\newtheorem{teo}{Theorem}[section]
\newtheorem{prop}[teo]{Proposition}
\newtheorem{lema}[teo]{Lemma}
\newtheorem{coro}[teo]{Corollary}
\newtheorem{remark}[teo]{Remark}
\newtheorem{defi}[teo]{Definition}
\numberwithin{equation}{section}
\newcommand{\Z}{\mathbb Z}
\newcommand{\Q}{\mathbb Q}
\newcommand{\K}{\mathbb K}
\newcommand{\C}{\mathbb C}
\newcommand{\F}{\mathbb F}
\newcommand{\A}{\mathbb A}
\newcommand{\bfs}{\boldsymbol}
\newcommand{\fq}{\F_{\hskip-0.7mm q}}
\newcommand{\fp}{\F_{\hskip-0.7mm p}}
\newcommand{\cfp}{\overline{\F}_{\hskip-0.35mm p}}
\def\ifm#1#2{\relax \ifmmode#1\else#2\fi}
\newcommand{\klk}    {\ifm {,\ldots,} {$,\ldots,$}}
\newcommand{\SO}{\mathcal{O}^{\sim}}
\begin{document}
\title{On the bit complexity of polynomial system solving}
\author[N. Gim\'enez]{
Nardo Gim\'enez${}^{1}$}
\author[G. Matera]{
Guillermo Matera${}^{1,2}$}

\address{${}^{1}$Instituto del Desarrollo Humano,
Universidad Nacional de Gene\-ral Sarmiento, J.M. Guti\'errez 1150
(B1613GSX) Los Polvorines, Buenos Aires, Argentina}
\email{\{agimenez,gmatera\}@ungs.edu.ar}

\address{${}^{2}$ National Council of Science and Technology (CONICET),
Ar\-gentina}

\thanks{The authors were partially supported by the grants
PIP CONICET 11220130100598, PIO CONICET-UNGS 14420140100027 and UNGS
30/3084.}
\subjclass{14Q20, 14G40, 13P15, 68W30}%
\keywords{Polynomial system solving over $\Q$, bit complexity,
reduced regular sequence, Chow form, lifting fibers, Hensel lifting,
lucky primes}%

\date{\today}%

\begin{abstract}
We exhibit a probabilistic algorithm which solves a polynomial
system over the rationals defined by a reduced regular sequence. Its
bit complexity is roughly quadratic in the B\'ezout number of the
system and linear in its bit size. Our algorithm solves the input
system modulo a prime number $p$ and applies $p$--adic lifting. For
this purpose, we establish a number of results on the bit length of
a ``lucky'' prime $p$, namely one for which the reduction of the
input system modulo $p$ preserves certain fundamental geometric and
algebraic properties of the original system. These results rely on
the analysis of Chow forms associated to the set of solutions of the
input system and effective arithmetic Nullstellens\"atze.
\end{abstract}

\maketitle

%
%
\section{Introduction}
Solving polynomial systems defined over $\Q$ is a fundamental task
of computational algebraic geometry, which has been the subject of
intensive work for at least 40 years. Symbolic approaches to this
problem include Gr\"obner basis technology, triangular
decomposition, resultants, Macaulay matrices and Kronecker--like
algorithms (see, e.g., \cite{Mora05} and \cite{Mora15} for an
overview of the existing methods). The corresponding {\em arithmetic
complexity}, namely the number of arithmetic operations in $\Q$, has
been analyzed in, e.g., \cite{Lazard81}, \cite{Giusti89a},
\cite{DiFiGiSe91}, \cite{FiGiSm95}, \cite{GiHaHeMoMoPa97},
\cite{GiLeSa01}, \cite{Lecerf03} and \cite{DuLe08}, among others.
The complexity paradigm arising from these works is that polynomial
systems can be solved with a number of arithmetic operations which
is {\em polynomial} in the B\'ezout number of the system. This
conclusion nearly matches the lower bounds of \cite{CaGiHeMaPa03},
\cite{GiHeMaSo11} and \cite{BaHeMaMoPaRo16}, under the assumption
that the corresponding algorithms are ``geometrically robust'',
namely they are universal and allow the solution of certain
``limit'' problems.

On the other hand, less work has been done to analyze the {\em bit
complexity} of these algorithms. Concerning Gr\"obner bases, the
work \cite{HaLa11} by Hashemi and Lazard shows that
zero--dimensional Gr\"obner bases can be computed essentially in
polynomial time in the input size and $D^n$, where $n$ is the number
of unknowns and $D$ is the mean value of the degrees of the defining
polynomials. The bit complexity of Kronecker--like algorithms for
complete intersections is analyzed in, e.g., \cite{GiHaHeMoMoPa97}
and \cite{HaMoPaSo00}, where it is shown that it is polynomial in
the input size and certain invariant called the ``system degree''
(which is upper bounded by the B\'ezout number of the system).
Further, the recent work by Schost and Safey El Din \cite{SaSc16}
considers the bit complexity of multi--homogeneous zero--dimensional
systems and proves that such systems can solved with quadratic
complexity in the multi--homogeneous B\'ezout number and a
corresponding arithmetic analogue of it. Finally,
\cite{GiHaHeMoMoPa97} provides a lower bound on the bit size of the
output when ``standard'' representations are used.

This paper is devoted to analyze the bit complexity of a family of
Kronecker--like algorithms originally due to \cite{GiHeMoMoPa98} and
\cite{GiHaHeMoMoPa97}. We shall consider the improved version of
this algorithm due to \cite{GiLeSa01} (see also \cite{DuLe08}),
which we now discuss. Let $F_1,\ldots,F_r\in\Z[X_1,\ldots,X_n]$ be
polynomials which form a reduced regular sequence, that is,
$F_1,\ldots, F_r$ form a regular sequence and the ideal
$(F_1,\ldots,F_s)$ is radical for $1\le s\le r$. Denote by
$\mathcal{V}_s:=\mathcal{V}(F_1,\ldots,F_s)$ the affine subvariety
of $\C^n$ defined by $F_1,\ldots,F_s$ and by $\delta_s:=\deg
\mathcal{V}_s$ its degree for $1\le s\le r$. Let
$\mathcal{V}:=\mathcal{V}_r$ and $\delta:=\max_{1\le s\le
r}\delta_s$. The algorithm outputs a suitable ``parametrization'' of
a ``lifting fiber'' of $\mathcal{V}$, that is, a (zero--dimensional)
fiber defined over $\Q$ of a general linear projection
$\pi:\mathcal{V}\to\C^{n-r}$ defined over $\Q$. Such a
parametrization is called a ``Kronecker representation''. Several
works show that this constitutes a good representation of
$\mathcal{V}$, namely a ``solution'' of the system
$F_1=0,\ldots,F_r=0$, both from the numeric and the symbolic point
of view (see, e.g., \cite{HeKrPuSaWa00}, \cite{Schost03},
\cite{Lecerf03}, \cite{CaMa06a}, \cite{SoWa05}).

The computation of the Kronecker representation of such a lifting
fiber proceeds in $r$ stages. In the $s$th stage we compute a
Kronecker representation of a lifting fiber of $\mathcal{V}_{s+1}$
from one of $\mathcal{V}_s$. Following a suggestion of
\cite{GiLeSa01}, to keep the bit length of intermediate results
under control, these computations are performed modulo a prime
number $p$, followed by a step of $p$--adic lifting to recover the
integers which define the Kronecker representation of $\mathcal{V}$.
As a consequence, the determination of a prime number $p$ with
``good'' modular reduction is crucial to estimate the bit
complexity of the procedure. 

For our purposes, the modular reduction defined by a prime number
$p$ is ``good'', and the corresponding prime $p$ is called
``lucky'', if basic geometric and algebraic features of the variety
$\mathcal{V}_s$ and its defining ideal $(F_1,\ldots,F_s)$ are
preserved under modular reduction for $1\le s\le r$. Among them, we
may mention dimension, degree and generic smoothness. Further, our
algorithm also requires that the modular reduction of the lifting
fibers under consideration preserves dimension, degree and
non--ramification. Partial results in this direction have been
obtained in \cite{Schost00} (see also \cite{MeSa16}), on modular
reduction of smooth fibers of parametric families of
zero--dimensional varieties, and \cite{DaOsShSo15}, on modular
reduction of zero--dimensional varieties defined over $\Z$.
Unfortunately, these results are not enough for our purposes.

For the analysis of the bit length of lucky primes, we establish
conditions on the coefficients of linear forms defining a projection
$\pi_s:\mathcal{V}_s\to\C^{n-s}$, and the coordinates of a point
$\bfs p\in\C^{n-s}$, which imply that $\pi$ is ``general'' in the
sense above and $\bfs p$ defines a lifting fiber for $1\le s\le r$.
As we need to analyze both conditions for projections and fibers
defined over $\Z$, and their modular reductions, a natural framework
for this analysis is that of an affine variety defined over a
infinite perfect field $\K$. Our main result is the following (see
Proposition \ref{prop: finite_fiber_and_finite_morphism} and Theorem
\ref{th:_lifting_point_rank_and_finite_morphism}).
\begin{teo}\label{th: intro: conditions equidim V}
Let $V\subset\overline{\K}^n$ be an equidimensional variety defined
over $\K$ of dimension $n-s$ and degree $\delta_s$. Let
$\Lambda_{ij}$ $(1\le i\le n-s+1,1\le j\le n)$ and
$Z_1,\ldots,Z_{n-s}$ be indeterminates over $\K[V]$. Denote $\bfs
Z:=(Z_1\klk Z_{n-s})$, $\bfs \Lambda:=(\Lambda_{ij})_{1\le i\le
n-s+1,1\le j\le n}$, $\bfs \Lambda^*:=(\Lambda_{ij})_{1\le i\le
n-s,1\le j\le n}$ and $\bfs \Lambda_i:=
(\Lambda_{i1}\klk\Lambda_{in})$ for $1\le i\le n-s+1$. There exist
polynomials $A_{\scriptscriptstyle V}\in\K[\bfs\Lambda^*]$ and
$\rho_{\scriptscriptstyle V}\in\K[\bfs \Lambda,\bfs Z]$ such that
$\deg_{\bfs\Lambda_i}A_{\scriptscriptstyle V}=\delta_s$ $(1\le i\le
n-s)$, $\deg_{\bfs\Lambda_i}\rho_{\scriptscriptstyle V}\le
\delta_s(2\delta_s-1)$ $(1\le i\le n-s+1)$, $\deg_{\bfs
Z}\rho_{\scriptscriptstyle V}\le \delta_s(2\delta_s-1)$ and the
following properties hold: for any $\bfs\lambda\in\K^{(n-s+1)n}$ and
$\bfs p\in\K^{n-s}$ with $A_{\scriptscriptstyle
V}(\bfs\lambda^*)\rho_{\scriptscriptstyle V}(\bfs\lambda,\bfs
p)\not=0$, if $(Y_1\klk Y_{n-s+1}):=\bfs\lambda \bfs X$, then
\begin{enumerate}
\item the mapping $\pi: V \rightarrow
\mathbb{A}^{n-s}$ defined by $\bfs Y:=(Y_1, \dots, Y_{n-s})$ is a
finite morphism;
\item $Y_{n-s+1}$ induces a primitive element of the ring extension
$\K[\bfs Y]\hookrightarrow \K[V]$;
\item $\mbox{rank}_{\scriptscriptstyle \K[\bfs Y]}\K[V]=\delta_s$;
\item  $\bfs{p}$ is a lifting point of $\pi$ and $Y_{n-s+1}$
induces a primitive element of $\pi^{-1}(\bfs{p})$.
\end{enumerate}
\end{teo}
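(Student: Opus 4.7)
The plan is to obtain both $A_V$ and $\rho_V$ from a single parametric polynomial attached to $V$ via its Chow form. Let $\mathrm{Ch}_V$ denote the Chow form of $V$, a polynomial in the coefficients of $n-s+1$ generic affine linear forms on $\A^n$ that is multihomogeneous of degree $\delta_s$ in each of these $n-s+1$ blocks. Specializing the $i$-th generic form to $L_i:=\bfs\Lambda_i\cdot\bfs X-Z_i$ for $1\le i\le n-s$ and the $(n-s+1)$-st to $L_{n-s+1}:=\bfs\Lambda_{n-s+1}\cdot\bfs X-T$ yields
\[
P_V(\bfs\Lambda,\bfs Z,T):=\mathrm{Ch}_V(L_{n-s+1},L_1,\ldots,L_{n-s})\in\K[\bfs\Lambda,\bfs Z,T],
\]
with $\deg P_V\le\delta_s$ in each block $(\bfs\Lambda_i,Z_i)$ ($1\le i\le n-s$) and in $(\bfs\Lambda_{n-s+1},T)$. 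The classical product formula for the Chow form gives, for generic $(\bfs\lambda,\bfs Z)$,
\[
P_V(\bfs\lambda,\bfs Z,T)=c(\bfs\lambda^*)\prod_{\xi\in\pi^{-1}(\bfs Z)}(T-Y_{n-s+1}(\xi)),
\]
so $P_V$ is, up to its leading $T$-coefficient, the parametric characteristic polynomial of $Y_{n-s+1}$ on the fibers of $\pi$.

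Take $A_V(\bfs\Lambda^*):=\mathrm{lc}_T P_V$. The coefficient of $T^{\delta_s}$ in $P_V$ is obtained by isolating the pure-constant part of $L_{n-s+1}$, and a direct computation identifies it, up to sign, with the Chow form of $V_\infty:=\overline V\cap\{X_0=0\}$ (the intersection of the projective closure of $V$ with the hyperplane at infinity) evaluated at $(\bfs\Lambda_1,\ldots,\bfs\Lambda_{n-s})$. Since $V$ is affine and equidimensional of dimension $n-s$ and degree $\delta_s$, the variety $V_\infty$ is equidimensional of dimension $n-s-1$ and degree $\delta_s$, whence its Chow form is multihomogeneous of degree exactly $\delta_s$ in each block $\bfs\Lambda_i$, $1\le i\le n-s$. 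In particular, $A_V\in\K[\bfs\Lambda^*]$ with $\deg_{\bfs\Lambda_i}A_V=\delta_s$. Classically, $A_V(\bfs\lambda^*)\neq 0$ is equivalent to the $n-s$ linear forms given by the rows of $\bfs\lambda^*$ having no common zero on $V_\infty$, i.e., to $\pi:V\to\A^{n-s}$ being a finite morphism. This yields (1), and the multiplicativity of the degree then gives (3).

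Finally, set $\rho_V:=\mathrm{Res}_T(P_V,\partial_T P_V)\in\K[\bfs\Lambda,\bfs Z]$. Standard bounds on the Sylvester resultant of polynomials of $T$-degrees $\delta_s$ and $\delta_s-1$ give $\deg_W\rho_V\le\delta_s(2\delta_s-1)$ for each block $W\in\{\bfs\Lambda_1\klk\bfs\Lambda_{n-s+1},\bfs Z\}$. Suppose $A_V(\bfs\lambda^*)\rho_V(\bfs\lambda,\bfs p)\neq 0$. Then $P_V(\bfs\lambda,\bfs p,T)$ has exact degree $\delta_s$ in $T$ and $\delta_s$ distinct roots, which by the product formula are the values of $Y_{n-s+1}$ on the points of $\pi^{-1}(\bfs p)$. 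Hence the fiber consists of $\delta_s$ reduced points separated by $Y_{n-s+1}$, so $\bfs p$ is a lifting point and $Y_{n-s+1}$ is a primitive element of $\pi^{-1}(\bfs p)$, establishing (4). Item (2) follows by contrapositive: if $Y_{n-s+1}$ were not a primitive element of the ring extension $\K[\bfs Y]\hookrightarrow\K[V]$ for this choice of $\bfs\lambda$, then $P_V(\bfs\lambda,\bfs Z,T)$ would be a non-trivial power of a polynomial of strictly smaller $T$-degree, making $\rho_V(\bfs\lambda,\bfs Z)$ vanish identically. I expect the main obstacle to be the rigorous identification of $\mathrm{lc}_T P_V$ with the Chow form of $V_\infty$ and, in particular, the verification that its degree in each $\bfs\Lambda_i$ is \emph{exactly} $\delta_s$ rather than merely bounded; once this is pinned down, the remaining degree bounds on $\rho_V$ amount to routine Sylvester-matrix manipulations.
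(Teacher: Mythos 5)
Your choice of objects coincides with the paper's ($A_V$ is the leading $T$--coefficient of the specialized Chow form, $\rho_V$ its discriminant), but several of the deductions you draw from them have real gaps, and the one you flag as the ``main obstacle'' is not the serious one. The serious gap is in item (4): your product formula is asserted only ``for generic $(\bfs\lambda,\bfs Z)$'', yet you apply it at the specific specialization $(\bfs\lambda,\bfs p)$. To conclude that the $\delta_s$ distinct roots of $P_V(\bfs\lambda,\bfs p,T)$ are exactly the values of $Y_{n-s+1}$ on $\pi^{-1}(\bfs p)$ you must show that \emph{every} root actually comes from a point of the fiber, i.e.\ $\#\pi^{-1}(\bfs p)\ge\delta_s$; nothing in your sketch does this. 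The paper's proof hinges precisely here: it constructs the fiber points explicitly from the rational parametrization $\xi_k=-\big(\partial P_V/\partial\Lambda_{n-s+1,k}\big)\big/\big(\partial P_V/\partial Z_{n-s+1}\big)$, after proving that the numerators of the relations $F(\ldots)$ and the polynomials $H_i$ are divisible by $P_V$ (Lemmas \ref{lemma: non_zero_divisor} and \ref{lemma: cardinality_of_the_fiber}, Proposition \ref{prop: cardinality_of_the_fiber}). Moreover, even granting $\delta_s$ distinct points separated by $Y_{n-s+1}$, ``lifting point'' is defined via the nonvanishing of the Jacobian determinant of $Y_1\klk Y_{n-s},F_1\klk F_s$ at each fiber point; passing from the cardinality statement to that condition is exactly Proposition \ref{prop: lifting_point_and_rank} (freeness of $\K[V]$ over $R$, unmixedness, the radicality criterion and the characteristic polynomial of $J$), which your argument silently skips.

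Two further points. For (1)--(3): the claimed \emph{equivalence} ``$A_V(\bfs\lambda^*)\neq 0$ iff $\pi$ is finite'' is false (project the parabola $y=x^2$ to the $x$--axis: the map is finite, yet the leading coefficient vanishes); only the forward implication holds, and for it your identification of $\mathrm{lc}_T P_V$ must be with the Chow form of the \emph{cycle} at infinity (with multiplicities), not the reduced $V_\infty$, whose degree can drop below $\delta_s$ --- the same example shows this. Likewise ``multiplicativity of the degree'' does not give (3) for an arbitrary finite linear projection (the parabola has rank $1$ but degree $2$); it does once the center at infinity avoids $\overline V$, but that is exactly the unproved cycle--theoretic identification, whereas the paper obtains rank $\le\delta_s$ cheaply from the integral dependence relations and gets equality only through the fiber count above. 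Finally, your contrapositive for (2) is incorrect as stated: non--primitivity does not force $P_V(\bfs\lambda,\bfs Z,T)$ to be a power of a lower--degree polynomial (think of $B'$ with two components where $Y_{n-s+1}$ collapses only one of them), and the corrected claim (a repeated factor) presupposes that $P_V(\bfs\lambda,\bfs Z,T)$ is the characteristic polynomial of $Y_{n-s+1}$ on $B'$, i.e.\ that the rank is already $\delta_s$ --- circular with (3). The paper avoids this by differentiating the fundamental identity $P_V(\bfs\Lambda,\bfs\Lambda\bfs\xi)=0$ to get $\rho_V(\bfs\Lambda,\bfs\Lambda_1\cdot\bfs\xi\klk\bfs\Lambda_{n-s}\cdot\bfs\xi)\,\xi_k=R_k(\bfs\Lambda,\bfs\Lambda\bfs\xi)$, which exhibits the powers of $Y_{n-s+1}$ as generators directly (Proposition \ref{prop: finite_fiber_and_finite_morphism}); you should incorporate an argument of this kind.
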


Our main technical tool is the analysis of the Chow form of $V$. A
similar analysis is obtained in \cite{CaMa06a} under stronger
assumptions, namely that $\K$ is a finite field $\fq$ and $V$ is an
absolutely--irreducible complete intersection.

Then we compare the conditions underlying Theorem \ref{th: intro:
conditions equidim V} for $\K=\Q$ and $\K=\cfp$, where $\fp$ is a
given prime field. This yields an integer multiple $\mathfrak{N}$ of
all primes $p$ which are not lucky in the sense above. We upper
bound the bit length of this integer $\mathfrak{N}$ using estimates
for heights of equidimensional varieties of \cite{DaKrSo13}, and
then obtain a lucky prime $p$ with ``low'' bit length. The following
statement summarizes our results on modular reduction (see Theorems
\ref{th:_simultaneous_noether_normalization} and \ref{th: app:
mathfrak_N_height}).
\begin{teo}\label{th: intro: good modular reduction}
Let $F_1\klk F_r\in\Z[X_1\klk X_n]$ be polynomials of degree at most
$d$ with coefficients of bit length at most $h$. Assume that
$F_1\klk F_r$ form a reduced regular sequence and denote
$\mathcal{V}_s:=V(F_1\klk F_s)\subset\C^n$ and $\delta_s:=\deg
\mathcal{V}_s$ for $1\le s\le r$. Let $\delta:=\max_{1\le s\le
r}\delta_s$. Let $\bfs\lambda\in \Z^{n^2}\setminus \{0\}$ and $\bfs
p:=(p_1\klk p_{n-1})\in \Z^{n-1}$ be randomly chosen elements with
entries of bit length $\mathcal{O}(n^2\delta^3)$. Let $(Y_1\klk
Y_n):=\bfs\lambda\bfs X$ and $\bfs p^s:=(p_1\klk p_{n-s})$ for $1\le
s\le r$.

Let $p$ be a random prime number of bit length
$\mathcal{O}^{\sim}\big(\log (nd^r h)\big)$. Denote by $F_{1,p}\klk
F_{r,p}$, $Y_{1,p}\klk Y_{n,p}$ and $\bfs p_p$ the corresponding
reductions modulo $p$. Then the following conditions are satisfied
for $1\le s\le r$ with probability at least $2/3$:
\begin{enumerate}
  \item the polynomials $F_{1,p}, \dots, F_{s,p}$ generate a radical
  ideal in $\cfp[\bfs X]$ and define an
  equidimensional variety $V_{s,p}\subset
  \cfp^n$ of
  dimension $n-s$ and degree $\delta_s$;
  \item the mapping $\pi_{s,p}:V_{s,p}\rightarrow
  \cfp^{n-s}$
  defined by $Y_{1,p},\dots,Y_{n-s,p}$ is a finite morphism,
  $\bfs p_p^s\in \fp^{n-s}$ is a lifting point of $\pi_{s,p}$,
  and $Y_{n-s+1,p}$ induces a primitive
  element of
  $\pi_{s,p}^{-1}(\bfs p_p^s)$;
  \item any $\bfs q\in \pi_{s,p}\bigl(\pi_{s+1,p}^{-1}(\bfs p^{s+1}_p)\bigr)$
  is a lifting point of $\pi_{s,p}$ and $Y_{n-s+1,p}$ induces a primitive
  element of $\pi_{s,p}^{-1}(\bfs q)$.
\end{enumerate}
\end{teo}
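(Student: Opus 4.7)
The plan is to apply Theorem \ref{th: intro: conditions equidim V} twice, once over $\K = \Q$ and once over $\K = \cfp$, and to control by a single integer $\mathfrak{N}$ the set of primes $p$ for which the reduction modulo $p$ of the $\Q$-side input fails to satisfy the hypotheses of the $\cfp$-side application. For each $1\le s \le r$ I first apply Theorem \ref{th: intro: conditions equidim V} to $\mathcal{V}_s$ over $\Q$, obtaining polynomials $A_{\mathcal{V}_s}\in\Z[\bfs\Lambda^*]$ and $\rho_{\mathcal{V}_s}\in\Z[\bfs\Lambda,\bfs Z]$ of the degrees stated there (after clearing denominators). A Schwartz--Zippel argument, together with the choice of $\bfs\lambda$ and $\bfs p$ of bit length $\mathcal{O}(n^2\delta^3)$, ensures that $A_{\mathcal{V}_s}(\bfs\lambda^*)\rho_{\mathcal{V}_s}(\bfs\lambda,\bfs p^s)\neq 0$ for every $s$ with high probability, so that conditions (1)--(4) of Theorem \ref{th: intro: conditions equidim V} hold over $\Q$ for every $s$.

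I then define $\mathfrak{N}\in\Z$ as the product, for $1\le s\le r$, of the following nonzero integers: a chosen maximal-$\bfs\Lambda_i$-degree coefficient of $A_{\mathcal{V}_s}$ and of $\rho_{\mathcal{V}_s}$ (so that their reductions mod a prime not dividing $\mathfrak{N}$ keep their role in Theorem \ref{th: intro: conditions equidim V} applied to $V_{s,p}$); the integer values $A_{\mathcal{V}_s}(\bfs\lambda^*)$ and $\rho_{\mathcal{V}_s}(\bfs\lambda,\bfs p^s)$; a leading coefficient of the Chow form of $\mathcal{V}_s$ detecting that $\dim V_{s,p}=n-s$ and $\deg V_{s,p}=\delta_s$; and a discriminant-type witness for a primitive element of $\Q[\bfs Y]\hookrightarrow\Q[\mathcal{V}_s]$ guaranteeing that the ideal $(F_{1,p}\klk F_{s,p})$ remains radical. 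To secure condition (3), for each $s<r$ I further include the resultant of two univariate polynomials derived from the Kronecker parametrization at stage $s+1$: the minimal polynomial of the primitive element of $\pi_{s+1}^{-1}(\bfs p^{s+1})$, and the polynomial obtained by composing $\rho_{\mathcal{V}_s}(\bfs\lambda,\bfs Z)$ with that parametrization projected to the first $n-s$ coordinates. Non-vanishing of this resultant modulo $p$ forces every algebraic point $\bfs q$ appearing in conclusion (3) to avoid the mod-$p$ zero locus of $\rho_{\mathcal{V}_s}$, so that a pointwise application of Theorem \ref{th: intro: conditions equidim V} over $\cfp$ at each such $\bfs q$ delivers both the lifting-point and primitive-element properties.

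The technical heart is the bound on $\log|\mathfrak{N}|$. Here I invoke the height estimates for the Chow form of an equidimensional variety due to \cite{DaKrSo13}: applied to $\mathcal{V}_s$, these give heights for $A_{\mathcal{V}_s}$ and $\rho_{\mathcal{V}_s}$ that are polynomial in $n,\delta,d^r,h$. Evaluating at integer points of bit length $\mathcal{O}(n^2\delta^3)$ and multiplying over $s$, one obtains $\log|\mathfrak{N}|=(nd^r h)^{\mathcal{O}(1)}$ up to polylogarithmic factors, matching Theorem \ref{th: app: mathfrak_N_height}. Since $\mathfrak{N}$ has at most $\log_2|\mathfrak{N}|$ distinct prime divisors, sampling a prime of bit length $\mathcal{O}^{\sim}\!\bigl(\log(nd^r h)\bigr)$ suffices, by the prime number theorem, to guarantee $p\nmid \mathfrak{N}$ with probability at least $2/3$; for any such $p$, the second application of Theorem \ref{th: intro: conditions equidim V} over $\cfp$ then produces simultaneously conclusions (1), (2), (3).

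The main obstacle is the height estimation, not the probabilistic argument. A direct expansion of $A_{\mathcal{V}_s}$ and $\rho_{\mathcal{V}_s}$ as polynomials in many variables yields exponentially many monomials, and naive arithmetic bounds would give catastrophic heights; the sharp arithmetic B\'ezout inequalities of \cite{DaKrSo13} are essential to keep everything polynomial in $d^r h$. A secondary subtlety is that condition (3) has to be enforced simultaneously at \emph{every} algebraic point of $\pi_{s+1,p}^{-1}(\bfs p^{s+1}_p)$, which forces the resultant construction above rather than a pointwise evaluation over $\Z$.
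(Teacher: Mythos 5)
Your overall architecture (apply Theorem \ref{th: intro: conditions equidim V} over $\Q$ and over $\cfp$, collect one integer $\mathfrak{N}$ of controlled height, and pick a small prime avoiding it) is indeed the paper's strategy, but the decisive step — transferring the hypotheses of Theorem \ref{th: intro: conditions equidim V} from $\Q$ to $\cfp$ — is missing from your $\mathfrak{N}$. Knowing $A_{\mathcal{V}_s}(\bfs\lambda^*)\rho_{\mathcal{V}_s}(\bfs\lambda,\bfs p^s)\neq 0$ over $\Q$, and that $p$ does not divide this value or various leading coefficients, says nothing a priori about the variety cut out by $F_{1,p}\klk F_{s,p}$: the reduction $P_{s,p}$ of the Chow form has no built-in relation to $\mathcal{V}_{s,p}$, so dimension could jump, degree could drop, and the ideal could become nonradical. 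The paper bridges this with two arithmetic Nullstellensatz certificates that you never construct: the integer $\alpha_s$ of \eqref{alpha_s_definition}, certifying $\alpha_s P_s(\bfs\Lambda,\bfs\Lambda\bfs X)^{\mu_s}\in(F_1\klk F_s)\Z[\bfs\Lambda,\bfs X]$, whose nonvanishing mod $p$ makes the integral-dependence relations persist and yields $\dim\mathcal{V}_{s,p}=n-s$, finiteness of $\pi_{s,p}$, $\deg\mathcal{V}_{s,p}=\delta_s$ and the fact that $P_{s,p}$ is a Chow form of $\mathcal{V}_{s,p}$ (Proposition \ref{prop: finite_morphism}, Corollary \ref{coro: equality_of_the_degree}); and the integer $\gamma_s$ of \eqref{gamma_s_definition}, certifying that the Jacobian $J_s$ is invertible on the lifting fiber, which is what gives radicality of $(F_{1,p}\klk F_{s,p})$ via Lemma \ref{lemma: radicality_criterion} (Lemma \ref{lema: radicality_modulo_p}). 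Your ``discriminant-type witness for a primitive element'' is not a substitute: squarefreeness of an eliminating polynomial concerns the reduced variety, whereas radicality of the ideal generated by the $F_{i,p}$ requires the Jacobian criterion on every component. Moreover, the heights of $\alpha_s$ and $\gamma_s$ (and of the certificates below) are the genuinely delicate part of Theorem \ref{th: app: mathfrak_N_height}; bounding only $h(A_{\mathcal{V}_s})$ and $h(\rho_{\mathcal{V}_s})$ does not suffice.

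Your treatment of condition (3) by a resultant built from the level-$(s+1)$ Kronecker parametrization is a genuinely different mechanism from the paper's (which intersects $\mathcal{V}_{s+1}$ with $\{\rho_s(\bfs\lambda,\bfs\lambda^*\bfs X)=0\}$ and uses the implicit equation $B_{\bfs\lambda^s}$ in the free variables together with a Nullstellensatz certificate, cf. Theorem \ref{th: lifting_fibers_modulo_p}), and it could probably be completed, but as written it has two holes. First, you never ensure that this resultant is a nonzero integer: its nonvanishing is exactly the statement that the characteristic-zero fiber $\pi_{s+1}^{-1}(\bfs p^{s+1})$ avoids the discriminant locus, and guaranteeing that for a random $(\bfs\lambda,\bfs p)$ with quantified probability needs a degree bound on the corresponding bad locus (this is the role of Lemma \ref{lemma: pure_dimesion_r_subvariety} and of the degree bound on ${\sf L}_{\bfs\lambda^s}$); your Schwartz--Zippel step only covers the $A_{\mathcal{V}_s}\rho_{\mathcal{V}_s}$ conditions. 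Second, to conclude anything mod $p$ you must know that the $Y_{n-s,p}$--coordinates of $\pi_{s+1,p}^{-1}(\bfs p^{s+1}_p)$ are roots of the reduction of the minimal polynomial you used over $\Q$, i.e., that the Kronecker representation specializes correctly modulo $p$ (Proposition \ref{prop: kronecker_representations_mod_p}); this again rests on $P_{s+1,p}$ being a Chow form of $\mathcal{V}_{s+1,p}$ and hence on the $\alpha$-type certificates your $\mathfrak{N}$ omits. Without these ingredients the transfer to $\cfp$, and therefore all three conclusions, does not follow from what you have assembled.
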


We observe that the analysis of lucky primes becomes much simpler if
only conditions (1) and (2) above are required. An analysis along
these lines can be deduced from \cite{Schost00} (compare with
\cite{MeSa16}). Nevertheless, condition (3), which is critical to
prove the correctness of our algorithm for solving the system
$F_1=0\klk F_r=0$, requires a significant extension of these
techniques.

Finally, we combine the algorithm of \cite{CaMa06a} with $p$--adic
lifting, as in \cite{GiLeSa01}, to obtain an algorithm for solving
the system $F_1=0\klk F_r=0$ with good bit complexity. We prove the
following result (see Theorem \ref{th:
bit_complexity_kronecker_solver} for a precise statement).
\begin{teo}\label{th: intro: bit complexity Kronecker}
Let $F_1\klk F_r$ be polynomials of $\Z[X_1\klk X_n]$ as in the
statement of Theorem \ref{th: intro: good modular reduction}. There
exists a probabilistic algorithm that takes as input an algorithm
evaluating $F_1\klk F_r$ with at most $L$ arithmetic operations, and
outputs a parametrization of a lifting fiber of $\mathcal{V}(F_1,
\dots,F_r)$ with
$
\mathcal{O}^{\sim}\big(n^{\mathcal{O}(1)}L\delta(d\delta+d^rh)\big)$
bit operations.
\end{teo}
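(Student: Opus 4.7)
The plan is to combine the arithmetic Kronecker-type algorithm of \cite{CaMa06a} with modular reduction at a single prime $p$ and Newton--Hensel $p$-adic lifting, in the spirit of \cite{GiLeSa01}. First I would choose random integer vectors $\bfs\lambda$ and $\bfs p$ with entries of bit length $\mathcal{O}(n^2\delta^3)$, and a random prime $p$ of bit length $\mathcal{O}^{\sim}(\log(nd^rh))$. By Theorems \ref{th: intro: conditions equidim V} and \ref{th: intro: good modular reduction}, with probability at least $2/3$ these choices are simultaneously generic enough over $\Q$ and over $\cfp$ so that, at every stage $s=1\klk r$, the Kronecker representation of a lifting fiber of $\mathcal{V}_s$ computed modulo $p$ coincides with the reduction modulo $p$ of the corresponding Kronecker representation over $\Q$.

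Next, I would run the arithmetic Kronecker solver of \cite{CaMa06a} on the reduced input $F_{1,p}\klk F_{r,p}$ over $\fq$ with $q=p$. This algorithm proceeds in $r$ stages and uses $n^{\mathcal{O}(1)}L\delta\cdot d\delta$ arithmetic operations in $\fq$, since $L$ and $\delta$ together control the cost of one lifting-fiber update. Each such operation costs $\mathcal{O}^{\sim}(\log p)=\mathcal{O}^{\sim}(\log(nd^rh))$ bit operations, so the total bit cost of this modular phase is $\mathcal{O}^{\sim}\big(n^{\mathcal{O}(1)}L\delta\cdot d\delta\big)$.

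I would then apply Newton--Hensel $p$-adic lifting to recover the integer coefficients of the Kronecker representation over $\Q$. Condition (3) of Theorem \ref{th: intro: good modular reduction} ensures that every intermediate zero-dimensional fiber visited by the algorithm is reduced and unramified modulo $p$, so the relevant Jacobian is invertible modulo $p$ and the quadratic Hensel iteration is valid. Using the arithmetic Nullstellensatz-type height estimates from \cite{DaKrSo13}, already invoked for Theorem \ref{th: intro: good modular reduction}, the numerators and denominators of the integer Kronecker representation have bit length $\mathcal{O}^{\sim}(d\delta+d^rh)$, which fixes the final $p$-adic precision. Running the doubling scheme, where each iteration evaluates the input SLP and the associated derivatives at cost $\mathcal{O}(L\delta)$ arithmetic operations in a truncated $p$-adic ring, the total bit cost of the lifting phase is $\mathcal{O}^{\sim}\big(n^{\mathcal{O}(1)}L\delta(d\delta+d^rh)\big)$. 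Summing with the modular phase yields the claimed bound.

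The main obstacle is correctness of the whole chain rather than of any single step in isolation: one must guarantee that at every one of the $r$ stages, and at every intermediate lifting point visited by the arithmetic algorithm, the modular picture faithfully reflects the rational one and the Jacobian remains invertible modulo $p$. This reduces precisely to condition (3) of Theorem \ref{th: intro: good modular reduction}, which asserts that every point in $\pi_{s,p}\bigl(\pi_{s+1,p}^{-1}(\bfs p^{s+1}_p)\bigr)$ is again a lifting point of $\pi_{s,p}$ with $Y_{n-s+1,p}$ primitive. This requirement is strictly stronger than what the modular results of \cite{Schost00} and \cite{DaOsShSo15} yield, and is exactly what the Chow-form analysis underlying Theorem \ref{th: intro: conditions equidim V} was designed to deliver.
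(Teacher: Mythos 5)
Your proposal is correct and follows essentially the same route as the paper: random choice of $\bfs\lambda$, $\bfs p$ and a lucky prime (whose bit length is controlled by the height bound on $\mathfrak{N}$), the modular Kronecker solver of \cite{CaMa06a} run stage by stage with condition (3) of Theorem \ref{th: intro: good modular reduction} guaranteeing that the intermediate fibers stay unramified, and then the Global Newton $p$-adic lifting of \cite{GiLeSa01} with precision fixed by the height estimates for the output Kronecker representation. The only minor imprecision is the stated output height $\mathcal{O}^{\sim}(d\delta+d^rh)$; the paper's Proposition \ref{eta_s_height_estimate} gives $\mathcal{O}^{\sim}\bigl(nd^{r-1}(h+rd)\bigr)$, which is what enters the lifting cost, but this discrepancy is absorbed by the $n^{\mathcal{O}(1)}$ factor and does not affect the final bound.
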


The paper is organized as follows. In Section
\ref{notions_and_notations} we recall the notions and results of
algebraic geometry and commutative algebra we shall use, and discuss
the representation of multivariate polynomials by straight--line
programs and algebraic varieties by Kronecker representations. In
Section \ref{section: Noether normalization} we recall the notion of
Chow form of an equidimensional variety, discuss its basic
properties and obtain conditions (1)--(3) of Theorem \ref{th: intro:
conditions equidim V}. In Section \ref{section:
lifting_points_and_lifting_fibers} we discuss the notion of lifting
point and finish the proof of Theorem \ref{th: intro: conditions
equidim V}. In Section \ref{section:
modular_simultaneous_noether_normalization} we prove Theorem
\ref{th: intro: good modular reduction}. For sake of readability,
all estimates on heights of varieties underlying the proof of this
result are postponed to Appendix \ref{section: appendix}. Finally,
in Section \ref{section: computation_of_a_kronecker_representation}
we describe our algorithm for solving the input system $F_1=0\klk
F_r=0$ and analyze its bit complexity, showing thus Theorem \ref{th:
intro: bit complexity Kronecker}.
%
%
\section{Notions and notations}\label{notions_and_notations}
We use standard notions and notations of commutative algebra and
algebraic geometry as can be found in, e.g., \cite{Kunz85},
\cite{Eisenbud95}, \cite{Shafarevich94}.

Let $\K$ be a field and $\overline{\K}$ its algebraic closure. Let
$\K[X_1,\dots,X_n]$ denote the ring of $n$--variate polynomials in
indeterminates $X_1,\dots,X_n$ and coefficients in $\K$. Let
$\mathbb{A}^n:=\mathbb{A}^n(\overline{\K})$ be the affine
$n$--dimensional space over $\overline{\K}$. A subset of
$\mathbb{A}^n$ is called a $\K$--\emph{definable affine subvariety
of} $\mathbb{A}^n$ (a $\K$--\emph{variety} for short) if it is  the
set of common zeros in $\mathbb{A}^n$ of a set of polynomials in
$\K[X_1,\dots,X_n]$. We will use the notations
$\mathcal{V}(F_1,\dots,F_s)$ and $\{F_1=0,\dots,F_s=0\}$ to denote
the $\K$--variety defined by $F_1,\dots,F_s$. Further, if
$\mathcal{I}$ is an ideal of $\K[X_1,\dots,X_n]$, then
$\mathcal{V}(\mathcal{I})$ denotes the $\K$--variety of
$\mathbb{A}^n$ defined by the elements of $\mathcal{I}$. On the
other hand, we shall denote by $\mathcal{I}(V)$ the vanishing ideal
of a $\K$--variety $V\subset\A^n$ in $\K[X_1,\dots,X_n]$ and by
$\K[V]$ its coordinate ring, namely the quotient ring $\K[V]:=
\K[X_1,\dots,X_n]/\mathcal{I}(V)$.

Let $V\subseteq \mathbb{A}^n$ be a $\K$--variety. We denote by $\dim
V$ its dimension with respect to the Zariski topology over $\K$
(which agrees with the Krull dimension of $\K[V]$). More generally,
if $R$ is a ring, then $\dim R$ denotes its Krull dimension. Suppose
further that $V$ is irreducible with respect to the Zariski topology
over $\K$. We define its \emph{degree} as the maximum number of
points lying in the intersection of $V$ with an affine linear
$\overline{\K}$--variety $L$ of $\mathbb{A}^n$ of codimension $\dim
V$ for which $\# (V\cap L)<\infty$. Now, if
$V=\mathcal{C}_1\cup\cdots\cup\mathcal{C}_N$ is the decomposition of
$V$ into irreducible $\K$--components,  we define the degree of $V$
as $\deg V=\sum_{i=1}^N \deg \mathcal{C}_i$ (cf. \cite{Heintz83}).
This definition of degree satisfies the following \emph{B\'ezout
inequality} (\cite{Heintz83}; see also \cite{Fulton98}): if $V$ and
$W$ are $\K$--varieties of $\mathbb{A}^n$, then
 \begin{equation}\label{bezout_inequality}
 \deg(V\cap W)\leq \deg V\deg W.
 \end{equation}
%
%
\subsection{Notions and results of commutative algebra}
A proper ideal $\mathcal{I}$ of $\K[X_1,\dots,X_n]$ is
\emph{unmixed} if the codimensions of its associated primes are all
equal. A classical result asserts that the \emph{unmixedness
theorem} holds for $\K[X_1,\dots,X_n]$, namely an ideal
$\mathcal{I}$ of $\K[X_1,\dots,X_n]$ of codimension $r$ generated by
$r$ elements is unmixed for any $r\geq 0$ (see, e.g., \cite[Theorems
17.6 and 17.7]{Matsumura86}).

Let $\mathcal{I}:=(F_1,\dots,F_r)\subset \K[X_1,\dots,X_n]$ be an
ideal of dimension $n-r$. Then $\mathcal{I}$ is unmixed and defines
an equidimensional $\K$--variety $V\subset \mathbb{A}^n$. Let
$Y_1,\dots, Y_n\in \K[X_1,\dots,X_n]$ be linearly--independent
linear
forms 
such that the mapping $\pi: V \rightarrow \mathbb{A}^{n-r}$ defined
by $Y_1,\dots,Y_{n-r}$ is a finite morphism. The change of variables
$(X_1,\dots,X_n)\to(Y_1,\dots,Y_n)$ is called a \emph{Noether
normalization} of $V$ (or $\mathcal{I}$) and we say that the
variables $Y_1,\dots, Y_n$ are in \emph{Noether position} with
respect to $V$ (or $\mathcal{I}$), the variables $Y_1,\dots,
Y_{n-r}$ being \emph{free}. Let $R:=\K[Y_1, \dots, Y_{n-r}]$ and let
$R'$ denote the field of fractions of $R$. Denote $B:=\K[X_1\klk
X_n]/\mathcal{I}$ and let $B':=R'\otimes_\K B:= R'[Y_{n-r+1}, \dots,
Y_n]/\mathcal{I}^e$, where $\mathcal{I}^e$ is the extension of
$\mathcal{I}$ to $R'[Y_{n-r+1}, \dots, Y_n]$. We consider $B$ as an
$R$--module and $B'$ as an $R'$--vector space respectively. Since
$B$ is a finitely generated, $B'$ is a finite--dimensional
$R'$--vector space, whose dimension we denote by $\dim_{R'}B'$. In
particular, for any $F\in \K[X_1\klk X_n]$ we may consider the
characteristic polynomial $\chi \in R'[T]$ (respectively the minimal
polynomial $\mu \in R'[T]$) of the homothety of multiplication by
$F$ in $B'$. In this situation we have that $\chi$ and $\mu$ belong
to $R[T]$ (see, e.g., \cite[Theorem 1.27]{DuLe08}). We shall call
$\chi$ and $\mu$ respectively the \emph{characteristic} and the
\emph{minimal} polynomials of $F$ modulo $\mathcal{I}$.

Now assume further that $\K$ is an infinite perfect field. Then $B$
is a free $R$--module of finite rank $\textrm{rank}_RB$ (see, e.g.
\cite[Lemma 3.3.1]{GiHeSa93}). Since any basis of $B$ as an
$R$--module induces a basis of $B'$ as an $R'$--vector space, we
have $\mbox{rank}_RB = \dim_{R'}B'$. In this case, we say that
$G\in\K[X_1\klk X_n]$ induces a \emph{primitive element} for
$\mathcal{I}$ if the powers of the image $g$ of $G$ in $B'$ generate
the $R'$--vector space $B'$. We shall also say that $G$ induces a
primitive element of the ring extension $R\hookrightarrow B$.

The following criterion for deciding radicality of an ideal,
probably well--known, is stated and proved here for lack of a
suitable reference.
\begin{lema}\label{lemma: radicality_criterion}
Let $\K$ be a perfect field, $\mathcal{I}:=(F_1, \dots, F_s)\subset
\K[X_1\klk X_n]$ an ideal of dimension $n-s$, and $\mathcal{J}$ the
ideal of $\K[X_1\klk X_n]$ generated by $\mathcal{I}$ and  the
$(s\times s)$--minors of the Jacobian matrix $(\partial F_i/\partial
X_j)_{1\leq i \leq s, 1\leq j \leq n}$. Then the following
conditions are equivalent:
 \begin{itemize}
   \item $\mathcal{I}$ is radical;
   \item $\mathcal{J}$ is not contained in any minimal prime of $\mathcal{I}$.
 \end{itemize}
\end{lema}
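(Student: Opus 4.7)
The plan is to reduce radicality of $\mathcal{I}$ to a Jacobian rank condition at each minimal prime of $\mathcal{I}$. First I would exploit that $\mathcal{I}$ is generated by $s$ elements and has codimension $s$, so the unmixedness theorem quoted above implies that the associated primes of $\mathcal{I}$ are precisely its (finitely many) minimal primes $\mathfrak{p}_1,\ldots,\mathfrak{p}_N$, each of height $s$. In the absence of embedded components, $\mathcal{I}$ coincides with the intersection of the contracted ideals $\mathcal{I}R_{\mathfrak{p}_i}\cap R$, where $R:=\K[X_1,\ldots,X_n]$, and therefore $\mathcal{I}$ is radical if and only if $\mathcal{I}R_{\mathfrak{p}_i}=\mathfrak{p}_iR_{\mathfrak{p}_i}$ for every $i$. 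This reduces the global question to a local one at each minimal prime.

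Next I would pass to the regular local ring $R_{\mathfrak{p}_i}$ of dimension $s$. Since $\mathcal{I}R_{\mathfrak{p}_i}$ is generated by $s$ elements and $\mathfrak{p}_i$ is minimal over $\mathcal{I}$, Krull's height theorem forces the quotient $R_{\mathfrak{p}_i}/\mathcal{I}R_{\mathfrak{p}_i}$ to be Artinian. An Artinian local ring is reduced if and only if it is a field, which in turn is equivalent to $F_1,\ldots,F_s$ being a regular system of parameters for $R_{\mathfrak{p}_i}$, i.e.\ to their images forming a $\kappa(\mathfrak{p}_i)$-basis of $\mathfrak{m}_i/\mathfrak{m}_i^2$, with $\mathfrak{m}_i:=\mathfrak{p}_iR_{\mathfrak{p}_i}$.

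Finally I would convert this linear-independence condition into the non-vanishing of an $(s\times s)$-minor of the Jacobian. Because $\K$ is perfect and $R/\mathfrak{p}_i$ is a finitely generated $\K$-algebra, the residue field $\kappa(\mathfrak{p}_i)$ is separably generated over $\K$, and the classical Jacobian criterion (cf.\ \cite{Eisenbud95}) tells us that the matrix $(\partial F_i/\partial X_j)_{1\le i\le s,\,1\le j\le n}$ has rank $s$ modulo $\mathfrak{p}_i$ precisely when $F_1,\ldots,F_s$ generate $\mathfrak{m}_i$ modulo $\mathfrak{m}_i^2$. This rank condition is equivalent to asserting that some $(s\times s)$-minor of the Jacobian lies outside $\mathfrak{p}_i$, i.e.\ $\mathcal{J}\not\subseteq\mathfrak{p}_i$; combining the three equivalences yields the lemma.

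The main delicate step is the last one: the perfectness hypothesis is essential, since over an imperfect field an inseparable residue-field extension could make the Jacobian rank drop even when $R_{\mathfrak{p}_i}/\mathcal{I}R_{\mathfrak{p}_i}$ is already a field, and a different criterion would then be needed. Everything preceding it is routine bookkeeping with primary decomposition and Krull dimension.
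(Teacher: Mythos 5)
Your proof is correct and follows essentially the same route as the paper: the unmixedness theorem reduces everything to the minimal primes of $\mathcal{I}$, and the Jacobian criterion over a perfect field converts reducedness (equivalently, regularity) of the localizations at those primes into the non-vanishing of some $(s\times s)$--minor modulo each minimal prime, i.e.\ $\mathcal{J}\not\subseteq\mathfrak{p}_i$. The only difference is one of packaging: where you perform the reduction by hand via primary decomposition and localization at the $\mathfrak{p}_i$, and then pass through regular systems of parameters, the paper simply cites Serre's conditions $(R_0)$ and $(S_1)$ (\cite[Exercise 11.10]{Eisenbud95}) together with the Jacobian criterion in the form of \cite[Corollary 16.20]{Eisenbud95}.
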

\begin{proof}
Let $B:=\K[X_1\klk X_n]/\mathcal{I}$. By \cite[Exercise
11.10]{Eisenbud95}, it suffices to show that the second condition is
equivalent to the following ones:
\begin{enumerate}
  \item the localization of $B$ at each prime of codimension $0$ is regular;\label{R0}
  \item all primes associated to zero in $B$ have codimension $0$. \label{S1}
\end{enumerate}
To prove this equivalence, we observe that the canonical
homomorphism $\K[X_1\klk X_n]\!\rightarrow B$ induces a bijection
between the set of primes associated to $\mathcal{I}$ and the set of
primes  associated to $0$ in $B$. This bijection maps the minimal
primes over $\mathcal{I}$ to the minimal primes over $0$ in $B$,
which are precisely the primes of codimension $0$ in $B$. Now, since
the unmixedness theorem holds in $\K[X_1\klk X_n]$, the ideal
$\mathcal{I}$ is unmixed, and thus the set of primes associated to
$\mathcal{I}$ coincides with the set of minimal primes over
$\mathcal{I}$, which implies that \eqref{S1} is satisfied. Next, the
second condition of the lemma can be rephrased by saying that the
image $\overline{\mathcal{J}}$ of $\mathcal{J}$ in $B$ is not
contained in any prime of $B$ of codimension $0$. By \cite[Corollary
16.20]{Eisenbud95}, this is equivalent to \eqref{R0}, which finishes
the proof.
\end{proof}
%
%
\subsection{Kronecker representations}
Let $V\subset\A^n$ be an equidimensional $\K$--variety of dimension
$n-s$, and let $\mathcal{I}\subset\K[X_1\klk X_n]$ be its vanishing
ideal. For a change of variables $(X_1\klk X_n)\to(Y_1\klk Y_n)$,
denote $R:=\K[Y_1,\dots, Y_{n-s}]$, $B:=\K[V]$ and $R':=\K(Y_1\klk
Y_{n-s})$. Consider $B':=R'[Y_{n-s+1}\klk Y_n]/\mathcal{I}^e$ as an
$R'$-vector space, where $\mathcal{I}^e$ is the extended ideal
$\mathcal{I}R[Y_{n-s+1}\klk Y_n]$, and let $\delta:=\dim_{R'}B'$.
\begin{defi}
A \emph{Kronecker representation} of $\mathcal{I}$ (or $V$) consists
of the following items:
\begin{itemize}
  \item a Noether normalization of $\mathcal{I}$, defined by a linear change
  of variables $(X_1, \dots,\! X_n)$ $\to$ $(Y_1,\dots, Y_n)$ such that
  $Y_{n-s+1}$ induces a \emph{primitive element} for $\mathcal{I}$;
  \item the minimal (monic) polynomial $Q\in R[T]$ of $Y_{n-s+1}$ modulo
  $\mathcal{I}$;
  \item the (unique) polynomials $W_{n-s+2}\klk W_n\in R'[T]$ of degree at most
  $\delta-1$ such that the following identity of ideals holds in
  $R'[Y_{n-s+1}, \dots, Y_n]$:
\begin{equation}\label{ident:kronecker_repres_ideals}
\mathcal{I}^e\!=\!\bigl(Q(Y_{n-s+1}),\!
Q'(Y_{n-s+1})Y_{n-s+2}-W_{n-s+2}(Y_{n-s+1}) \klk
Q'(Y_{n-s+1})Y_n-Q_n(Y_{n-s+1})\bigr),
\end{equation}
where $Q'$ denotes the first derivative of $Q$ with respect to $T$.
\end{itemize}
Considering instead polynomials $V_{n-s+2}\klk V_n$ of degree at
most $\delta-1$ such that
\[
\mathcal{I}^e=\bigl(Q(Y_{n-s+1}), Y_{n-s+2}-V_{n-s+2}(Y_{n-s+1}),
\dots, Y_n-V_n(Y_{n-s+1})\bigr),
\]
we have a \emph{univariate representation} of $\mathcal{I}$ (or
$V$).
\end{defi}

If $Q'\neq 0$, identity \eqref{ident:kronecker_repres_ideals} may be
interpreted in geometric terms as we now explain. Let
$\ell:\mathbb{A}^n\rightarrow \mathbb{A}^n$ be the linear mapping
defined by $Y_1,\dots,Y_n$ and $W:=\ell(V)$. We interpret
$Y_1,\dots, Y_n$ as new indeterminates and consider the mapping
$\Pi:W\rightarrow \mathbb{A}^{n-s+1}$ defined by the projection on
the first $n-s+1$ coordinates. Considering $Q$ as an element of
$\K[Y_1,\dots,Y_{n-s+1}]$, it turns out that $\Pi$ defines a
birational isomorphism between $W$ and the hypersurface $\{Q=0\}$ of
$\mathbb{A}^{n-s+1}$, whose inverse is the rational mapping
$\Phi:\{Q=0\}\rightarrow W$ defined in the following way:
$$\Phi(\bfs y):=\left(\bfs y,
\frac{W_{n-s+2}(\bfs y)}{Q'(\bfs y)}, \dots, \frac{W_n(\bfs
y)}{Q'(\bfs y)}\right).$$
%
%
\subsection{Model of computation}
Besides the Big--Oh notation $\mathcal{O}$, we also use the standard
Soft--Oh notation $\mathcal{O}^\sim$ which does not take into
account logarithmic terms. 
We remark that the cost of certain basic operations (such as
addition, multiplication, division, and gcd) with integers of bit
length $m$ is in $\mathcal{O}^\sim(m)$. In particular, arithmetic
operations in the prime finite field $\fp$ of $p$ elements can be
performed with $\mathcal{O}^\sim(\log p)$ bit operations.

Algorithms in computer algebra usually consider the standard dense
(or sparse) representation model, where multivariate polynomials are
encoded by means of the vector of all (or of all nonzero)
coefficients. However, since a generic $n$--variate polynomial of
degree $d$ has $\binom{n + d}{n}=\mathcal{O}(d^n)$ nonzero
coefficients, its dense or sparse representation requires an
exponential size in  $d$ and $n$, and their manipulation usually
requires an exponential number of arithmetic operations with respect
to $d$ and $n$. To avoid this phenomenon we will use an alternative
representation for multivariate polynomials by means of
straight--line programs (cf. \cite{BuClSh97}). A {\em
(division--free) straight--line program} $\beta$ in $\K[X_1\klk
X_n]$ which {\em represents} or {\em evaluates} polynomials $F_1\klk
F_s \in \K[X_1\klk X_n]$ is a sequence $(Q_1, \dots, Q_r)$ of
elements of $\K[X_1\klk X_n]$ satisfying the following conditions:
\begin{itemize}
  \item $\{F_1, \dots, F_s\} \subseteq \{Q_1, \dots, Q_r\}$;
  \item there exists a finite subset $\mathcal{T}\subset
  \K$, called the set of \emph{parameters} of $\beta$,
  such that for every $1 \le \rho \le r$, the polynomial $Q_{\rho}$ either is an
element of $\mathcal{T} \cup \{X_1, \dots, X_n \}$, or there exist
$1 \le \rho_1, \rho_2 < \rho$ such that
$Q_{\rho}=Q_{\rho_1}\circ_{\rho}Q_{\rho_2}$, where $\circ_{\rho}$ is
one of the arithmetic operations $+, -, \times$.
\end{itemize}
The {\em length} of $\beta$ is defined as the total number of
arithmetic operations performed during the evaluation process
defined by $\beta$.

Our algorithm is probabilistic, of {\em Monte Carlo} type (see,
e.g., \cite{GaGe99}). One of the probabilistic aspects is related to
random choices of points outside certain Zariski open sets. A basic
tool for estimating the corresponding probability of success is the
following well--known result (see, e.g., \cite[Lemma 6.44]{GaGe99}).
\begin{lema}
\label{lemma: zippel_schwartz} Let $R$ be an integral domain, $U_1,
\dots, U_k$ indeterminates over $R$, $S\subseteq R$ a finite set
with $s:=\# S$ elements, and $F\in R[U_1, \ldots, U_k]$ a nonzero
polynomial of degree at most $d$. Then $F$ has at most $ds^{k-1}$
zeros in $S^k$.
\end{lema}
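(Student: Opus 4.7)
The plan is to prove this classical Schwartz--Zippel type bound by induction on the number $k$ of indeterminates. For the base case $k=1$, the polynomial $F\in R[U_1]$ is nonzero of degree at most $d$; since $R$ is an integral domain, $F$ has at most $d$ roots in $R$, hence at most $d=ds^{0}$ zeros in $S$, as required.

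For the inductive step, I would write $F$ as a polynomial in the last variable,
\[
F=\sum_{i=0}^{e}G_i(U_1,\dots,U_{k-1})\,U_k^{\,i},
\]
where $e\le d$ is the degree of $F$ in $U_k$ and $G_e\neq 0$. Note that $\deg G_e\le d-e$, so by the inductive hypothesis $G_e$ has at most $(d-e)s^{k-2}$ zeros in $S^{k-1}$. I would then split the count of zeros of $F$ in $S^k$ according to whether the first $k-1$ coordinates $(u_1,\dots,u_{k-1})$ annihilate $G_e$ or not.

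For $(u_1,\dots,u_{k-1})\in S^{k-1}$ with $G_e(u_1,\dots,u_{k-1})=0$, I cannot say more than that the fiber contributes at most $s$ zeros, yielding an overall contribution of at most $(d-e)s^{k-2}\cdot s=(d-e)s^{k-1}$. For $(u_1,\dots,u_{k-1})\in S^{k-1}$ with $G_e(u_1,\dots,u_{k-1})\neq 0$, the specialization $F(u_1,\dots,u_{k-1},U_k)$ is a nonzero univariate polynomial of degree exactly $e$ over the integral domain $R$, and so has at most $e$ roots in $S$; bounding the number of such tuples by $s^{k-1}$ gives a contribution of at most $e\,s^{k-1}$. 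Adding these two bounds yields
\[
\#\{\bfs u\in S^k:F(\bfs u)=0\}\le (d-e)s^{k-1}+e\,s^{k-1}=d\,s^{k-1},
\]
which closes the induction.

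The proof is essentially routine; the only subtle point is that the argument uses that $R$ is an integral domain twice, namely to ensure that a nonzero univariate polynomial over $R$ has at most as many roots as its degree (applied in the base case and in the non-vanishing fiber), and to guarantee that the leading coefficient $G_e$ with respect to $U_k$ is itself a nonzero polynomial of degree at most $d-e$, so that the inductive hypothesis applies to it. No further obstacle is expected.
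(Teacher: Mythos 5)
Your induction is correct and complete: it is the standard Schwartz--Zippel argument, which is exactly what the reference the paper cites for this lemma (\cite[Lemma 6.44]{GaGe99}) does, the paper itself giving no proof. One small inaccuracy in your closing remark: the nonvanishing of the leading coefficient $G_e$ and the bound $\deg G_e\le d-e$ need nothing about $R$ being a domain (they follow from the choice of $e$ and the total-degree bound); integrality is used only for the univariate root counts.
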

We shall interpret Lemma \ref{lemma: zippel_schwartz} in terms of
probabilities: for an element $\bfs u$ chosen uniformly at random in
$S^k$, the probability that $F(\bfs u)\neq 0$ is greater than
$1-d/s$.

The second probabilistic aspect concerns the choice of a ``lucky''
prime number $p$. In connection with this matter, we have the
following result (see, e.g., \cite[Section 18.4]{GaGe99}).
\begin{lema}\label{lemma: finding_primes}
Let $B$, $m$ be positive integers and $M$ a nonzero integer such
that $\log |M|\leq \frac{B}{m}$. There is a probabilistic algorithm
which, from the integer $B$ and any positive integer $k$, returns a
prime $p$ between $B+1$ and $2B$ not dividing $M$. It performs
$\mathcal{O}^\sim(k\,\log^2 B)$ bit operations and returns the right
result with probability at least
\[
\Bigl(1-\frac{\log B}{2^{k-1}}\Bigr)\Bigl(1-\frac{2}{m}\Bigr).
\]
\end{lema}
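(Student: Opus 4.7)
The plan is to describe a standard randomized procedure and bound its failure probability as a product of two near-independent factors, one arising from primality testing and the other from the divisibility condition on $M$. The algorithm samples integers $n_1,\ldots,n_t$ independently and uniformly at random in $[B+1, 2B]$ (using $\mathcal{O}(\log B)$ random bits each), and runs on each a Miller--Rabin type primality test; the parameters $t$ and the number of Miller--Rabin rounds are balanced so that the total amount of work comes to $\mathcal{O}(k)$ modular exponentiations with operands of $\mathcal{O}(\log B)$ bits, and the algorithm outputs the first $n_i$ that passes all its rounds. One modular exponentiation on such operands is computable in $\mathcal{O}^{\sim}(\log^2 B)$ bit operations, so the total bit cost is $\mathcal{O}^{\sim}(k\,\log^2 B)$, matching the claim.

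The correctness analysis then splits into two steps, corresponding exactly to the two factors of the stated probability. For the primality factor $1 - \log B/2^{k-1}$, I invoke Chebyshev's estimate $\pi(2B) - \pi(B) \ge B/(c\log B)$ for a small absolute constant $c$: the probability that a single uniformly random integer in $[B+1,2B]$ is actually prime is bounded below by $1/(c\log B)$. Combining this density bound with the standard error analysis of iterated Miller--Rabin, the probability that the algorithm either fails to find any prime among the sampled $n_i$ or mistakenly accepts a composite is at most $\log B / 2^{k-1}$. This is the content worked out in detail in \cite[Section~18.4]{GaGe99}, and I would cite it rather than reprove it.

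For the divisibility factor $1 - 2/m$, I argue that conditional on the output being an actual prime $p \in [B+1, 2B]$, it fails to divide $M$ with probability at least $1-2/m$. Any prime divisor of $M$ lying in $[B+1,2B]$ is at least $B$, so if $\omega$ denotes the number of such divisors one has $B^{\omega} \le |M|$, whence $\omega \le \log|M|/\log B \le B/(m\log B)$ using the hypothesis $\log|M| \le B/m$. Dividing by the Chebyshev lower bound $\pi(2B)-\pi(B) \ge B/(2\log B)$, the fraction of primes in $[B+1,2B]$ that divide $M$ is at most $2/m$. Since the candidates are drawn independently of $M$, the two probability estimates are (essentially) independent and multiply to give the stated lower bound.

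The main technical obstacle is the first factor: a naive analysis of $k$-round Miller--Rabin on a single random composite of $\log B$ bits yields an error probability of only $4^{-k}$ per candidate, which by itself does not account for the possibility that no prime is sampled at all among the drawn candidates. The combined bound $\log B / 2^{k-1}$ must therefore exploit a sharpened Miller--Rabin analysis averaged over random inputs, together with the Chebyshev density estimate, so that the two sources of failure are amortized against one another in a single clean factor. Everything else — the decomposition of the failure event, the counting of prime divisors of $M$ in $[B+1,2B]$, and the bit-complexity accounting for the modular exponentiations — is routine.
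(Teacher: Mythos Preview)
Your approach is correct and essentially identical to the paper's: the paper also cites \cite[Section~18.4]{GaGe99} (specifically Theorem~18.8) for the prime-finding procedure, its bit cost, and the first factor $1-\log B/2^{k-1}$, and then argues separately that a random prime in $(B,2B]$ divides $M$ with probability at most $2/m$ by bounding the number of such prime divisors against $\pi(2B)-\pi(B)$. You have simply unpacked the ingredients (Miller--Rabin plus Chebyshev density) that the paper leaves inside the black-box citation.
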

\begin{proof}
According to, e.g.,  \cite[Theorem 18.8]{GaGe99}, there is a
probabilistic algorithm which computes a random prime $p$ such that
$B<p\le 2B$ with $\mathcal{O}^\sim(k\,\log^2 B)$ bit operations and
probability of success at least $1-{\log B}/{2^{k-1}}$. On the other
hand, if $p$ is a random prime with $B< p \leq 2B$, then $p$ does
not divide $M$ with probability at least $1-{2}/{m}$. Combining both
assertions the lemma follows.
\end{proof}
%
%
\section{On Noether normalizations}
\label{section: Noether normalization}
Let $\K$ be a perfect field and  $V\subset \mathbb{A}^n$ an
equidimensional $\K$--variety of dimension $n-s\ge 0$ and degree
$\delta$. In this section we obtain a condition on the coefficients
of linear forms $Y_1\klk Y_{n-s+1}\in\K[X_1\klk X_n]$ which implies
that $Y_1\klk Y_{n-s}$ define a Noether normalization of $V$ and
$Y_{n-s+1}$ is a primitive element of the ring extension $\K[Y_1\klk
Y_{n-s}]\hookrightarrow\K[V]$ (Proposition \ref{prop:
finite_fiber_and_finite_morphism}). As these conditions rely heavily
on properties of the Chow form of $V$, we also recall the notion of
Chow form of an equidimensional variety and some of its basic
properties.
%
%
\subsection{The Chow form of an equidimensional variety}
\label{subsec: Chow form equidimensional variety}
%
Let $\bfs\Lambda^h\!:=\!(\!\Lambda_{ij})_{1\leq i \leq n-s+1, 0\leq
j \leq n}$ be a matrix of indeterminates over $\K[V]$, let
$\bfs\Lambda^h_i:=(\Lambda_{i0}, \dots, \Lambda_{in})$ and
$\bfs\Lambda_i:=(\Lambda_{i1}, \dots, \Lambda_{in})$ for $1 \leq i
\leq n-s+1$. A Chow form of $V$ is a square--free polynomial
$F_{\scriptscriptstyle V}$ of $\K[\bfs\Lambda^h]$ such that
$F_{\scriptscriptstyle V}(\bfs\lambda^h)=0$ if and only if
$\overline{V}\cap\{\lambda_{i0}+\sum_{j=1}^n\lambda_{ij}X_j=0\ (1\le
i\le n-s+1)\}$ is nonempty, where $\overline{V}\subset\mathbb{P}^n$
is the projective closure of $V$ with respect to the canonical
inclusion $\mathbb{A}^n\hookrightarrow \mathbb{P}^{n}$ (see
\cite[Chapter X, Section 6]{HoPe68b}). We observe that
$F_{\scriptscriptstyle V}$ is multihomogeneous of degree $\delta$ in
each group of variables $\bfs\Lambda^h_i$ for $1 \leq i \leq n-s+1$,
and is uniquely determined up to nonzero multiples in $\K$. Let
$\bfs\Lambda:=(\Lambda_{ij})_{1\leq i \leq n-s+1,1\leq j \leq n}$
and let $Z_1,\dots,Z_{n-s+1}$ be new indeterminates. Let
$P_{\scriptscriptstyle V}\in \K[\bfs\Lambda,Z_1,\dots,Z_{n-s+1}]$ be
the unique polynomial such that
\[
P_{\scriptscriptstyle
V}(\bfs\Lambda,\Lambda_{10},\dots,\Lambda_{n-s+1,
0})=F_{\scriptscriptstyle
V}(\bfs\Lambda^h_1,\dots,\bfs\Lambda^h_{n-s+1}).
\]
By abuse of language we also call $P_{\scriptscriptstyle V}$ a Chow
form of $V$.

Let $\xi_1,\dots,\xi_n$ be the coordinate functions of $V$ induced
by $X_1,\dots,X_n$. Set $\bfs\xi:=(\xi_1,\dots,\xi_n)$ and let
$\bfs\Lambda_i \cdot\bfs\xi \in \K[V][\bfs\Lambda]$ be defined by
\[
\bfs\Lambda_i \cdot\bfs\xi:=\sum_{j=1}^n\Lambda_{ij}\xi_j \quad (1
\leq i \leq n-s+1).
\]
A fundamental property of the Chow form is that
$P_{\scriptscriptstyle V}$ is uniquely determined, up to
multiplication by nonzero elements of $\K$, by the following two
conditions:
\begin{itemize}
\item if
$\bfs\Lambda\bfs\xi:=(\bfs\Lambda_1\cdot\bfs\xi,
\dots,\bfs\Lambda_{n-s+1}\cdot\bfs\xi)$, then the following identity
holds in $\K[V][\bfs\Lambda]$:
\begin{equation}\label{ident:chow form_fundamental}
P_{\scriptscriptstyle V}(\bfs\Lambda,\bfs\Lambda\bfs\xi)=0.
\end{equation}
Equivalently, let $\bfs\Lambda_i\cdot \bfs
X:=\sum_{j=1}^n\Lambda_{ij}X_j$ for $1\leq i\leq n-s+1$ and
$\bfs\Lambda\bfs X:=(\bfs\Lambda_1\cdot \bfs X, \dots,
\bfs\Lambda_{n-s+1}\cdot \bfs X)$. Then the polynomial
$P_{\scriptscriptstyle V}(\bfs\Lambda,\bfs\Lambda\bfs X)\in
\K[\bfs\Lambda,\bfs X]$ vanishes on the variety
$\mathbb{A}^{(n-s+1)n}\times V$.
\item  If $G\in \K[\bfs\Lambda,Z_1,\dots, Z_{n-s+1}]$ is
any polynomial such that $G(\bfs\Lambda,\bfs\Lambda\bfs\xi)=0$, then
$P_{\scriptscriptstyle V}$ divides $G$ in $\K[\bfs\Lambda,
Z_1,\dots, Z_{n-s+1}]$.
\end{itemize}

Furthermore, $F_{\scriptscriptstyle V}$ has the following features
(see \cite[Chapter X, Sections 7 and 9]{HoPe68b}):
\begin{enumerate}
\item $F_{\scriptscriptstyle V}$ is homogeneous of degree $\delta$
in the $(n-s+1)\times (n-s+1)$--minors of $\bfs\Lambda^h$;
\item $\deg_{(\Lambda_{10}, \dots, \Lambda_{n-s+1,0})} F_{\scriptscriptstyle V}
=\deg_{\Lambda_{n-s+1,0}} F_{\scriptscriptstyle V}=\delta$;
\label{features_chow_form_item_2}
\item if $V$ is an irreducible $\K$--variety, then $F_{\scriptscriptstyle V}$
is an irreducible polynomial of $\K[\bfs\Lambda^h]$. More generally,
if $V=\mathcal{C}_1\cup \cdots \cup \mathcal{C}_N$ is the
decomposition of $V$ into irreducible $\K$--components, and
$F_{\scriptscriptstyle \mathcal{C}_i}$ is a Chow form of
$\mathcal{C}_i$ for $1\leq i \leq N$, then $\prod_{1\leq i \leq s}
F_{\scriptscriptstyle \mathcal{C}_i}$ is a Chow form of $V$.
\label{features_chow_form_item_3}
\end{enumerate}

\begin{remark}
Let $A_{\scriptscriptstyle V}\in
\K[\bfs\Lambda^h_1,\dots,\bfs\Lambda^h_{n-s}]$ be the (nonzero)
polynomial which arises as the coefficient of the monomial
$\Lambda_{n-s+1,0}^{\delta}$ in $F_{\scriptscriptstyle V}$,
considering $F_{\scriptscriptstyle V}$ as an element of
$\K[\bfs\Lambda][\Lambda_{10},\dots,\Lambda_{n-s+1,0}]$. Then
(\ref{features_chow_form_item_2}) implies that
$A_{\scriptscriptstyle V}$ is independent of
$\Lambda_{10},\dots,\Lambda_{n-s\,0}$, that is,
$A_{\scriptscriptstyle V}\in \K[\bfs\Lambda_1,\dots,
\bfs\Lambda_{n-s}]$. In particular, $A_{\scriptscriptstyle V}$ is
homogeneous of degree $\delta$ in the $(n-s)\times (n-s)$--minors of
the $(n-s) \times n$--matrix $\bfs\Lambda^*=(\Lambda_{ij})_{1\leq i
\leq n-s, 1 \leq j \leq n}$.
\end{remark}

Let $\rho_{\scriptscriptstyle V}\in \K[\bfs\Lambda, Z_1,\dots,
Z_{n-s}]$ be the discriminant of $P_{\scriptscriptstyle V}$ with
respect to $Z_{n-s+1}$, namely
  \[
  \rho_{\scriptscriptstyle V}:=\mathrm{Res}_{Z_{n-s+1}}
  \left(P_{\scriptscriptstyle V}, \frac{\partial P_{\scriptscriptstyle V}}{\partial Z_{n-s+1}}\right).
  \]

\begin{lema}\label{lemma: nonzero_discriminant}
$\rho_{\scriptscriptstyle V}$ and $\partial P_{\scriptscriptstyle
V}/\partial Z_{n-s+1}$ are both nonzero.
\end{lema}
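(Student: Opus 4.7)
My plan is to produce a specialization of $\bfs\Lambda$ and $Z_1,\dots,Z_{n-s}$ to values in $\overline{\K}$ at which $P_{\scriptscriptstyle V}$, viewed as a univariate polynomial in $Z_{n-s+1}$, has $\delta$ pairwise distinct roots in $\overline{\K}$. This is enough to conclude: a univariate polynomial with $\delta$ simple roots is separable, so its $Z_{n-s+1}$--derivative does not vanish identically and its discriminant is a nonzero scalar; since partial differentiation and the resultant commute with specialization (as long as the leading coefficient survives), the polynomials $\partial P_{\scriptscriptstyle V}/\partial Z_{n-s+1}$ and $\rho_{\scriptscriptstyle V}$ must themselves be nonzero.

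To build the specialization, I would first invoke the Remark: $A_{\scriptscriptstyle V}\neq 0$, so $P_{\scriptscriptstyle V}$ has degree exactly $\delta$ in $Z_{n-s+1}$, and this degree is preserved by any specialization $\bfs\lambda^*\in\overline{\K}^{(n-s)n}$ for which $A_{\scriptscriptstyle V}(\bfs\lambda^*)\neq 0$. Since $\overline{\K}$ is infinite, I may choose $\bfs\lambda_1,\dots,\bfs\lambda_{n-s}\in\overline{\K}^n$ generic enough so that $A_{\scriptscriptstyle V}(\bfs\lambda^*)\neq 0$ and so that the linear map $\pi:V\to \mathbb{A}^{n-s}$ defined by $\bfs\lambda_i\cdot\bfs X$ $(1\le i\le n-s)$ is a finite surjective morphism. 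Because $V$ is reduced (varieties are taken set-theoretically in the paper) and equidimensional of dimension $n-s$ and degree $\delta$, a Bertini--type argument ensures that for generic $\bfs z^*\in\overline{\K}^{n-s}$ the fiber $\pi^{-1}(\bfs z^*)$ consists of exactly $\delta$ distinct points $\bfs x^{(1)},\dots,\bfs x^{(\delta)}$.

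Next I would plug this data into the fundamental identity $P_{\scriptscriptstyle V}(\bfs\Lambda,\bfs\Lambda\bfs\xi)=0$. Evaluating the coordinate functions $\bfs\xi$ at $\bfs x^{(k)}$ and specializing $\bfs\Lambda_i$ to $\bfs\lambda_i$ for $1\le i\le n-s$, one obtains that $\bfs\Lambda_{n-s+1}\cdot \bfs x^{(k)}$ is a root of the polynomial $P_{\scriptscriptstyle V}(\bfs\lambda_1,\dots,\bfs\lambda_{n-s},\bfs\Lambda_{n-s+1},\bfs z^*,Z_{n-s+1})\in\overline{\K}[\bfs\Lambda_{n-s+1},Z_{n-s+1}]$ for every $1\le k\le\delta$. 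Since the $\bfs x^{(k)}$ are pairwise distinct, for generic $\bfs\lambda_{n-s+1}\in\overline{\K}^n$ the scalars $\bfs\lambda_{n-s+1}\cdot \bfs x^{(k)}$ are pairwise distinct, and together with $A_{\scriptscriptstyle V}(\bfs\lambda^*)\neq 0$ this exhibits $P_{\scriptscriptstyle V}(\bfs\lambda,\bfs z^*,Z_{n-s+1})$ as a degree-$\delta$ polynomial with $\delta$ simple roots, which closes the argument.

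The main obstacle is the Bertini--type step: for a generic projection $\pi:V\to \mathbb{A}^{n-s}$, the generic fiber must really consist of $\delta$ \emph{distinct} points, not merely a zero--dimensional scheme of length $\delta$. This requires two ingredients: first, that distinct irreducible components of $V$ contribute to disjoint fibers for generic $\bfs z^*$, which follows from the fact that any two components meet in a subvariety of dimension strictly less than $n-s$ and hence miss a generic fiber; second, that for each irreducible component $\mathcal{C}_i$ the extension $\K(\mathcal{C}_i)/\K(\bfs\lambda_1\cdot\bfs X,\dots,\bfs\lambda_{n-s}\cdot\bfs X)$ is separable for generic $\bfs\lambda_1,\dots,\bfs\lambda_{n-s}$, which holds because $\K$ is perfect and hence every finitely generated extension admits a separating transcendence basis.
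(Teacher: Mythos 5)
Your strategy is genuinely different from the paper's. The paper argues purely algebraically: since $P_{\scriptscriptstyle V}$ is square--free, $\K[\bfs\Lambda,\bfs Z]/(P_{\scriptscriptstyle V})$ is a reduced, hence separable, $\K$--algebra ($\K$ perfect); base--changing to the algebraic closure $\K'$ of $\K(\bfs\Lambda,Z_1,\dots,Z_{n-s})$ keeps it reduced, so $P_{\scriptscriptstyle V}$ is square--free as a univariate polynomial over $\K'$, whence $\partial P_{\scriptscriptstyle V}/\partial Z_{n-s+1}\neq 0$; applying the same argument to each irreducible factor (each a Chow form of a component, of positive degree in $Z_{n-s+1}$) gives coprimality with the derivative and so $\rho_{\scriptscriptstyle V}\neq 0$. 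Your route instead exhibits a single specialization $(\bfs\lambda,\bfs z^*)$ at which $P_{\scriptscriptstyle V}$ becomes a degree--$\delta$ univariate polynomial with $\delta$ simple roots; granting that, the deduction of both nonvanishing statements (derivative commutes with specializing the other variables; the resultant specializes well because $A_{\scriptscriptstyle V}(\bfs\lambda^*)\neq 0$) is fine, and no circularity with Lemma \ref{lemma: non_zero_divisor} or Proposition \ref{prop: cardinality_of_the_fiber} is incurred.

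The weak point is exactly the step you call the main obstacle, and the two ingredients you list do not yet close it. Ingredient one (pairwise intersections of components have dimension $<n-s$, so a generic fiber misses them) is fine. Ingredient two, separability of $\overline{\K}(\mathcal{C}_i)$ over $\overline{\K}(Y_1,\dots,Y_{n-s})$ for a generic projection, only yields that the generic fiber of $\pi|_{\mathcal{C}_i}$ consists of exactly $[\overline{\K}(\mathcal{C}_i):\overline{\K}(Y_1,\dots,Y_{n-s})]$ distinct points (and even here one needs that \emph{generic linear forms} give a separating transcendence basis, not merely that one exists). What is still missing is the identification of that field degree with the geometric degree $\deg\mathcal{C}_i$ --- equivalently, that the maximum in the paper's definition of degree is attained by the generic fibers of a generic linear projection. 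Without it you only get $\sum_i D_i$ simple roots for some $D_i\leq\deg\mathcal{C}_i$, which does not force the degree--$\delta$ specialized polynomial to be separable. This identification is a classical fact (it is essentially \cite{Heintz83}) and citing it would complete your argument, but as written it is assumed rather than proved; note that the paper's own mechanism for producing $\delta$ distinct points in a fiber (Proposition \ref{prop: cardinality_of_the_fiber}) cannot be used here, since it depends on the very lemma being proved. The paper's separable--algebra argument buys exactly the avoidance of any such fiber counting.
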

\begin{proof}
We have that $A:=\K[\bfs\Lambda,
Z_1,\dots,Z_{n-s+1}]/(P_{\scriptscriptstyle V})$ is a reduced
$\K$--algebra. Since $\K$ is perfect, by \cite[Corollary, page
194]{Matsumura80} it follows that $A$ is a separable $\K$--algebra.
Let $\K'$ denote the algebraic closure of
$\K(\bfs\Lambda,Z_1,\dots,Z_{n-s})$. By \cite[27.G]{Matsumura80}, we
deduce that the $\K'$--algebra
$A\otimes_{\K}\K'=\K'[Z_{n-s+1}]/(P_{\scriptscriptstyle V})$ is
reduced.  Since $\K'$ is a perfect field, this implies that
$\partial P_{\scriptscriptstyle V}/\partial Z_{n-s+1}\neq 0$. Now,
by \eqref{features_chow_form_item_2} and
\eqref{features_chow_form_item_3} above, each irreducible factor of
$P_{\scriptscriptstyle V}$ is a Chow form of an irreducible
component $\mathcal{C}_i$ of $V$, of positive degree $\deg
\mathcal{C}_i$ in $Z_{n-s+1}$. Then the previous argument shows that
the partial derivative with respect to $Z_{n-s+1}$ of each
irreducible factor of $P_{\scriptscriptstyle V}$ does not vanish,
which in turn implies that $P_{\scriptscriptstyle V}$ and $\partial
P_{\scriptscriptstyle V}/\partial Z_{n-s+1}$ are relatively prime
polynomials of $\K[\bfs\Lambda, Z_1,\dots,Z_{n-s+1}]$. Since
$\K[\bfs\Lambda, Z_1,\dots,Z_{n-s}]$ is a factorial ring, this
implies that the resultant $\rho_{\scriptscriptstyle V}$ of these
polynomials does not vanish.
\end{proof}

Further, $\rho_{\scriptscriptstyle V}$ satisfies the following
degree estimates:
$$
\deg_{(Z_1, \dots, Z_{n-s})} \rho_{\scriptscriptstyle V}  \leq
(2\delta-1)\delta, \quad  \deg_{\bfs\Lambda_{i}}
\rho_{\scriptscriptstyle V} \leq (2\delta-1)\delta \quad (1 \leq i
\leq n-s+1).
$$
In particular, for its total degree we have $\deg
\rho_{\scriptscriptstyle V}\leq (n-s+2)(2\delta^2-\delta)$.

Let $\bfs Z:=(Z_1,\dots,Z_{n-s+1})$. Further, for any
$\bfs\lambda:=(\lambda_{ij})_{1\leq i \leq n-s+1, 1 \leq j \leq
n}\in \mathbb{A}^{(n-s+1)n}$, we write
$\bfs\lambda_i:=(\lambda_{i1}, \dots, \lambda_{in})$ and
$\bfs\lambda_i \cdot\bfs\xi:= \sum_{j=1}^n \lambda_{ij}\xi_j$ for $1
\leq i \leq n-s+1$. We consider $\K[V][\bfs\Lambda]$ as a
$\K[\bfs\Lambda, \bfs Z]$--algebra through the ring homomorphism
$\K[\bfs\Lambda, \bfs Z]\rightarrow \K[V][\bfs\Lambda]$ which maps
any $F \in \K[\bfs\Lambda, \bfs Z]$ to $F(\bfs\Lambda,
\bfs\Lambda\bfs\xi)$. In these terms, we have the following result.
\begin{lema}\label{lemma: non_zero_divisor}
$\partial P_{\scriptscriptstyle V}/\partial Z_{n-s+1}$ is not a zero
divisor of the $\K[\bfs\Lambda, \bfs Z]$--algebra
$\K[V][\bfs\Lambda]$.
\end{lema}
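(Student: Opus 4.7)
The plan is to reduce to the irreducible components of $V$ by combining the factorization of the Chow form with the characterization of zero divisors in a reduced ring. Since $V$ is equidimensional and $\mathcal{I}(V)$ is radical, $\K[V]$ is reduced of pure dimension $n-s$, and its minimal primes $\mathfrak{p}_1,\ldots,\mathfrak{p}_N$ correspond to the irreducible $\K$--components $\mathcal{C}_1,\ldots,\mathcal{C}_N$ of $V$, with $\K[V]/\mathfrak{p}_i\simeq \K[\mathcal{C}_i]$. Since $\K[\bfs\Lambda]$ is a polynomial ring over $\K$, each extended ideal $\mathfrak{p}_i[\bfs\Lambda]\subset \K[V][\bfs\Lambda]$ is prime with quotient $\K[\mathcal{C}_i][\bfs\Lambda]$; moreover $\bigcap_i\mathfrak{p}_i[\bfs\Lambda]=\bigl(\bigcap_i\mathfrak{p}_i\bigr)[\bfs\Lambda]=0$, so the $\mathfrak{p}_i[\bfs\Lambda]$ are exactly the minimal primes of $\K[V][\bfs\Lambda]$. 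Hence it suffices to show, for every $i$, that the image $f_i$ of $(\partial P_{\scriptscriptstyle V}/\partial Z_{n-s+1})(\bfs\Lambda,\bfs\Lambda\bfs\xi)$ in $\K[\mathcal{C}_i][\bfs\Lambda]$ is nonzero.

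By feature \eqref{features_chow_form_item_3}, up to a nonzero scalar we may write $P_{\scriptscriptstyle V}=\prod_{k=1}^N P_{\scriptscriptstyle\mathcal{C}_k}$, where $P_{\scriptscriptstyle\mathcal{C}_k}$ is a Chow form of $\mathcal{C}_k$. The Leibniz rule gives
\[
\frac{\partial P_{\scriptscriptstyle V}}{\partial Z_{n-s+1}}=\sum_{k=1}^N\Bigl(\prod_{j\neq k}P_{\scriptscriptstyle\mathcal{C}_j}\Bigr)\frac{\partial P_{\scriptscriptstyle\mathcal{C}_k}}{\partial Z_{n-s+1}}.
\]
Let $\bfs\xi^{(i)}:=(\xi_1^{(i)},\ldots,\xi_n^{(i)})$ denote the coordinate functions of $\mathcal{C}_i$. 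The fundamental identity \eqref{ident:chow form_fundamental} applied to $\mathcal{C}_i$ yields $P_{\scriptscriptstyle\mathcal{C}_i}(\bfs\Lambda,\bfs\Lambda\bfs\xi^{(i)})=0$ in $\K[\mathcal{C}_i][\bfs\Lambda]$, so every term with $k\neq i$ drops out after reduction modulo $\mathfrak{p}_i[\bfs\Lambda]$, leaving
\[
f_i=\Bigl(\prod_{j\neq i}P_{\scriptscriptstyle\mathcal{C}_j}(\bfs\Lambda,\bfs\Lambda\bfs\xi^{(i)})\Bigr)\cdot\frac{\partial P_{\scriptscriptstyle\mathcal{C}_i}}{\partial Z_{n-s+1}}(\bfs\Lambda,\bfs\Lambda\bfs\xi^{(i)})
\]
in the domain $\K[\mathcal{C}_i][\bfs\Lambda]$.

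It remains to verify that each factor above is nonzero. For $j\neq i$, the irreducible polynomials $P_{\scriptscriptstyle\mathcal{C}_i}$ and $P_{\scriptscriptstyle\mathcal{C}_j}$ are non-associated, so $P_{\scriptscriptstyle\mathcal{C}_i}$ does not divide $P_{\scriptscriptstyle\mathcal{C}_j}$; the universal characterization of the Chow form of $\mathcal{C}_i$ then forces $P_{\scriptscriptstyle\mathcal{C}_j}(\bfs\Lambda,\bfs\Lambda\bfs\xi^{(i)})\neq 0$. Analogously, the proof of Lemma \ref{lemma: nonzero_discriminant} shows $\partial P_{\scriptscriptstyle\mathcal{C}_i}/\partial Z_{n-s+1}\neq 0$, and since its $Z_{n-s+1}$--degree is strictly smaller than that of $P_{\scriptscriptstyle\mathcal{C}_i}$, it cannot be divisible by $P_{\scriptscriptstyle\mathcal{C}_i}$; the universal characterization again yields $(\partial P_{\scriptscriptstyle\mathcal{C}_i}/\partial Z_{n-s+1})(\bfs\Lambda,\bfs\Lambda\bfs\xi^{(i)})\neq 0$. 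I expect the only delicate point of the argument to be the identification of the minimal primes of $\K[V][\bfs\Lambda]$ with the ideals $\mathfrak{p}_i[\bfs\Lambda]$; once this is in place, the rest is a clean application of the two defining properties of the Chow form.
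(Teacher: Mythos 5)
Your proof is correct, but it takes a genuinely different route from the paper's. The paper also works componentwise, yet instead of factoring the Chow form it starts from a hypothetical relation $\frac{\partial P_{\scriptscriptstyle V}}{\partial Z_{n-s+1}}(\bfs\Lambda,\bfs\Lambda\bfs\xi)\,G(\bfs\Lambda,\bfs\xi)=0$, multiplies by a cofactor to replace the partial derivative by the discriminant $\rho_{\scriptscriptstyle V}(\bfs\Lambda,\bfs\Lambda_1\cdot\bfs\xi,\dots,\bfs\Lambda_{n-s}\cdot\bfs\xi)$, and then, on an irreducible component $\mathcal{C}$ where $G$ does not vanish, specializes $\bfs\Lambda$ at a generic $\bfs\lambda$ to get an algebraic dependence among $\bfs\lambda_1\cdot\bfs\xi,\dots,\bfs\lambda_{n-s}\cdot\bfs\xi$, contradicting $\dim\mathcal{C}=n-s$ via integrality of the corresponding Noether normalization; so it leans on Lemma \ref{lemma: nonzero_discriminant} and a geometric dimension argument. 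Your argument stays entirely inside the formal calculus of Chow forms: write $P_{\scriptscriptstyle V}$ as the product of the $P_{\scriptscriptstyle\mathcal{C}_k}$ (property \eqref{features_chow_form_item_3}), apply Leibniz, kill the terms with $k\neq i$ by the fundamental identity on $\mathcal{C}_i$, and exclude vanishing of the two surviving factors in the domain $\K[\mathcal{C}_i][\bfs\Lambda]$ by the divisibility (minimality) property of $P_{\scriptscriptstyle\mathcal{C}_i}$ together with irreducibility and a $Z_{n-s+1}$--degree count; this avoids the discriminant and the generic specialization altogether and even gives an explicit expression for the image of $\partial P_{\scriptscriptstyle V}/\partial Z_{n-s+1}$ on each component. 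Two small remarks: the pairwise non--associatedness of $P_{\scriptscriptstyle\mathcal{C}_i}$ and $P_{\scriptscriptstyle\mathcal{C}_j}$, which you assert without proof, follows at once from the square--freeness of $P_{\scriptscriptstyle V}$ built into the definition combined with \eqref{features_chow_form_item_3} (or from the classical fact that the Chow form determines the variety); and the ``delicate point'' you flag is weaker than what you need, since your conclusion only requires that the ideals $\mathfrak{p}_i[\bfs\Lambda]$ be prime with zero intersection in the reduced ring $\K[V][\bfs\Lambda]$, not that they be identified with its minimal primes, and both facts are immediate.
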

\begin{proof}
Let $G\in \K[\bfs\Lambda, \bfs X]$ be any polynomial such that
\begin{equation}\label{eq:non_zero_divisor_lemma_1}
\frac{\partial P_{\scriptscriptstyle V}}{\partial
Z_{n-s+1}}(\bfs\Lambda,\bfs\Lambda\bfs\xi)\cdot
G(\bfs\Lambda,\bfs\xi)=0
\end{equation}
in $\K[V][\bfs\Lambda]$. We have $\rho_{\scriptscriptstyle V} \in
(P_{\scriptscriptstyle V},\partial P_{\scriptscriptstyle V}/\partial
Z_{n-s+1})\K[\bfs\Lambda, \bfs Z]$. Since $P_{\scriptscriptstyle
V}(\bfs\Lambda,\bfs\Lambda\bfs\xi)=0$, we deduce that
$\rho_{\scriptscriptstyle V}(\bfs\Lambda,\bfs\Lambda_1 \cdot\bfs\xi,
\dots, \bfs\Lambda_{n-s}\cdot\bfs\xi)$ is a multiple of $\partial
P_{\scriptscriptstyle V}/\partial Z_{n-s+1}(\bfs\Lambda,\bfs\Lambda
\bfs\xi)$ in the ring $\K[V][\bfs\Lambda]$. Combining this with
\eqref{eq:non_zero_divisor_lemma_1}, we deduce that
\[
\rho_{\scriptscriptstyle V}(\bfs\Lambda,\bfs\Lambda_1 \cdot\bfs\xi,
\ldots,\bfs\Lambda_{n-s}\cdot\bfs\xi)\cdot G(\bfs\Lambda,\bfs\xi)=0
\]
in $\K[V][\bfs\Lambda]$. Suppose that there exists an irreducible
$\K$--component $\mathcal{C}$ of $V$ such that
$G(\bfs\Lambda,\bfs\xi)\neq 0$ in $\K[\mathcal{C}][\bfs\Lambda]$.
Then
\[
\rho_{\scriptscriptstyle V}(\bfs\Lambda,\bfs\Lambda_1 \cdot\bfs\xi,
\ldots,\bfs\Lambda_{n-s}\cdot\bfs\xi)\cdot G(\bfs\Lambda,\bfs\xi)=0
\]
in $\K[\mathcal{C}][\bfs\Lambda]$. Since
$\K[\mathcal{C}][\bfs\Lambda]$ is an integral domain, we conclude
that $\rho_{\scriptscriptstyle V}(\bfs\Lambda,\bfs\Lambda_1 \cdot
\bfs\xi, \ldots, \bfs\Lambda_{n-s}\cdot\bfs\xi)=0$ in
$\K[\mathcal{C}][\bfs\Lambda]$. This implies that
\begin{equation}\label{eq:non_zero_divisor_lemma_2}
\rho_{\scriptscriptstyle V}(\bfs\Lambda,\bfs\Lambda_1 \cdot\bfs\xi,
\ldots, \bfs\Lambda_{n-s}\cdot\bfs\xi)=0
\end{equation}
in $\overline{\K}[\mathcal{C}][\bfs\Lambda]$, where $\overline{\K}$
is the algebraic closure of $\K$. On the other hand, by Lemma
\ref{lemma: nonzero_discriminant} the polynomial
$\rho_{\scriptscriptstyle V}$ is  nonzero. Then, for a generic
choice of $\bfs\lambda \in \mathbb{A}^{(n-s+1)n}$, the ring
extension $\overline{\K}[\bfs\lambda_1
\cdot\bfs\xi,\ldots,\bfs\lambda_{n-s}\cdot \bfs\xi]\hookrightarrow
\overline{\K}[V]$ is integral and $\rho_{\scriptscriptstyle
V}(\bfs\lambda, Z_1, \ldots, Z_{n-s})$ is a nonzero polynomial in
$\overline{\K}[Z_1,\ldots, Z_{n-s}]$. By
(\ref{eq:non_zero_divisor_lemma_2}) we deduce that
$\rho_{\scriptscriptstyle V}(\bfs\lambda, \bfs\lambda_1 \cdot
\bfs\xi, \ldots, \bfs\lambda_{n-s}\cdot \bfs\xi)=0$
in $\overline{\K}[\mathcal{C}]$, which shows that $\bfs\lambda_1
\cdot\bfs\xi, \ldots, \bfs\lambda_{n-s} \cdot\bfs\xi$ are
algebraically dependent over $\overline{\K}$. Since
$\overline{\K}[\bfs\lambda_1 \cdot\bfs\xi, \dots, \bfs\lambda_{n-s}
\cdot\bfs\xi]\hookrightarrow \overline{\K}[\mathcal{C}]$ is also
integral, it follows that $\dim \mathcal{C}\leq n-s-1$, which is a
contradiction. Therefore, $G(\bfs\Lambda,\bfs\xi)= 0$ in
$\K[\mathcal{C}][\bfs\Lambda]$ for every irreducible component
$\mathcal{C}$ of $V$. We conclude that $G(\bfs\Lambda,\bfs\xi)= 0$
in $\K[V][\bfs\Lambda]$, which finishes the proof.
\end{proof}
%
%
\subsection{A generic condition for a Noether normalization}
In the sequel, 
for $\bfs\lambda:=(\lambda_{ij})_{1\leq i \leq n-s+1, 1\leq j \leq
n}\in \K^{(n-s+1)n}$ we write $\bfs\lambda^*:=(\lambda_{ij})_{1\leq
i
\leq n-s, 1\leq j \leq n}$. 
%
\begin{prop}\label{prop: finite_fiber_and_finite_morphism}
With hypotheses and notations as before, let $\bfs\lambda\in
\K^{(n-s+1)n}$ be such that $A_{\scriptscriptstyle
V}(\bfs\lambda^*)\neq 0$. Let $Y_i:=\bfs\lambda_i \cdot \bfs X$ for
$1\leq i \leq n-s+1$, $R:=\K[Y_1, \dots, Y_{n-s}]$, $B:=\K[V]$,
$R':=\K(Y_1, \dots, Y_{n-s})$ and $B':=R'\otimes_\K B$. Then the
mapping $\pi: V \rightarrow \mathbb{A}^r$ defined by $Y_1, \dots,
Y_{n-s}$ is a finite morphism. Further, if $\rho_{\scriptscriptstyle
V}(\bfs\lambda, Z_1,\ldots,Z_{n-s})\neq 0$, then $Y_{n-s+1}$ induces
a primitive element of the ring extension $R\hookrightarrow \K[V]$
and $\dim_{R'}B'\leq \delta$.
\end{prop}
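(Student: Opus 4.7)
My plan is to deduce both assertions from careful specializations of the fundamental Chow form identity $P_{\scriptscriptstyle V}(\bfs\Lambda,\bfs\Lambda\bfs\xi)=0$ in $\K[V][\bfs\Lambda]$ and from its partial derivative with respect to $\Lambda_{n-s+1,k}$. Writing $P_{\scriptscriptstyle V}$ as a polynomial in $Z_{n-s+1}$, its leading coefficient is exactly $A_{\scriptscriptstyle V}(\bfs\Lambda^*)$, and by the remark on $A_{\scriptscriptstyle V}$ this depends only on $\bfs\Lambda_1,\ldots,\bfs\Lambda_{n-s}$. Specializing the identity at $\bfs\Lambda^*=\bfs\lambda^*$ while leaving $\bfs\Lambda_{n-s+1}$ free gives, in $\K[V][\bfs\Lambda_{n-s+1}]$, a polynomial of degree $\delta$ in $\bfs\Lambda_{n-s+1}\cdot\bfs\xi$, with coefficients in $R[\bfs\Lambda_{n-s+1}]$ and leading coefficient $A_{\scriptscriptstyle V}(\bfs\lambda^*)\in\K^{\times}$. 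Substituting $\bfs\Lambda_{n-s+1}=e_k$ (the $k$th standard basis vector) for $k=1,\ldots,n$ and dividing by $A_{\scriptscriptstyle V}(\bfs\lambda^*)$ yields a monic integral relation of degree $\delta$ for $\xi_k$ over $R$. Thus $\K[V]=R[\xi_1,\ldots,\xi_n]$ is module-finite over $R$ and $\pi$ is a finite morphism, settling the first claim.

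\textbf{Set-up for part (2).} Assume now $\rho_{\scriptscriptstyle V}(\bfs\lambda,Y_1,\ldots,Y_{n-s})\neq 0$ and put $q(T):=P_{\scriptscriptstyle V}(\bfs\lambda,Y_1,\ldots,Y_{n-s},T)/A_{\scriptscriptstyle V}(\bfs\lambda^*)\in R[T]$, which is monic of degree $\delta$ and satisfies $q(Y_{n-s+1})=0$ in $\K[V]$. A short resultant computation, using $P_{\scriptscriptstyle V}=A_{\scriptscriptstyle V}\cdot q$ and the standard scaling rules for resultants, yields
\[
\rho_{\scriptscriptstyle V}(\bfs\lambda,Y_1,\ldots,Y_{n-s})=\pm\, A_{\scriptscriptstyle V}(\bfs\lambda^*)^{2\delta-1}\,\mathrm{disc}_T(q).
\]
The hypothesis therefore forces $\mathrm{disc}_T(q)\neq 0$ in $R'$, so $q$ and $q'$ are coprime in $R'[T]$. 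From a B\'ezout relation $aq+bq'=1$ in $R'[T]$, evaluated at $T=Y_{n-s+1}$, we conclude that $q'(Y_{n-s+1})$ is a unit of $B'$.

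\textbf{Conclusion of part (2).} To write each $\xi_k$ as an element of $R'[Y_{n-s+1}]$, I differentiate the Chow form identity with respect to $\Lambda_{n-s+1,k}$ inside the polynomial ring $\K[V][\bfs\Lambda]$. Since only the $(n-s+1)$th component of $\bfs\Lambda\bfs\xi$ depends on $\Lambda_{n-s+1,k}$, with derivative $\xi_k$, the chain rule gives
\[
\frac{\partial P_{\scriptscriptstyle V}}{\partial \Lambda_{n-s+1,k}}(\bfs\Lambda,\bfs\Lambda\bfs\xi)+\frac{\partial P_{\scriptscriptstyle V}}{\partial Z_{n-s+1}}(\bfs\Lambda,\bfs\Lambda\bfs\xi)\cdot\xi_k=0.
\]
Specializing $\bfs\Lambda=\bfs\lambda$ and noting $\partial P_{\scriptscriptstyle V}/\partial Z_{n-s+1}(\bfs\lambda,Y,Y_{n-s+1})=A_{\scriptscriptstyle V}(\bfs\lambda^*)\,q'(Y_{n-s+1})$, which is a unit of $B'$ by the previous step, I can invert this coefficient in $B'$ and express $\xi_k$ as a quotient of two polynomials in $Y_{n-s+1}$ with coefficients in $R$, hence as an element of $R'[Y_{n-s+1}]$. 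Therefore $B'=R'[\xi_1,\ldots,\xi_n]=R'[Y_{n-s+1}]$, so $Y_{n-s+1}$ induces a primitive element of $R\hookrightarrow\K[V]$. The bound $\dim_{R'}B'\leq\delta$ is then immediate since $Y_{n-s+1}$ is annihilated by the monic degree-$\delta$ polynomial $q\in R'[T]$.

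\textbf{Main obstacle.} The combinatorial specializations are routine; the delicate step is the passage from the generic, $\bfs\Lambda$-parametric identities to specialized identities in $B'$, and in particular the requirement that the specialized denominator $\partial P_{\scriptscriptstyle V}/\partial Z_{n-s+1}(\bfs\lambda,Y,Y_{n-s+1})$ be not merely nonzero but \emph{invertible} in $B'$. This is exactly what the separability argument built from the hypothesis $\rho_{\scriptscriptstyle V}(\bfs\lambda,Y)\neq 0$ delivers, and it is the only place where more than a direct substitution is required.
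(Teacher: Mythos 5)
Your computational core is sound and stays close to the paper's route: specializing the Chow--form identity at $\bfs\Lambda^*=\bfs\lambda^*$ and then at $\bfs\Lambda_{n-s+1}=e_k$ gives the same kind of monic degree--$\delta$ relations the paper obtains (it substitutes auxiliary linear forms $\bfs w_k$ rather than the standard basis vectors, an immaterial difference), and your handling of the primitive element --- coprimality of $q$ and $q'$ in $R'[T]$ via the nonvanishing discriminant, followed by the derivative identity $\partial_{Z_{n-s+1}}P_{\scriptscriptstyle V}(\bfs\Lambda,\bfs\Lambda\bfs\xi)\,\xi_k+\partial_{\Lambda_{n-s+1,k}}P_{\scriptscriptstyle V}(\bfs\Lambda,\bfs\Lambda\bfs\xi)=0$ --- is a legitimate variant of the paper's argument, which instead derives the explicit identity $\rho_{\scriptscriptstyle V}(\bfs\Lambda,\bfs\Lambda_1\cdot\bfs\xi,\dots,\bfs\Lambda_{n-s}\cdot\bfs\xi)\,\xi_k=R_k(\bfs\Lambda,\bfs\Lambda\bfs\xi)$ from $\rho_{\scriptscriptstyle V}\in(P_{\scriptscriptstyle V},\partial P_{\scriptscriptstyle V}/\partial Z_{n-s+1})$ and specializes it. Your scaling relation between $\rho_{\scriptscriptstyle V}(\bfs\lambda,Y_1,\dots,Y_{n-s})$ and $\mathrm{disc}_T(q)$ also agrees with the one the paper uses later.

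There is, however, a missing step. You pass from ``$\K[V]$ is module--finite over the image of $R$'' directly to ``$\pi$ is a finite morphism,'' and nowhere do you show that $Y_1,\dots,Y_{n-s}$ remain algebraically independent on $V$, i.e.\ that $R\to\K[V]$ is injective (equivalently, that $\pi$ restricted to each irreducible component $\mathcal{C}$ is dominant). In the paper's conventions this is part of what ``finite morphism'' means, and its proof devotes a separate paragraph to it; more importantly, it is what makes your second part non-vacuous: the phrase ``ring extension $R\hookrightarrow\K[V]$'' and the space $B'$ (which, per Section 2, is the base change of $B$ to $R'$, not literally $R'\otimes_\K B$) presuppose injectivity. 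If some nonzero element of $R$ vanished identically on $V$, then $B'$ would collapse to the zero ring and your B\'ezout/evaluation argument would only prove vacuous statements; note that your assertion ``$\mathrm{disc}_T(q)\neq 0$ in $R'$'' lives in the fraction field of the polynomial ring $R\subset\K[\bfs X]$ and by itself says nothing about the image of that element in $\K[V]$, which is exactly what the paper's algebraic-independence step supplies. The repair is short and uses only what you already proved: $\K[\mathcal{C}]$ is integral over the image of $R$ for each component $\mathcal{C}$, and $\dim\mathcal{C}=n-s$, so that image has Krull dimension $n-s$; since $A_{\scriptscriptstyle V}(\bfs\lambda^*)\neq 0$ forces $Y_1,\dots,Y_{n-s}$ to be linearly independent, $R$ is a polynomial ring in $n-s$ variables, and a nonzero kernel would drop the dimension --- hence $R\hookrightarrow\K[\mathcal{C}]$ for every $\mathcal{C}$, which is precisely the dominance argument in the paper's proof.
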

\begin{proof}
Let $\bfs\Lambda^*=(\Lambda_{ij})_{1\leq i \leq n-s, 1\leq j \leq
n}$. Recall that $A_{\scriptscriptstyle V}$ is homogeneous of degree
$\delta$ in the $(n-s)\times (n-s)$--minors of $\bfs\Lambda^*$.
Since $A_{\scriptscriptstyle V}(\bfs\lambda^*)\neq 0$, at least one
of the minors of the $(n-s)\times n$ matrix $\bfs\lambda^*$ is
nonzero. We deduce that the linear forms $Y_1, \dots, Y_{n-s}$ are
linearly independent. Thus there exist linear forms ${Y}_{n-s+1},
\dots, {Y}_n \in \K[\bfs X]$ such that $Y_1, \dots, Y_{n-s},
{Y}_{n-s+1},\dots,{Y}_n$ are linearly independent. Let $\bfs
w_k:=(w_{k1}, \dots, w_{kn})\in \K^n$ be such that ${Y}_{n-s+k}=\bfs
w_k\cdot \bfs X$ for $1\leq k \leq s$. Let $Q_k\in
\K[Z_1,\dots,Z_{n-s+1}]$ be the polynomial obtained by replacing in
$P_{\scriptscriptstyle V}$ the matrix $\bfs\Lambda$ for
$(\bfs\lambda^*,\bfs w_k)$. From \eqref{ident:chow form_fundamental}
we deduce that
\begin{equation}\label{eq:finite_morphism_proposition}
Q_k(Y_1,\dots,Y_{n-s},\bfs w_k \cdot\bfs\xi)=0
\end{equation}
in the $R$--algebra $B$ for $1\leq k \leq s$, where
$\bfs\xi:=(\xi_1, \dots, \xi_n)$ denotes the $n$--tuple of
coordinate functions in $B$ induced by $X_1, \dots, X_n$. Observe
that $\deg_{Z_{n-s+1}}Q_k\le\delta$ and that $A_{\scriptscriptstyle
V}(\bfs\lambda^*)$ is the coefficient of $Z_{n-s+1}^{\delta}$ in
$Q_k$. Since  $A_{\scriptscriptstyle V}(\bfs\lambda^*)\neq 0$, we
have that $\deg_{Z_{n-s+1}}Q_k=\delta$ and
\eqref{eq:finite_morphism_proposition} may be interpreted as a
relation of integral dependence for the image $\bfs w_k
\cdot\bfs\xi$ of ${Y}_{n-s+k}$ in $B$ over $R$ for $1 \leq k \leq
s$. Moreover, $\K[Y_1,\ldots,Y_n]= \K[\bfs X]$ because the linear
forms $Y_1, \dots, Y_n$ are linearly independent. This implies that
$R\rightarrow B$ is an integral ring extension.

To prove that $\pi$ is finite, let $\mathcal{C}$ be any irreducible
$\K$--component of $V$ and let $\pi_{\mathcal{C}}$ be the
restriction of $\pi$ to $\mathcal{C}$. It suffices to prove that
$\pi_{\mathcal{C}}$ is dominant or, equivalently, that its dual ring
homomorphism $\pi^*_{\mathcal{C}}: \K[\mathbb{A}^{n-s}]\rightarrow
\K[\mathcal{C}]$ is injective. Let $t_i$ denote the $i$--th
coordinate function of $\mathbb{A}^{n-s}$ for $1\leq i \leq n-s$.
With a slight abuse of notation denote also by $\bfs\xi$ the
$n$--tuple of coordinate functions of $\K[\mathcal{C}]$ induced by
$X_1, \dots, X_n$. Then $\pi^*_{\mathcal{C}}(t_i)=\bfs\lambda_i
\cdot\bfs\xi$ for $1\leq i \leq n-s$. Since $\K[\mathcal{C}]$ is
integral over $\K[\bfs\lambda_1 \cdot\bfs\xi,
\dots,\bfs\lambda_{n-s} \cdot\bfs\xi]$ and $\dim \mathcal{C}=r$, we
deduce that $\bfs\lambda_1 \cdot\bfs\xi, \dots, \bfs\lambda_{n-s}
\cdot \bfs\xi$ are algebraically independent over $\K$. This implies
the injectivity of $\pi^*_{\mathcal{C}}$, which concludes the proof
of the first assertion of the proposition.

Next, 
taking partial derivatives with respect to the variable
$\Lambda_{n-s+1,k}$ at both sides of \eqref{ident:chow
form_fundamental}, we obtain the following identity in
$\K[V][\bfs\Lambda]$ for $1\leq k \leq n$:
\begin{equation}\label{ident:partial_derivatives_chow_form}
\frac{\partial P_{\scriptscriptstyle V }}{\partial
Z_{n-s+1}}(\bfs\Lambda,\bfs\Lambda\bfs\xi)\,\xi_k + \frac{\partial
P_{\scriptscriptstyle V} }{\partial
\Lambda_{n-s+1,k}}(\bfs\Lambda,\bfs\Lambda\bfs\xi)=0.
\end{equation}
From \eqref{ident:chow form_fundamental} and
\eqref{ident:partial_derivatives_chow_form} we deduce that there
exists $R_k\in \K[\bfs\Lambda, \bfs Z]$ such that
\begin{equation}\label{ident:discriminant_chow_form}
\rho_{\scriptscriptstyle V}(\bfs\Lambda,\bfs\Lambda_1\cdot\bfs\xi,
\dots,\bfs\Lambda_{n-s}\cdot\bfs\xi)\,\xi_k=
R_k(\bfs\Lambda,\bfs\Lambda\bfs\xi)
\end{equation}
in $\K[V][\bfs\Lambda]$ for $1\le k\le n$. By substituting
$\bfs\lambda$ for $\bfs\Lambda$  in
\eqref{ident:discriminant_chow_form} we deduce that
\[
\rho_{\scriptscriptstyle
V}(\bfs\lambda,Y_1,\dots,Y_{n-s})\xi_k=R_k(\bfs\lambda, Y_1,\dots,
Y_{n-s}, \bfs\lambda_{n-s+1}\cdot\bfs\xi)
\]
in $\K[V]$ for $1\leq k \leq n$. By the choice of $\bfs\lambda$, the
polynomial $\rho_{\scriptscriptstyle V}(\bfs\lambda,
Z_1,\dots,Z_{n-s})$ is nonzero. Since $\bfs\lambda_1\cdot \bfs\xi,
\dots,\bfs\lambda_{n-s}\cdot\bfs\xi$ are algebraically independent
over $\K$, we deduce that $\rho_{\scriptscriptstyle V}(\bfs\lambda,
Y_1, \dots, Y_{n-s})$ is a nonzero element of $R$. Then the previous
identities show that the powers of $\bfs\lambda_{n-s+1} \cdot
\bfs\xi$ generate the $R'$--vector space $B'$. In other words,
$Y_{n-s+1}$ induces a primitive element of the ring extension
$R\hookrightarrow \K[V]$.

Now, let $Q\in R[Z_{n-s+1}]$ be the polynomial obtained by
substituting $\bfs\lambda$ for $\bfs\Lambda$ and $Y_1, \dots,
Y_{n-s}$ for $Z_1, \dots, Z_{n-s}$ in $P_{\scriptscriptstyle V}$.
From \eqref{ident:chow form_fundamental} we deduce that
$Q(\bfs\lambda_{n-s+1} \cdot\bfs \xi)=0$ in $B'$. Taking into
account that $\deg_{Z_{n-s+1}} Q =\delta$ we conclude that
$\dim_{R'}B'\leq \delta$.
\end{proof}
%
%
\section{Lifting points and lifting fibers}\label{section: lifting_points_and_lifting_fibers}
%
%
Assume as in Section \ref{section: Noether normalization} that $\K$
is perfect field and $V\subset \mathbb{A}^n$ is an equidimensional
$\K$--variety of dimension $n-s$ and degree $\delta$. Let
$F_1,\dots,F_s \in \K[\bfs X]$ be polynomials that generate the
vanishing ideal $\mathcal{I}$ of $V$. Assume further that we are
given linear forms $Y_1,\dots, Y_{n-s}\in \K[\bfs X]$ defining a
finite morphism $\pi: V \rightarrow \mathbb{A}^{n-s}$, and let $J\in
\K[\bfs X]$ be the Jacobian determinant of $Y_1,\dots, Y_{n-s},F_1,
\dots, F_s$ with respect to the variables $X_1, \dots, X_n$. A point
$\bfs p\in \K^{n-s}$ is called a \emph{lifting point} of $\pi$ with
respect to the system $F_1=0,\dots, F_s=0$ if $J(\bfs x)\neq 0$ for
every $\bfs x\in \pi^{-1}(\bfs p)$.  We call the zero--dimensional
variety $\pi^{-1}(\bfs p)$ the \emph{lifting fiber} of $\bfs p$.
According to Proposition \ref{prop: lifting_point_and_rank} below,
the notions of lifting point and lifting fiber are independent of
the choice of the polynomials $F_1, \dots, F_s$ generating
$\mathcal{I}(V)$. Consequently, in the sequel we shall simply say
that $\bfs p$ is a lifting point of $\pi$ and $\pi^{-1}(\bfs p)$ is
a lifting fiber without reference to $F_1,\dots,F_s$.

The notion of lifting fiber in this framework was first introduced
in \cite{GiHaHeMoMoPa97}. The concept was isolated in
\cite{HeKrPuSaWa00}, where it was shown how one can use a Kronecker
representation of a lifting fiber of a given equidimensional variety
to tackle certain fundamental algorithmic problems associated to it
(see also \cite{GiLeSa01}, \cite{Schost03}, \cite{BoMaWaWa04},
\cite{PaSa04} and \cite{JeMaSoWa09} for extensions, refinements and
algorithmic aspects related to lifting fibers). The notion is also
important in numerical algebraic geometry, where it is known under
the name of {\em witness set} (see, e.g., \cite{SoWa05}; see
\cite{SoVeWa08} for a dictionary between lifting fibers and witness
sets).

As expressed in the introduction, the output of the main algorithm
of this paper will be a lifting fiber of the variety defined by the
input system. For this reason, we devote Section \ref{subsec:
properties_of_lifting_points} to discuss a number of properties of
lifting points and lifting fibers which are important for the
algorithm. Then in Section \ref{subsec:
generic_condition_for_a_lifting_point} we obtain a condition on the
coordinates of a point $\bfs p\in\K^{n-s}$ which implies that $\bfs
p$ is a lifting point of $\pi$ (Theorem
\ref{th:_lifting_point_rank_and_finite_morphism}). Finally, in
Section \ref{subsec:
kronecker_representations_from_specializations_of_the_Chow_form} we
show that, taking partial derivatives and specializing a Chow form
of $V$ at the coordinates of linear forms $Y_1\klk Y_{n-s+1}$ as
above and a lifting point $\bfs p$ of $\pi$, we obtain a Kronecker
representation of the lifting fiber $\pi^{-1}(\bfs p)$ and a related
object, called a lifting curve (Propositions \ref{prop:
kronecker_representation_of_J} and \ref{prop:
kronecker_representation_of_K}).
%
%
\subsection{Properties of lifting points}
\label{subsec: properties_of_lifting_points}
Let $\bfs p:=(p_1\klk p_{n-s})\in\K^{n-s}$ be a point as above. Then
$\pi^{-1}(\bfs p)=V\cap \{Y_1-p_1=0,\dots,Y_{n-s}-p_{n-s}=0\}$. We
shall prove that $\bfs p$ is a lifting point of $\pi$ if and only if
the ideal
\begin{equation}\label{def:ideal_J}
\mathcal{J}:=(F_1,\dots,F_s,
Y_1-p_1,\dots,Y_{n-s}-p_{n-s})\subset\K[\bfs X]
\end{equation}
is radical. To this aim, we start with a technical result.

\begin{lema} \label{lemma: rank_fiber}
With hypotheses and notations as above, assume further that $\K[V]$
is a free $R$--module of finite rank $D$, where
$R:=\K[Y_1,\dots,Y_{n-s}]$. Fix $j$ with $0\leq j \leq n-s$ and let
$\mathcal{J}_j\subseteq \K[\bfs X]$ be the ideal
$\mathcal{J}_j:=(F_1,\dots,F_s)+(Y_1-p_1,\dots,Y_j-p_j)$. Then
$\K[\bfs X]/\mathcal{J}_j$ is a free $\K[Y_{j+1}, \dots,
Y_{n-s}]$--module of rank equal to $D$. Moreover, if the coordinate
functions of $V$ defined by $G_1\klk G_D\in\K[\bfs X]$ form a basis
of $\K[V]$ as $R$--module, then they also induce a basis of $\K[\bfs
X]/\mathcal{J}_j$ as $\K[Y_{j+1}, \dots, Y_{n-s}]$--module.
\end{lema}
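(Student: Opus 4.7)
The plan is to reduce the problem to a standard statement about free modules and quotients, using the hypothesis that $\K[V]$ is already free of rank $D$ over $R = \K[Y_1,\ldots,Y_{n-s}]$.

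First I would identify $\K[\bfs X]/\mathcal{J}_j$ with a quotient of $\K[V]$. Since $\mathcal{I}=(F_1,\ldots,F_s)$ is the vanishing ideal of $V$, the third isomorphism theorem yields
\[
\K[\bfs X]/\mathcal{J}_j \;\cong\; \K[V]\big/\bigl(\overline{Y_1-p_1},\ldots,\overline{Y_j-p_j}\bigr)\K[V],
\]
where $\overline{\phantom{x}}$ denotes the class in $\K[V]$. The right-hand side is obtained from $\K[V]$ by killing the $R$-submodule generated by the elements $Y_i-p_i$ with $1\le i\le j$, i.e.\ by extending scalars from $R$ to $R/\mathfrak{a}$, where $\mathfrak{a}:=(Y_1-p_1,\ldots,Y_j-p_j)R$.

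Next I would use the freeness hypothesis. If $G_1,\ldots,G_D$ induce an $R$-basis of $\K[V]$, then $\K[V]\cong R^D$ as $R$-modules, via $G_k\mapsto e_k$. Hence
\[
\K[V]\big/\mathfrak{a}\K[V] \;\cong\; R^D\otimes_R(R/\mathfrak{a}) \;\cong\; (R/\mathfrak{a})^D,
\]
and the basis $e_1,\ldots,e_D$ of the left side descends to the canonical basis of the right side. The composition of the two isomorphisms sends the class of $G_k$ in $\K[\bfs X]/\mathcal{J}_j$ to the $k$-th canonical generator of $(R/\mathfrak{a})^D$.

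It remains to identify $R/\mathfrak{a}$. Since $Y_1,\ldots,Y_{n-s}$ are algebraically independent over $\K$ (they induce a finite, hence dominant, morphism to $\mathbb{A}^{n-s}$), the inclusion $\K[\bfs X]\supset R$ is a polynomial ring in $Y_1,\ldots,Y_{n-s}$, and the obvious map gives $R/\mathfrak{a}\cong \K[Y_{j+1},\ldots,Y_{n-s}]$. Combining this with the previous step shows that $\K[\bfs X]/\mathcal{J}_j$ is a free $\K[Y_{j+1},\ldots,Y_{n-s}]$-module of rank $D$, with a basis given by the images of $G_1,\ldots,G_D$.

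The argument is essentially a triple application of the third isomorphism theorem plus the fact that tensoring a free module with a quotient ring preserves the basis. There is no serious obstacle; the only care needed is to keep track of which ring one is working over at each step, in particular to verify that the ideal generated in $\K[V]$ by $\overline{Y_1-p_1},\ldots,\overline{Y_j-p_j}$ coincides with $\mathfrak{a}\K[V]$ under the identification $\K[V]\cong R^D$, which is immediate since $\mathfrak{a}$ is generated inside $R$ by these very elements.
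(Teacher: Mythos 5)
Your proposal is correct and follows essentially the same route as the paper: both identify $\K[\bfs X]/\mathcal{J}_j$ with $\K[V]$ modulo the ideal generated by $Y_1-p_1,\ldots,Y_j-p_j$ (using that $F_1,\ldots,F_s$ generate $\mathcal{I}(V)$) and then exploit freeness of $\K[V]$ over $R$. The only difference is presentational: you package the step as base change $\K[V]\otimes_R R/\mathfrak{a}\cong(R/\mathfrak{a})^D$ with $R/\mathfrak{a}\cong\K[Y_{j+1},\ldots,Y_{n-s}]$, whereas the paper verifies by an explicit coefficient computation that $G_1,\ldots,G_D$ generate and are $\K[Y_{j+1},\ldots,Y_{n-s}]$-linearly independent in the quotient.
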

\begin{proof} It suffices to prove the last assertion.
Let $F\in \K[\bfs X]$. There exist $A_1, \dots,
A_{\scriptscriptstyle D}\in R$ such that
$F=A_1G_1+\cdots+A_{\scriptscriptstyle D}G_{\scriptscriptstyle D}$
in $\K[V]$. Note that
\[
A_i\equiv A_i(p_1,\dots,p_j, Y_{j+1}, \dots, Y_{n-s}) \quad  \mod
(Y_1-p_1, \dots, Y_j-p_j)
 \]
for $1 \leq i \leq D$. Hence, if $B_i:=A_i(p_1,\dots,p_j, Y_{j+1},
\dots, Y_{n-s})$ for $1 \leq i \leq D$, then $F=B_1G_1+ \cdots +
B_{\scriptscriptstyle D}G_{\scriptscriptstyle D}$ in $\K[\bfs
X]/\mathcal{J}_j$. This shows that $G_1, \dots,
G_{\scriptscriptstyle D}$ generate $\K[\bfs X]/\mathcal{J}_j$ as a
$\K[Y_{j+1}, \dots, Y_{n-s}]$--module. Next, suppose that $B_1G_1+
\cdots + B_{\scriptscriptstyle D}G_{\scriptscriptstyle D}=0$ in
$\K[\bfs X]/\mathcal{J}_j$ for certain $B_1, \dots,
B_{\scriptscriptstyle D}\in \K[Y_{j+1}, \dots, Y_{n-s}]$. It follows
that there exist $H_1, \dots, H_j\in \K[\bfs X]$ such that $ B_1G_1+
\cdots + B_{\scriptscriptstyle D}G_{\scriptscriptstyle
D}=H_1(Y_1-p_1) + \cdots +H_j(Y_j-p_j) $ in $\K[V]$. We can write
$H_i=\sum_{k=1}^{D}C_{ik}G_k$ in $\K[V]$ with $C_{ik}\in R$ for
$1\leq i \leq j$ and $1\leq k \leq D$. As a consequence, we obtain
the following identity in $\K[V]$:
\[
\biggl(B_1 - \sum_{k=1}^jC_{k1}(Y_k-p_k)\biggr)G_1 + \cdots +
\biggl(B_{\scriptscriptstyle D} - \sum_{k=1}^jC_{k
{\scriptscriptstyle D}}(Y_k-p_k)\biggr)G_{\scriptscriptstyle D}=0.
\]
Since $G_1, \dots, G_{\scriptscriptstyle D}$ induce a basis of
$\K[V]$ as $R$--module, we see that $B_i=
\sum_{k=1}^jC_{ki}(Y_k-p_k)$ for $1\leq i \leq D$. By substituting
$p_k$ for $Y_k$ in these identities for $1 \leq k \leq j$, we
conclude that $B_i=0$ for $1\leq i \leq D$.  This shows that
$G_1,\dots, G_{\scriptscriptstyle D}$ define $\K[Y_{j+1},\dots,
Y_{n-s}]$--linearly independent elements of $\K[\bfs
X]/\mathcal{J}_j$, which  finishes the proof of the lemma.
\end{proof}

Now we are able to prove that the ideal $\mathcal{J}$ above is
radical if $\bfs p$ is a lifting point of $\pi$.
\begin{lema}\label{lemma: positive_dimension_lifting_fiber}
With  hypotheses and notations as in Lemma \ref{lemma: rank_fiber},
assume further that $\bfs p$ is a lifting point of $\pi$. Then
$\mathcal{J}_j$ is a radical, equidimensional ideal of dimension
$n-s-j$. Further, if $W_j\subseteq \mathbb{A}^n$ is the
$\K$--variety defined by $\mathcal{J}_j$, then the mapping $\pi_j:
W_j \rightarrow \mathbb{A}^{n-s-j}$ defined by $Y_{j+1}, \dots,
Y_{n-s}$ is a finite morphism.
\end{lema}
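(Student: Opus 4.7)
The plan is to dispatch the three assertions separately, with radicality being the only substantive step. For the finite morphism and the dimension claim, Lemma \ref{lemma: rank_fiber} already realizes $\K[\bfs X]/\mathcal{J}_j$ as a free $\K[Y_{j+1},\dots,Y_{n-s}]$--module of rank $D$; this immediately gives that $\pi_j$ is finite and that $\dim\K[\bfs X]/\mathcal{J}_j=n-s-j$. For equidimensionality, I would observe that $\mathcal{J}_j$ is generated by the $s+j$ elements $F_1,\dots,F_s,Y_1-p_1,\dots,Y_j-p_j$ and has codimension exactly $s+j$ by the previous step; the unmixedness theorem recalled in Section \ref{notions_and_notations} then forces all associated primes to have codimension $s+j$, so $\mathcal{J}_j$ is unmixed.

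For radicality, I would apply the Jacobian criterion of Lemma \ref{lemma: radicality_criterion} to these $s+j$ generators: it suffices to show that for each minimal prime $P$ of $\mathcal{J}_j$, some $(s+j)\times(s+j)$ minor of the Jacobian matrix of $F_1,\dots,F_s,Y_1-p_1,\dots,Y_j-p_j$ with respect to $X_1,\dots,X_n$ does not lie in $P$. The strategy is to exhibit a witness point $\bfs x$ on each irreducible component $W_j':=\mathcal{V}(P)$ where this non--vanishing can be read off directly from the lifting point hypothesis.

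To build such an $\bfs x$, I would restrict $\pi_j$ to $W_j'$: since $W_j'$ is irreducible of dimension $n-s-j$ and $\pi_j$ is finite, the restriction is a finite morphism into $\mathbb{A}^{n-s-j}$ and, by going--up applied to the induced integral extension, the map into the $(n-s-j)$--dimensional polynomial ring $\K[Y_{j+1},\dots,Y_{n-s}]$ must have trivial kernel; hence $\pi_j|_{W_j'}$ is dominant and therefore surjective. Choosing $\bfs x\in W_j'$ lying over $(p_{j+1},\dots,p_{n-s})$ forces $\bfs x\in\pi^{-1}(\bfs p)$. The lifting point hypothesis then yields non--vanishing at $\bfs x$ of the full $n\times n$ Jacobian determinant of $Y_1,\dots,Y_{n-s},F_1,\dots,F_s$, which in turn forces linear independence of the sub--collection of rows $\nabla F_1,\dots,\nabla F_s,\nabla Y_1,\dots,\nabla Y_j$ at $\bfs x$, and hence the non--vanishing of some $(s+j)\times(s+j)$ minor of the relevant Jacobian matrix at $\bfs x$.

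The genuinely non--trivial point in the plan is the passage from an abstract component $W_j'$ to a concrete point inside the original lifting fiber $\pi^{-1}(\bfs p)$: this is what channels the lifting point hypothesis into the Jacobian criterion and is the step I expect to require the most careful justification. Everything else reduces to direct appeals to Lemma \ref{lemma: rank_fiber}, the unmixedness theorem, and Lemma \ref{lemma: radicality_criterion}.
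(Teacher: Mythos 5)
Your proposal is correct and follows essentially the same route as the paper: dimension and unmixedness from Lemma \ref{lemma: rank_fiber} together with the unmixedness theorem, then radicality via Lemma \ref{lemma: radicality_criterion} by producing, on each component, a point of $\pi^{-1}(\bfs p)$ through surjectivity of the restricted finite morphism and reading off a nonvanishing $(s+j)\times(s+j)$ minor from the lifting point hypothesis. The only cosmetic difference is that you get the dimension lower bound from freeness (faithfulness) of the module, whereas the paper invokes Krull's Principal Ideal theorem; both are fine.
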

\begin{proof}
Since $\K[\bfs X]/\mathcal{J}_j$ is a finite
$\K[Y_{j+1},\dots,Y_{n-s}]$--module by Lemma \ref{lemma:
rank_fiber}, we deduce that $\K[\bfs X]/\mathcal{J}_j$ is integral
over $\K[Y_{j+1},\dots,Y_{n-s}]$. This implies that $\dim W_j \leq
n-s-j$. On the other hand, the Principal Ideal theorem (see, e.g.,
\cite[Theorem 10.2]{Eisenbud95}) shows that $\dim W_j\geq n-s-j$,
from which we conclude that $\dim W_j= n-s-j$. By the unmixedness
theorem it follows that $\mathcal{J}_j$ is unmixed. Next, let
$\mathcal{C}$ be an irreducible $\K$--component of $W_j$. We claim
that the restriction $\pi_\mathcal{C}: \mathcal{C} \rightarrow
\mathbb{A}^{n-s-j}$ of $\pi_j$ to $\mathcal{C}$ is a finite
morphism. Indeed, since $\K[Y_{j+1}, \dots, Y_{n-s}]\hookrightarrow
\K[W]$ is an integral extension of rings, so is the extension
$\K[Y_{j+1}, \dots, Y_{n-s}]\rightarrow \K[\mathcal{C}]$ induced by
the dual ring homomorphism ${\pi}_{\mathcal{C}}^*$. Moreover, since
$\dim \mathcal{C} =n-s-j$, it follows that $\pi_{\mathcal{C}}^*$ is
injective and therefore $\pi_{\mathcal{C}}$ is dominant. This proves
the claim, which implies that $\pi_j$ is a finite morphism.

It remains to prove that $\mathcal{J}_j$ is radical. Let
$\mathcal{C}$ be any irreducible $\K$--component of $W_j$. Since the
restriction $\pi_{\mathcal{C}}$ of $\pi_j$ to $\mathcal{C}$ is a
finite morphism, it is surjective, and there exists $\bfs x\in
{\pi}_{\mathcal{C}}^{-1}(p_{j+1}, \dots, p_{n-s})=\mathcal{C}\cap
\{Y_{j+1}-p_{j+1}=0, \dots, Y_{n-s}-p_{n-s}=0\}$. Let $J$ be the
Jacobian determinant of $F_1, \dots, F_s, Y_1-p_1, \dots,
Y_{n-s}-p_{n-s}$ with respect to $X_1, \dots, X_n$ and $M_j$ the
Jacobian matrix of $F_1, \dots,F_s, Y_1-p_1,\dots, Y_j-p_j$ with
respect to $X_1, \dots, X_n$. Since $\bfs p$ is a lifting point of
$\pi$ and $\bfs x\in \pi^{-1}(\bfs p)$, we have $J(\bfs x)\neq 0$,
which implies that $M_j(\bfs x)$ has rank $s+j$. As a consequence,
there exists an $(s+j)\times (s+j)$--minor $m$ of $M_j$ such that
$m(\bfs x)\neq 0$. It follows that the ideal generated by
$\mathcal{J}_j$ and all the $(s+j)\times (s+j)$--minors of $M_j$ is
not contained in $\mathcal{I}(\mathcal{C})$. Then Lemma \ref{lemma:
radicality_criterion} shows that $\mathcal{J}_j$ is radical.
\end{proof}

Let $\bfs p:=(p_1,\dots,p_{n-s})\in \K^{n-s}$ be a lifting point of
$\pi$. In the sequel we shall interpret $Y_1\klk Y_{n-s}$ either as
linear forms in $X_1, \dots, X_n$ or as indeterminates over $\K$,
each interpretation being clear from the context. By Lemma
\ref{lemma: positive_dimension_lifting_fiber}, the zero--dimensional
ideal $\mathcal{J}:=(F_1, \dots, F_s, Y_1-p_1, \dots,
Y_{n-s}-p_{n-s})\subset \K[\bfs X]$ is radical and therefore it is
the vanishing ideal of the lifting fiber $V_{\bfs p}:=\pi^{-1}(\bfs
p)$. Now, for the main algorithm of this paper we shall consider
certain curve associated to $\bfs p$ and $V$, which we now
introduce. Let $\bfs p^*:=(p_1,\dots,p_{n-s-1})$ and let $W_{\bfs
p^*}\subset \mathbb{A}^n$ be the $\K$--variety defined by the ideal
$$\mathcal{K}:=(F_1,\dots, F_s, Y_1-p_1, \dots,
Y_{n-s-1}-p_{n-s-1})\subseteq \K[\bfs X].$$
According to Lemma \ref{lemma: positive_dimension_lifting_fiber},
$\mathcal{K}$ is a radical, equidimensional ideal of dimension $1$
and the mapping ${\pi}_1: W_{\bfs p^*}\rightarrow \mathbb{A}^1$
defined by $Y_{n-s}$ is a finite morphism. We call $W_{\bfs p^*}$
the \emph{lifting curve} defined by $\bfs p^*$.

We shall identify $V_{\bfs p}$ with a zero--dimensional subvariety
of $\mathbb{A}^s$ and $W_{\bfs p^*}$ with a curve of
$\mathbb{A}^{s+1}$ as follows. For simplicity of notations, we shall
denote by $F_i(Y_1,\dots,Y_n)$ or $F_i(\bfs Y)$ the element of
$\K[Y_1,\dots,\! Y_n]$ obtained by rewriting $F_i(X_1,\dots, X_n)$
in the variables $Y_1,\dots,Y_n$.
\begin{lema}\label{lemma: lifting_curve_isomorphism}
With hypotheses as in Lemma \ref{lemma:
positive_dimension_lifting_fiber}, the following assertions hold:
\begin{itemize}
  \item the  polynomials $F_1(\bfs p, Y_{n-s+1},\dots, Y_n),\dots,
F_s(\bfs p, Y_{n-s+1},\dots, Y_n)$ generate a radical,
zero--dimensional ideal $\overline{\mathcal{J}}$ of
$\K[Y_{n-s+1},\dots,Y_n]$, and the $\K$--variety
$\mathcal{V}(\overline{\mathcal{J}})\subset \mathbb{A}^s$ is
isomorphic to $V_{\bfs p}$. Further, $\K[Y_{n-s+1}, \dots,
Y_n]/\overline{\mathcal{J}}$ is a $\K$--vector space of dimension
$\mathrm{rank}_R\K[V]$;
  \item the polynomials $F_1(\bfs p^*, Y_{n-s},\dots, Y_n), \dots,
F_s(\bfs p^*, Y_{n-s},\dots, Y_n)$ generate a radical,
equidimensional ideal $\overline{\mathcal{K}}$ of
$\K[Y_{n-s},\dots,Y_n]$ of dimension $1$, and the $\K$--variety
$\mathcal{V}(\overline{\mathcal{K}})\subset \mathbb{A}^{s+1}$ is
isomorphic to $W_{\bfs p^*}$. Further, $Y_{n-s},\dots, Y_n$ are in
Noether position with respect to $\overline{\mathcal{K}}$ and
$\K[Y_{n-s}, \dots, Y_n]/\overline{\mathcal{K}}$  is a free
$\K[Y_{n-s}]$--module of rank equal to $\mathrm{rank}_R\K[V]$.
\end{itemize}
\end{lema}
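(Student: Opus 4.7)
The plan is to reduce both parts of the lemma to direct consequences of Lemmas \ref{lemma: rank_fiber} and \ref{lemma: positive_dimension_lifting_fiber} by means of the linear change of coordinates $(X_1,\ldots,X_n)\leftrightarrow (Y_1,\ldots,Y_n)$. Since $Y_1,\ldots,Y_n$ are linearly independent linear forms in $X_1,\ldots,X_n$ (which follows from the fact that $\pi$ is a finite morphism, so the $Y_1,\ldots,Y_{n-s}$ are independent, together with the choice of completion to a basis), they form a second set of indeterminates for the same polynomial ring, and we may identify $\K[\bfs X]=\K[\bfs Y]$ as $\K$--algebras. Under this identification, the ideal $\mathcal{J}$ is generated by $F_1(\bfs Y),\ldots,F_s(\bfs Y),Y_1-p_1,\ldots,Y_{n-s}-p_{n-s}$, and evaluating the last $n-s$ generators by substituting $p_i$ for $Y_i$ induces a canonical $\K$--algebra isomorphism
\[
\K[\bfs X]/\mathcal{J}\;\cong\;\K[Y_{n-s+1},\ldots,Y_n]/\overline{\mathcal{J}},
\]
which at the level of $\K$--varieties yields the required isomorphism $V_{\bfs p}=\pi^{-1}(\bfs p)\cong\mathcal{V}(\overline{\mathcal{J}})\subset\mathbb{A}^s$.

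To finish the first assertion, I would invoke Lemma \ref{lemma: positive_dimension_lifting_fiber} with $j=n-s$ to conclude that $\mathcal{J}=\mathcal{J}_{n-s}$ is radical and zero--dimensional, and transfer this through the above isomorphism to $\overline{\mathcal{J}}$. Lemma \ref{lemma: rank_fiber} with $j=n-s$ gives that $\K[\bfs X]/\mathcal{J}$ is a free $\K$--module of rank $D:=\mathrm{rank}_R\K[V]$, and the isomorphism transports this to the stated $\K$--vector space dimension of $\K[Y_{n-s+1},\ldots,Y_n]/\overline{\mathcal{J}}$.

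For the second assertion I repeat the argument with $\mathcal{K}$ in place of $\mathcal{J}$ and $j=n-s-1$: the substitution of $\bfs p^*$ for $(Y_1,\ldots,Y_{n-s-1})$ produces a $\K$--algebra isomorphism
\[
\K[\bfs X]/\mathcal{K}\;\cong\;\K[Y_{n-s},\ldots,Y_n]/\overline{\mathcal{K}},
\]
and the corresponding variety isomorphism $W_{\bfs p^*}\cong\mathcal{V}(\overline{\mathcal{K}})$. Lemma \ref{lemma: positive_dimension_lifting_fiber} (with $j=n-s-1$) supplies the radicality, equidimensionality and dimension $1$ of $\overline{\mathcal{K}}$, together with the fact that the morphism on $W_{\bfs p^*}$ defined by $Y_{n-s}$ is finite; via the isomorphism this is exactly the assertion that $Y_{n-s},\ldots,Y_n$ are in Noether position with respect to $\overline{\mathcal{K}}$, with $Y_{n-s}$ free. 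Finally, Lemma \ref{lemma: rank_fiber} with $j=n-s-1$ exhibits $\K[\bfs X]/\mathcal{K}$ as a free $\K[Y_{n-s}]$--module of rank $D$, and the same isomorphism carries this structure to $\K[Y_{n-s},\ldots,Y_n]/\overline{\mathcal{K}}$. The only delicate point is to verify that the identification $\K[\bfs X]=\K[\bfs Y]$ is compatible with the two substitutions and with the relevant module structures over $\K$ and $\K[Y_{n-s}]$; this is a routine check, and the bulk of the work is already done in the two lemmas cited.
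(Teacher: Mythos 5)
Your proposal is correct and takes essentially the same route as the paper: it uses the substitution isomorphisms $\K[\bfs X]/\mathcal{J}\cong\K[Y_{n-s+1},\dots,Y_n]/\overline{\mathcal{J}}$ and $\K[\bfs X]/\mathcal{K}\cong\K[Y_{n-s},\dots,Y_n]/\overline{\mathcal{K}}$ and transfers radicality, dimension, Noether position and the freeness/rank assertions via Lemmas \ref{lemma: rank_fiber} and \ref{lemma: positive_dimension_lifting_fiber}, exactly as in the paper's argument.
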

\begin{proof}
Clearly, we have an isomorphism of  $\K$--algebras
$$
\K[Y_1,\dots,Y_n]/\mathcal{J}\cong \K[Y_{n-s+1},\dots
,Y_n]/\overline{\mathcal{J}},
$$
which maps $F(Y_1,\dots,Y_n) \mod \mathcal{J}$ to $F(\bfs p,
Y_{n-s+1}, \dots, Y_n) \mod \overline{\mathcal{J}}$. It follows that
$\overline{\mathcal{J}}$ is radical and zero--dimensional, since so
is $\mathcal{J}$. Therefore, this is an isomorphism between the
coordinate rings of $V_{\bfs p}$ and
$\mathcal{V}(\overline{\mathcal{J}})$, and proves that $V_{\bfs p}$
and $\mathcal{V}(\overline{\mathcal{J}})$ are isomorphic. Similarly,
we have an isomorphism of $\K$--algebras
$$
\K[Y_1,\dots,Y_n]/\mathcal{K}\cong \K[Y_{n-s},\dots
,Y_n]/\overline{\mathcal{K}},
$$
which maps $F(Y_1,\dots,Y_n) \mod \mathcal{K}$ to $F(\bfs p^*,
Y_{n-s}, \dots, Y_n) \mod \overline{\mathcal{K}}$. Arguing as before
we conclude that $\overline{\mathcal{K}}$ is radical and $W_{\bfs
p^*}$ and $\mathcal{V}(\overline{\mathcal{K}})$ are isomorphic.
Further, we have that $Y_j$ is integral over $\K[Y_{n-s}]$ modulo
$\overline{\mathcal{K}}$ for $n-s+1\leq j \leq n$, which proves that
$Y_{n-s},\dots, Y_n$ are in Noether position with respect to
$\overline{\mathcal{K}}$. Finally, the assertions concerning
freeness and ranks follow by Lemma \ref{lemma: rank_fiber}, which
completes the proof of the lemma.
\end{proof}

A critical step in our main algorithm is to obtain a Kronecker
representation of a lifting curve $W_{\bfs p^*}$ from one of a
lifting fiber $V_{\bfs p}$. This will be achieved by considering a
symbolic version of the Newton method, which requires that the
polynomials $F_1(\bfs p, Y_{n-s+1},\dots, Y_n), \dots, F_s(\bfs p,
Y_{n-s+1},\dots, Y_n)$ define all points of $V_{\bfs p}$ by
transversal cuts. Further, in Section \ref{subsec:
lifting_the_integers} we shall lift a Kronecker representation of
the output lifting fiber modulo a prime number $p$, which also
requires such a transversality condition. As the next result shows,
this is guaranteed if $\bfs p$ is a lifting point $\pi$.
\begin{lema}\label{lemma: jacobian_nonzero}
With hypotheses as in Lemma \ref{lemma:
positive_dimension_lifting_fiber}, the Jacobian determinant
$\overline{J}$ of the polynomials $F_1(\bfs p, Y_{n-s+1},\dots,
Y_n), \dots, F_s(\bfs p, Y_{n-s+1},\dots, Y_n)$ with respect to
$Y_{n-s+1},\dots, Y_n$ is invertible in $\K[Y_{n-s+1},
\dots,Y_n]/\overline{\mathcal{J}}$.
\end{lema}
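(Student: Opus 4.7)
The plan is to relate the Jacobian $\overline{J}$ to the Jacobian $J$ of the original lifting system via the linear change of variables $\bfs X\leftrightarrow\bfs Y$, and then invoke the fact that $\K[Y_{n-s+1},\dots,Y_n]/\overline{\mathcal{J}}$ is a reduced finite-dimensional $\K$-algebra, so an element is invertible precisely when it vanishes at no geometric point of $\mathcal{V}(\overline{\mathcal{J}})$.

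First I would make the change of variables explicit. Since $Y_1,\dots,Y_n$ are linearly independent forms, there is an invertible matrix $A\in\K^{n\times n}$ with $\bfs Y=A\bfs X$. Writing $\tilde F_i(\bfs Y):=F_i(A^{-1}\bfs Y)$, the chain rule gives $(\partial F_i/\partial X_j)=(\partial \tilde F_i/\partial Y_k)\cdot A$, so the Jacobian $J$ of $F_1,\dots,F_s,Y_1-p_1,\dots,Y_{n-s}-p_{n-s}$ with respect to $\bfs X$ equals $\det(A)$ times the analogous Jacobian computed in $\bfs Y$-coordinates. Reordering the variables so that $Y_{n-s+1},\dots,Y_n$ come first and expanding along the rows of the linear forms $Y_k-p_k$ (which contribute an identity block), one obtains, up to a nonzero factor in $\K$, the Jacobian $\det(\partial \tilde F_i/\partial Y_{n-s+k})_{1\le i,k\le s}$; after reducing modulo $Y_1-p_1,\dots,Y_{n-s}-p_{n-s}$ this is precisely $\overline{J}$.

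Next I would exploit the isomorphism of Lemma \ref{lemma: lifting_curve_isomorphism}. Under the bijection between $\pi^{-1}(\bfs p)$ and $\mathcal{V}(\overline{\mathcal{J}})\subset\A^s$ induced by $\bfs x\mapsto(Y_{n-s+1}(\bfs x),\dots,Y_n(\bfs x))$, each geometric point $\bfs y\in\mathcal{V}(\overline{\mathcal{J}})$ is the image of a point $\bfs x\in\pi^{-1}(\bfs p)$. Since $\bfs p$ is a lifting point of $\pi$, the hypothesis gives $J(\bfs x)\ne 0$, and the change-of-variables identity above then yields $\overline{J}(\bfs y)\ne 0$. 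Thus $\overline{J}$ does not vanish at any geometric point of $\mathcal{V}(\overline{\mathcal{J}})$.

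To conclude, I would use that $\overline{\mathcal{J}}$ is a zero-dimensional radical ideal (Lemma \ref{lemma: lifting_curve_isomorphism}), so the quotient $\K[Y_{n-s+1},\dots,Y_n]/\overline{\mathcal{J}}$ is a finite-dimensional reduced $\K$-algebra, hence Artinian and isomorphic to a finite product of finite field extensions of $\K$; after base change to $\overline{\K}$ (using that $\K$ is perfect) it becomes a product of copies of $\overline{\K}$ indexed by the geometric points of $\mathcal{V}(\overline{\mathcal{J}})$. In such a ring, an element is a unit iff it is not contained in any maximal ideal iff it is nonzero at every geometric point, which is exactly what the previous paragraph established. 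The main obstacle is the careful handling of the change-of-variables computation, particularly keeping track of which rows/columns of the Jacobian are evaluated before and after the substitution $Y_k\mapsto p_k$; the subsequent invertibility argument is then a standard consequence of the ring being reduced and Artinian.
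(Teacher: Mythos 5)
Your proof is correct, but it takes a different route from the paper's. The paper never revisits the original Jacobian $J$: it applies the radicality criterion of Lemma \ref{lemma: radicality_criterion} directly to the specialized system (the unique $s\times s$ minor of its Jacobian being $\overline{J}$), so the radicality of $\overline{\mathcal{J}}$ — already secured in Lemmas \ref{lemma: positive_dimension_lifting_fiber} and \ref{lemma: lifting_curve_isomorphism} — immediately yields $\overline{J}\notin\mathcal{P}_i$ for every minimal (hence maximal) prime $\mathcal{P}_i$ of $\overline{\mathcal{J}}$, and the Chinese remainder theorem finishes the argument. You instead trace invertibility back to the very definition of lifting point: the change-of-variables computation showing $J=\det(A)\cdot\det(\partial F_i/\partial Y_{n-s+k})$ up to the reordering sign is correct (the rows of the linear forms contribute an identity block after factoring out $A$), and combined with the point-level bijection of Lemma \ref{lemma: lifting_curve_isomorphism} it gives nonvanishing of $\overline{J}$ at every geometric point of $\mathcal{V}(\overline{\mathcal{J}})$; the final unit criterion in the zero-dimensional algebra is then the same CRT-type argument the paper uses (and in fact only zero-dimensionality is needed there, the reduced structure being a convenience). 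Your version makes the role of the hypothesis $J(\bfs x)\neq 0$ explicit and avoids invoking the Jacobian radicality criterion a second time, at the cost of the bookkeeping in the Jacobian identity; the paper's version is shorter because it lets the already-established radicality of $\overline{\mathcal{J}}$ (where the lifting-point hypothesis was consumed) carry all the weight.
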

\begin{proof}
Let $\mathcal{P}_1, \dots, \mathcal{P}_N$ be the minimal prime
ideals of $\overline{\mathcal{J}}$. Since $\overline{\mathcal{J}}$
is radical, by Lemma \ref{lemma: radicality_criterion} we deduce
that $\overline{J}\notin \mathcal{P}_i$ for $1\leq i \leq N$. As
$\overline{\mathcal{J}}$ is of dimension zero, each $\mathcal{P}_i$
is a maximal ideal of $\K[Y_{n-s+1}, \dots, Y_n]$, which implies
that $\overline{J}$ is a unit in $\K[Y_{n-s+1}, \dots,
Y_n]/\mathcal{P}_i$ for $1\leq i \leq N$. By the Chinese remainder
theorem we conclude that $\overline{J}$ is a unit in $\K[Y_{n-s+1},
\dots, Y_n]/\overline{\mathcal{J}}$, which finishes the proof of the
lemma.
\end{proof}

Finally, assuming that $F_1\klk F_s$ form a regular sequence, we
shall need to see that this is preserved when specializing
$(Y_1,\dots, Y_{n-s})$ at a lifting point $\bfs p$. We have the
following result.
\begin{lema}\label{lema: reduced_regular_sequence}
Assume that $F_1, \dots, F_s$ form a regular sequence of $\K[\bfs
X]$ and $Y_1,\dots,Y_n$ are linear forms of $\K[\bfs X]$ in Noether
position with respect to $V_i:=\{F_1=0,\dots,F_i=0\}$ for $1\leq i
\leq s$. Then $F_1(\bfs p,Y_{n-s+1},\dots, Y_n), \dots, F_s(\bfs p,
Y_{n-s+1},\dots,Y_n)$ form a regular sequence of
$\K[Y_{n-s+1},\dots,Y_n]$ for any $\bfs p\in\K^{n-s}$.
\end{lema}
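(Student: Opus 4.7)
The plan is to reduce to a Cohen--Macaulay argument by exploiting Noether position to show that, after specialization, the resulting ideal cuts out a zero-dimensional scheme in $\K[Y_{n-s+1},\dots,Y_n]$. Since the change of coordinates $\bfs X \to \bfs Y$ is linear and invertible, I identify $\K[\bfs X]$ with $\K[\bfs Y]$, so that $F_i(\bfs p, Y_{n-s+1},\dots,Y_n)$ is literally the specialization at $Y_j = p_j$ (for $1 \le j \le n-s$) of the polynomial representing $F_i$ in the $\bfs Y$-variables.

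The Noether-position hypothesis at $i = s$ asserts that $\K[Y_1,\dots,Y_{n-s}] \hookrightarrow \K[\bfs Y]/(F_1,\dots,F_s)$ is a finite ring extension. Since finiteness of modules is preserved under base change, tensoring over $\K[Y_1,\dots,Y_{n-s}]$ with the quotient $\K[Y_1,\dots,Y_{n-s}]/(Y_1-p_1,\dots,Y_{n-s}-p_{n-s}) \cong \K$ gives that
$$A:=\K[Y_{n-s+1},\dots,Y_n]\big/\bigl(F_1(\bfs p, Y_{n-s+1},\dots,Y_n),\dots,F_s(\bfs p, Y_{n-s+1},\dots,Y_n)\bigr)$$
is a finite-dimensional $\K$-algebra, necessarily nonzero because integral extensions are surjective on spectra. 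Hence $A$ has Krull dimension zero, and the ideal $I:=(F_1(\bfs p,\cdot),\dots,F_s(\bfs p,\cdot))$ has codimension $s=\dim \K[Y_{n-s+1},\dots,Y_n]$.

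Since the polynomial ring $\K[Y_{n-s+1},\dots,Y_n]$ is Cohen--Macaulay, the standard criterion (e.g., \cite[Corollary~17.7]{Eisenbud95})—namely, that in a Cohen--Macaulay ring an ideal generated by as many elements as its codimension is generated by a regular sequence, in any order—yields that $F_1(\bfs p, Y_{n-s+1},\dots,Y_n),\dots,F_s(\bfs p, Y_{n-s+1},\dots,Y_n)$ form a regular sequence in $\K[Y_{n-s+1},\dots,Y_n]$, as desired. I foresee no serious obstacle; the entire argument is structural, relying only on preservation of finiteness under base change and the Cohen--Macaulay property of polynomial rings. Note that only the Noether-position hypothesis at $i=s$ is used in this reduction; the positions at $i<s$ are not needed for this particular conclusion.
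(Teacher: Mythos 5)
Your reduction is fine up to and including the conclusion that
$A=\K[Y_{n-s+1},\dots,Y_n]/\bigl(F_1(\bfs p,\cdot),\dots,F_s(\bfs p,\cdot)\bigr)$
is a nonzero finite--dimensional $\K$--algebra (modulo the small point that Noether position is a statement about $\K[V_s]$, i.e.\ the radical, so you need the easy remark that integrality over $\K[Y_1,\dots,Y_{n-s}]$ passes from $\K[\bfs X]/\sqrt{(F_1,\dots,F_s)}$ to $\K[\bfs X]/(F_1,\dots,F_s)$). The genuine gap is the last step. The principle you invoke --- ``in a Cohen--Macaulay ring, an ideal generated by as many elements as its codimension is generated by a regular sequence, in any order'' --- is a statement about \emph{local} (or graded, with homogeneous elements of positive degree) Cohen--Macaulay rings; it is false in the polynomial ring globally. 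For instance, in $\K[x,y,z]$ the elements $y(1-x),\, z(1-x),\, x$ generate the ideal $(x,y,z)$, which has codimension $3$, and the quotient is even a field; yet they are not a regular sequence in that order, since $z(1-x)\cdot y\in\bigl(y(1-x)\bigr)$ while $y\notin\bigl(y(1-x)\bigr)$. So knowing only that the full specialized ideal is zero--dimensional does not give regularity of the sequence in the prescribed order; one needs the codimension of \emph{every} initial segment $\bigl(F_1(\bfs p,\cdot),\dots,F_i(\bfs p,\cdot)\bigr)$ to equal $i$.

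This is exactly why the hypothesis is Noether position with respect to each $V_i$, $1\le i\le s$, and why your closing remark that the positions for $i<s$ are not needed is where the argument breaks. The paper's proof uses finiteness of $\pi_i:V_i\to\A^{n-i}$ to show that $V_i\cap\{Y_1=p_1,\dots,Y_{n-s}=p_{n-s}\}$, which is isomorphic to $\{F_1(\bfs p,\cdot)=0,\dots,F_i(\bfs p,\cdot)=0\}\subset\A^s$, has dimension exactly $s-i$ for every $i$; from this stepwise dimension control, unmixedness (Cohen--Macaulayness of $\K[Y_{n-s+1},\dots,Y_n]$) shows each $F_{i+1}(\bfs p,\cdot)$ avoids all associated primes of the previous ideal, giving regularity. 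Without the intermediate hypotheses, a component of some $V_i$ could lie ``vertically'' over $\bfs p$, the corresponding initial segment would have too small codimension, and regularity can genuinely fail; so the gap cannot be repaired while discarding those hypotheses.
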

\begin{proof}
It suffices to show that $F_1(\bfs p,Y_{n-s+1},\dots, Y_n), \dots,
F_i(\bfs p, Y_{n-s+1},\dots,Y_n)$ define a subvariety  of
$\mathbb{A}^s$ of dimension $s-i$ for $1\leq i \leq s$. Let
$L_s:=\{Y_1=p_1,\dots,Y_{n-s}=p_{n-s}\}\subset \mathbb{A}^n$ and
$\pi_i:V_i\rightarrow \mathbb{A}^{n-i}$ the mapping defined by
$Y_1,\dots,Y_{n-i}$. Then $V_i\cap
L_s=\pi_i^{-1}\bigl(\{Y_1=p_1,\dots,Y_{n-s}=p_{n-s}\}\bigr)$. Since
$\pi_i$ is a finite morphism,  we have that $\dim V_i\cap
L_s=\dim_{\mathbb{A}^{n-i}} \{Y_1=p_1,\dots,t_{n-s}=Y_{n-s}\}=s-i$,
and the conclusion of the lemma follows by noting that
$\bigl\{F_1(\bfs p,Y_{n-s+1},\dots, Y_n)=0, \dots, F_i(\bfs p,
Y_{n-s+1},\dots,Y_n)=0\bigr\}$ and $V_i\cap L_s$ are isomorphic
varieties.
 \end{proof}
%
%
\subsection{A condition for lifting points}
\label{subsec: generic_condition_for_a_lifting_point}
In this section we obtain a condition for the coordinates of a point
$\bfs p\in \K^{n-s}$ which implies that it is a lifting point of
$\pi$. A first step in this direction is provided by the following
characterization of the notion of lifting point, which also proves
that the concept is independent of the polynomials $F_1\klk F_s$
generating the vanishing ideal of the variety $V$.
\begin{prop} \label{prop: lifting_point_and_rank}
Assume that $\K[V]$ is a free $R$--module of finite rank
$D:=\mathrm{rank}_R\K[V]$. Then $\#\pi^{-1}(\bfs p)\le D$ for any
$\bfs p\in \K^{n-s}$, with equality if and only if $\bfs p$ is a
lifting point.
\end{prop}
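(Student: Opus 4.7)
The plan is to compute $\dim_\K A$ for $A := \K[\bfs X]/\mathcal{J}$, where $\mathcal{J}$ is the ideal of \eqref{def:ideal_J} whose variety is $\pi^{-1}(\bfs p)$, and then relate this dimension to $\#\pi^{-1}(\bfs p)$ by decomposing the associated zero-dimensional algebra point by point.

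First I would apply Lemma \ref{lemma: rank_fiber} with $j=n-s$. Since $\K[Y_{j+1},\dots,Y_{n-s}]=\K$ in this case, the lemma yields that $A$ is a free $\K$-module of rank $D$; in particular $\mathcal{J}$ is zero-dimensional. Extending scalars to $\overline{\K}$, the structure theory of finite-dimensional algebras (equivalently, the Chinese Remainder Theorem applied to the finitely many maximal ideals above $\mathcal{J}\overline{\K}[\bfs X]$) produces a decomposition
\[
A \otimes_\K \overline{\K} \cong \prod_{x \in \pi^{-1}(\bfs p)} \mathcal{O}_x
\]
as a product of local artinian $\overline{\K}$-algebras indexed by the points of $\pi^{-1}(\bfs p)$. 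Since each $\dim_{\overline\K}\mathcal{O}_x\ge 1$, summing dimensions gives
\[
\#\pi^{-1}(\bfs p) \le \dim_{\overline\K}(A \otimes_\K \overline{\K}) = \dim_\K A = D,
\]
with equality if and only if $\mathcal{O}_x$ is a field (equivalently, has $\overline{\K}$-dimension one) for every $x$.

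The final step is to match this ``field at every point'' condition with $\bfs p$ being a lifting point. Since $\pi^{-1}(\bfs p)$ is zero-dimensional and $\mathcal{J}$ is generated by the $n$ polynomials $F_1,\dots,F_s,Y_1-p_1,\dots,Y_{n-s}-p_{n-s}$ in the $n$ variables $X_1,\dots,X_n$, the Jacobian criterion of smoothness shows that $x$ is a regular point of $\mathcal{V}(\mathcal{J})$ precisely when the $n\times n$ Jacobian matrix of these generators has full rank at $x$, i.e.\ $J(x)\neq 0$. In the zero-dimensional case ``regular'' is the same as ``local ring is a field''. Hence equality in the bound is equivalent to $J$ being nonvanishing on $\pi^{-1}(\bfs p)$, which is the definition of a lifting point.

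The main care is in justifying the Jacobian step cleanly at the points of $\pi^{-1}(\bfs p)\subset\overline{\K}^n$ after base change. A more self-contained alternative, avoiding base change, would be to apply Lemma \ref{lemma: radicality_criterion} directly to $\mathcal{J}$: it shows that $\mathcal{J}$ is radical iff $J$ is not contained in any minimal prime of $\mathcal{J}$, equivalently iff $J(x)\neq 0$ for every $x \in \mathcal{V}(\mathcal{J})$; and to observe separately that, since $\K$ is perfect, a zero-dimensional radical ideal $\mathcal{J}\subset\K[\bfs X]$ satisfies $\#\mathcal{V}(\mathcal{J})=\dim_\K \K[\bfs X]/\mathcal{J}$, so equality in the bound translates exactly to radicality of $\mathcal{J}$.
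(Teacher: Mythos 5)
Your proof is correct, and for the equivalence it takes a genuinely different route from the paper. Both arguments begin identically, using Lemma \ref{lemma: rank_fiber} with $j=n-s$ to get $\dim_\K \K[\bfs X]/\mathcal{J}=D$ for the ideal $\mathcal{J}$ of \eqref{def:ideal_J}. From there the paper stays over $\K$: it introduces the characteristic polynomial $\chi_{\scriptscriptstyle J}$ of the Jacobian $J$ modulo $\mathcal{I}$, observes that its constant term evaluated at $\bfs p$ detects whether $\mathcal{J}+(J)=(1)$ (the Nullstellensatz reformulation of the lifting-point condition), passes through zero-divisor and associated-prime characterizations quoted from Durvye--Lecerf, and invokes Lemma \ref{lemma: radicality_criterion} to conclude that $\bfs p$ is a lifting point if and only if $\mathcal{J}$ is radical, which for a perfect field is equivalent to $\dim_\K\K[\bfs X]/\mathcal{J}=\#\pi^{-1}(\bfs p)$. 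You instead base-change to $\overline{\K}$, split the artinian algebra into local factors indexed by the points of the fiber, and identify ``each factor is a field'' with $J(x)\neq 0$ at each point via the cotangent-space computation for the $n$ generators of $\mathcal{J}$ (your Jacobian step is a genuine biconditional here: for an ideal with exactly $n$ generators, $\mathfrak m_x/\mathfrak m_x^2=0$ iff the $n\times n$ Jacobian is nonsingular at $x$, by Nakayama, independently of whether the generators cut out the reduced structure). Your route is more elementary and self-contained, needing only the structure of artinian algebras and Nakayama, and it reads the lifting-point condition off pointwise without ever mentioning radicality or characteristic polynomials; the paper's route has the advantage of working entirely over $\K$, of reusing Lemma \ref{lemma: radicality_criterion} and the Durvye--Lecerf toolkit already in play, and of making explicit the equivalence ``lifting point $\Leftrightarrow$ $\mathcal{J}$ radical,'' which is exploited elsewhere (e.g.\ in Lemma \ref{lemma: positive_dimension_lifting_fiber}). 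The alternative you sketch in your final paragraph, via Lemma \ref{lemma: radicality_criterion} applied to $\mathcal{J}$ together with $\#\mathcal{V}(\mathcal{J})=\dim_\K\K[\bfs X]/\mathcal{J}$ for radical zero-dimensional ideals over a perfect field, is essentially the paper's argument with the characteristic-polynomial intermediary removed.
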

\begin{proof}
Let $\bfs p:=(p_1, \dots, p_{n-s})$ and let
$\mathcal{J}\subset\K[\bfs X]$ be the zero--dimensional ideal of
\eqref{def:ideal_J}. By Lemma \ref{lemma: rank_fiber} we have
$\dim_{\K} \K[\bfs X]/\mathcal{J}=D$.
Since $\#\pi^{-1}(\bfs p)= \dim_{\K} \K[\bfs X]/\sqrt{\mathcal{J}}$,
the inequality of the statement follows.

Now we prove the characterization of lifting points. Let
$\chi_{\scriptscriptstyle J}\in R[T]$ be the characteristic
polynomial of $J$ modulo $\mathcal{I}$. Since by Lemma \ref{lemma:
rank_fiber} a basis of $\K[V]$ as $R$--module induces a basis of
$\K[\bfs X]/\mathcal{J}$ as $\K$--vector space, it is easy to see
that $\chi_{\scriptscriptstyle J}(\bfs p,T)$ is the characteristic
polynomial of $J$ modulo $\mathcal{J}$. Let
$\mu:=\chi_{\scriptscriptstyle J}(0)$ be the constant term of
$\chi_{\scriptscriptstyle J}$, so that $\mu(\bfs p)$ is the constant
term of $\chi_{\scriptscriptstyle J}(\bfs p,T)$. By the
Nullstellensatz, $\bfs p$ is a lifting point of $\pi$ if and only if
the equality of ideals $\mathcal{J} + (J) = (1)$ holds in $\K[\bfs
X]$. Note that, by the unmixedness theorem, $\mathcal{J}$ is
unmixed. Then $\mathcal{J} + (J) = (1)$ if and only if $\mu(\bfs
p)\neq 0$ (see, e.g., \cite[Lemma 2.1(c)]{DuLe08}). Further,
$\mu(\bfs p)\neq 0$ if and only if $J$ is not a zero divisor in
$\K[\bfs X]/\mathcal{J}$ (see, e.g., \cite[Lemma 2.1(b)]{DuLe08}),
which in turn holds if and only if $J$ is not contained in any
associated prime of $\mathcal{J}$ (see, e.g., \cite[Theorem
6.1(ii)]{Matsumura86}). Finally, by Lemma \ref{lemma:
radicality_criterion}, the latter is equivalent to the radicality of
$\mathcal{J}$. Summarizing, we have that $\bfs p$ is a lifting point
of $\pi$ if and only if $\mathcal{J}$ is a radical ideal. On the
other hand, $\mathcal{J}$ is radical if and only if $\dim_\K\K[\bfs
X]/\mathcal{J}= \#\pi^{-1}(\bfs p)$. Since $\dim_{\K} \K[\bfs
X]/\mathcal{J}=D$, the proposition follows.
\end{proof}

Let $\bfs\Lambda:=(\Lambda_{ij})_{1\leq i \leq n-s+1, 1\leq j \leq
n}$, $\bfs Z:=(Z_1,\dots, Z_{n-s+1})$  and let
$P_{\scriptscriptstyle V}\in \K[\bfs\Lambda, \bfs Z]$ be a Chow form
of $V$. Denote as before by $A_{\scriptscriptstyle V}\in
\K[\bfs\Lambda_1, \dots, \bfs\Lambda_{n-s}]$ the (nonzero)
coefficient of the monomial $Z_{n-s+1}^{\delta}$ in
$P_{\scriptscriptstyle V}$, and by $\rho_{\scriptscriptstyle V} \in
\K[\bfs\Lambda, Z_1,\dots, Z_{n-s}]$ the discriminant of
$P_{\scriptscriptstyle V}$ with respect to $Z_{n-s+1}$. Consider the
quotient ring $\K[\bfs\Lambda, \bfs Z]/(P_{\scriptscriptstyle V})$
as a $\K[\bfs\Lambda, \bfs Z]$--algebra by means of the canonical
ring homomorphism $\K[\bfs\Lambda, \bfs Z]\rightarrow
\K[\bfs\Lambda, \bfs Z]/(P_{\scriptscriptstyle V})$. Further,
consider as before $\K[V][\bfs\Lambda]$ as a $\K[\bfs\Lambda, \bfs
Z]$--algebra by means of the ring homomorphism $\K[\bfs\Lambda, \bfs
Z]\rightarrow \K[V][\bfs\Lambda]$ which maps any $F \in
\K[\bfs\Lambda, \bfs Z]$ to $F(\bfs\Lambda, \bfs\Lambda\bfs\xi)$. By
Lemma \ref{lemma: nonzero_discriminant}, the polynomial $\partial
P_{\scriptscriptstyle V}/\partial Z_{n-s+1}$ is nonzero and hence
\[
S:=\bigl\{(\partial P_{\scriptscriptstyle V}/\partial
Z_{n-s+1})^{\eta}: \eta \geq 0 \bigr\}
\]
is a multiplicatively closed subset of $\K[\bfs\Lambda, \bfs Z]$. We
consider the localizations
\begin{align*}
\K[\bfs\Lambda, \bfs Z]_{\partial P_{\scriptscriptstyle V}/\partial Z_{n-s+1}} &
:=S^{-1}\K[\bfs\Lambda, \bfs Z], \\
\bigl(\K[\bfs\Lambda, \bfs Z]/(P_{\scriptscriptstyle
V})\bigr)_{\partial P_{\scriptscriptstyle V}/\partial Z_{n-s+1}}
&:=  S^{-1}\K[\bfs\Lambda, \bfs Z]/(P_{\scriptscriptstyle V}), \\
\K[V][\bfs\Lambda]_{\partial P_{\scriptscriptstyle V}/\partial
Z_{n-s+1}} &:=  S^{-1}\K[V][\bfs\Lambda].
\end{align*}
Let
$ \K[\bfs\Lambda,\!\bfs Z]/(P_{\scriptscriptstyle V})\!\rightarrow\!
\K[V][\mathbf{\Lambda]} $
be the $\K[\bfs\Lambda, \bfs Z]$--algebra homomorphism that maps
$[Z_i]_{\textrm{mod} P_{\scriptscriptstyle V}}$ to
$\bfs\Lambda_i\cdot\bfs\xi$ for  $1 \leq i \leq n-s+1$ and consider
the $\K[\bfs\Lambda,\bfs Z]_{\partial P_{\scriptscriptstyle
V}/\partial Z_{n-s+1}}$--algebra homomorphism
\begin{equation}\label{def:Phi_homomorphism}
\Phi: \bigl(\K[\bfs\Lambda,\bfs Z]/(P_{\scriptscriptstyle
V})\bigr)_{\partial P_{\scriptscriptstyle V}/\partial Z_{n-s+1}}
\rightarrow \K[V][\bfs\Lambda]_{\partial P_{\scriptscriptstyle
V}/\partial Z_{n-s+1}}.
\end{equation}
that extends this map. The next result asserts that $\Phi$ is an
isomorphism.
\begin{lema}
$\Phi$ is an isomorphism of  $\K[\bfs\Lambda,\bfs Z]_{\partial
P_{\scriptscriptstyle V}/\partial Z_{n-s+1}}$--algebras.
\end{lema}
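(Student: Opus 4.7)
The plan is to prove injectivity and surjectivity of $\Phi$ separately, relying on the two fundamental properties of the Chow form (identities \eqref{ident:chow form_fundamental} and \eqref{ident:partial_derivatives_chow_form}) together with Lemma \ref{lemma: non_zero_divisor}.

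For \emph{injectivity}, I would take an element of the source represented by $G/(\partial P_{\scriptscriptstyle V}/\partial Z_{n-s+1})^\eta$ with $G\in\K[\bfs\Lambda,\bfs Z]$ and assume $\Phi$ sends it to zero. This means $(\partial P_{\scriptscriptstyle V}/\partial Z_{n-s+1})^m(\bfs\Lambda,\bfs\Lambda\bfs\xi)\cdot G(\bfs\Lambda,\bfs\Lambda\bfs\xi)=0$ in $\K[V][\bfs\Lambda]$ for some $m\ge 0$. Since Lemma \ref{lemma: non_zero_divisor} guarantees that $\partial P_{\scriptscriptstyle V}/\partial Z_{n-s+1}$ is not a zero divisor in $\K[V][\bfs\Lambda]$, I conclude $G(\bfs\Lambda,\bfs\Lambda\bfs\xi)=0$. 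Then the universal property of the Chow form (recalled just after \eqref{ident:chow form_fundamental}) shows that $P_{\scriptscriptstyle V}$ divides $G$ in $\K[\bfs\Lambda,\bfs Z]$, so $[G]=0$ in $\K[\bfs\Lambda,\bfs Z]/(P_{\scriptscriptstyle V})$ and hence the original element is zero in the localization.

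For \emph{surjectivity}, the critical observation is that identity \eqref{ident:partial_derivatives_chow_form} already expresses every coordinate function in the image of $\Phi$: rewriting it as
\[
\xi_k=-\frac{(\partial P_{\scriptscriptstyle V}/\partial \Lambda_{n-s+1,k})(\bfs\Lambda,\bfs\Lambda\bfs\xi)}{(\partial P_{\scriptscriptstyle V}/\partial Z_{n-s+1})(\bfs\Lambda,\bfs\Lambda\bfs\xi)}
\]
in $\K[V][\bfs\Lambda]_{\partial P_{\scriptscriptstyle V}/\partial Z_{n-s+1}}$ exhibits each $\xi_k$ as $\Phi\bigl(-[\partial P_{\scriptscriptstyle V}/\partial \Lambda_{n-s+1,k}]/[\partial P_{\scriptscriptstyle V}/\partial Z_{n-s+1}]\bigr)$. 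Since the indeterminates $\Lambda_{ij}$ are trivially in the image, and the image of $\Phi$ is a $\K[\bfs\Lambda,\bfs Z]_{\partial P_{\scriptscriptstyle V}/\partial Z_{n-s+1}}$--subalgebra containing $\xi_1,\dots,\xi_n$, it therefore contains all of $\K[V][\bfs\Lambda]$; inverting $\partial P_{\scriptscriptstyle V}/\partial Z_{n-s+1}$ on both sides shows it also covers the full localization.

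I do not expect a real obstacle here: the substantive work has already been done in Lemmas \ref{lemma: nonzero_discriminant} and \ref{lemma: non_zero_divisor}, and the two fundamental identities \eqref{ident:chow form_fundamental} and \eqref{ident:partial_derivatives_chow_form} handle surjectivity and the ``universal property'' half of injectivity immediately. The only mildly subtle point is confirming the well-definedness of $\Phi$ on the quotient (which follows from $P_{\scriptscriptstyle V}(\bfs\Lambda,\bfs\Lambda\bfs\xi)=0$) and checking that the localization on each side inverts the same element up to the canonical map, which is routine.
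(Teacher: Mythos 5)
Your proof is correct and follows essentially the same route as the paper: surjectivity via identity \eqref{ident:partial_derivatives_chow_form} expressing each $\xi_k$ as $\Phi$ of an explicit fraction, and injectivity resting ultimately on the minimality property of $P_{\scriptscriptstyle V}$. The only cosmetic difference is in the injectivity step, where the paper just observes that the unlocalized map $\K[\bfs\Lambda,\bfs Z]/(P_{\scriptscriptstyle V})\rightarrow\K[V][\bfs\Lambda]$ is injective and then localizes, while you unwind the localization and additionally invoke Lemma \ref{lemma: non_zero_divisor}; both variants are valid.
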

\begin{proof}
By the minimality of $P_{\scriptscriptstyle V}$ the homomorphism
$\K[\bfs\Lambda,\bfs Z]/(P_{\scriptscriptstyle V})\rightarrow
\K[V][\mathbf{\Lambda]}$ above is injective, and thus so is $\Phi$.
To prove surjectivity, by
\eqref{ident:partial_derivatives_chow_form} we have
$ \xi_k\!=-\frac{\partial P_{\scriptscriptstyle V}\!/\partial
\Lambda_{n-s+1,k}(\bfs\Lambda,\bfs\Lambda\bfs\xi)}{ \partial
P_{\scriptscriptstyle V}/\partial Z_{n-s+1}} $ in
$\K[V][\bfs\Lambda]_{\partial P_{\scriptscriptstyle V}/\partial
Z_{n-s+1}}$ for  $1\leq k \leq n$. It follows that
\begin{equation}\label{eq:Phi_surjective}
\xi_k=\Phi \left( -\frac{[\partial P_{\scriptscriptstyle V}
/\partial \Lambda_{n-s+1,k}]_{\textrm{mod} P_{\scriptscriptstyle
V}}}{\partial P_{\scriptscriptstyle V}/\partial Z_{n-s+1}}  \right)
\end{equation}
for  $1\leq k \leq n$. Since  $\xi_1, \dots, \xi_n$ generate
$\K[V][\bfs\Lambda]_{\partial P_{\scriptscriptstyle V}/\partial
Z_{n-s+1}}$ as a $\K[\bfs\Lambda,\bfs Z]_{\partial
P_{\scriptscriptstyle V}/\partial Z_{n-s+1}}$--algebra, the lemma
follows.
\end{proof}

We shall also need the following technical result.
\begin{lema}\label{lemma: cardinality_of_the_fiber} For any
$F\in \K[\bfs X]$, let $F_\Lambda\in \K[\bfs\Lambda, \bfs Z]$ be any
polynomial such that
\begin{equation}\label{eq:cardinality_fiber_lemma_1}
F\left(  -\frac{\partial P_{\scriptscriptstyle V} /\partial
\Lambda_{n-s+1,1}}{\partial P_{\scriptscriptstyle V}/\partial
Z_{n-s+1}},\dots, -\frac{\partial P_{\scriptscriptstyle V} /\partial
\Lambda_{n-s+1,n}}{\partial P_{\scriptscriptstyle V}/\partial
Z_{n-s+1}} \right)=\frac{F_{\scriptscriptstyle \Lambda}}{(\partial
P_{\scriptscriptstyle V}/\partial Z_{n-s+1})^\eta}
\end{equation}
for some $\eta\in \mathbb{N}$. If $F$ vanishes on $V$, then
$F_{\scriptscriptstyle \Lambda}$ is a multiple of
$P_{\scriptscriptstyle V}$. Further, for $1\leq i \leq n-s+1$, the
following polynomial $H_i\in \mathbb{Z}[\bfs\Lambda, \bfs Z]$ is a
multiple of $P_{\scriptscriptstyle V}$:
\begin{equation}\label{eq:cardinality_fiber_lemma_2}
H_i:=\frac{\partial P_{\scriptscriptstyle V}}{\partial
Z_{n-s+1}}\,Z_i + \sum_{j=1}^{n}\Lambda_{ij}\frac{\partial
P_{\scriptscriptstyle V} }{\partial \Lambda_{n-s+1,j}}.
\end{equation}
\end{lema}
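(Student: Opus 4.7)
The plan is to derive both claims from the minimality property of the Chow form $P_{\scriptscriptstyle V}$, namely that any $G\in\K[\bfs\Lambda,\bfs Z]$ with $G(\bfs\Lambda,\bfs\Lambda\bfs\xi)=0$ in $\K[V][\bfs\Lambda]$ is divisible by $P_{\scriptscriptstyle V}$. The bridge is the identity \eqref{ident:partial_derivatives_chow_form}, which after division by $\partial P_{\scriptscriptstyle V}/\partial Z_{n-s+1}$ expresses $\xi_k$ as $-(\partial P_{\scriptscriptstyle V}/\partial\Lambda_{n-s+1,k})/(\partial P_{\scriptscriptstyle V}/\partial Z_{n-s+1})$ evaluated at $(\bfs\Lambda,\bfs\Lambda\bfs\xi)$, in the localized algebra $\K[V][\bfs\Lambda]_{\partial P_{\scriptscriptstyle V}/\partial Z_{n-s+1}}$; equivalently, this identity encodes the inverse of $\Phi$ from \eqref{def:Phi_homomorphism}.

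For the first assertion, I would substitute $(\bfs\Lambda,\bfs\Lambda\bfs\xi)$ for $(\bfs\Lambda,\bfs Z)$ in \eqref{eq:cardinality_fiber_lemma_1}. Using the expression above for each $\xi_k$, the left--hand side becomes $F(\xi_1,\ldots,\xi_n)=0$ in $\K[V][\bfs\Lambda]_{\partial P_{\scriptscriptstyle V}/\partial Z_{n-s+1}}$, because $F\in\mathcal{I}(V)$. Thus $F_{\scriptscriptstyle\Lambda}(\bfs\Lambda,\bfs\Lambda\bfs\xi)/(\partial P_{\scriptscriptstyle V}/\partial Z_{n-s+1}(\bfs\Lambda,\bfs\Lambda\bfs\xi))^{\eta}$ vanishes in the localization, i.e.\ some power of $(\partial P_{\scriptscriptstyle V}/\partial Z_{n-s+1})(\bfs\Lambda,\bfs\Lambda\bfs\xi)$ annihilates $F_{\scriptscriptstyle\Lambda}(\bfs\Lambda,\bfs\Lambda\bfs\xi)$ in $\K[V][\bfs\Lambda]$. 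By Lemma \ref{lemma: non_zero_divisor}, $\partial P_{\scriptscriptstyle V}/\partial Z_{n-s+1}$ is not a zero divisor in this algebra, so $F_{\scriptscriptstyle\Lambda}(\bfs\Lambda,\bfs\Lambda\bfs\xi)=0$, and the minimality of $P_{\scriptscriptstyle V}$ yields $P_{\scriptscriptstyle V}\mid F_{\scriptscriptstyle\Lambda}$.

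For the second assertion, the strategy is to evaluate $H_i$ at $(\bfs\Lambda,\bfs\Lambda\bfs\xi)$ directly. Substituting $Z_i\mapsto\bfs\Lambda_i\cdot\bfs\xi=\sum_j\Lambda_{ij}\xi_j$ in \eqref{eq:cardinality_fiber_lemma_2} and regrouping yields
\[
H_i(\bfs\Lambda,\bfs\Lambda\bfs\xi)=\sum_{j=1}^n\Lambda_{ij}\left(\frac{\partial P_{\scriptscriptstyle V}}{\partial Z_{n-s+1}}(\bfs\Lambda,\bfs\Lambda\bfs\xi)\,\xi_j+\frac{\partial P_{\scriptscriptstyle V}}{\partial\Lambda_{n-s+1,j}}(\bfs\Lambda,\bfs\Lambda\bfs\xi)\right),
\]
and each parenthesized term vanishes by \eqref{ident:partial_derivatives_chow_form}. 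Hence $H_i(\bfs\Lambda,\bfs\Lambda\bfs\xi)=0$ in $\K[V][\bfs\Lambda]$, and the minimality of $P_{\scriptscriptstyle V}$ gives $P_{\scriptscriptstyle V}\mid H_i$.

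The point needing care is the first assertion, since $F_{\scriptscriptstyle\Lambda}$ is not uniquely determined by \eqref{eq:cardinality_fiber_lemma_1}: different ways of clearing denominators produce representatives that differ by multiples of $P_{\scriptscriptstyle V}$, so we must pass through the localized ring and back. The role of Lemma \ref{lemma: non_zero_divisor} is precisely to make this round trip legitimate, by ruling out spurious zero divisors introduced when clearing powers of $\partial P_{\scriptscriptstyle V}/\partial Z_{n-s+1}$.
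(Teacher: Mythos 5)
Your proposal is correct and takes essentially the paper's approach: for the first assertion it is the paper's argument verbatim in substance — push \eqref{eq:cardinality_fiber_lemma_1} through the localized substitution homomorphism $\K[\bfs\Lambda,\bfs Z]_{\partial P_{\scriptscriptstyle V}/\partial Z_{n-s+1}}\to\K[V][\bfs\Lambda]_{\partial P_{\scriptscriptstyle V}/\partial Z_{n-s+1}}$ (this is exactly what $\Phi$ and \eqref{eq:Phi_surjective} encode), identify the left--hand side with $F(\bfs\xi)=0$ via \eqref{ident:partial_derivatives_chow_form}, use Lemma \ref{lemma: non_zero_divisor} to clear the power of $\partial P_{\scriptscriptstyle V}/\partial Z_{n-s+1}$, and conclude by the minimality of $P_{\scriptscriptstyle V}$. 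For the second assertion you evaluate $H_i$ at $(\bfs\Lambda,\bfs\Lambda\bfs\xi)$ directly in $\K[V][\bfs\Lambda]$ rather than computing $\Phi^{-1}([Z_i]_{\mathrm{mod}\,P_{\scriptscriptstyle V}})$ as the paper does, but this is only a cosmetic variant of the same use of \eqref{ident:partial_derivatives_chow_form} and minimality — indeed slightly more direct, since it bypasses clearing the denominator modulo $P_{\scriptscriptstyle V}$.
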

\begin{proof}
Considering \eqref{eq:cardinality_fiber_lemma_1} modulo
$P_{\scriptscriptstyle V}$ and applying $\Phi$ to both sides, by
\eqref{eq:Phi_surjective} we see that
\[
F(\bfs\xi)=\frac{F_{\scriptstyle\Lambda}(\bfs\Lambda,\bfs\Lambda\bfs\xi)}{(\partial
P_{\scriptscriptstyle V}/\partial Z_{n-s+1})^\eta}.
\]
Since $F(\bfs\xi)=0$ and $\partial P_{\scriptscriptstyle V}/\partial
Z_{n-s+1}$ is not a zero divisor of $\K[V][\bfs\Lambda]$ (Lemma
\ref{lemma: non_zero_divisor}), we conclude that
$F_{\scriptstyle\Lambda}(\bfs\Lambda,\bfs\Lambda\bfs\xi)=0$. By the
minimality of $P_{\scriptscriptstyle V}$ the first assertion
follows.

To prove the second assertion, we observe that
\begin{align}\label{inverse_phi_equation}
[Z_i]_{\textrm{mod} P_{\scriptscriptstyle V}} =
\Phi^{-1}(\bfs\Lambda_i \cdot \bfs\xi)  =  \sum_{j=1}^n\Lambda_{ij}
\,\Phi^{-1}(\xi_j)
\end{align}
for $1 \leq i \leq n-s+1$. By this and \eqref{eq:Phi_surjective} it
follows that
$$
[Z_i]_{\textrm{mod} P_{\scriptscriptstyle
V}}=-\sum_{j=1}^n\Lambda_{ij} \frac{[\partial P_{\scriptscriptstyle
V} /\partial \Lambda_{n-s+1,j}]_{\textrm{mod} P_{\scriptscriptstyle
V}}}{\partial P_{\scriptscriptstyle V}/\partial Z_{n-s+1}}
$$
for  $1 \leq i \leq n-s+1$, which readily implies the second
assertion of the lemma.
\end{proof}

The next result, combined with Proposition \ref{prop:
lifting_point_and_rank}, will yield the condition characterizing
lifting points we are looking for.
\begin{prop}\label{prop: cardinality_of_the_fiber}
Let $\bfs\lambda\in \K^{(n-s+1)n}$ and $\bfs p\in  \K^{n-s}$ be such
that $A_{\scriptscriptstyle
V}(\bfs\lambda^*)\rho_{\scriptscriptstyle V}(\bfs\lambda, \bfs
p)\neq 0$, let $Y_i:=\bfs\lambda_i \cdot \bfs X$ for $1\leq i \leq
n-s$ and $\pi: V\rightarrow \mathbb{A}^{n-s}$ the mapping defined by
$Y_1,\dots, Y_{n-s}$. Then $\# \pi^{-1}(\bfs p)=\delta$.
\end{prop}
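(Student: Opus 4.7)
The plan is to compare $\pi^{-1}(\bfs p)$ with the roots of the univariate polynomial $q(T):=P_{\scriptscriptstyle V}(\bfs\lambda,\bfs p,T)\in\K[T]$. Since $A_{\scriptscriptstyle V}(\bfs\lambda^*)$ is, by definition, the coefficient of $Z_{n-s+1}^{\delta}$ in $P_{\scriptscriptstyle V}(\bfs\lambda,Z_1,\dots,Z_{n-s+1})$, the hypothesis $A_{\scriptscriptstyle V}(\bfs\lambda^*)\neq 0$ gives $\deg_T q=\delta$. As $\rho_{\scriptscriptstyle V}(\bfs\lambda,\bfs p)$ is the discriminant of $q$ in $T$, the hypothesis $\rho_{\scriptscriptstyle V}(\bfs\lambda,\bfs p)\neq 0$ will guarantee that $q$ has $\delta$ distinct roots in $\overline{\K}$ and, crucially, that $q'(t)=(\partial P_{\scriptscriptstyle V}/\partial Z_{n-s+1})(\bfs\lambda,\bfs p,t)\neq 0$ at every such root. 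Note also that Proposition \ref{prop: finite_fiber_and_finite_morphism} applies, so $\pi$ is a finite morphism and $\pi^{-1}(\bfs p)$ is finite.

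Next I would exhibit a bijection $\varphi:\pi^{-1}(\bfs p)\to\{t\in\overline{\K}:q(t)=0\}$ given by $\varphi(\bfs x):=\bfs\lambda_{n-s+1}\cdot\bfs x$. That $\varphi$ lands in the root set of $q$ follows from \eqref{ident:chow form_fundamental}: specializing $P_{\scriptscriptstyle V}(\bfs\Lambda,\bfs\Lambda\bfs\xi)=0$ at $\bfs\Lambda=\bfs\lambda$ and at any $\bfs x\in\pi^{-1}(\bfs p)$ yields $q(\bfs\lambda_{n-s+1}\cdot\bfs x)=0$, since $\bfs\lambda_i\cdot\bfs x=p_i$ for $1\le i\le n-s$. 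Injectivity will come from specializing \eqref{ident:partial_derivatives_chow_form} at $(\bfs\lambda,\bfs x)$ and dividing by the nonzero quantity $q'(\varphi(\bfs x))$, which recovers each coordinate $x_k$ as a rational expression in $\varphi(\bfs x)$ alone.

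For surjectivity, given a root $t$ of $q$ I would define
\[
x_k(t):=-\frac{(\partial P_{\scriptscriptstyle V}/\partial \Lambda_{n-s+1,k})(\bfs\lambda,\bfs p,t)}{(\partial P_{\scriptscriptstyle V}/\partial Z_{n-s+1})(\bfs\lambda,\bfs p,t)}\qquad(1\le k\le n),
\]
which is well-defined because the denominator $q'(t)$ is nonzero, and set $\bfs x(t):=(x_1(t),\dots,x_n(t))$. The identities $\bfs\lambda_i\cdot\bfs x(t)=p_i$ for $1\le i\le n-s$ and $\bfs\lambda_{n-s+1}\cdot\bfs x(t)=t$ will be obtained by specializing the polynomials $H_i$ of \eqref{eq:cardinality_fiber_lemma_2} at $(\bfs\lambda,\bfs p,t)$: since Lemma \ref{lemma: cardinality_of_the_fiber} shows $H_i\in(P_{\scriptscriptstyle V})$ and $q(t)=0$, each $H_i(\bfs\lambda,\bfs p,t)$ vanishes, and dividing by $q'(t)$ produces the required identities. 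The membership $\bfs x(t)\in V$ will follow from the first assertion of Lemma \ref{lemma: cardinality_of_the_fiber}: for every $F\in\mathcal{I}(V)$ the associated $F_{\scriptscriptstyle\Lambda}$ is a multiple of $P_{\scriptscriptstyle V}$, so $F(\bfs x(t))=F_{\scriptscriptstyle\Lambda}(\bfs\lambda,\bfs p,t)/q'(t)^{\eta}=0$. Clearly $\varphi\circ\bfs x=\mathrm{id}$, so $\bfs x$ is a section of $\varphi$, which is therefore surjective.

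Combining both directions, $\varphi$ is a bijection, whence $\#\pi^{-1}(\bfs p)=\#\{t:q(t)=0\}=\delta$. The principal technical step is the surjectivity: one has to turn the formal divisibility statements $H_i\in(P_{\scriptscriptstyle V})$ and $F_{\scriptscriptstyle\Lambda}\in(P_{\scriptscriptstyle V})$ of Lemma \ref{lemma: cardinality_of_the_fiber} into point-theoretic assertions at the specialization $(\bfs\lambda,\bfs p,t)$, and the nonvanishing of $\rho_{\scriptscriptstyle V}(\bfs\lambda,\bfs p)$ is precisely what legalizes the denominators $q'(t)$ arising in the explicit formulas for $\bfs x(t)$.
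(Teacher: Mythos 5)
Your argument is correct, and its core coincides with the paper's: from each of the $\delta$ distinct roots $t$ of $q(T)=P_{\scriptscriptstyle V}(\bfs\lambda,\bfs p,T)$ you build the point $\bfs x(t)$ by the rational formulas coming from Lemma \ref{lemma: cardinality_of_the_fiber}, and verify via $F_{\scriptscriptstyle\Lambda}$ and the $H_i$ that $\bfs x(t)\in\pi^{-1}(\bfs p)$ with $\bfs\lambda_{n-s+1}\cdot\bfs x(t)=t$; this is exactly the paper's construction of the points $\bfs x^1,\dots,\bfs x^{\delta}$, giving $\#\pi^{-1}(\bfs p)\ge\delta$. Where you genuinely diverge is the upper bound: the paper invokes Proposition \ref{prop: finite_fiber_and_finite_morphism} to know the fiber is finite and then applies the B\'ezout inequality \eqref{bezout_inequality} to get $\#\pi^{-1}(\bfs p)\le\deg V=\delta$, whereas you specialize \eqref{ident:partial_derivatives_chow_form} at $(\bfs\lambda,\bfs x)$ and use $q'(\varphi(\bfs x))\neq 0$ (separability of $q$, from $\rho_{\scriptscriptstyle V}(\bfs\lambda,\bfs p)\neq 0$ together with $A_{\scriptscriptstyle V}(\bfs\lambda^*)\neq 0$, which keeps the degree at $\delta$ so the resultant specializes correctly) to recover every coordinate of $\bfs x$ from $\varphi(\bfs x)$, so the primitive-element map $\varphi$ is injective into the root set and $\#\pi^{-1}(\bfs p)\le\delta$ follows without B\'ezout, and indeed without even needing finiteness of $\pi$ at this point. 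Both routes are sound; yours stays entirely inside the Chow-form formalism and anticipates the primitive-element statement of Theorem \ref{th:_lifting_point_rank_and_finite_morphism}, while the paper's is slightly shorter given that the finiteness of $\pi$ and the degree machinery are already available.
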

\begin{proof}
By the choice of $\bfs\lambda$, the polynomial
$P_{\scriptscriptstyle V}(\bfs\lambda, \bfs p, Z_{n-s+1})$ has
degree $\delta$. Since
\[
\rho_{\scriptscriptstyle V}(\bfs\lambda, \bfs
p)=\mbox{Res}_{Z_{n-s+1}}\biggl(P_{\scriptscriptstyle
V}(\bfs\lambda,\bfs p, Z_{n-s+1}),\frac{\partial
P_{\scriptscriptstyle V}}{\partial Z_{n-s+1}}(\bfs\lambda, \bfs p,
Z_{n-s+1})\biggr)
\]
and $\rho_{\scriptscriptstyle V}(\bfs\lambda, \bfs p)\neq 0$, the
polynomial $P_{\scriptscriptstyle V}(\bfs\lambda, \bfs p,
Z_{n-s+1})$ is separable. Let $z_1, \dots, z_{\delta} \in
\overline{\K}$ be the $\delta$ different roots of
$P_{\scriptscriptstyle V}(\bfs\lambda, \bfs p, Z_{n-s+1})$ and set
$\bfs y^k:=(\bfs p,z_k)$ for $1 \leq k \leq \delta$. We have that
$\partial P_{\scriptscriptstyle V}/\partial
Z_{n-s+1}(\bfs\lambda,\bfs y^k)\neq 0$ for $1 \leq k \leq \delta$,
and thus the point
\[
\bfs x^k:=\biggl(-\frac{\partial P_{\scriptscriptstyle V}/\partial
\Lambda_{n-s+1,1}(\bfs\lambda,\bfs y^k)}{\partial
P_{\scriptscriptstyle V}/\partial Z_{n-s+1}(\bfs\lambda,\bfs
y^k)},\dots, -\frac{\partial P_{\scriptscriptstyle V}/\partial
\Lambda_{n-s+1,n}(\bfs\lambda,\bfs y^k)}{\partial
P_{\scriptscriptstyle V}/\partial Z_{n-s+1}(\bfs\lambda,\bfs y^k)}
\biggr) \in \mathbb{A}^n
\]
is well defined for $1\leq k \leq \delta$.

We claim that $\bfs x^1, \dots,\bfs x^\delta$ are pairwise distinct
and $\pi^{-1}(\bfs p)=\{\bfs x^1, \dots,\bfs x^\delta\}$. Indeed,
let $F\in \K[\bfs X]$ be any polynomial vanishing on $V$ and
$F_{\scriptstyle\Lambda}\in \K[\bfs\Lambda,\bfs Z]$ a corresponding
polynomial according to \eqref{eq:cardinality_fiber_lemma_1}. By
Lemma \ref{lemma: cardinality_of_the_fiber} we have
$F_{\scriptstyle\Lambda}(\bfs\lambda,\bfs y^k)=0$, and thus $F(\bfs
x^k)=0$, for $1 \leq k \leq \delta$. This proves that $\bfs x^1,
\dots,\bfs x^\delta$ belong to $V$. Further, Lemma \ref{lemma:
cardinality_of_the_fiber} also shows that
\[
H_i(\bfs\lambda,\bfs y^k)=\frac{\partial P_{\scriptscriptstyle
V}}{\partial Z_{n-s+1}}(\bfs\lambda,\bfs y^k)
y^k_i+\sum_{j=1}^n\lambda_{ij}\frac{\partial P_{\scriptscriptstyle
V}}{\partial \Lambda_{n-s+1,j}}(\bfs\lambda,\bfs y^k)=0
\]
for $1\leq i \leq n-s+1$ and $1 \leq k \leq \delta$. By the
definition of $\bfs x^k$ it follows that
\begin{equation}\label{eq:cardinality_fiber_prop}
y^k_i=\bfs\lambda_i\cdot \bfs x^k \quad (1\leq i\leq n-s+1).
\end{equation}
Since $y^k_i=p_i$ for $1\leq i \leq n-s$,
\eqref{eq:cardinality_fiber_prop} implies that $\pi(\bfs x^k)=\bfs
p$ and $z_k=\bfs\lambda_{n-s+1}\cdot \bfs x^k$ for $1\leq k \leq
\delta$. Since the $z_k$ are pairwise distinct, we deduce that so
are the $\bfs x^k$. This proves that $\#\pi^{-1}(\bfs p)\geq
\delta$. On the other hand, since $\pi$ is a finite morphism
(Proposition \ref{prop: finite_fiber_and_finite_morphism}), the
fiber $\pi^{-1}(\bfs p)$ is finite, and by (\ref{bezout_inequality})
we have
\[
\# \pi^{-1}(\bfs p)=\deg \big(V \cap
\{Y_1-p_1=0,\dots,Y_{n-s}-p_{n-s}=0\}\bigl)\leq \deg V=\delta,
\]
which concludes the proof of the claim and the proposition.
\end{proof}

Now we are able to state the main result of this section.
\begin{teo}\label{th:_lifting_point_rank_and_finite_morphism}
Let $\bfs\lambda\in \K^{(n-s+1)n}$ and $\bfs p\in \K^{n-s}$ be such
that $A_{\scriptscriptstyle
V}(\bfs\lambda^*)\rho_{\scriptscriptstyle V}(\bfs\lambda, \bfs
p)\neq 0$. Let $Y_i:=\bfs\lambda_i \cdot \bfs X$ for $1\leq i \leq
n-s+1$ and $R:=\K[Y_1, \dots, Y_{n-s}]$. Then:
\begin{itemize}
  \item the mapping $\pi: V \rightarrow \mathbb{A}^{n-s}$ defined by
  $Y_1, \dots, Y_{n-s}$ is a finite morphism and $Y_{n-s+1}$ induces
  a primitive element of the ring extension $R\hookrightarrow
  \K[V]$;
  \item if $\K[V]$ is a free $R$--module, then $\mathrm{rank}_{\scriptscriptstyle R}\K[V]=\delta$;
  \item  $\bfs p$ is a lifting point of $\pi$ and $Y_{n-s+1}$
  induces a primitive element of $\pi^{-1}(\bfs p)$.
\end{itemize}
\end{teo}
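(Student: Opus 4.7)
The plan is to glue together the three propositions already proved in Sections~\ref{section: Noether normalization} and~\ref{subsec: generic_condition_for_a_lifting_point}; the theorem is essentially a packaging of their conclusions once the hypotheses are checked.

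First I would observe that the assumption $\rho_{\scriptscriptstyle V}(\bfs\lambda,\bfs p)\neq 0$ forces $\rho_{\scriptscriptstyle V}(\bfs\lambda,Z_1,\ldots,Z_{n-s})$ to be a nonzero polynomial in $\K[\bfs Z]$, since an identically zero polynomial cannot take a nonzero value at any point. Together with $A_{\scriptscriptstyle V}(\bfs\lambda^*)\neq 0$, this places us under the full hypotheses of Proposition~\ref{prop: finite_fiber_and_finite_morphism}, which immediately yields that $\pi$ is a finite morphism, that $Y_{n-s+1}$ induces a primitive element of the ring extension $R\hookrightarrow \K[V]$, and that $\dim_{R'}B'\leq \delta$. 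This disposes of the first bullet.

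For the rank assertion, assume $\K[V]$ is free over $R$. Any $R$-basis of $\K[V]$ is also an $R'$-basis of $B'$, so $\mathrm{rank}_R\K[V]=\dim_{R'}B'\leq \delta$ by the previous step. The reverse inequality is where Proposition~\ref{prop: cardinality_of_the_fiber} enters: under the present hypotheses on $(\bfs\lambda,\bfs p)$ it gives $\#\pi^{-1}(\bfs p)=\delta$, and then Proposition~\ref{prop: lifting_point_and_rank} implies $\delta=\#\pi^{-1}(\bfs p)\leq \mathrm{rank}_R\K[V]$. Combining the two inequalities yields the desired equality $\mathrm{rank}_R\K[V]=\delta$.

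Finally, using $\#\pi^{-1}(\bfs p)=\mathrm{rank}_R\K[V]=\delta$, the equality case in Proposition~\ref{prop: lifting_point_and_rank} shows at once that $\bfs p$ is a lifting point of $\pi$. For the last claim I would recycle the construction inside the proof of Proposition~\ref{prop: cardinality_of_the_fiber}: the $\delta$ points $\bfs x^1,\ldots,\bfs x^\delta$ of $\pi^{-1}(\bfs p)$ were shown there to satisfy $\bfs\lambda_{n-s+1}\cdot\bfs x^k=z_k$, where $z_1,\ldots,z_\delta$ are the pairwise distinct roots of the separable polynomial $P_{\scriptscriptstyle V}(\bfs\lambda,\bfs p,Z_{n-s+1})$. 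Hence the linear form $Y_{n-s+1}$ separates the points of the fiber, so its image in the coordinate ring of $\pi^{-1}(\bfs p)$ has minimal polynomial of degree $\delta$ and therefore generates that ring as a $\K$-algebra, which is precisely the primitive element property. I do not foresee a serious obstacle: the heavy work has already been done in the quoted propositions, and this theorem is mainly a verification that the two hypotheses $A_{\scriptscriptstyle V}(\bfs\lambda^*)\neq 0$ and $\rho_{\scriptscriptstyle V}(\bfs\lambda,\bfs p)\neq 0$ suffice to trigger all of them simultaneously.
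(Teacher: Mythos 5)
Your proposal is correct, and for the first bullet, the rank identity and the lifting--point claim it follows the paper's proof verbatim: Proposition \ref{prop: finite_fiber_and_finite_morphism} for the finite morphism, the primitive element of $R\hookrightarrow\K[V]$ and the bound $\dim_{R'}B'\le\delta$, then the sandwich $\delta=\#\pi^{-1}(\bfs p)\le\mathrm{rank}_R\K[V]\le\delta$ from Propositions \ref{prop: cardinality_of_the_fiber} and \ref{prop: lifting_point_and_rank}, with the equality case of the latter giving the lifting point. The only genuine divergence is the last claim, that $Y_{n-s+1}$ induces a primitive element of $\pi^{-1}(\bfs p)$: the paper specializes the identity \eqref{ident:discriminant_chow_form} at $(\bfs\lambda,\bfs p)$ to get $\rho_{\scriptscriptstyle V}(\bfs\lambda,\bfs p)\,\xi_k=R_k(\bfs\lambda,\bfs p,\bfs\lambda_{n-s+1}\cdot\bfs\xi)$ on the fiber, so that every coordinate function is a polynomial in $Y_{n-s+1}$ and $\K\bigl[\pi^{-1}(\bfs p)\bigr]=\K[\bfs\lambda_{n-s+1}\cdot\bfs\xi]$ at once; you instead reuse the separation of the $\delta$ fiber points by $Y_{n-s+1}$ established inside the proof of Proposition \ref{prop: cardinality_of_the_fiber}. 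Your route is also valid, but the step ``minimal polynomial of degree $\delta$, therefore a generator'' silently uses $\dim_\K\K\bigl[\pi^{-1}(\bfs p)\bigr]=\delta$; you should say where this comes from, e.g.\ from the vanishing ideal of the fiber being radical and $\K$ perfect (so the dimension equals the number $\delta$ of geometric points), or from the fact you have just proved that $\bfs p$ is a lifting point, whence the ideal \eqref{def:ideal_J} is radical and has $\K$--codimension $\mathrm{rank}_R\K[V]=\delta$ by Lemma \ref{lemma: rank_fiber}. With that one sentence added your argument is complete; the paper's version has the mild advantage of yielding the explicit rational parametrization of the fiber by $Y_{n-s+1}$ that is exploited again in Section \ref{subsec: kronecker_representations_from_specializations_of_the_Chow_form}, whereas yours is slightly more elementary.
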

\begin{proof}
Proposition \ref{prop: finite_fiber_and_finite_morphism} proves the
first assertion. Combining Propositions \ref{prop:
finite_fiber_and_finite_morphism}, \ref{prop:
lifting_point_and_rank} and \ref{prop: cardinality_of_the_fiber} we
deduce that $\delta=\#\pi^{-1}(\bfs p)\le
\mathrm{rank}_{\scriptscriptstyle R}\K[V]\le \delta$. It follows
that $\#\pi^{-1}(\bfs p)=\delta$ and $\bfs p$ is a lifting point of
$\pi$. Next, let $\bfs p:=(p_1,\dots, p_{n-s})$. By substituting
$\bfs\lambda$ for $\bfs\Lambda$ and $p_1,\dots,p_{n-s}$ for
$\bfs\lambda_1 \cdot \bfs\xi, \dots, \bfs\lambda_{n-s} \cdot
\bfs\xi$  in \eqref{ident:discriminant_chow_form}, we deduce that
\[
\rho_{\scriptscriptstyle V}(\bfs\lambda,\bfs
p)\xi_k=R_k(\bfs\lambda,\bfs p, \bfs\lambda_{n-s+1} \cdot \bfs\xi)
\]
in $\pi^{-1}(\bfs p)$ for $1\leq k \leq n$. Since
$\rho_{\scriptscriptstyle V}(\bfs\lambda,\bfs p)\neq 0$, we conclude
that $\K\bigl[\pi^{-1}(\bfs p)\bigr]=\K[\bfs\lambda_{n-s+1} \cdot
\bfs\xi]$, which proves that $Y_{n-s+1}$ induces a primitive element
of $\pi^{-1}(\bfs p)$.
\end{proof}
%
%
\subsection{Kronecker representations from specializations of the Chow form}
 \label{subsec: kronecker_representations_from_specializations_of_the_Chow_form}
Let be given $\bfs\lambda:=(\lambda_{ij})_{1\leq i \leq n-s+1, 1\leq
j \leq n}\in \K^{(n-s+1)n}$ and $\bfs p:=(p_1,\dots,p_{n-s})\in
\K^{n-s}$ satisfying the hypotheses of Proposition \ref{prop:
lifting_point_and_rank} and Theorem
\ref{th:_lifting_point_rank_and_finite_morphism}. Define
$Y_i:=\bfs\lambda_i\cdot \bfs X$ for $1\leq i \leq n-s+1$, and let
$R:=\K[Y_1, \dots, Y_{n-s}]$ and $B:=\K[V]$. Assume that we are also
given linear forms $Y_{n-s+2},\dots, Y_n\in \K[\bfs X]$ such that
$Y_1\klk Y_n$ are linearly independent. Then
   \begin{itemize}
     \item $Y_1,\dots, Y_n$ are in Noether position with respect to
     $\mathcal{I}$;
     \item $\bfs p$ is a lifting point of the finite morphism
     $\pi:V\rightarrow\mathbb{A}^{n-s}$ defined by
     $Y_1,\dots,Y_{n-s}$;
     \item $B$ is a free $R$--module of finite rank equal to $\delta$.
   \end{itemize}
We shall show that Kronecker representations of the definining
ideals of $V$, the lifting fiber $V_{\bfs p}$ and the lifting curve
$W_{\bfs p^*}$ can be obtained by specializing any Chow form of $V$.
This will provide a criterion to check that the modular reductions
considered during our main algorithm behave properly.

Let $P_{\scriptscriptstyle V}\in \K[\bfs\Lambda, \bfs Z]$ be a Chow
form of $V$, and let $A_{\scriptscriptstyle V}\in
\K[\bfs\Lambda_1,\dots,\bfs\Lambda_{n-s}]$ and $\rho_{V}\in
\K[\bfs\Lambda,Z_1,\dots,Z_{n-s}]$ be defined as in Section
\ref{subsec: generic_condition_for_a_lifting_point}. By
\eqref{ident:chow form_fundamental} and
\eqref{ident:partial_derivatives_chow_form}, we have
\begin{equation}\label{ident:chow_form__fundamental_1}
P_{\scriptscriptstyle V}(\bfs\Lambda,\bfs\Lambda\bfs\xi)  =0,\quad
\frac{\partial P_{\scriptscriptstyle V}}{\partial
Z_{n-s+1}}(\bfs\Lambda,\bfs\Lambda\bfs\xi)\xi_k + \frac{\partial
P_{\scriptscriptstyle V}}{\partial
\Lambda_{n-s+1,k}}(\bfs\Lambda,\bfs\Lambda\bfs\xi)=0 \quad (1\leq k
\leq n),
\end{equation}
in $\K[V][\bfs\Lambda]$. Let $T$ be a new indeterminate and define
$Q,W_{n-s+2}\klk W_n\in R[T]$ by
$$
Q:=\frac{P_{\scriptscriptstyle V}(\bfs\lambda, Y_1, \dots,
Y_{n-s},T)}{A_{\scriptscriptstyle V}(\bfs\lambda^*)},\quad
W_j:=-\sum_{k=1}^{n}\frac{\lambda_{jk}}{A_{\scriptscriptstyle
V}(\bfs\lambda^*)}\frac{\partial P_{\scriptscriptstyle V}}{\partial
\Lambda_{n-s+1,k}}(\bfs\lambda, Y_1, \dots, Y_{n-s},T)
$$
for $n-s+2\leq j \leq n$. Substituting $\bfs\lambda$ for
$\bfs\Lambda$ in (\ref{ident:chow_form__fundamental_1}) we deduce
that
\begin{equation}\label{eq:kronecker_representation_I}
Q(Y_{n-s+1}) \in \mathcal{I},\quad
Q'(Y_{n-s+1})Y_j-W_j(Y_{n-s+1})\in \mathcal{I} \quad (n-s+2\leq j
\leq n),
\end{equation}
where $Q'$ denotes the first derivative of $Q$ with respect to $T$.

Note that $Q$ is a monic polynomial of degree $\delta$ and $\deg
W_j< \delta$ for $n-s+2\leq j \leq n$. On the other hand, by the
choice of $\bfs\lambda$ we have that the discriminant of $Q$, which
is equal to ${\rho_{\scriptscriptstyle V}(\bfs\lambda, Y_1,\dots,
Y_{n-s})}/{A_{\scriptscriptstyle V}(\bfs\lambda^*)^{2\delta-1}}$, is
a nonzero element of $R$. Thus $Q$ is square--free and $Q'$ is
invertible modulo $Q$. In particular, $Q'(Y_{n-s+1})$ is invertible
in $B':=R'[Y_{n-s+1}, \dots, Y_n]/\mathcal{I}^e$, and
\eqref{eq:kronecker_representation_I} shows that the homomorphism of
$R'$--algebras
$R'[T]/(Q) \rightarrow B'$,
which maps $T \mod Q$ to $Y_{n-s+1} \mod \mathcal{I}^e$, is
surjective. This means that $Y_{n-s+1}$ is a primitive element for
$\mathcal{I}$. On the other hand, since $\dim_{R'}B'=\delta$, the
above homomorphism is an isomorphism. We conclude that $Q$ is the
minimal polynomial of $Y_{n-s+1}$ over $R'$ modulo $\mathcal{I}^e$,
and we have the following identity of ideals in $R'[Y_{n-s+1},
\dots, Y_n]$:
$$
\mathcal{I}^e\!=\!\bigl(Q(Y_{n-s+1}),
\!Q'(Y_{n-s+1})Y_{n-s+2}-W_{n-s+2}(Y_{n-s+1}),
\dots,\!Q'(Y_{n-s+1})Y_n-W_n(Y_{n-s+1})\bigr).
$$
Further, by construction $\deg_TW_j\le\delta-1$ for $n-s+2\le j\le
n$. As a consequence, we obtain the following result.
\begin{prop}\label{prop: kronecker_representation_of_I}
The polynomials $Q, W_{n-s+2}, \dots, W_n$ form the Kronecker
representation of $\mathcal{I}$ with primitive element $Y_{n-s+1}$.
\end{prop}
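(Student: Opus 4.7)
The plan is to verify the three items in the definition of a Kronecker representation directly from the properties of the Chow form already established. By Theorem \ref{th:_lifting_point_rank_and_finite_morphism}, under our assumptions on $\bfs\lambda$ the change of variables $Y_1,\dots,Y_n$ puts $\mathcal{I}$ in Noether position with $Y_{n-s+1}$ as candidate primitive element, and $B$ is a free $R$-module of rank $\delta$, so $\dim_{R'}B'=\delta$. I thus need only check that the polynomials $Q$ and $W_{n-s+2},\dots,W_n$ defined above are (a) of the correct shape and degree, (b) satisfy $Q(Y_{n-s+1})\in\mathcal{I}$ and $Q'(Y_{n-s+1})Y_j-W_j(Y_{n-s+1})\in\mathcal{I}$, and (c) realize the ideal identity \eqref{ident:kronecker_repres_ideals}.

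First I would verify (a) and (b) from the construction. By feature \eqref{features_chow_form_item_2}, $P_{\scriptscriptstyle V}$ has $Z$-degree $\delta$ in $Z_{n-s+1}$ with leading coefficient $A_{\scriptscriptstyle V}(\bfs\Lambda^*)$; since $A_{\scriptscriptstyle V}(\bfs\lambda^*)\neq 0$, the specialization $Q$ is monic of degree $\delta$ in $T$, and the partial derivatives $\partial P_{\scriptscriptstyle V}/\partial \Lambda_{n-s+1,k}$ specialize to polynomials of $T$-degree strictly less than $\delta$, so each $W_j$ has $T$-degree at most $\delta-1$. Substituting $\bfs\lambda$ for $\bfs\Lambda$ in the fundamental identities \eqref{ident:chow_form__fundamental_1} and using $Y_i = \bfs\lambda_i\cdot\bfs X$ yields \eqref{eq:kronecker_representation_I}.

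For (c), I would argue as follows. The discriminant of $Q$ with respect to $T$ equals $\rho_{\scriptscriptstyle V}(\bfs\lambda,Y_1,\dots,Y_{n-s})/A_{\scriptscriptstyle V}(\bfs\lambda^*)^{2\delta-1}$, which is a nonzero element of $R$ by the hypothesis $\rho_{\scriptscriptstyle V}(\bfs\lambda,\bfs p)\neq 0$ (in particular $\rho_{\scriptscriptstyle V}(\bfs\lambda,Y_1,\dots,Y_{n-s})\not\equiv 0$). Hence $Q$ is square-free over $R'$, so $Q'$ is invertible modulo $Q$, and in particular $Q'(Y_{n-s+1})$ is a unit in $B'$. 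Then \eqref{eq:kronecker_representation_I} provides a well-defined surjective homomorphism of $R'$-algebras $R'[T]/(Q)\to B'$ sending $T$ to the class of $Y_{n-s+1}$, because each $Y_j$ ($j\ge n-s+2$) lies in the image as $Q'(T)^{-1}W_j(T)$. Since both sides have $R'$-dimension $\delta$, this map is an isomorphism, which shows simultaneously that $Y_{n-s+1}$ is a primitive element, that $Q$ is the minimal polynomial of $Y_{n-s+1}$ modulo $\mathcal{I}$, and that the ideal identity \eqref{ident:kronecker_repres_ideals} holds.

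The only mildly delicate point is checking that the surjective map $R'[T]/(Q)\to B'$ is well-defined and that dimension-counting forces it to be an isomorphism; once this is granted, the ideal identity is a direct reformulation of the isomorphism, with the explicit parametrizations of $Y_{n-s+2},\dots,Y_n$ supplied by the $W_j$. No further computation should be required.
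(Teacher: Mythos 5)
Your argument is correct and follows essentially the same route as the paper: specialize the Chow form identities to obtain \eqref{eq:kronecker_representation_I}, use the nonvanishing of $\rho_{\scriptscriptstyle V}(\bfs\lambda,Y_1,\dots,Y_{n-s})/A_{\scriptscriptstyle V}(\bfs\lambda^*)^{2\delta-1}$ to get $Q$ square--free with $Q'$ invertible modulo $Q$, and then conclude from the surjection $R'[T]/(Q)\to B'$ together with $\dim_{R'}B'=\delta$ that it is an isomorphism, which gives primitivity of $Y_{n-s+1}$, minimality of $Q$, and the ideal identity \eqref{ident:kronecker_repres_ideals}. This is precisely the derivation the paper carries out in the discussion preceding the proposition.
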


\begin{remark}\label{total_degrees_remark}
Since $\deg_{\scriptscriptstyle (Z_1,\dots,Z_{n-s+1})}
P_{\scriptscriptstyle V}=\deg_{\scriptscriptstyle
Z_{n-s+1}}P_{\scriptscriptstyle V}=\delta$ (see Section \ref{subsec:
Chow form equidimensional variety}), we have
$\deg_{(Y_1,\dots,Y_{n-s},T)} Q=\delta$ and
$\deg_{(Y_1,\dots,Y_{n-s},T)} W_j\le \delta$ for ${n-s+2}\le j\le
n$.
\end{remark}

Now, let $\mathcal{J}:=\mathcal{I}+
(Y_1-p_1,\dots,Y_{n-s}-p_{n-s})$. Denote as in Lemma \ref{lemma:
lifting_curve_isomorphism} by $\overline{\mathcal{J}}$ the image of
$\mathcal{J}$ in $\K[Y_{n-s+1},\dots,Y_n]$. Substituting
$p_1,\dots,p_{n-s}$ for $Y_1,\dots, Y_{n-s}$ in
\eqref{eq:kronecker_representation_I} we obtain
\begin{align}\label{eq:kronecker_representation_J}
Q(\bfs p,Y_{n-s+1})&\in \overline{\mathcal{J}}, \ \, Q'(\bfs
p,Y_{n-s+1})Y_j-W_j(\bfs p,Y_{n-s+1})\in \overline{\mathcal{J}} \ \,
(n-s+2\leq j \leq n).
\end{align}
The polynomial $Q(\bfs p,T)$ is monic of degree $\delta$ and $\deg
W_j(\bfs p,T)< \delta$ for $n-s+2\leq j \leq n$. The discriminant of
$Q(\bfs p,T)$ is ${\rho_{\scriptscriptstyle V}(\bfs\lambda, \bfs
p)}/{A_{\scriptscriptstyle V}(\bfs\lambda^*)}^{2\delta-1}$, and thus
nonzero due to the choice of $\bfs\lambda$ and $\bfs p$. It follows
that $Q(\bfs p,T)$ is square--free and $Q'(\bfs p,T)$ is invertible
modulo $Q(\bfs p,T)$. This implies that $Q'(\bfs p,Y_{n-s+1})$ is
invertible in $\K[Y_{n-s+1},\dots,Y_n]/\overline{\mathcal{J}}$, and
\eqref{eq:kronecker_representation_J} shows that the homomorphism of
$\K$--algebras
\[
\K[T]/\bigl(Q(\bfs p,T)\bigr) \rightarrow
K[Y_{n-s+1},\dots,Y_n]/\overline{\mathcal{J}},\quad T\!\!\!\! \mod
Q(\bfs p,T)\mapsto Y_{n-s+1}\!\!\!\! \mod \overline{\mathcal{J}},
\]
is surjective. This means that $Y_{n-s+1}$ induces a primitive
element for $\overline{\mathcal{J}}$. Further, since $\K[V_{\bfs
p}]\cong \K[Y_{n-s+1},\dots,Y_n]/\overline{\mathcal{J}}$ is a
$\K$--vector space of dimension equal to $\mathrm{rank}_R\K[V]$, and
$\mathrm{rank}_R\K[V]=\deg Q(\bfs p,T)=\delta$, it follows that the
above homomorphism is an isomorphism. We conclude that $Q(\bfs p,T)$
is the minimal polynomial of $Y_{n-s+1}$ over $\K$ modulo
$\overline{\mathcal{J}}$, and that the following equality of ideals
holds  in $\K[Y_{n-s+1},\dots, Y_n]$:
$$
\overline{\mathcal{J}} = \bigl(Q(\bfs p,Y_{n-s+1}), Q'(\bfs
p,Y_{n-s+1})Y_j-W_j(\bfs p,Y_{n-s+1}):n-s+2\le j\le n\bigr).
$$
%
%
Identifying $\mathcal{J}$ with its image in
$\K[Y_{n-s+1},\dots,Y_n]$, we obtain the following result.
\begin{prop}\label{prop: kronecker_representation_of_J}
The polynomials $Q(\bfs p,T), W_{n-s+2}(\bfs p,T), \dots, W_n(\bfs
p, T)$ form the Kronecker representation  of $\mathcal{J}$ with
primitive element $Y_{n-s+1}$.
\end{prop}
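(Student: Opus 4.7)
The plan is to obtain the Kronecker representation of $\mathcal{J}$ by specializing the Kronecker representation of $\mathcal{I}$ (Proposition \ref{prop: kronecker_representation_of_I}) at $Y_1=p_1,\dots,Y_{n-s}=p_{n-s}$, using the isomorphism
$$\K[Y_1,\dots,Y_n]/\mathcal{J} \;\cong\; \K[Y_{n-s+1},\dots,Y_n]/\overline{\mathcal{J}}$$
supplied by Lemma \ref{lemma: lifting_curve_isomorphism}. Substituting $p_1,\dots,p_{n-s}$ for $Y_1,\dots,Y_{n-s}$ in the identities \eqref{eq:kronecker_representation_I} yields that $Q(\bfs p, Y_{n-s+1})$ and the polynomials $Q'(\bfs p, Y_{n-s+1}) Y_j - W_j(\bfs p, Y_{n-s+1})$, $n-s+2\le j\le n$, all lie in $\overline{\mathcal{J}}$.

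The next step is to verify the nondegeneracy conditions needed to turn these membership statements into a Kronecker representation. Since $Q$ is monic of degree $\delta$ in $T$ with leading coefficient in $\K$, so is $Q(\bfs p, T)$. Its discriminant equals $\rho_{\scriptscriptstyle V}(\bfs\lambda, \bfs p)/A_{\scriptscriptstyle V}(\bfs\lambda^*)^{2\delta-1}$, which is nonzero by hypothesis on $\bfs\lambda$ and $\bfs p$. Hence $Q(\bfs p, T)$ is separable and $Q'(\bfs p, T)$ is invertible modulo $Q(\bfs p, T)$; in particular $Q'(\bfs p, Y_{n-s+1})$ is a unit in $\K[Y_{n-s+1},\dots,Y_n]/\overline{\mathcal{J}}$.

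I would then consider the $\K$-algebra homomorphism
$$\K[T]/(Q(\bfs p, T)) \;\longrightarrow\; \K[Y_{n-s+1},\dots,Y_n]/\overline{\mathcal{J}}$$
sending $T \mod Q(\bfs p, T)$ to $Y_{n-s+1} \mod \overline{\mathcal{J}}$. It is well-defined by the first relation in the specialized \eqref{eq:kronecker_representation_J}, and surjective because the remaining relations, together with invertibility of $Q'(\bfs p, Y_{n-s+1})$, express each $Y_j$ for $j\ge n-s+2$ as a polynomial in $Y_{n-s+1}$ modulo $\overline{\mathcal{J}}$. To promote surjectivity to an isomorphism, I compare $\K$-dimensions: by Lemma \ref{lemma: lifting_curve_isomorphism} the target has dimension $\mathrm{rank}_R\K[V]$, and by Theorem \ref{th:_lifting_point_rank_and_finite_morphism} this rank equals $\delta = \deg Q(\bfs p, T)$, the dimension of the source. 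Thus the map is an isomorphism, identifying $Q(\bfs p, T)$ as the minimal polynomial of $Y_{n-s+1}$ modulo $\overline{\mathcal{J}}$, exhibiting $Y_{n-s+1}$ as a primitive element, and yielding the required identity of ideals in $\K[Y_{n-s+1},\dots,Y_n]$. The bound $\deg_T W_j(\bfs p, T) \le \delta - 1$ is inherited from the same bound for $W_j$ in Proposition \ref{prop: kronecker_representation_of_I}.

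The core engine is already in place (the fundamental identities \eqref{ident:chow form_fundamental} and \eqref{ident:partial_derivatives_chow_form} for the Chow form and the dimension count from Theorem \ref{th:_lifting_point_rank_and_finite_morphism}), so the only delicate point is the invertibility of $Q'(\bfs p, Y_{n-s+1})$ modulo $\overline{\mathcal{J}}$; this is where the hypothesis $\rho_{\scriptscriptstyle V}(\bfs\lambda, \bfs p)\neq 0$ is used in an essential way and is what makes the specialization at a lifting point well-behaved.
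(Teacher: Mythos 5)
Your proposal is correct and follows essentially the same route as the paper: specialize the relations \eqref{eq:kronecker_representation_I} at $\bfs p$, use the nonvanishing of the discriminant $\rho_{\scriptscriptstyle V}(\bfs\lambda,\bfs p)/A_{\scriptscriptstyle V}(\bfs\lambda^*)^{2\delta-1}$ to invert $Q'(\bfs p,Y_{n-s+1})$ modulo $\overline{\mathcal{J}}$, and conclude via the surjection $\K[T]/\bigl(Q(\bfs p,T)\bigr)\rightarrow\K[Y_{n-s+1},\dots,Y_n]/\overline{\mathcal{J}}$ together with the dimension count $\mathrm{rank}_R\K[V]=\delta=\deg Q(\bfs p,T)$. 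No gaps to report.
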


Finally, we discuss a Kronecker representation of
$\mathcal{K}:=\mathcal{I}+(Y_1-p_1,\dots,Y_{n-s-1}-p_{n-s-1})$. Let
$\bfs p^*:=(p_1,\dots,p_{n-s-1})$ and let $\overline{\mathcal{K}}$
be the image of $\mathcal{K}$ in $\K[Y_{n-s},\dots,Y_n]$ as in Lemma
\ref{lemma: lifting_curve_isomorphism}. Then $Y_{n-s},\dots, Y_n$
are in Noether position with respect to $\overline{\mathcal{K}}$ and
$\K[W_{\bfs p^*}]\cong\K[Y_{n-s},\dots,Y_n]/\overline{\mathcal{K}}$
is a free $\K[Y_{n-s}]$--module of rank equal to
$\mathrm{rank}_R\K[V]$. Substituting $p_1,\dots,p_{n-s-1}$ for
$Y_1,\dots, Y_{n-s-1}$ in \eqref{eq:kronecker_representation_I}, we
deduce that
\begin{align}\label{eq:kronecker_representation_of_K}
Q(\bfs p^*, Y_{n-s}, Y_{n-s+1})&\in \overline{\mathcal{K}},\\
\nonumber Q'(\bfs p^*,Y_{n-s, }Y_{n-s+1})Y_j-W_j(\bfs p^*, Y_{n-s,
}Y_{n-s+1}) &\in \overline{\mathcal{K}}  \quad (n-s+2\leq j \leq n).
\end{align}
Observe that $Q(\bfs p^*, Y_{n-s}, T)$ is monic of degree $\delta$
and $\deg W_j(\bfs p^*, Y_{n-s}, T)< \delta$ for $n-s+2\leq j \leq
n$. By the choice of $\bfs\lambda$, the discriminant
${\rho_{\scriptscriptstyle V}(\lambda, \bfs p^*,
Y_{n-s})}/{A_{\scriptscriptstyle V}(\bfs\lambda^*)}^{2\delta-1}$ of
$Q(\bfs p^*, Y_{n-s}, T)$ is a nonzero element of $\K[Y_{n-s}]$.
Therefore, $Q(\bfs p^*, Y_{n-s}, T)$ is square--free, $Q'(\bfs p^*,
Y_{n-s}, T)$ is invertible modulo $Q(\bfs p^*, Y_{n-s}, T)$, and
thus $Q'(\bfs p^*, Y_{n-s}, Y_{n-s+1})$ is invertible in
$\K(Y_{n-s})[Y_{n-s+1},\dots,Y_n]/\overline{\mathcal{K}}^e$, where
$\overline{\mathcal{K}}^e$ is the extension of
$\overline{\mathcal{K}}$ to the ring
$\K(Y_{n-s})[Y_{n-s+1},\dots,Y_n]$. By
\eqref{eq:kronecker_representation_of_K} the homomorphism of
$\K(Y_{n-s})$--algebras
 \[
 \K(Y_{n-s})[T]/\bigl(Q(\bfs p^*, Y_{n-s}, T)\bigr)
 \rightarrow \K(Y_{n-s})[Y_{n-s+1},\dots,Y_n]/\overline{\mathcal{K}}^e
 \]
which maps $T \mod Q(\bfs p^*, Y_{n-s}, T)$ to $Y_{n-s+1} \mod
\overline{\mathcal{K}}^e$ is surjective. In particular,
$Y_{n-s+\!1}$ induces a primitive element for
$\overline{\mathcal{K}}$. Since
$\K(Y_{n-s})[Y_{n-s+\!1},\dots,\!Y_n]/\overline{\!\mathcal{K}}^e$ is
a $\K(Y_{n-s})$--vector space of dimension equal to
$\mathrm{rank}_R\K[V]=\deg Q(\bfs p^*, Y_{n-s}, T)=\delta$, this
homomorphism is an isomorphism. We conclude that $Q(\bfs p^*,
Y_{n-s}, T)$ is the minimal polynomial of $Y_{n-s+1}$ modulo
$\overline{\mathcal{K}}^e$, and the following equality of ideals
holds in $\K(Y_{n-s})[Y_{n-s+1},\dots,Y_n]$:
\begin{align*}
\overline{\mathcal{K}}^e = \bigl(Q(\bfs p^*, Y_{n-s}, Y_{n-s+1}),
Q'(\bfs p^*, Y_{n-s}, Y_{n-s+1})Y_{n-s+2}-W_{n-s+2}(\bfs p^*, Y_{n-s}, Y_{n-s+1}), \\
\dots,Q'(\bfs p^*, Y_{n-s}, Y_{n-s+1})Y_n-W_n(\bfs p^*,Y_{n-s},
Y_{n-s+1})\bigr).
\end{align*}
Identifying $\mathcal{K}$ with its image in $\K[Y_{n-s},\dots,Y_n]$,
we obtain the following result.
\begin{prop} \label{prop: kronecker_representation_of_K}
$Q(\bfs p^*, Y_{n-s},T),W_{n-s+2}(\bfs p^*,Y_{n-s},T),\dots,
W_n(\bfs p^*,Y_{n-s},T)$ form the Kronecker representation of
$\mathcal{K}$  with primitive element $Y_{n-s+1}$.
\end{prop}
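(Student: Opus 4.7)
My plan is to mimic closely the strategy used for Proposition \ref{prop: kronecker_representation_of_J}, but working over the base ring $\K[Y_{n-s}]$ (equivalently, over its fraction field $\K(Y_{n-s})$) instead of the base field $\K$. The idea is to specialize the identities \eqref{eq:kronecker_representation_I} associated with the Chow form of $V$, obtain a candidate Kronecker representation modulo the extended ideal $\overline{\mathcal{K}}^e$, and then check that the natural map from $\K(Y_{n-s})[T]/(Q(\bfs p^*, Y_{n-s}, T))$ to the quotient $\K(Y_{n-s})[Y_{n-s+1},\ldots,Y_n]/\overline{\mathcal{K}}^e$ is an isomorphism by comparing dimensions.

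First I would substitute $p_1,\ldots,p_{n-s-1}$ for $Y_1,\ldots,Y_{n-s-1}$ in \eqref{eq:kronecker_representation_I} to derive \eqref{eq:kronecker_representation_of_K}, namely the membership of $Q(\bfs p^*, Y_{n-s}, Y_{n-s+1})$ and of the linear expressions $Q'(\bfs p^*, Y_{n-s}, Y_{n-s+1})Y_j - W_j(\bfs p^*, Y_{n-s}, Y_{n-s+1})$ in $\overline{\mathcal{K}}$. Then I would observe that $Q(\bfs p^*, Y_{n-s}, T)$ is monic of degree $\delta$ in $T$, and that its $T$-discriminant equals ${\rho_{\scriptscriptstyle V}(\bfs\lambda, \bfs p^*, Y_{n-s})}/{A_{\scriptscriptstyle V}(\bfs\lambda^*)^{2\delta-1}}$, which is a nonzero element of $\K[Y_{n-s}]$ thanks to our assumption that $\rho_{\scriptscriptstyle V}(\bfs\lambda, \bfs p)\not=0$ (so that the polynomial $\rho_{\scriptscriptstyle V}(\bfs\lambda, \bfs p^*, Y_{n-s})$ itself cannot vanish identically). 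Hence $Q(\bfs p^*, Y_{n-s}, T)$ is square-free and $Q'(\bfs p^*, Y_{n-s}, T)$ is invertible modulo $Q(\bfs p^*, Y_{n-s}, T)$ in $\K(Y_{n-s})[T]$, which makes $Q'(\bfs p^*, Y_{n-s}, Y_{n-s+1})$ invertible in $\K(Y_{n-s})[Y_{n-s+1},\ldots,Y_n]/\overline{\mathcal{K}}^e$.

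Next, these observations combined with \eqref{eq:kronecker_representation_of_K} show that each $Y_j$ (for $n-s+2\le j\le n$) is expressible as a polynomial in $Y_{n-s+1}$ modulo $\overline{\mathcal{K}}^e$, so the $\K(Y_{n-s})$-algebra homomorphism
\[
\K(Y_{n-s})[T]/\bigl(Q(\bfs p^*, Y_{n-s}, T)\bigr) \longrightarrow \K(Y_{n-s})[Y_{n-s+1},\ldots,Y_n]/\overline{\mathcal{K}}^e
\]
sending $T$ to $Y_{n-s+1}$ is surjective, which in particular shows that $Y_{n-s+1}$ induces a primitive element for $\overline{\mathcal{K}}$. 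To upgrade the surjection to an isomorphism, I would compare $\K(Y_{n-s})$-dimensions: the source has dimension $\delta = \deg_T Q(\bfs p^*, Y_{n-s}, T)$, while the target has dimension $\mathrm{rank}_{\K[Y_{n-s}]}\bigl(\K[Y_{n-s},\ldots,Y_n]/\overline{\mathcal{K}}\bigr) = \mathrm{rank}_R \K[V] = \delta$ by Lemma \ref{lemma: lifting_curve_isomorphism} combined with Theorem \ref{th:_lifting_point_rank_and_finite_morphism}.

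From the isomorphism it follows that $Q(\bfs p^*, Y_{n-s}, T)$ is the minimal polynomial of $Y_{n-s+1}$ modulo $\overline{\mathcal{K}}^e$ and that the ideal $\overline{\mathcal{K}}^e$ is generated by $Q(\bfs p^*, Y_{n-s}, Y_{n-s+1})$ together with the expressions $Q'(\bfs p^*, Y_{n-s}, Y_{n-s+1})Y_j - W_j(\bfs p^*, Y_{n-s}, Y_{n-s+1})$ for $n-s+2\le j\le n$. Identifying $\mathcal{K}$ with its image $\overline{\mathcal{K}}$ in $\K[Y_{n-s},\ldots,Y_n]$ via the isomorphism of Lemma \ref{lemma: lifting_curve_isomorphism} yields the desired Kronecker representation. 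The main point requiring care is the rank computation that collapses the surjection into an isomorphism; everything else is a routine specialization of the arguments already developed for Propositions \ref{prop: kronecker_representation_of_I} and \ref{prop: kronecker_representation_of_J}.
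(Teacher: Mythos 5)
Your proposal is correct and follows essentially the same route as the paper: specialize \eqref{eq:kronecker_representation_I} at $\bfs p^*$, use the nonvanishing of the discriminant $\rho_{\scriptscriptstyle V}(\bfs\lambda,\bfs p^*,Y_{n-s})/A_{\scriptscriptstyle V}(\bfs\lambda^*)^{2\delta-1}$ to invert $Q'(\bfs p^*,Y_{n-s},Y_{n-s+1})$ modulo $\overline{\mathcal{K}}^e$, obtain surjectivity of the map from $\K(Y_{n-s})[T]/\bigl(Q(\bfs p^*,Y_{n-s},T)\bigr)$, and conclude by the dimension count $\dim_{\K(Y_{n-s})}=\mathrm{rank}_R\K[V]=\delta$ coming from Lemma \ref{lemma: lifting_curve_isomorphism}. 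Your extra remark that $\rho_{\scriptscriptstyle V}(\bfs\lambda,\bfs p^*,Y_{n-s})$ cannot vanish identically because its value at $Y_{n-s}=p_{n-s}$ is $\rho_{\scriptscriptstyle V}(\bfs\lambda,\bfs p)\neq 0$ is a valid (and slightly more explicit) justification of the step the paper attributes to "the choice of $\bfs\lambda$".
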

%
%
\section{On the conditions for a good modular reduction}
\label{section: modular_simultaneous_noether_normalization}
From now on we consider polynomials $F_1 \klk F_r \in \Z[\bfs X]$ of
degree at most $d$ that form a reduced regular sequence, and denote
$\mathcal{V}_s:=V(F_1\klk F_s)$ and $\delta_s:=\deg \mathcal{V}_s$
for $1\le s\le r$. As explained in the introduction, our aim is to
describe an algorithm for solving the system $F_1=0\klk F_r=0$ and
analyze its bit complexity. This algorithm outputs a Kronecker
representation of a lifting fiber of $\mathcal{V}_r$ and relies on
modular methods. For this reason, a crucial point is the choice of a
``lucky'' prime number, namely one which provides a good modular
reduction, of ``low'' bit length. In this section we exhibit a
nonzero integer multiple $\mathfrak{N}$ of all the unlucky prime
numbers. More precisely, we show that, for a suitable choice of
$\bfs \lambda \in \Z^{n^2}$ and $\bfs p \in \Z^{n-1}$, there is a
nonzero integer $\mathfrak{N}$ with the following property: if $p$
is a prime number not dividing $\mathfrak{N}$, then all conditions
in Theorem \ref{th: intro: good modular reduction} modulo $p$ are
satisfied. Further, our description of $\mathfrak{N}$ is explicit
enough as to allow us to estimate its bit length (Theorem \ref{th:
app: mathfrak_N_height}). By this estimate and well--known methods
for finding small primes not dividing a given integer we shall be
able to compute in Section \ref{section:
computation_of_a_kronecker_representation} a lucky prime of low bit
length with hight probability of success.

The determination of the integer $\mathfrak{N}$ proceeds in several
stages. In Section \ref{subsec: dimension_and_noether_normalization}
we deal with conditions $(1)$--$(2)$ of Theorem \ref{th: intro: good
modular reduction}, and the corresponding results are summarized in
Theorem \ref{th: conditions_mod_p_summary}. Then in Section
\ref{subsec: lifting_fibers_not_meeting_a_discriminant} we discuss
the fulfillment of the more involved condition $(3)$ of Theorem
\ref{th: intro: good modular reduction}.

In the sequel, if $p$ is a prime number and $G$ any polynomial with
integer coefficients, we denote by $G_p$ its reduction modulo $p$.
Further, if $G_1\klk G_t\in\Z[\bfs X]$ define a variety
$\mathcal{W}:=V(G_1\klk G_t)\subset\A^m:=\A^m(\overline{Q})$, we
denote by $\mathcal{W}_{p}:=V(G_{1,p} \klk G_{t,p})\subseteq
\mathbb{A}_{\scriptscriptstyle \cfp}^m:=\A^m(\cfp)$ the
corresponding reduction modulo $p$.
%
%
\subsection{First conditions for a good modular reduction}
\label{subsec: dimension_and_noether_normalization} Fix $s$ with
$1\leq s \leq r$ and $\bfs \lambda \in \Z^{(n-s+1)n}$ such that the
hypotheses of Proposition \ref{prop:
finite_fiber_and_finite_morphism} are satisfied. In this section we
establish a condition on a prime number $p$ which implies that the
variety $\mathcal{V}_{s,p}$ is equidimensional and reduced of
dimension $n-s$ and degree $\delta_s$, and the linear forms
$(Y_{1,p} \klk Y_{n-s,p}):= \bfs \lambda_p \bfs X$ are the free
variables of a Noether normalization of $\mathcal{V}_{s,p}$.

Throughout this section and the next one,
$\bfs\Lambda:=(\Lambda_{ij})_{1\leq i \leq n-s+1, 1\leq j\leq n}$
and $\bfs Z:=(Z_1,\dots,Z_{n-s+1})$ denote a matrix and a vector of
indeterminates over $\Q[\mathcal{V}_s]$. We set $\bfs
\Lambda_i:=(\Lambda_{i1},\dots,\Lambda_{in})$ and $\bfs\Lambda_i
\cdot \bfs X:=\sum_{j=1}^n\Lambda_{ij}X_j$ for $1 \leq i \leq
n-s+1$. Further, we denote $\bfs\Lambda \bfs X:=(\bfs\Lambda_1 \cdot
\bfs X, \dots, \bfs\Lambda_{n-s+1} \cdot \bfs X)$,
$\bfs\Lambda^*:=(\Lambda_{ij})_{1\leq i \leq n-s, 1\leq j\leq n}$
and $\bfs\Lambda^* \bfs X:=(\bfs\Lambda_1 \cdot \bfs X, \dots,
\bfs\Lambda_{n-s} \cdot \bfs X)$. Finally, given $\bfs \lambda
:=(\lambda_{ij})_{1\leq i \leq n-s+1, 1\leq j\leq n}\in
\Z^{(n-s+1)n}$, we adopt the notations $\bfs \lambda_i\cdot  \bfs X$
$(1\leq i \leq n-s+1)$, $\bfs\lambda \bfs X$, $\bfs \lambda^*$ and
$\bfs\lambda^* \bfs X$ accordingly. Denote by $P_s\in
\Q[\bfs\Lambda,\bfs Z]$ a Chow form of $\mathcal{V}_s$. Since $P_s$
is uniquely determined up to nonzero multiples in $\Q$, we may
assume that $P_s$ is a primitive polynomial of $\Z[\bfs\Lambda, \bfs
Z]$. Let as before $A_s\in \Z[\bfs\Lambda_1 \klk \bfs\Lambda_{n-s}]$
be the coefficient of the monomial $Z_{n-s+1}^{\delta_s}$ in $P_s$
and $\rho_s\in \Z[\bfs\Lambda, Z_1 \klk Z_{n-s}]$ the discriminant
of $P_s$ with respect to $Z_{n-s+1}$, that is,
  $$
  \rho_s:=\mathrm{Res}_{Z_{n-s+1}}\left(P_s, \frac{\partial P_s}{\partial Z_{n-s+1}}\right).
 $$
According to Lemma \ref{lemma: nonzero_discriminant}, the
polynomials $\partial P_s/\partial Z_{n-s+1}$ and $\rho_s$ are both
nonzero.

As a first step, we give a condition for consistency of the system
$F_{1,p}=0 \klk F_{s,p}=0$.
\begin{lema}\label{lemma:_condition_modular_ reduction_consistent}
Let $p$ be a prime number such that
$A_{s,p}(\bfs\lambda_p^*)\rho_{s,p}(\!\bfs\lambda_p,\!Z_1 \klk
\!Z_{n-s})$ is nonzero. Let $Y_{i,p}:=\bfs\lambda_{i,p}\cdot \bfs X$
for $1\leq i \leq n-s$. If $\pi_{s,p}: \mathcal{V}_{s,p}\rightarrow
\mathbb{A}_{\scriptscriptstyle \cfp}^{n-s}$ is the mapping defined
by $Y_{1,p} \klk Y_{n-s,p}$, then any $\bfs q \in
\mathbb{A}_{\scriptscriptstyle \cfp}^{n-s}$ with
$\rho_{s,p}(\bfs\lambda_p, \bfs q)\neq 0$ satisfies $\#
\pi_{s,p}^{-1}(\bfs q)\geq \delta_s$.
\end{lema}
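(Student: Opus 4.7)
The plan is to adapt to the modular setting the explicit geometric construction used in Proposition \ref{prop: cardinality_of_the_fiber}, which produces $\delta$ distinct points of $\pi^{-1}(\bfs p)$ from the roots of a univariate specialization of the Chow form. The hypothesis $A_{s,p}(\bfs\lambda_p^*)\neq 0$ guarantees that the univariate polynomial $P_{s,p}(\bfs\lambda_p,\bfs q,T)\in\cfp[T]$ has degree $\delta_s$, and $\rho_{s,p}(\bfs\lambda_p,\bfs q)\neq 0$ ensures that it is separable; let $z_1,\dots,z_{\delta_s}\in\cfp$ be its $\delta_s$ pairwise distinct roots. Following Proposition \ref{prop: cardinality_of_the_fiber}, I propose to define, for $1\leq k\leq \delta_s$,
\[
\bfs x^k:=\biggl(-\frac{\partial P_{s,p}/\partial\Lambda_{n-s+1,1}(\bfs\lambda_p,\bfs q,z_k)}{\partial P_{s,p}/\partial Z_{n-s+1}(\bfs\lambda_p,\bfs q,z_k)},\dots,-\frac{\partial P_{s,p}/\partial\Lambda_{n-s+1,n}(\bfs\lambda_p,\bfs q,z_k)}{\partial P_{s,p}/\partial Z_{n-s+1}(\bfs\lambda_p,\bfs q,z_k)}\biggr)\in\cfp^n,
\]
noting that the denominators are nonzero since $\rho_{s,p}(\bfs\lambda_p,\bfs q)\neq 0$ forces the $z_k$ to be simple roots of $P_{s,p}(\bfs\lambda_p,\bfs q,\cdot)$. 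The goal is then to verify that the $\bfs x^k$ are pairwise distinct points of $\pi_{s,p}^{-1}(\bfs q)$.

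The crucial technical step is to transfer to the modular setting the two divisibilities established in Lemma \ref{lemma: cardinality_of_the_fiber}. I would choose $P_s$ to be a primitive polynomial of $\Z[\bfs\Lambda,\bfs Z]$, which is possible as Chow forms are unique up to nonzero elements of $\Q$. For each $1\le j\le s$, let $F_{j,\Lambda}\in\Z[\bfs\Lambda,\bfs Z]$ be the polynomial obtained by setting $\eta:=\deg F_j$ in \eqref{eq:cardinality_fiber_lemma_1} and clearing denominators; since $F_j$ and $P_s$ have integer coefficients, $F_{j,\Lambda}$ does as well, and similarly each $H_i$ from \eqref{eq:cardinality_fiber_lemma_2} lies in $\Z[\bfs\Lambda,\bfs Z]$. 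Because $F_j$ vanishes on $\mathcal{V}_s$, Lemma \ref{lemma: cardinality_of_the_fiber} gives $P_s\mid F_{j,\Lambda}$ and $P_s\mid H_i$ in $\Q[\bfs\Lambda,\bfs Z]$. By Gauss's lemma applied to the primitive $P_s$, these divisibilities hold in $\Z[\bfs\Lambda,\bfs Z]$, and hence survive reduction modulo $p$, yielding $P_{s,p}\mid F_{j,\Lambda,p}$ and $P_{s,p}\mid H_{i,p}$ in $\F_p[\bfs\Lambda,\bfs Z]$.

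With these modular divisibilities in hand, the verification becomes a direct transcription of the argument of Proposition \ref{prop: cardinality_of_the_fiber}. Evaluating the identity $P_{s,p}\mid F_{j,\Lambda,p}$ at $(\bfs\lambda_p,\bfs q,z_k)$ yields $F_{j,\Lambda,p}(\bfs\lambda_p,\bfs q,z_k)=0$, and the defining relation \eqref{eq:cardinality_fiber_lemma_1} reduced mod $p$ translates this into $(\partial P_{s,p}/\partial Z_{n-s+1}(\bfs\lambda_p,\bfs q,z_k))^{\deg F_j}\cdot F_{j,p}(\bfs x^k)=0$; since the first factor is nonzero, $F_{j,p}(\bfs x^k)=0$ for every $j$, so $\bfs x^k\in\mathcal{V}_{s,p}$. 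Similarly, $H_{i,p}(\bfs\lambda_p,\bfs q,z_k)=0$ together with the definition of $\bfs x^k$ gives $q_i=\bfs\lambda_{i,p}\cdot\bfs x^k$ for $1\le i\le n-s$ and $z_k=\bfs\lambda_{n-s+1,p}\cdot\bfs x^k$. The first batch of identities shows $\pi_{s,p}(\bfs x^k)=\bfs q$, and the last identity forces the $\bfs x^k$ to be pairwise distinct since the $z_k$ are. This yields $\delta_s$ distinct elements of $\pi_{s,p}^{-1}(\bfs q)$, as desired.

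The main obstacle—really the only content beyond a reprise of Proposition \ref{prop: cardinality_of_the_fiber}—is precisely the Gauss's lemma step, which is what allows the $\Q$-divisibilities of $F_{j,\Lambda}$ and $H_i$ by $P_s$ to be promoted to $\Z$-divisibilities and thereby preserved under reduction mod $p$. This hinges on choosing $P_s$ primitive in $\Z[\bfs\Lambda,\bfs Z]$ and on the integrality of $F_{j,\Lambda}$ and $H_i$, both of which are built into the construction. Once this is granted, everything else is a mechanical translation of the characteristic-zero argument to $\cfp$.
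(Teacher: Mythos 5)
Your proposal is correct and follows essentially the same route as the paper: specialize the primitive Chow form at $(\bfs\lambda_p,\bfs q)$, use separability to get $\delta_s$ distinct roots, construct the points from the partial-derivative formulas, and transfer the divisibilities of Lemma \ref{lemma: cardinality_of_the_fiber} to $\Z[\bfs\Lambda,\bfs Z]$ via primitivity of $P_s$ (Gauss) so that they survive reduction modulo $p$. The paper's proof proceeds in exactly this way, concluding by repeating the argument of Proposition \ref{prop: cardinality_of_the_fiber} mutatis mutandis.
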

\begin{proof}
Note that $P_{s,p}(\bfs\lambda_p,\bfs q, Z_{n-s+1})$ has degree
$\delta_s$, because $A_{s,p}(\bfs\lambda_p^*)\neq 0$. It follows
that
\[
\rho_{s,p}(\bfs\lambda_p,\bfs
q)=\mathrm{Res}_{Z_{n-s+1}}\biggl(P_{s,p}(\bfs\lambda_p, \bfs q,
Z_{n-s+1}), \frac{\partial P_{s,p}}{\partial
Z_{n-s+1}}(\bfs\lambda_p,\bfs q, Z_{n-s+1})\biggr),
\]
and thus the polynomial $P_{s,p}(\bfs\lambda_p,\bfs q,Z_{n-s+1})$ is
separable. Let $z_1,\dots,z_{\delta_s}\in \cfp$ be the roots of
$P_{s,p}(\bfs\lambda_p,\bfs q,Z_{n-s+1})$ and $\bfs y^k:=(\bfs q,
z_k)$ for $1\leq k \leq \delta_s$. As $\partial P_{s,p}/\partial
Z_{n-s+1}(\bfs\lambda_p,\bfs y^k)\neq 0$ for $1\leq k \leq
\delta_s$, the point
 \[
\bfs x^k:=\biggl(-\frac{\partial P_{s,p}/\partial
\Lambda_{n-s+1,1}(\bfs\lambda_p,\bfs y^k)}{\partial P_{s,p}/\partial
Z_{n-s+1}(\bfs\lambda_p,\bfs y^k)},\dots, -\frac{\partial
P_{s,p}/\partial \Lambda_{n-s+1,n}(\bfs\lambda_p,\bfs y^k)}{\partial
P_{s,p}/\partial Z_{n-s+1}(\bfs\lambda_p,\bfs y^k)} \biggr) \in
\A_{\scriptscriptstyle \cfp}^n
\]
is well defined for $1\leq k \leq \delta_s$.

We claim that $\bfs x^{1} \klk \bfs x^{\delta_s}$ are pairwise
distinct and $\{\bfs x^{1} \klk \bfs
x^{\delta_s}\}\subseteq\pi_{s,p}^{-1}(\bfs q)$. Indeed, let
$F_{\Lambda,j}\in \Z[\bfs\Lambda, \bfs Z]$ and $\eta_j\in
\mathbb{N}$ be such that
\begin{equation}\label{eq:lema_modular_consistency}
F_j\left(  -\frac{\partial P_s /\partial \Lambda_{n-s+1,1}}{\partial
P_s/\partial Z_{n-s+1}},\dots, -\frac{\partial P_s /\partial
\Lambda_{n-s+1,n}}{\partial P_s/\partial Z_{n-s+1}}
\right)=\frac{F_{\Lambda,j}}{(\partial P_s/\partial
Z_{n-s+1})^{\eta_j}}
\end{equation}
for $1\leq j \leq s$. Also let
$$
H_i:=\frac{\partial P_s}{\partial Z_{n-s+1}}\,Z_i +
\sum_{j=1}^{n}\Lambda_{ij}\frac{\partial P_s }{\partial
\Lambda_{n-s+1,j}}.
$$
for $1\leq i \leq n-s+1$. Lemma \ref{lemma:
cardinality_of_the_fiber} shows that $F_{\Lambda,j}$ $(1\le j\le s)$
and $H_i$ $(1\le i\le n-s+1)$ are multiples of $P_s$ in
$\Q[\bfs\Lambda, \bfs Z]$. Further, since $P_s$ is a primitive
polynomial, we conclude that they are multiples of $P_s$ in $\Z[\bfs
\Lambda,\bfs Z]$, and thus that $F_{\Lambda,j,p}$  $(1\le j\le s)$
and $H_{i,p}$  $(1\le i\le n-s+1)$ are multiples of $P_{s,p}$. As
$P_{s,p}(\bfs\lambda_p,\bfs y^k)=0$  by construction, we see that
$F_{\Lambda,j,p}(\bfs\lambda_p,\bfs y^k)=0$ and
$H_{i,p}(\bfs\lambda_p,\bfs y^k)=0$ for $1\leq k \leq \delta_s$, and
reducing \eqref{eq:lema_modular_consistency} modulo $p$ we deduce
that $F_{j,p}(\bfs x^k)=0$ for $1\leq k \leq \delta_s$. Then
following the proof of Proposition \ref{prop:
cardinality_of_the_fiber} {\em mutatis mutandis} we conclude that
$\bfs x^1,\dots,\bfs x^{\delta_s}$ are pairwise distinct points of
$\pi_{s,p}^{-1}(\bfs q)$.
\end{proof}
By definition, $P_s(\bfs\Lambda, \bfs\Lambda \bfs X) \in
\Z[\bfs\Lambda, \bfs X]$ vanishes on the set of common zeros
$\mathbb{A}^{(n-s+1)n}\times \mathcal{V}_s$ of $F_1,\dots,F_s$ in
$\mathbb{A}^{(n-s+1)n}\times \mathbb{A}^n$. By the Nullstellensatz,
there exist $\alpha_s\in \Z\setminus\{0\}$ and $\mu_s \in
\mathbb{N}$ such that
\begin{equation}\label{alpha_s_definition}
\alpha_s P_s(\bfs\Lambda,\bfs\Lambda \bfs X)^{\mu_s}\in
(F_1,\dots,F_s)\Z[\bfs\Lambda,\bfs X].
\end{equation}
Our next result provides a condition which implies that the modular
reduction preserves dimension and a Noether normalization.
\begin{prop}\label{prop: finite_morphism}
Let $p$ be a prime number such that
 $\alpha_{s,p}A_{s,p}(\bfs \lambda_p^*)\rho_{s,p}(\bfs\lambda_p, Z_1,\dots,Z_{n-s})$ is nonzero.  Let
$Y_i:=\bfs\lambda_i\cdot \bfs X$ for $1\leq i \leq n-s$. Then:
  \begin{enumerate}
    \item $F_{1,p}, \dots, F_{s,p}$ generate an unmixed ideal in
    $\cfp[\bfs X]$ of dimension $n-s$;
    \item the mapping $\pi_{s,p}:  \mathcal{V}_{s,p}\rightarrow
    \A_{\scriptscriptstyle \cfp}^{n-s}$
    defined by $Y_{1,p},\dots,Y_{n-s,p}$ is a finite morphism.
  \end{enumerate}
\end{prop}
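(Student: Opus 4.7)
The plan is to mirror the proof of Proposition \ref{prop: finite_fiber_and_finite_morphism}, but to replace the Chow form identity \eqref{ident:chow form_fundamental}, which a priori only holds in the coordinate ring $\Q[\mathcal{V}_s]$, by the stronger integral identity \eqref{alpha_s_definition}, which lives in $\Z[\bfs\Lambda,\bfs X]$ and therefore survives reduction modulo any prime $p$ with $\alpha_{s,p}\ne 0$. The first step is to reduce \eqref{alpha_s_definition} modulo $p$ to obtain
\[
\alpha_{s,p}\,P_{s,p}(\bfs\Lambda,\bfs\Lambda\bfs X)^{\mu_s}\in
(F_{1,p},\dots,F_{s,p})\,\cfp[\bfs\Lambda,\bfs X],
\]
and since $\alpha_{s,p}\ne 0$, this is equivalent to saying that $P_{s,p}(\bfs\Lambda,\bfs\Lambda\bfs X)^{\mu_s}$ itself lies in $(F_{1,p},\dots,F_{s,p})\,\cfp[\bfs\Lambda,\bfs X]$.

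Next, I would imitate the integrality argument of Proposition \ref{prop: finite_fiber_and_finite_morphism}. Since $A_{s,p}$ is homogeneous of degree $\delta_s$ in the $(n-s)\times(n-s)$--minors of $\bfs\Lambda^*$ and $A_{s,p}(\bfs\lambda_p^*)\ne 0$, at least one such minor of $\bfs\lambda_p^*$ is nonzero, so $Y_{1,p},\dots,Y_{n-s,p}$ are linearly independent over $\cfp$. Choose $\bfs w_1,\dots,\bfs w_s\in\cfp^n$ completing $\bfs\lambda_{1,p},\dots,\bfs\lambda_{n-s,p}$ to a basis of $\cfp^n$, and substitute $\bfs\Lambda=(\bfs\lambda_p^*,\bfs w_k)$ in the identity above to obtain
\[
\alpha_{s,p}\,P_{s,p}\bigl(\bfs\lambda_p^*,\bfs w_k,Y_{1,p},\dots,Y_{n-s,p},\bfs w_k\cdot\bfs X\bigr)^{\mu_s}\in (F_{1,p},\dots,F_{s,p}).
\]
Viewed as a polynomial in $T=\bfs w_k\cdot\bfs X$, the inner factor has degree $\delta_s$ with leading coefficient $A_{s,p}(\bfs\lambda_p^*)$, so the left-hand side has degree $\delta_s\mu_s$ with leading coefficient $\alpha_{s,p}A_{s,p}(\bfs\lambda_p^*)^{\mu_s}$, a nonzero element of $\cfp$ by hypothesis. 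Dividing through produces an explicit integral dependence relation for $\bfs w_k\cdot\bfs X$ over $\cfp[Y_{1,p},\dots,Y_{n-s,p}]$ modulo $(F_{1,p},\dots,F_{s,p})$, for each $1\le k\le s$. Because $Y_{1,p},\dots,Y_{n-s,p},\bfs w_1\cdot\bfs X,\dots,\bfs w_s\cdot\bfs X$ form a $\cfp$--basis of the space of linear forms in $\bfs X$, every $X_j$ is a $\cfp$--linear combination of these, and hence $\cfp[\bfs X]/(F_{1,p},\dots,F_{s,p})$ is a finitely generated $\cfp[Y_{1,p},\dots,Y_{n-s,p}]$--module.

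Assertion (1) then follows: the finiteness yields $\dim V(F_{1,p},\dots,F_{s,p})\le n-s$, while Krull's principal ideal theorem gives the reverse inequality, so every minimal prime of $(F_{1,p},\dots,F_{s,p})$ has height exactly $s$; since $\cfp[\bfs X]$ is Cohen--Macaulay, the unmixedness theorem stated in Section \ref{notions_and_notations} applies and shows that the ideal is unmixed of dimension $n-s$. For assertion (2), passing to the quotient by the nilradical preserves module-finiteness, so $\cfp[\mathcal{V}_{s,p}]$ is a finite $\cfp[Y_{1,p},\dots,Y_{n-s,p}]$--module; on each irreducible $\cfp$--component $\mathcal{C}$ of $\mathcal{V}_{s,p}$ the equality $\dim\mathcal{C}=n-s$ forces $Y_{1,p},\dots,Y_{n-s,p}$ to be algebraically independent modulo $\mathcal{I}(\mathcal{C})$, so the restricted dual ring homomorphism is injective and $\pi_{s,p}|_\mathcal{C}$ is dominant and finite; hence $\pi_{s,p}$ is a finite morphism. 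The main subtlety is the very first step: identity \eqref{ident:chow form_fundamental} over $\Q$ cannot be reduced modulo $p$ in a naive way because passing to the coordinate ring destroys all control over denominators, which is precisely why the Nullstellensatz form \eqref{alpha_s_definition} and the hypothesis $\alpha_{s,p}\ne 0$ are indispensable.
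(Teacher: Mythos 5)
Your reduction of the Nullstellensatz certificate \eqref{alpha_s_definition} modulo $p$ and the ensuing integrality argument coincide with the paper's proof, and that part is fine. But there is a genuine gap in how you get assertion (1): Krull's principal ideal theorem only bounds the height of the \emph{minimal primes} of $(F_{1,p},\dots,F_{s,p})$, so it gives $\dim\geq n-s$ only if this ideal is proper, i.e.\ only if $\mathcal{V}_{s,p}\neq\emptyset$. Your argument never establishes this. Module-finiteness of $\cfp[\bfs X]/(F_{1,p},\dots,F_{s,p})$ over $\cfp[Y_{1,p},\dots,Y_{n-s,p}]$ is perfectly compatible with the quotient being the zero ring, and the certificate \eqref{alpha_s_definition} reduced mod $p$ is vacuously satisfied when $(F_{1,p},\dots,F_{s,p})=(1)$, so nothing you have used rules this out. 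The telltale sign is that your proof never invokes the hypothesis $\rho_{s,p}(\bfs\lambda_p,Z_1,\dots,Z_{n-s})\neq 0$, which is exactly the ingredient the statement provides for this purpose.

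The paper closes this gap with Lemma \ref{lemma:_condition_modular_ reduction_consistent}: since $\rho_{s,p}(\bfs\lambda_p,Z_1,\dots,Z_{n-s})$ is a nonzero polynomial and $\cfp$ is infinite, there is a point $\bfs q$ with $\rho_{s,p}(\bfs\lambda_p,\bfs q)\neq 0$, and for such $\bfs q$ the specialized Chow form $P_{s,p}(\bfs\lambda_p,\bfs q,Z_{n-s+1})$ is separable of degree $\delta_s$; its roots, fed into the partial-derivative formulas of Lemma \ref{lemma: cardinality_of_the_fiber} (whose numerators are multiples of $P_s$ over $\Z$, hence reduce well mod $p$), produce at least $\delta_s$ actual points of $\mathcal{V}_{s,p}$. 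This shows the ideal is proper, after which your chain ``finiteness gives $\dim\leq n-s$, principal ideal theorem gives $\dim\geq n-s$, unmixedness theorem applies, and integrality plus $\dim\mathcal{C}=n-s$ on each component gives finiteness of $\pi_{s,p}$'' goes through exactly as in the paper. So your proposal is the right strategy but is incomplete without this non-emptiness step; note also that assertion (2) silently depends on it as well, since finiteness of $\pi_{s,p}$ as stated presupposes $\dim\mathcal{V}_{s,p}=n-s$.
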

\begin{proof}
Recall that $A_s$ is homogeneous of degree $\delta_s$ in the
$(n-s)\times (n-s)$--minors of $\bfs\Lambda^*$. Since $p \nmid
A_s(\bfs\lambda^*)$, at least one of the $(n-s)\times (n-s)$--minors
of $\bfs\lambda^*$ is nonzero modulo $p$. We deduce that the linear
forms $Y_{1,p}, \dots, Y_{n-s,p}$ are linearly independent, and
there exist linear forms ${Y}_{n-s+1}, \dots, {Y}_n \in \Z[\bfs X]$
such that $Y_{1,p}, \dots, {Y}_{n,p}$ are linearly independent in
$\mathbb{F}_p[\bfs X]$. Let $\bfs w_k\in \Z^n$ be such that
${Y}_{n-s+k}=\bfs w_k\cdot \bfs X$ for $1\leq k \leq s$ and
$$Q_k:=P_s(\bfs\lambda^*,\bfs w_k,Y_1\klk Y_{n-s},Y_{n-s+k})\in
\Z[Z_1,\dots,Z_{n-s+1}].$$
From \eqref{alpha_s_definition} we see that
$\alpha_s Q_k(Y_1,\dots,Y_{n-s},{Y}_{n-s+k})^{\mu_s}\in
(F_1,\dots,F_s)\Z[\bfs X]$
for $1\leq k \leq s$, and reducing modulo $p$ we obtain
\begin{equation}\label{eq:finite_morphism_mod_p_prop_1}
\alpha_{s,p}Q_{k,p}(Y_{1,p},\dots,Y_{n-s,p}, {Y}_{n-s+k,p})^{\mu_s}
\in (F_{1,p}\dots,F_{s,p}) \fp[\bfs X]
\end{equation}
for $1\leq k \leq s$. Observe that $\deg_{Z_{n-s+1}}Q_k=\delta_s$
and $A_s(\bfs\lambda^*)$ is the coefficient of
$Z_{n-s+1}^{\delta_s}$ in $Q_k$. Since  $p \nmid
\alpha_sA_s(\bfs\lambda^*)$, identity
\eqref{eq:finite_morphism_mod_p_prop_1} may be interpreted as an
integral dependence relation for ${Y}_{n-s +k,p}$ over
$\overline{\mathbb{F}}_p[Y_{1,p},\dots,Y_{n-s,p}]$ modulo
$(F_{1,p}\dots,F_{s,p})$. Further, since
$\overline{\mathbb{F}}_p[Y_{1,p}, \dots,
{Y}_{n,p}]=\overline{\mathbb{F}}_p[\bfs X]$, we conclude that
$\overline{\mathbb{F}}_p[Y_{1,p},\dots,Y_{n-s,p}]\rightarrow
\overline{\mathbb{F}}_p[\mathcal{V}_{s,p}]$ is an integral ring
extension. In particular, we have $\dim \mathcal{V}_{s,p}\leq n-s$.
Moreover, since $A_{s,p}(\bfs\lambda_p^*)\rho_{s,p}(\bfs\lambda_p,
Z_1 \klk Z_{n-s})\not=0$, by Lemma \ref{lemma:_condition_modular_
reduction_consistent} the variety
$\mathcal{V}_{s,p}=V(F_{1,p},\dots,F_{s,p})$ is nonempty. Therefore,
$(F_{1,p}, \dots, F_{s,p})$ is a proper ideal of
$\overline{\mathbb{F}}_p[\bfs X]$ of dimension at most $n-s$, while
the Principal Ideal theorem (see, e.g., \cite[Theorem
10.2]{Eisenbud95}) implies $\dim(F_{1,p}, \dots, F_{s,p})\geq n-s$.
We conclude that $\dim (F_{1,p}, \dots, F_{s,p})=n-s$, and the
unmixedness theorem proves that $(F_{1,p}, \dots, F_{s,p})$ is
unmixed. This shows the first assertion. Since the ring extension
$\overline{\mathbb{F}}_p[Y_{1,p},\dots,Y_{n-s,p}]\rightarrow
\overline{\mathbb{F}}_p[\mathcal{V}_{s,p}]$ is integral and $\dim
\mathcal{V}_{s,p}=n-s$, it follows that $\pi_{s,p}:
\mathcal{V}_{s,p}\rightarrow \mathbb{A}_{\scriptscriptstyle
\overline{\mathbb{F}}_p}^{n-s}$ is a finite morphism, which finishes
the proof.
\end{proof}

Next we show that the hypotheses of Proposition \ref{prop:
finite_morphism} also guarantee that the degree is preserved under
modular reduction, and the modular Chow form is obtained reducing
modulo $p$ that of $\mathcal{V}_s$.
\begin{coro}\label{coro: equality_of_the_degree}
With notations and hypotheses as in Proposition \ref{prop:
finite_morphism}, $\deg \mathcal{V}_{s,p}=\delta_s$ and $P_{s,p}$
is a Chow form of $\mathcal{V}_{s,p}$.
\end{coro}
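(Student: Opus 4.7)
The plan is to establish the equality $\deg \mathcal{V}_{s,p}=\delta_s$ by a double inequality, and then deduce that $P_{s,p}$ is a Chow form of $\mathcal{V}_{s,p}$ by comparing multi-degrees with the genuine Chow form.

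First I would derive the lower bound $\deg \mathcal{V}_{s,p}\geq \delta_s$. Since $\rho_{s,p}(\bfs\lambda_p,Z_1,\dots,Z_{n-s})$ is a nonzero polynomial by hypothesis, I may pick $\bfs q\in \mathbb{A}^{n-s}_{\scriptscriptstyle\cfp}$ with $\rho_{s,p}(\bfs\lambda_p,\bfs q)\neq 0$. Lemma \ref{lemma:_condition_modular_ reduction_consistent} then yields $\#\pi_{s,p}^{-1}(\bfs q)\geq \delta_s$. Now $\pi_{s,p}^{-1}(\bfs q)$ equals $\mathcal{V}_{s,p}\cap\{Y_{1,p}=q_1,\dots,Y_{n-s,p}=q_{n-s}\}$, the intersection of $\mathcal{V}_{s,p}$ with a linear variety of codimension $n-s$. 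By the B\'ezout inequality \eqref{bezout_inequality} together with the finiteness of the fiber (guaranteed by Proposition \ref{prop: finite_morphism}), $\#\pi_{s,p}^{-1}(\bfs q)\leq \deg \mathcal{V}_{s,p}$, whence $\deg \mathcal{V}_{s,p}\geq \delta_s$.

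Next I would obtain the reverse inequality by exploiting \eqref{alpha_s_definition}. Reducing modulo $p$ and using $\alpha_{s,p}\neq 0$ gives
\[
P_{s,p}(\bfs\Lambda,\bfs\Lambda\bfs X)^{\mu_s}\in (F_{1,p},\dots,F_{s,p})\,\cfp[\bfs\Lambda,\bfs X].
\]
Since $(F_{1,p},\dots,F_{s,p})\subseteq \mathcal{I}(\mathcal{V}_{s,p})$ and the coordinate ring $\cfp[\mathcal{V}_{s,p}][\bfs\Lambda]$ is reduced, it follows that $P_{s,p}(\bfs\Lambda,\bfs\Lambda\bfs\xi)=0$ in $\cfp[\mathcal{V}_{s,p}][\bfs\Lambda]$, where $\bfs\xi$ denotes the coordinate functions of $\mathcal{V}_{s,p}$. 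By the minimality property of the Chow form recalled in Section \ref{subsec: Chow form equidimensional variety} (applicable since Proposition \ref{prop: finite_morphism} ensures $\mathcal{V}_{s,p}$ is equidimensional of dimension $n-s$), the Chow form $F_{\mathcal{V}_{s,p}}$ of $\mathcal{V}_{s,p}$ divides $P_{s,p}$ in $\cfp[\bfs\Lambda^h]$. Comparing the multi-degrees, $F_{\mathcal{V}_{s,p}}$ is multi-homogeneous of degree $\deg \mathcal{V}_{s,p}$ in each block $\bfs\Lambda_i^h$ while $P_{s,p}$ is multi-homogeneous of degree $\delta_s$ in each such block, forcing $\deg \mathcal{V}_{s,p}\leq \delta_s$.

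Combining both bounds gives $\deg \mathcal{V}_{s,p}=\delta_s$. Then $F_{\mathcal{V}_{s,p}}$ and $P_{s,p}$ share the same multi-degree and the former divides the latter, so they agree up to a nonzero scalar in $\cfp$, which shows that $P_{s,p}$ itself is a Chow form of $\mathcal{V}_{s,p}$. The one subtle point is the passage from $P_{s,p}^{\mu_s}\in(F_{1,p},\dots,F_{s,p})$ to the actual vanishing of $P_{s,p}(\bfs\Lambda,\bfs\Lambda\bfs\xi)$ in the coordinate ring, which is why reducedness of $\cfp[\mathcal{V}_{s,p}]$ (automatic from working with the vanishing ideal) must be invoked before applying Chow form minimality.
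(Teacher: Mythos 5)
Your proposal is correct and follows essentially the same route as the paper's proof: the lower bound $\deg\mathcal{V}_{s,p}\ge\delta_s$ comes from Lemma \ref{lemma:_condition_modular_ reduction_consistent} plus the B\'ezout inequality and finiteness of the fiber, and the upper bound comes from reducing \eqref{alpha_s_definition} modulo $p$, so that the Chow form of $\mathcal{V}_{s,p}$ divides $P_{s,p}$ (nonzero since $P_s$ is primitive), followed by a degree comparison that also identifies $P_{s,p}$ with that Chow form up to a nonzero scalar. The only cosmetic differences are that the paper compares $\deg_{Z_{n-s+1}}$ rather than the multidegree in each block and phrases the reducedness step simply as vanishing of $P_{s,p}(\bfs\Lambda,\bfs\Lambda\bfs X)$ on $\mathbb{A}^{(n-s+1)n}\times\mathcal{V}_{s,p}$.
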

\begin{proof}
Since $p \nmid \alpha_s$, from (\ref{alpha_s_definition}) we deduce
that
$
P_{s,p}(\bfs\Lambda,\bfs\Lambda \bfs X)^{\mu_s}\in
(F_{1,p},\dots,F_{s,p})\fp[\bfs\Lambda, \bfs X].
$
It follows that $P_{s,p}(\bfs\Lambda, \bfs\Lambda \bfs X)$ vanishes
on $\mathbb{A}_{\scriptscriptstyle
\overline{\mathbb{F}}_p}^{(n-s+1)n}\times \mathcal{V}_{s,p}$. As a
consequence, if $Q_s\in \fp[\bfs\Lambda, \bfs Z]$ is a Chow form of
$\mathcal{V}_{s,p}$, then $Q_s$ divides $P_{s,p}$ in
$\fp[\bfs\Lambda, \bfs Z]$. Since  $P_{s,p}$ is nonzero, because
$P_s$ is primitive,  we conclude that
\[
\deg \mathcal{V}_{s,p}=\deg_{Z_{n-s+1}}Q_s\leq
\deg_{Z_{n-s+1}}P_{s,p}\leq \delta_s.
\]
On the other hand, Proposition \ref{prop: finite_morphism} shows
that $\pi_{s,p}$ is a finite morphism, and the (finite) fiber
$\pi_{s,p}^{-1}(\bfs p_p)$ satisfies $\#\pi_{s,p}^{-1}(\bfs p_p)\geq
\delta_s$ by Lemma \ref{lemma:_condition_modular_
reduction_consistent}. The B\'ezout inequality
\eqref{bezout_inequality} implies
\[
\#\pi_{s,p}^{-1}(\bfs p_p)=\deg\bigl(\mathcal{V}_{s,p}\cap
\{Y_{1,p}-p_{1,p}=0,\dots, Y_{n-s,p}-p_{n-s}=0\}\bigr)\leq \deg
\mathcal{V}_{s,p}.
\]
This proves that $\deg \mathcal{V}_{s,p}= \delta_s$. Since $Q_s$ is
homogeneous of degree $\delta_s$ and $P_{s,p}$ has degree at most
$\delta_s$ in each set of variables $(Z_i,
\Lambda_{i1},\dots,\Lambda_{in})$ for $1\leq i \leq n-s+1$, we
deduce that $P_{s,p}=\epsilon Q_s$ for some $\epsilon \in
\fp\setminus \{0\}$, showing thus that $P_{s,p}$ is a Chow form of
$\mathcal{V}_{s,p}$.
\end{proof}

Finally, we obtain a condition which implies that the modular
reduction preserves generic smoothness. Let $\bfs p:=(p_1, \dots,
p_{n-s})\in \Z^{n-s}$ be such that
$A_s(\bfs\lambda^*)\rho_s(\bfs\lambda, \bfs p)\neq 0$. From Theorem
\ref{th:_lifting_point_rank_and_finite_morphism} it follows that
$\bfs p$ is a lifting point of the mapping $\pi_s: \mathcal{V}_s
\rightarrow \mathbb{A}^{n-s}$ defined by $Y_1, \dots, Y_{n-s}$. Then
$F_1, \dots, F_s, Y_1-p_1, \dots, Y_{n-s}-p_{n-s}$ and the Jacobian
determinant $J_s$ of $F_1, \dots, F_s, Y_1-p_1, \dots,
Y_{n-s}-p_{n-s}$ with respect to $X_1,\dots, X_n$ do not have common
zeros in $\mathbb{A}^{n}$. By the Nullstellensatz, there exist
$\gamma_s\in \Z\setminus \{0\}$ and $G_1, \dots, G_{n+1}\in \Z[\bfs
X]$ such that
\begin{equation}\label{gamma_s_definition}
\gamma_s=G_1F_1+ \cdots + G_sF_s+ G_{s+1}(Y_1-p_1) + \cdots +
G_n(Y_{n-s}-p_{n-s}) + G_{n+1}J_s.
\end{equation}
The nonvanishing of $\gamma_s$ modulo $p$ provides the additional
condition we are looking for.
\begin{lema} \label{lema: radicality_modulo_p}
With the previous hypotheses and notations, let $p$ be a prime
number such that $p\nmid \alpha_s\gamma_s A_s(\bfs
\lambda^*)\rho_s(\bfs \lambda, \bfs p)$. Then $F_{1,p}, \dots,
F_{s,p}$ generate a radical ideal in $\cfp[\bfs X]$.
\end{lema}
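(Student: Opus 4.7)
The plan is to apply the radicality criterion of Lemma \ref{lemma: radicality_criterion} to the ideal $(F_{1,p},\dots,F_{s,p})$ in $\cfp[\bfs X]$. First, I would observe that the hypothesis $p\nmid \alpha_s A_s(\bfs\lambda^*)\rho_s(\bfs\lambda,\bfs p)$ implies $p\nmid \alpha_s A_s(\bfs\lambda^*)\rho_s(\bfs\lambda,Z_1,\dots,Z_{n-s})$ (since $\rho_s(\bfs\lambda,\bfs p)$ is the value of $\rho_s(\bfs\lambda,Z_1,\dots,Z_{n-s})$ at $\bfs p$), so Proposition \ref{prop: finite_morphism} applies and guarantees that $(F_{1,p},\dots,F_{s,p})$ is an unmixed ideal of dimension $n-s$ in $\cfp[\bfs X]$ and that $\pi_{s,p}:\mathcal{V}_{s,p}\to \A_{\cfp}^{n-s}$ is a finite morphism. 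By Lemma \ref{lemma: radicality_criterion}, it then suffices to prove that the ideal $\mathcal{J}_p\subset\cfp[\bfs X]$ generated by $F_{1,p},\dots,F_{s,p}$ and the $(s\times s)$--minors of the Jacobian matrix $(\partial F_{i,p}/\partial X_j)_{1\le i\le s,\, 1\le j\le n}$ is not contained in any minimal prime of $(F_{1,p},\dots,F_{s,p})$.

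The key link between the Nullstellensatz identity \eqref{gamma_s_definition} and these minors comes from a Laplace expansion: the Jacobian determinant $J_s$ of $F_1,\dots,F_s,Y_1-p_1,\dots,Y_{n-s}-p_{n-s}$ with respect to $X_1,\dots,X_n$, expanded along the last $n-s$ rows (whose entries are the integers $\lambda_{ij}$), equals a $\Z$--linear combination of $(s\times s)$--minors of $(\partial F_i/\partial X_j)$ with coefficients given (up to sign) by the $(n-s)\times(n-s)$--minors of $\bfs\lambda^*$. Hence the same identity holds after reduction modulo $p$: $J_{s,p}$ belongs to the ideal generated modulo $p$ by the $(s\times s)$--minors of the Jacobian matrix of $F_{1,p},\dots,F_{s,p}$.

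Next I would evaluate: for any $\bfs x_p\in\pi_{s,p}^{-1}(\bfs p_p)$, reducing \eqref{gamma_s_definition} modulo $p$ and substituting $\bfs x_p$ makes all terms involving $F_{i,p}$ or $Y_{j,p}-p_{j,p}$ vanish, leaving
\[
\gamma_{s,p}=G_{n+1,p}(\bfs x_p)\,J_{s,p}(\bfs x_p).
\]
Since $p\nmid \gamma_s$, the left--hand side is a nonzero constant, so $J_{s,p}(\bfs x_p)\neq 0$, and by the Laplace expansion some $(s\times s)$--minor of the Jacobian of $F_{1,p},\dots,F_{s,p}$ is nonzero at $\bfs x_p$.

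Finally I would argue that every minimal prime of $(F_{1,p},\dots,F_{s,p})$ admits such a point. Let $\mathcal{P}$ be a minimal prime and $\mathcal{C}\subset\mathcal{V}_{s,p}$ the corresponding irreducible component. By unmixedness, $\dim\mathcal{C}=n-s$; the restriction $\pi_{s,p}|_\mathcal{C}:\mathcal{C}\to\A_{\cfp}^{n-s}$ is a finite morphism (as the restriction of one), hence its image is closed and of the same dimension $n-s$, forcing surjectivity. Thus there exists $\bfs x_p\in\mathcal{C}\cap \pi_{s,p}^{-1}(\bfs p_p)$; at this point some $(s\times s)$--minor of the Jacobian is nonzero, so it does not belong to $\mathcal{I}(\mathcal{C})=\mathcal{P}$. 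Consequently $\mathcal{J}_p\not\subset\mathcal{P}$, and Lemma \ref{lemma: radicality_criterion} yields the radicality of $(F_{1,p},\dots,F_{s,p})$. The only mildly subtle point is the Laplace expansion argument tying $J_s$ to the $(s\times s)$--minors of $(\partial F_i/\partial X_j)$; once this is in place, the rest is a transparent specialization of \eqref{gamma_s_definition}.
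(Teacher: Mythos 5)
Your proof is correct and follows essentially the same route as the paper: invoke Proposition \ref{prop: finite_morphism} for unmixedness and finiteness of $\pi_{s,p}$, reduce \eqref{gamma_s_definition} modulo $p$ to get $J_{s,p}\neq 0$ on the fiber over $\bfs p_p$, use surjectivity of the restriction of $\pi_{s,p}$ to each irreducible component to place a fiber point on every component, and conclude with the radicality criterion of Lemma \ref{lemma: radicality_criterion}. The only difference is that you spell out, via the Laplace expansion of $J_s$ along the rows of $\bfs\lambda^*$, why nonvanishing of $J_{s,p}$ forces some $(s\times s)$--minor of the Jacobian of $F_{1,p},\dots,F_{s,p}$ to be nonzero — a step the paper uses implicitly.
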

\begin{proof}
Since by hypothesis $\alpha_{s,p}A_{s,p}(\bfs
\lambda_p^*)\rho_{s,p}(\bfs \lambda_p, Z_1 \klk Z_{n-s})$ is
nonzero, from Proposition \ref{prop: finite_morphism} it follows
that $\mathcal{V}_{s,p}$ is equidimensional of dimension $n-s$ and
the mapping $\pi_{s,p}: \mathcal{V}_{s,p}\rightarrow
\A_{\scriptscriptstyle \cfp}^{n-s}$ defined by
$Y_{1,p},\dots,Y_{n-s,p}$ is a finite morphism. On the other hand,
reducing \eqref{gamma_s_definition} modulo $p$ we see that
\[
\gamma_{s,p}=G_{1,p}F_{1,p}+ \cdots + G_{s,p}F_{s,p}+
G_{s+1,p}(Y_{1,p}-p_{1,p}) + \cdots + G_{n,p}(Y_{n-s,p}-p_{n-s,p}) +
G_{n+1,p}J_{s,p}
\]
holds in $\fp[\bfs X]$.  We deduce that $J_{s,p}(\bfs x)\neq 0$ for
any $\bfs x\in \pi_{s,p}^{-1}(\bfs p)$. Let $\mathcal{C}_1, \dots,
\mathcal{C}_h$ be the irreducible components of $\mathcal{V}_{s,p}$
and let $\pi_{\mathcal{C}_i}$ denote the restriction of $\pi_{s,p}$
to $\mathcal{C}_i$ for $1 \leq i \leq h$. Since $\mathcal{V}_{s,p}$
is equidimensional,  $\pi_{\mathcal{C}_i}$ is a finite morphism. In
particular, $\pi_{\mathcal{C}_i}$ is surjective and
$\mathcal{C}_i\cap \pi_{s,p}^{-1}(\bfs p_p)\neq \emptyset$ for $1
\leq i \leq h$. It follows that $J_{s,p}$ does not vanish
identically on $\mathcal{C}_i$, which implies that there exists an
$(s\times s)$--minor $M_i\in \fp[\bfs X]$ of the Jacobian matrix
$(\partial F_{i,p}/\partial X_j)_{1 \leq i \leq s, 1 \leq j \leq n}$
not vanishing identically on $\mathcal{C}_i$ for  $1 \leq i \leq h$.
Let $\mathcal{J} \subseteq \overline{\mathbb{F}}_p[\bfs X]$ be  the
ideal generated by $F_{1,p}, \dots, F_{s,p}$ and the $(s \times
s)$--minors of the Jacobian matrix $(\partial F_{i,p}/\partial
X_j)_{1 \leq i \leq s, 1 \leq j \leq n}$. If $\mathcal{P}_i\subseteq
\overline{\mathbb{F}}_p[\bfs X]$ is the vanishing ideal of
$\mathcal{C}_i$ for $1\leq i \leq h$, then $\mathcal{P}_1, \dots,
\mathcal{P}_h$ are the minimal prime ideals of $(F_{1,p}, \dots,
F_{s,p})$. Since $M_i\not \in \mathcal{P}_i$, we have
$\mathcal{J}\nsubseteq \mathcal{P}_i$ for $1\leq i \leq h$, and
Lemma \ref{lemma: radicality_criterion} proves that the ideal
$(F_{1,p}, \dots, F_{s,p})$ is radical.
\end{proof}
We summarize all the previous results in the following theorem.
\begin{teo}\label{th: conditions_mod_p_summary} Let
$\bfs \lambda \in \Z^{(n-s+1)n}$ and $\bfs p \in \Z^{n-s}$ be such
that $A_s(\bfs \lambda^*)\rho_s(\bfs \lambda, \bfs p)\neq 0$ and $p$
a prime number such that $p\nmid \alpha_s\gamma_s A_s(\bfs
\lambda^*)\rho_s(\bfs \lambda, \bfs p)$, where $\alpha_s$ and
$\gamma_s$ are the integers of \eqref{alpha_s_definition} and
\eqref{gamma_s_definition} respectively. Let
$Y_{i,p}:=\bfs\lambda_{i,p}\cdot \bfs X$ for $1\le i\le n-s+1$ and
$R_{s,p}:=\cfp[Y_{1,p},\dots,Y_{n-s,p}]$. Then the following
conditions hold:
 \begin{itemize}
   \item $F_{1,p}, \dots, F_{s,p}$ generate a radical ideal in $\cfp[\bfs X]$
   and define an equidimensional variety $\mathcal{V}_{s,p}\subset \A_{\scriptscriptstyle \cfp}^{n-s}$
   of dimension $n-s$ and degree $\delta_s$;
   \item the mapping $\pi_{s,p}: \mathcal{V}_{s,p}\rightarrow \A_{\scriptscriptstyle \cfp}^{n-s}$
   defined by $Y_{1,p},\dots,Y_{n-s,p}$ is a finite morphism and $Y_{n-s+1,p}$ induces
   a primitive element of the ring extension $R_{s,p}\hookrightarrow
      \cfp[\mathcal{V}_{s,p}]$;
   \item $\mathrm{rank}_{R_{s,p}}\cfp[\mathcal{V}_{s,p}]=\delta_s$;
   \item any $\bfs q\in \mathbb{A}_{\scriptscriptstyle \overline{\mathbb{F}}_p}^{n-s}$
         with $\rho_{s,p}(\bfs\lambda_p,\bfs q)\neq 0$  is a lifting point of $\pi_{s,p}$ and $Y_{n-s+1,p}$ induces a primitive element of $\pi_{s,p}^{-1}(\bfs q)$.
 \end{itemize}
\end{teo}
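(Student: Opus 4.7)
The plan is to view this theorem as the combination of three intermediate modular results already proved (Proposition \ref{prop: finite_morphism}, Corollary \ref{coro: equality_of_the_degree}, Lemma \ref{lema: radicality_modulo_p}) with the ``abstract'' Theorem \ref{th:_lifting_point_rank_and_finite_morphism} applied over the perfect field $\cfp$. The hypothesis on $p$ has been tailored precisely so that one may transport all the generic conditions of Sections \ref{section: Noether normalization} and \ref{section: lifting_points_and_lifting_fibers} from the characteristic zero setting to the modular one.

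First I would unpack the arithmetic hypothesis. Since $p\nmid A_s(\bfs\lambda^*)\rho_s(\bfs\lambda,\bfs p)$, both $A_{s,p}(\bfs\lambda_p^*)$ and $\rho_{s,p}(\bfs\lambda_p,\bfs p_p)$ are nonzero in $\cfp$; in particular $\rho_{s,p}(\bfs\lambda_p,Z_1,\dots,Z_{n-s})$ is a nonzero polynomial. Combined with $p\nmid\alpha_s$, this verifies the hypotheses of Proposition \ref{prop: finite_morphism}, so $(F_{1,p},\dots,F_{s,p})$ is unmixed of dimension $n-s$ and $\pi_{s,p}$ is finite. Corollary \ref{coro: equality_of_the_degree} then yields $\deg\mathcal{V}_{s,p}=\delta_s$ and, crucially, that $P_{s,p}$ is (up to a nonzero scalar of $\cfp$) a Chow form of $\mathcal{V}_{s,p}$. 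Finally, the full hypothesis $p\nmid\gamma_s$ unlocks Lemma \ref{lema: radicality_modulo_p}, so $(F_{1,p},\dots,F_{s,p})$ is radical. This already settles the first bullet.

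For the remaining bullets I would apply Theorem \ref{th:_lifting_point_rank_and_finite_morphism} with $\K:=\cfp$, $V:=\mathcal{V}_{s,p}$, linear coefficients $\bfs\lambda_p$, and an arbitrary point $\bfs q\in\A_{\cfp}^{n-s}$ satisfying $\rho_{s,p}(\bfs\lambda_p,\bfs q)\neq 0$. Since $P_{s,p}$ is a Chow form of $\mathcal{V}_{s,p}$, the polynomials $A_{s,p}$ and $\rho_{s,p}$ are precisely the data $A_{\mathcal{V}_{s,p}}$ and $\rho_{\mathcal{V}_{s,p}}$ of Sections \ref{subsec: Chow form equidimensional variety} and \ref{subsec: generic_condition_for_a_lifting_point}, so the condition $A_{s,p}(\bfs\lambda_p^*)\rho_{s,p}(\bfs\lambda_p,\bfs q)\neq 0$ required by that theorem is satisfied. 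Because $\cfp$ is perfect and $\mathcal{V}_{s,p}$ is equidimensional, $\cfp[\mathcal{V}_{s,p}]$ is a free module over $R_{s,p}$, so the rank statement also applies. Theorem \ref{th:_lifting_point_rank_and_finite_morphism} then delivers exactly the second, third and fourth bullets of the statement.

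The only genuinely delicate point, which is the step I expect to require the most care, is the identification of $A_{s,p}$ and $\rho_{s,p}$ with the Chow-form data of $\mathcal{V}_{s,p}$. For $A_{s,p}$ this is automatic since modular reduction commutes with extraction of coefficients. For $\rho_{s,p}$ one must argue that the discriminant commutes with reduction modulo $p$, which is precisely guaranteed by the fact that $A_{s,p}(\bfs\lambda_p^*)\neq 0$: this keeps the leading coefficient of $P_{s,p}$ in $Z_{n-s+1}$ nonzero after reduction, so the Sylvester matrix defining $\rho_s$ reduces without dropping rank, and $\rho_{s,p}=\mathrm{Res}_{Z_{n-s+1}}\bigl(P_{s,p},\partial P_{s,p}/\partial Z_{n-s+1}\bigr)$. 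Once this compatibility is in place, the proof reduces to citing the results stated above.
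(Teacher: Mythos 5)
Your proposal is correct and follows essentially the same route as the paper: the first bullet is obtained by combining Proposition \ref{prop: finite_morphism}, Corollary \ref{coro: equality_of_the_degree} and Lemma \ref{lema: radicality_modulo_p}, and the remaining bullets by applying Theorem \ref{th:_lifting_point_rank_and_finite_morphism} with $\K=\cfp$, using that $P_{s,p}$ is a Chow form of $\mathcal{V}_{s,p}$. Your extra discussion of the compatibility of $\rho_s$ with reduction is a welcome refinement the paper leaves implicit; just note that if $\deg_{Z_{n-s+1}}\partial P_{s,p}/\partial Z_{n-s+1}$ drops (e.g.\ when $p\mid\delta_s$) the reduced discriminant equals the discriminant of $P_{s,p}$ only up to a power of $A_{s,p}$, which is harmless since the needed nonvanishing still transfers.
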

\begin{proof}
The first assertion follows by Proposition \ref{prop:
finite_morphism}, Corollary \ref{coro: equality_of_the_degree} and
Lemma \ref{lema: radicality_modulo_p}. Since $P_{s,p}$ is a Chow
form of $\mathcal{V}_{s,p}$ by Corollary \ref{coro:
equality_of_the_degree}, the last three assertions are a consequence
of Theorem \ref{th:_lifting_point_rank_and_finite_morphism} applied
to $\K=\overline{\mathbb{F}}_p$.
\end{proof}
%
%
\subsection{Lifting fibers not meeting a discriminant}
\label{subsec: lifting_fibers_not_meeting_a_discriminant}
Throughout this section we assume that $s\le r-1$. Our main
algorithm is recursive, and in its $s$th step computes a geometric
solution of the fiber $\pi_{s+1}^{-1}(\bfs p^*)$ from one of the
lifting curve $W_{\bfs p^*}$. As the geometric solution of $W_{\bfs
p^*}$ constitutes a ``good'' representation of $W_{\bfs p^*}$
outside the discriminant locus $\{\rho_s(\bfs\lambda,Y_1\klk
Y_{n-s})=0\}$, it is critical that $\pi_{s+1}^{-1}(\bfs p^*)$ does
not intersect this hypersurface. In this section we show that for a
generic choice of the coordinates of $\bfs\lambda$ and $\bfs p$ this
condition is satisfied and discuss when this is preserved under
modular reduction.

For this purpose, we use the following terminology: for two
subvarieties $\mathcal{V}$ and $\mathcal{W}$ of $\mathbb{A}^n$, we
say that $\mathcal{W}$ cuts $\mathcal{V}$ \emph{properly} if
$\mathcal{W}$ does not contain any irreducible
$\overline{\Q}$--component of $\mathcal{V}$. We have the following
result.
\begin{lema}\label{lemma: pure_dimesion_r_subvariety}
There exists a polynomial ${\sf R}_s\in
\overline{\mathbb{Q}}[\bfs\Lambda]\setminus \{0\}$ of degree at most
$2(n-s+2)\delta_s^2\delta_{s+1}$ with the following property: for
every $\bfs\lambda \in \mathbb{A}^{(n-s+1)n}$ with ${\sf
R}_s(\bfs\lambda)\neq 0$, the hypersurface
$\{\rho_s(\bfs\lambda,\bfs\lambda^* \bfs X)=0\}\subset\mathbb{A}^n$
cuts $\mathcal{V}_{s+1}$ properly.
\end{lema}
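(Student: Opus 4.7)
The plan is to build ${\sf R}_s$ as a product of one auxiliary polynomial per irreducible $\overline{\Q}$--component of $\mathcal{V}_{s+1}$. Decompose $\mathcal{V}_{s+1}=\mathcal{C}_1\cup\cdots\cup\mathcal{C}_N$ into such components, so that $\sum_{i=1}^N\deg\mathcal{C}_i=\delta_{s+1}$ and in particular $N\le\delta_{s+1}$. The hypersurface $\{\rho_s(\bfs\lambda,\bfs\lambda^*\bfs X)=0\}$ cuts $\mathcal{V}_{s+1}$ properly exactly when it contains no $\mathcal{C}_i$, so it is enough to exhibit, for each $i$, a nonzero polynomial $R_i\in\overline{\Q}[\bfs\Lambda]$ such that $R_i(\bfs\lambda)\ne 0$ implies $\mathcal{C}_i\not\subset\{\rho_s(\bfs\lambda,\bfs\lambda^*\bfs X)=0\}$, and then set ${\sf R}_s:=\prod_{i=1}^N R_i$.

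For each $i$ I would select a point $\bfs x_0^{(i)}\in\mathcal{C}_i$ which is simultaneously a smooth point of $\mathcal{V}_s$, and define $R_i(\bfs\Lambda):=\rho_s(\bfs\Lambda,\bfs\Lambda^*\bfs x_0^{(i)})$; then $R_i(\bfs\lambda)\ne 0$ directly witnesses that $\rho_s(\bfs\lambda,\bfs\lambda^*\bfs X)$ does not vanish at the point $\bfs x_0^{(i)}\in\mathcal{C}_i$, so the hypersurface cannot contain $\mathcal{C}_i$. Such a point exists because the reduced regular sequence hypothesis makes $(F_1,\dots,F_{s+1})$ radical and hence $\mathcal{V}_{s+1}$ generically smooth; at any smooth point of $\mathcal{V}_{s+1}$ the Jacobian of $F_1,\dots,F_{s+1}$ has rank $s+1$, which forces the Jacobian of $F_1,\dots,F_s$ to have rank $s$ and hence $\mathcal{V}_s$ to be smooth there too.

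The main obstacle is verifying $R_i\not\equiv 0$, which I would settle by a Bertini--type transversality argument. Fixing $\bfs x_0:=\bfs x_0^{(i)}$, it is enough to exhibit one $\bfs\lambda^0$ with $\rho_s(\bfs\lambda^0,\bfs\lambda^{0*}\bfs x_0)\ne 0$. The affine $s$--dimensional subspaces $L_{\bfs\mu}:=\bfs x_0+\ker\bfs\mu\subset\mathbb{A}^n$ form a linear family parametrized by $\bfs\mu\in\mathbb{A}^{(n-s)n}$, with base locus $\{\bfs x_0\}\subset\mathcal{V}_s^{\mathrm{sm}}$. For generic $\bfs\mu$ the restriction of $\bfs\mu$ to $T_{\bfs x_0}\mathcal{V}_s$ is an isomorphism (making $L_{\bfs\mu}$ meet $\mathcal{V}_s$ transversely at $\bfs x_0$), and a dimension count gives $L_{\bfs\mu}\cap\mathcal{V}_s^{\mathrm{sing}}=\emptyset$ (since $\dim\mathcal{V}_s^{\mathrm{sing}}\le n-s-1$ and the expected dimension of the intersection is $-1$, with $\bfs x_0\notin\mathcal{V}_s^{\mathrm{sing}}$); a standard Bertini argument then ensures $L_{\bfs\mu}\cap\mathcal{V}_s$ consists of $\delta_s$ distinct smooth points, each a transverse intersection. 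Taking $\bfs\lambda^{0*}:=\bfs\mu$ in this generic open set and $\bfs\lambda^0_{n-s+1}$ further generic enough to separate the $\delta_s$ fiber points, the polynomial $P_s(\bfs\lambda^0,\bfs\lambda^{0*}\bfs x_0,T)\in\overline{\Q}[T]$ has $\delta_s$ distinct roots, so its discriminant $\rho_s(\bfs\lambda^0,\bfs\lambda^{0*}\bfs x_0)$ is nonzero.

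The degree bound then closes routinely: each $R_i$ is obtained from $\rho_s(\bfs\Lambda,\bfs Z)$ by the linear--in--$\bfs\Lambda$ substitution $Z_j=\bfs\Lambda_j\cdot\bfs x_0^{(i)}$ for $1\le j\le n-s$, so $\deg R_i\le\deg\rho_s\le(n-s+2)(2\delta_s^2-\delta_s)\le 2(n-s+2)\delta_s^2$. Hence $\deg{\sf R}_s\le N\cdot 2(n-s+2)\delta_s^2\le 2(n-s+2)\delta_s^2\,\delta_{s+1}$, matching the bound required by the statement.
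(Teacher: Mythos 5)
Your proposal is essentially the paper's own proof: on each irreducible component of $\mathcal{V}_{s+1}$ you pick a point that is smooth on $\mathcal{V}_{s+1}$ (hence on $\mathcal{V}_s$, by the radicality/Jacobian-rank argument), specialize to get the factor $\rho_s(\bfs\Lambda,\bfs\Lambda^*\bfs x_0^{(i)})$, take the product over the at most $\delta_{s+1}$ components, and bound the degree exactly as the paper does. The only difference is cosmetic: where you justify the nonvanishing of each factor by a Bertini-type transversality argument for $s$--planes through the chosen smooth point, the paper gets the same conclusion (a generic such plane meets $\mathcal{V}_s$ in $\delta_s$ distinct points separated by a generic extra linear form, so the specialized discriminant is nonzero) by invoking the multiplicity-one property of smooth points, citing \cite[Corollary 5.15]{Mumford95}.
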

\begin{proof}
Let $\mathcal{C}_1, \dots, \mathcal{C}_h$ be the irreducible
$\overline{\mathbb{Q}}$--components of $\mathcal{V}_{s+1}$, and let
$\bfs z_i\in \mathcal{C}_i$ be a nonsingular point of
$\mathcal{V}_{s+1}$ for $1\leq i \leq h$. Define
\[
{\sf R}_s:=\prod_{i=1}^h\rho_s(\bfs\Lambda, \bfs\Lambda^* \bfs z_i).
\]
We claim that ${\sf R}_s$ satisfies the conditions of the lemma.
Indeed, fix $1 \leq i \leq h$. Since $\bfs z_i$ is a nonsingular
point of $\mathcal{V}_{s+1}$ and
$\mathcal{I}(\mathcal{V}_{s+1})=\mathcal{I}(\mathcal{V}_s)+(F_{s+1})$,
then $\bfs z_i$ is also a nonsingular point of $\mathcal{V}_s$.
Hence, for a generic choice of $\bfs\lambda\in
\mathbb{A}^{(n-s+1)n}$, denoting by $\pi_s: \mathcal{V}_s\rightarrow
\mathbb{A}^{n-s}$ the mapping $\pi_s(\bfs x):=\bfs\lambda^* \bfs x$
and $\bfs p:=\pi_s(\bfs z_i)$, the following conditions are
satisfied:
\begin{itemize}
  \item $\#\pi_s^{-1}(\bfs p)=\delta_s$;
  \item the linear form $\bfs\lambda_{n-s+1} \cdot \bfs X$ separates
   the points of $\pi_{s}^{-1}(\bfs p)$;
  \item the discriminant of the polynomial $P_s(\bfs\lambda,
  \bfs p, Z_{n-s+1})$ is $\rho_s(\bfs\lambda, \bfs p)$.
\end{itemize}
Indeed, since $\bfs z_i$ is a nonsingular point of $\mathcal{V}_s$,
then $\mathcal{V}_s$ has multiplicity $1$ at $\bfs z_i$ (see, e.g.,
\cite[\S 5A, Corollary 5.15]{Mumford95}). This means that a generic
linear space of dimension $s$ passing through $\bfs z_i$ meets
$\mathcal{V}_s$ in exactly $\delta_s-1$ points different from $\bfs
z_i$, which shows the first condition. The remaining conditions are
clearly satisfied.

Let $\bfs x^1,\dots,\bfs x^{\delta_s}$ be the $\delta_s$ points of
$\pi_{s}^{-1}(\bfs p)$. Since $\bfs\lambda_{n-s+1} \cdot \bfs X$
separates these points, the polynomial $P_s(\bfs\lambda, \bfs p,
Z_{n-s+1})$ has $\delta_s$ different roots, namely
$\bfs\lambda_{n-s+1} \cdot\bfs x^i$ for $1\leq i \leq \delta_s$. We
conclude that $\rho_s(\bfs\lambda, \bfs p)\neq 0$. It follows that
$\rho_s(\bfs\Lambda,\bfs\Lambda^*\bfs z_i)$ is a nonzero polynomial
in $\overline{\mathbb{Q}}[\bfs\Lambda]$ for $1\leq i \leq h$ and
therefore ${\sf R}_s\in \overline{\mathbb{Q}}[\bfs\Lambda]\setminus
\{0\}$. Since $\deg\rho_s(\bfs\Lambda, \bfs\Lambda^* \bfs z_i)\leq
(n-s+2)(2\delta_s-1)\delta_s$ and $h\leq \delta_{s+1}$, the estimate
for the degree ${\sf R}_s$ follows. Finally, let $\bfs\lambda \in
\mathbb{A}^{(n-s+1)n}$ be such that ${\sf R}_s(\bfs\lambda)\neq 0$.
Then $\rho_s(\bfs\lambda,\bfs\lambda^*\bfs z_i)\neq 0$ for $1 \leq i
\leq h$, which shows that $\mathcal{C}_i$ is not contained in the
hypersurface $\{\rho_s(\bfs\lambda,\bfs\lambda^* \bfs X)=0\}$ of
$\mathbb{A}^n$ for $1\leq i \leq h$.
\end{proof}

Let $\bfs\lambda\in \Z^{(n-s+1)n}\setminus \{0\}$ be such that ${\sf
R}_s(\bfs\lambda)\neq 0$ and let $\mathcal{W}_{\bfs\lambda^s}\subset
\mathbb{A}^n$ be the variety
\begin{equation}\label{w_s_definition_equation}
\mathcal{W}_{\bfs\lambda^s}:=\mathcal{V}_{s+1}\cap
\{\rho_s(\bfs\lambda,\bfs\lambda^* \bfs X)=0\}.
\end{equation}
By Lemma \ref{lemma: pure_dimesion_r_subvariety},
$\mathcal{W}_{\bfs\lambda^s}$ is either empty or equidimensional of
dimension $n-s-2$.

Assume that $\mathcal{W}_{\bfs\lambda^s}=\emptyset$ and let
$\rho_{\bfs\lambda^s}:=\rho_s(\bfs\lambda, \bfs\lambda^* \bfs X) \in
\Z[\bfs X]$. By the Nullstellensatz there exists
$\mu_{\bfs\lambda^s}\in \Z\setminus \{0\}$ satisfying
\begin{equation}\label{empty_case_equation}
  \mu_{\bfs\lambda^s}\in (F_1, \dots, F_{s+1},
  \rho_{\bfs\lambda^s})\Z[\bfs X].
\end{equation}
On the other hand, assume that $\mathcal{W}_{\bfs\lambda^s}\neq
\emptyset$ and let $Y_j:=\bfs\lambda_j \cdot \bfs X$ for $1\leq j
\leq n-s-1$. By \cite[Theorem 3.3]{Jelonek05} (see also
\cite[Theorem 3.1]{DaKrSo13}) there exists a nonzero polynomial
$B_{\bfs\lambda^s}\in \Z[Z_1,\dots,Z_{n-s-1}]$ with $\deg
B_{\bfs\lambda^s} \leq \deg  \mathcal{W}_{\bfs\lambda^s}$ such that
\begin{equation}\label{nonempty_case_implicit_equation}
B_{\bfs\lambda^s}\bigl(Y_1(\bfs x),\dots,Y_{n-s-1}(\bfs x)\bigr)=0
\end{equation}
for every $\bfs x\in \mathcal{W}_{\bfs\lambda^s}$. Since $\deg
\mathcal{W}_{\bfs\lambda^s}\leq \deg \mathcal{V}_{s+1} \deg
\rho_{\bfs\lambda^s}$, we have
\begin{equation}\label{nonempty_case_implicit_equation_degree_estimate}
\deg B_{\bfs\lambda^s}\leq 2(n-s+2)\delta_s^2\delta_{s+1}.
\end{equation}

As $B_{\bfs\lambda^s}(Y_1\klk Y_{n-s-1})$ vanishes on the variety
$\mathcal{W}_{\bfs\lambda^s}\subset\A^n$ defined by
$F_1,\dots,F_{s+1}$ and $\rho_{\bfs\lambda^s}$, by the
Nullstellensatz there exist $\beta_{\bfs\lambda^s}\in
\Z\setminus\{0\}$ and $\ell_{\bfs\lambda^s} \in \mathbb{N}$ such
that
\begin{equation}\label{nullstellensatz_implicit_equation}
\beta_{\bfs\lambda^s}B_{\bfs\lambda^s}(Y_1\klk
Y_{n-s-1})^{\ell_{\bfs \lambda^s}}\in
(F_1,\dots,F_{s+1},\rho_{\bfs\lambda^s})\Z[\bfs X].
\end{equation}

Now we are able to establish our condition for a good modular
reduction at the $s$th step. Let ${\sf M}_s\in \Z[\bfs \Lambda, Z_1
\klk Z_{n-s}]\setminus \{0\}$ be the polynomial defined by
\begin{equation}\label{def:polynomial_M_s}
{\sf M}_s:=\alpha_s\gamma_sA_s(\bfs \Lambda^*)\rho_s(\bfs \Lambda,
Z_1\klk Z_{n-s}),
\end{equation}
where $\alpha_s$ and $\gamma_s$ are the integers of
\eqref{alpha_s_definition} and \eqref{gamma_s_definition}
respectively. Observe that
\begin{equation}\label{eq: deg_M_s}
    \deg {\sf M}_s \leq 2(n-s+2)\delta_s^2.
\end{equation}
Further, let ${\sf C}_s\in \Z[\bfs\Lambda]$ be a nonzero coefficient
of ${\sf M}_s{\sf M}_{s+1}\in\Z[\bfs\Lambda][Z_1, \dots, Z_{n-s}]$.
For $\bfs\lambda\in \Z^{(n-s+1)n}\setminus \{0\}$ with ${\sf
C}_s(\bfs\lambda){\sf R}_s(\bfs \lambda)\neq 0$, define ${\sf
L}_{\bfs\lambda^s}\in \Z[Z_1, \dots, Z_{n-s}]\setminus\{0\}$ as
\begin{equation}\label{def:L_lambda_s}
{\sf L}_{\bfs\lambda^s}:=\left\{
\begin{array}{cl}
\mu_{\bfs\lambda^s}&\textrm{ if }
\mathcal{W}_{\bfs\lambda^s}=\emptyset,\\
\beta_{\bfs\lambda^s}B_{\bfs\lambda^s}&\textrm{ if }
\mathcal{W}_{\bfs\lambda^s}\neq \emptyset,\end{array} \right.
\end{equation}
where $\mu_{\bfs\lambda^s}$, $B_{\bfs\lambda^s}$ and
$\beta_{\bfs\lambda^s}$ are defined as in
\eqref{empty_case_equation},
\eqref{nullstellensatz_implicit_equation} and
\eqref{nonempty_case_implicit_equation}. Finally, define
$$
{\sf N}_{\bfs\lambda^s}:={\sf M}_s(\bfs\lambda, Z_1, \dots,
Z_{n-s}){\sf M}_{s+1}(\bfs\lambda^*, Z_1, \dots, Z_{n-s-1}){\sf
L}_{\bfs\lambda^s}(Z_1, \dots, Z_{n-s-1}).
$$
\begin{teo}\label{th: lifting_fibers_modulo_p}
Let $1 \leq s \leq r-1$. Let $\bfs\lambda\in \Z^{(n-s+1)n}$ and
$\bfs p:=(p_1\klk p_{n-s})\in \Z^{n-s}$ be such that ${\sf
C}_s(\bfs\lambda){\sf R}_s(\bfs\lambda)\neq 0$ and ${\sf
N}_{\bfs\lambda^s}(\bfs p)\neq 0$, and let $p$ be a prime number
with $p \nmid {\sf N}_{\bfs\lambda^s}(\bfs p)$. If
$Y_i:=\bfs\lambda_i \cdot \bfs X$ for $1 \leq i \leq n-s+1$, then
the following conditions are satisfied:
\begin{enumerate}
  \item $F_{1,p}, \dots, F_{s,p}$ generate a
  radical ideal in $\cfp[\bfs X]$ and define
  an equidimensional variety $\mathcal{V}_{s,p}\subset
  \mathbb{A}_{\scriptscriptstyle \cfp}^n$
  of dimension $n-s$ and degree $\delta_s$.  The same holds for
  $F_{1,p}, \dots, F_{s+1,p}$ and $\mathcal{V}_{s+1,p}$;
  \item the mapping $\pi_{s,p}:\mathcal{V}_{s,p}\rightarrow
  \mathbb{A}_{\scriptscriptstyle \cfp}^{n-s}$
  defined by $Y_{1,p},\dots,Y_{n-s,p}$ is a finite morphism,
  $\bfs p_p\in \fp^{n-s}$ is a lifting point of $\pi_{s,p}$,
  and $Y_{n-s+1,p}$ induces a primitive element of $\pi_{s,p}^{-1}(\bfs
  p_p)$;
  \item the mapping  $\pi_{s+1,p}:\mathcal{V}_{s+1,p}\rightarrow
  \mathbb{A}_{\scriptscriptstyle \cfp}^{n-s-1}$
  defined by $Y_{1,p},\dots,Y_{n-s-1,p}$ is a finite morphism. Furthermore,
  if $\bfs p^*:=(p_1\klk p_{n-s-1})$, then  $\bfs p^*_p$ is a lifting point of $\pi_{s+1,p}$ and
  $Y_{n-s,p}$ induces a primitive element of $\pi_{s+1,p}^{-1}(\bfs
  p^*_p)$;
  \item any $\bfs q\in \pi_{s,p}\bigl(\pi_{s+1,p}^{-1}(\bfs p^*_p)\bigr)$
  satisfies $\rho_{s,p}(\bfs\lambda_p, \bfs q)\neq 0$. In particular, any such $\bfs q$ is a
  lifting point of $\pi_{s,p}$ and $Y_{n-s+1,p}$ induces a primitive element
  of $\pi_{s,p}^{-1}(\bfs q)$.
\end{enumerate}
\end{teo}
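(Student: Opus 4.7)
The plan is to derive parts (1)--(3) as direct consequences of Theorem \ref{th: conditions_mod_p_summary} applied at levels $s$ and $s+1$, and to establish the key part (4) by specializing the Nullstellensatz relations \eqref{empty_case_equation} and \eqref{nullstellensatz_implicit_equation} along the fiber $\pi_{s+1,p}^{-1}(\bfs p^*_p)$.

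First I would unpack ${\sf N}_{\bfs\lambda^s}(\bfs p) = {\sf M}_s(\bfs\lambda,\bfs p)\,{\sf M}_{s+1}(\bfs\lambda^*,\bfs p^*)\,{\sf L}_{\bfs\lambda^s}(\bfs p^*)$, so that the hypothesis $p\nmid {\sf N}_{\bfs\lambda^s}(\bfs p)$ yields in particular $p\nmid \alpha_s\gamma_sA_s(\bfs\lambda^*)\rho_s(\bfs\lambda,\bfs p)$ and $p\nmid \alpha_{s+1}\gamma_{s+1}A_{s+1}(\bfs\lambda_1,\dots,\bfs\lambda_{n-s-1})\rho_{s+1}(\bfs\lambda^*,\bfs p^*)$. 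Theorem \ref{th: conditions_mod_p_summary} applied to $\mathcal{V}_s$ with the linear forms $Y_1,\dots,Y_{n-s+1}$ and the point $\bfs p$ then gives (2) and the $\mathcal{V}_{s,p}$ half of (1); applying the same theorem to $\mathcal{V}_{s+1}$ with the linear forms $Y_1,\dots,Y_{n-s}$ and the point $\bfs p^*$ yields (3) and the remaining half of (1). Here one uses that $\bfs\lambda^*=(\bfs\lambda_1,\dots,\bfs\lambda_{n-s})$ furnishes exactly the number of rows required for a Chow form of the $(n-s-1)$--dimensional variety $\mathcal{V}_{s+1}$.

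For part (4), I would fix $\bfs x\in\pi_{s+1,p}^{-1}(\bfs p^*_p)\subseteq\mathcal{V}_{s+1,p}$, so $F_{i,p}(\bfs x)=0$ for $1\le i\le s+1$ and $\bfs\lambda_{i,p}\cdot\bfs x=p_{i,p}$ for $1\le i\le n-s-1$. Setting $\bfs q:=\pi_{s,p}(\bfs x)=(p_{1,p},\dots,p_{n-s-1,p},\bfs\lambda_{n-s,p}\cdot\bfs x)$ and $\rho_{\bfs\lambda^s}(\bfs X):=\rho_s(\bfs\lambda,\bfs\lambda^*\bfs X)$, the identity $\rho_{s,p}(\bfs\lambda_p,\bfs q)=\rho_{\bfs\lambda^s,p}(\bfs x)$ reduces the problem to showing $\rho_{\bfs\lambda^s,p}(\bfs x)\neq 0$. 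I would then split according to the definition of ${\sf L}_{\bfs\lambda^s}$: if $\mathcal{W}_{\bfs\lambda^s}=\emptyset$, the membership \eqref{empty_case_equation}, reduced modulo $p$ and evaluated at $\bfs x$, becomes $\mu_{\bfs\lambda^s,p}=H(\bfs x)\,\rho_{\bfs\lambda^s,p}(\bfs x)$ for some $H\in\cfp[\bfs X]$, with left-hand side nonzero by $p\nmid{\sf L}_{\bfs\lambda^s}=\mu_{\bfs\lambda^s}$; if $\mathcal{W}_{\bfs\lambda^s}\neq\emptyset$, the same procedure applied to \eqref{nullstellensatz_implicit_equation}, together with $Y_{i,p}(\bfs x)=p_{i,p}$, yields $\beta_{\bfs\lambda^s,p}\,B_{\bfs\lambda^s,p}(\bfs p^*_p)^{\ell_{\bfs\lambda^s}}=H(\bfs x)\,\rho_{\bfs\lambda^s,p}(\bfs x)$, whose left-hand side is nonzero by $p\nmid{\sf L}_{\bfs\lambda^s}(\bfs p^*)=\beta_{\bfs\lambda^s}B_{\bfs\lambda^s}(\bfs p^*)$. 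In either case $\rho_{s,p}(\bfs\lambda_p,\bfs q)\neq 0$, and the last bullet of Theorem \ref{th: conditions_mod_p_summary} delivers the remaining lifting--point and primitive--element assertions of (4).

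The main obstacle lies precisely in the case split for ${\sf L}_{\bfs\lambda^s}$: when $\mathcal{W}_{\bfs\lambda^s}\neq\emptyset$ there is no constant Nullstellensatz certificate, and one must exploit the polynomial $B_{\bfs\lambda^s}$ produced by \eqref{nonempty_case_implicit_equation}, whose value at $\bfs p^*$ is governed exactly by the factor ${\sf L}_{\bfs\lambda^s}(\bfs p^*)$ appearing in ${\sf N}_{\bfs\lambda^s}(\bfs p)$. Once this dichotomy is made explicit, both cases close by a single substitution into the reduced relation.
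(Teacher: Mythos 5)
Your proposal is correct and follows essentially the same route as the paper: parts (1)--(3) are obtained by applying Theorem \ref{th: conditions_mod_p_summary} at levels $s$ and $s+1$ via the factors ${\sf M}_s(\bfs\lambda,\bfs p)$ and ${\sf M}_{s+1}(\bfs\lambda^*,\bfs p^*)$ of ${\sf N}_{\bfs\lambda^s}(\bfs p)$, and part (4) is proved exactly as in the paper by reducing \eqref{empty_case_equation} (respectively \eqref{nullstellensatz_implicit_equation}) modulo $p$ and evaluating at a point $\bfs x\in\pi_{s+1,p}^{-1}(\bfs p^*_p)$, using $p\nmid\mu_{\bfs\lambda^s}$ (respectively $p\nmid\beta_{\bfs\lambda^s}B_{\bfs\lambda^s}(\bfs p^*)$) to conclude $\rho_{s,p}(\bfs\lambda_p,\bfs q)=\rho_{\bfs\lambda^s,p}(\bfs x)\neq 0$ and then invoking the last assertion of Theorem \ref{th: conditions_mod_p_summary}.
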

\begin{proof}
Since $p\nmid {\sf M}_s(\bfs\lambda, \bfs p){\sf
M}_{s+1}(\bfs\lambda^*, \bfs p^*)$, the first three assertions
follow by Theorem \ref{th: conditions_mod_p_summary}.

To prove the last assertion, let $\bfs q\in
\pi_{s,p}\bigl(\pi_{s+1,p}^{-1}(\bfs p^*_p)\bigr)$. Then there
exists $\bfs x\in \pi_{s+1,p}^{-1}(\bfs p_p^*)$ such that $\bfs
q=\bigl(\bfs p_p^*,Y_{n-s,p}(\bfs x)\bigr)$. Suppose that the
variety $\mathcal{W}_{\bfs\lambda^s}$ of
\eqref{w_s_definition_equation} is empty. Considering
\eqref{empty_case_equation} modulo $p$, and taking into account that
$p\nmid \mu_{\bfs\lambda^s}$, we deduce that $F_{1,p}, \dots,
F_{s+1,p}$ and $\rho_{\bfs\lambda^s, p}$ generate the unit ideal of
$\cfp[\bfs X]$. As $\bfs x\in \mathcal{V}_{s+1,p}$, it follows that
$\rho_{s,p}(\bfs\lambda_p, \bfs q)=\rho_{\bfs\lambda^s, p}(\bfs
x)\neq 0$. Since $p \nmid {\sf M}_s(\bfs\lambda, \bfs p)$, by
Theorem \ref{th: conditions_mod_p_summary} we conclude  that $\bfs
q$ is a lifting point of $\pi_{s,p}$ and  $Y_{n-s+1,p}$ induces a
primitive element of $\pi_{s,p}^{-1}(\bfs q)$. On the other hand, if
$\mathcal{W}_{\bfs\lambda^s}\neq \emptyset$, then considering
\eqref{nullstellensatz_implicit_equation} modulo $p$ and taking into
account that $p\nmid \beta_{\bfs\lambda^s}$ we see that
$$
B_{\bfs\lambda^s,
p}(Y_{1,p},\ldots,Y_{n-s-1,p})^{\ell_{\bfs\lambda^s}}\in
(F_{1,p},\dots,F_{s+1,p},\rho_{\bfs\lambda^s,p})\cfp[\bfs X].
$$
This implies that $B_{\bfs\lambda^s, p}$ vanishes on
$\mathcal{V}_{s+1,p}\cap \{\rho_{\bfs\lambda^s, p}=0\}$. Further,
the fact that $p\nmid B_{\bfs\lambda^s}(\bfs p^*)$ implies
$B_{\bfs\lambda^s, p}(\bfs x)=B_{\bfs\lambda^s,p}(\bfs p_p^*)\neq
0$, and then $\rho_{s,p}(\bfs\lambda_p, \bfs q)=\rho_{\bfs\lambda^s,
p}(\bfs x)\neq 0$.  Arguing as before we deduce that $\bfs q$ is a
lifting point of $\pi_{s,p}$ and $Y_{n-s+1,p}$ induces a primitive
element of $\pi_{s,p}^{-1}(\bfs q)$.
\end{proof}

\begin{remark}\label{fiber_of_the_fiber_remark}
With hypotheses as in Theorem \ref{th: lifting_fibers_modulo_p}, let
$\pi_{s+1,p}^{-1}(\bfs p_p^*)=\{\bfs x^1 \klk \bfs
x^{\delta_{s+1}}\}$. Since $Y_{n-s,p}$ induces a primitive element
of $\pi_{s+1,p}^{-1}(\bfs p_p^*)$, it separates $\bfs x^1\klk \bfs
x^{\delta_{s+1}}$. Therefore, if $q\in \cfp[T]$ is the minimal
polynomial of $Y_{n-s,p}$ over $\pi_{s+1,p}^{-1}(\bfs p_p^*)$, then
its roots in $\cfp$ are $Y_{n-s,p}(\bfs x^1), \dots, Y_{n-s,p}(\bfs
x^{\delta_{s+1}})$. Since
$$\pi_{s,p}\bigl(\pi_{s+1,p}^{-1}(\bfs p_p^*)\bigr)=\Bigl\{\bigl(\bfs
p_p^*, Y_{n-s,p}(\bfs x^1)\bigr), \dots, \bigl(\bfs p_p^*,
Y_{n-s,p}(\bfs x^{\delta_{s+1}})\bigr)\Bigr\},$$
we can rephrase item $(4)$ of Theorem \ref{th:
lifting_fibers_modulo_p} in the following way:
$\rho_{s,p}\bigl(\bfs\lambda_p,(\bfs p_p^*,a)\bigr)\neq 0$ for every
root $a\in \cfp$ of $q$. Thus, $(\bfs p_p^*,a)$ is a lifting point
of $\pi_{s,p}$ and $Y_{n-s+1,p}$ induces a primitive element of
$\pi_{s,p}^{-1}(\bfs p_p^*, a)$.
\end{remark}
%
%
\subsection{Simultaneous Noether normalization and lifting fibers}
From now on, let $\bfs\Lambda:=(\Lambda_{ij})_{1\leq i \leq n, 1\leq
j \leq n}$ denote a set of $n^2$ indeterminates over $\mathbb{Q}$.
For $1\leq s \leq r$, we write $\bfs\Lambda^s:=(\Lambda_{ij})_{1\leq
i \leq n, 1\leq j \leq n-s+1}$. Further, for
$\bfs\lambda:=(\lambda_{ij})_{1 \leq i \leq n, 1 \leq j \leq n} \in
\Z^{n^2}$, we denote $\bfs\lambda^s:=(\lambda_{ij})_{1 \leq i \leq
n-s+1, 1 \leq j \leq n}$. Let ${\sf R} \in
\overline{\mathbb{Q}}[\bfs\Lambda]\setminus \{0\}$ be the polynomial
defined by
\begin{equation}\label{R_definition}
{\sf R}:= \prod_{s=1}^{r-1}{\sf C}_s{\sf R}_s.
\end{equation}
Since $\deg {\sf C}_s \leq \deg {\sf M}_s + \deg {\sf M}_{s+1}$,
taking into account \eqref{eq: deg_M_s} and the estimate for the
degree of ${\sf R}_s$ of Lemma \ref{lemma:
pure_dimesion_r_subvariety}, we easily deduce that
\begin{equation}\label{deg_R_estimate}
\deg {\sf R} \leq D:=(2n-r+4)r(\delta^3+2\delta^2).
\end{equation}

Let $\bfs\lambda \in \Z^{n^2}\setminus \{0\}$ be such that ${\sf
R}(\bfs\lambda)\neq 0$ and define ${\sf N}_{\bfs\lambda}\in \Z[Z_1,
\dots, Z_{n-1}]\setminus \{0\}$ as
\begin{equation}\label{N_lambda_definition}
{\sf N}_{\bfs\lambda}:={\sf M}_r(\bfs\lambda^r, Z_1, \dots,
Z_{n-r})\prod_{s=1}^{r-1}{\sf M}_s(\bfs\lambda^s, Z_1, \dots,
Z_{n-s}){\sf L}_{\bfs\lambda^s}(Z_1, \dots, Z_{n-s-1}).
\end{equation}
Observe that
\begin{align*}
\deg {\sf N}_{\bfs\lambda}\le\sum_{s=1}^r\deg {\sf M}_s+
\sum_{s=1}^r\deg {\sf L}_{\bfs\lambda^s}\le 2(\delta^3+\delta^2)
\sum_{s=1}^{r-1}(n-s+2)+2(n-r+2)\delta^2\le D.
\end{align*}
Let $\bfs p:=(p_1, \dots, p_{n-1})\in \Z^{n-1}$ be such that ${\sf
N}_{\bfs\lambda}(\bfs p)\neq 0$ and denote $\bfs p^s:=(p_1, \dots,
p_{n-s})$ for $1 \leq s \leq r$. With hypotheses as above we easily
obtain the following result.
\begin{teo} \label{th:_simultaneous_noether_normalization}
Let $\bfs\lambda\in \Z^{n^2}\setminus \{0\}$ and $\bfs p\in
\Z^{n-1}$ be such that $\det(\bfs\lambda){\sf R}(\bfs\lambda)\neq 0$
and ${\sf N}_{\bfs\lambda}(\bfs p)\neq0$. Let
$\mathfrak{N}:=\det(\bfs\lambda){\sf N}_{\bfs\lambda}(\bfs p)$ and
$Y_i:=\bfs\lambda_i\cdot\bfs X$ for $1\leq i \leq n$. If $p$ is  a
prime number such that $p\nmid \mathfrak{N}$, then $Y_{1,p}\klk
Y_{n,p}$ define a new set of variables for $\cfp[\bfs X]$ and
conditions $(1)$--$(4)$ of Theorem \ref{th: lifting_fibers_modulo_p}
are satisfied for $1 \leq s \leq r-1$ with $\bfs p:=\bfs p^s$ and
$\bfs p^*:=\bfs p^{s+1}$. In particular, $F_{1,p}, \dots, F_{r,p}$
define a reduced regular sequence in $\cfp[\bfs X]$.
\end{teo}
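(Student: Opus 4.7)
The plan is to deduce the theorem directly from Theorem \ref{th: lifting_fibers_modulo_p}, applied sequentially at each step $s\in\{1,\dots,r-1\}$, together with one elementary observation for the change of variables and a short argument at the end for the reduced regular sequence property.

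The change of variables is immediate: since $p\nmid\det(\bfs\lambda)$, the matrix $\bfs\lambda_p$ is invertible over $\cfp$, so the reductions $Y_{1,p}\klk Y_{n,p}$ are linearly independent linear forms and constitute a new set of variables of $\cfp[\bfs X]$.

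Next, I would fix $s$ with $1\le s\le r-1$ and verify the hypotheses of Theorem \ref{th: lifting_fibers_modulo_p} for the triple $(\bfs\lambda^s,\bfs p^s,p)$. Since each ${\sf C}_t$ and ${\sf R}_t$ depends only on the first $n-t+1$ rows of $\bfs\Lambda$, the product structure \eqref{R_definition} shows that ${\sf R}(\bfs\lambda)\neq 0$ forces ${\sf C}_s(\bfs\lambda^s){\sf R}_s(\bfs\lambda^s)\neq 0$. For the second hypothesis I would compare the definition of ${\sf N}_{\bfs\lambda^s}$ (given just before Theorem \ref{th: lifting_fibers_modulo_p}) with the global \eqref{N_lambda_definition}: the three factors ${\sf M}_s(\bfs\lambda^s,\bfs p^s)$, ${\sf M}_{s+1}(\bfs\lambda^{s+1},\bfs p^{s+1})$ and ${\sf L}_{\bfs\lambda^s}(\bfs p^{s+1})$ that make up ${\sf N}_{\bfs\lambda^s}(\bfs p^s)$ all occur as factors of ${\sf N}_{\bfs\lambda}(\bfs p)$. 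Hence ${\sf N}_{\bfs\lambda^s}(\bfs p^s)$ divides $\mathfrak{N}$ and is consequently nonzero and coprime to $p$. Theorem \ref{th: lifting_fibers_modulo_p} then yields conditions $(1)$--$(4)$ at each such step $s$.

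Finally, I would extract the reduced regular sequence property. Condition $(1)$ applied for $s=1,\dots,r-1$ gives that $(F_{1,p}\klk F_{s,p})$ is a radical, equidimensional ideal of dimension $n-s$ for every $1\le s\le r$, the top level $s=r$ being covered by the $\mathcal{V}_{s+1,p}$ clause at $s=r-1$; this yields the ``reduced'' part. For the regular sequence part, since each $(F_{1,p}\klk F_{s,p})$ is unmixed of dimension $n-s$ and the dimension drops by exactly one when $F_{s+1,p}$ is adjoined, $F_{s+1,p}$ lies in no associated (equivalently, minimal) prime of $(F_{1,p}\klk F_{s,p})$ and is thus a non-zero-divisor modulo this ideal. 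I expect no genuine obstacle: the whole argument is a bookkeeping exercise matching the several indexing conventions (notably that $\bfs\Lambda^s$ and $\bfs\lambda^s$ each carry $n-s+1$ rows while $\bfs p^s$ carries $n-s$ coordinates) and then invoking the earlier results.
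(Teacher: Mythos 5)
Your proposal is correct and follows exactly the route the paper intends: the paper states this theorem as an immediate consequence of Theorem \ref{th: lifting_fibers_modulo_p} and the product structure of ${\sf R}$, ${\sf N}_{\bfs\lambda}$ and $\mathfrak{N}$, and your bookkeeping (each factor ${\sf C}_s{\sf R}_s$ of ${\sf R}$ depends only on $\bfs\lambda^s$, and ${\sf N}_{\bfs\lambda^s}(\bfs p^s)$ divides ${\sf N}_{\bfs\lambda}(\bfs p)$) is precisely the verification being left to the reader. Your closing dimension/non-zero-divisor argument for the regular-sequence claim, using that each $(F_{1,p},\dots,F_{s,p})$ is radical and equidimensional of dimension $n-s$, is also the standard deduction the paper has in mind.
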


In the sequel, a prime $p$ as in Theorem
\ref{th:_simultaneous_noether_normalization} will be called
``lucky'' and a reduction modulo such a prime $p$ is called
``good''.

We end this section by discussing Kronecker representations for a
good modular reduction. Given $\bfs\lambda:=(\lambda_{ij})_{1\leq i,
j \leq n}\in \Z^{n^2}$ and $\bfs p:=(p_1,\dots,p_{n-1})\in \Z^{n-1}$
satisfying the hypotheses of Theorem
\ref{th:_simultaneous_noether_normalization}, define
$Y_i:=\bfs\lambda_i\cdot \bfs X$ for $1\leq i \leq n$, and let
$R_s:=\Q[Y_1, \dots, Y_{n-s}]$ and $B_s:=\Q[\mathcal{V}_s]$ for
$1\leq s \leq r$. Since $A_s(\bfs \lambda^{s+1})\rho_s(\bfs
\lambda^s, \bfs p^s)\neq 0$ for $1\leq s\leq r$, by Theorem
\ref{th:_lifting_point_rank_and_finite_morphism} the following
conditions are satisfied:
\begin{itemize}
  \item $Y_1 \klk Y_{n-s}$ are in Noether position with respecto to
  $\mathcal{I}_s$;
  \item $\bfs p^s$ is a lifting point of the finite morphism $\pi_s: \mathcal{V}_s\rightarrow \A^{n-s}$ defined by $Y_1 \klk
  Y_{n-s}$;
  \item $B_s$ is a free $R_s$--module of rank equal to $\delta_s$.
\end{itemize}
Let $\mathcal{I}_s:=(F_1 \klk F_s)$ and
$\mathcal{J}_s:=\mathcal{I}_s +(Y_1-p_1, \dots, Y_{n-s}-p_{n-s})$
for $1\leq s \leq r$ and $\mathcal{K}_s:=\mathcal{I}_s +(Y_1-p_1,
\dots, Y_{n-s-1}-p_{n-s-1})$ for $1\leq s \leq r-1$. According to
Lemma \ref{lemma: positive_dimension_lifting_fiber}, $\mathcal{J}_s$
and $\mathcal{K}_s$ are the vanishing ideals of the lifting fiber
$\mathcal{V}_{\bfs p^s}$ and the lifting curve $\mathcal{W}_{\bfs
p^s}$ respectively. Further, identifying $\mathcal{I}_s$ with its
image in $\Q[Y_{n-s+1} \klk Y_n]$ and $\mathcal{K}_s$ with its image
in  $\mathbb{Q}[Y_{n-s}\klk Y_n]$ as in Lemma \ref{lemma:
lifting_curve_isomorphism}, the following conditions hold for $1\leq
s \leq r$:
\begin{itemize}
  \item $\mathbb{Q}[Y_{n-s+1} \klk Y_n]/\mathcal{J}_s$ is a $\Q$--vector space of dimension
  $\delta_s$;
  \item $Y_{n-s}\klk Y_n$ are in Noether position with respect to
  $\mathcal{K}_s$;
  \item $\mathbb{Q}[Y_{n-s}\klk Y_n]/\mathcal{K}_s$ is a free
$\mathbb{Q}[Y_{n-s}]$--module of rank equal to
$\mbox{rank}_{R_s}\mathbb{Q}[\mathcal{V}_s]$.
\end{itemize}

We can obtain Kronecker representations of $\mathcal{I}_s$,
$\mathcal{J}_s$, and $\mathcal{K}_s$ as in Section \ref{subsec:
kronecker_representations_from_specializations_of_the_Chow_form},
namely let $T$ be a new indeterminate and define $Q^s, W^s_{n-s+2}
\klk W^s_n\in R_s[T]$ by
\begin{equation}\label{eq: def: kronecker_representation_of_I_s}
Q^s:=\frac{P_s(\bfs\lambda^s, Y_1, \dots,
Y_{n-s},T)}{A_s(\bfs\lambda^{s+1})},\quad
W^s_j:=-\sum_{k=1}^{n}\frac{\lambda_{jk}}{A_s(\bfs\lambda^{s+1})}\frac{\partial P_s}{\partial
\Lambda_{n-s+1,k}}(\bfs\lambda^s, Y_1, \dots, Y_{n-s},T)
\end{equation}
for $n-s+2\leq j \leq n$, where $P_s\in\Z[\bfs\Lambda^s,Z_1\klk
Z_{n-s+1}]$ is a primitive Chow form of $\mathcal{V}_s$.
Propositions \ref{prop: kronecker_representation_of_I}, \ref{prop:
kronecker_representation_of_J} and \ref{prop:
kronecker_representation_of_K} then read as follows.
\begin{prop} \label{prop: kronecker_representation_of_I_s_J_s_and_K_s}
The following assertions hold:
\begin{itemize}
  \item the polynomials $Q^s, W^s_{n-s+2} \klk W^s_n$ form the Kronecker representation of $\mathcal{I}_s$
  with primitive element $Y_{n-s+1}$;
  \item the polynomials $Q^s(\bfs p^s,T), W^s_{n-s+2}(\bfs p^s, T) \klk\! W^s_n(\bfs p^s,T)$ form the Kronecker representation of
$\mathcal{J}_s$ with primitive element $Y_{n-s+1}$;
  \item the polynomials
 $Q^s(\bfs p^{s+1},\!Y_{n-s},T),W^s_{n-s+2}(\bfs
p^{s+1},\!Y_{n-s},T) \klk W^s_n(\bfs p^{s+1},\!Y_{n-s},T)$ form the
Kronecker representation of  $\mathcal{K}_s$  with primitive element
$Y_{n-s+1}$.
\end{itemize}
\end{prop}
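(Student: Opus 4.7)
The plan is to observe that this proposition is essentially a direct consequence of Propositions \ref{prop: kronecker_representation_of_I}, \ref{prop: kronecker_representation_of_J}, and \ref{prop: kronecker_representation_of_K} applied to each equidimensional variety $V = \mathcal{V}_s$ ($1\le s\le r$), once the required hypotheses are verified using Theorem \ref{th:_simultaneous_noether_normalization}.

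First, I would fix $s$ and check that the data $(\bfs\lambda^s, \bfs p^s)$ play the role of $(\bfs\lambda, \bfs p)$ in Section \ref{section: lifting_points_and_lifting_fibers} for the variety $V:=\mathcal{V}_s$, which is equidimensional of dimension $n-s$ and degree $\delta_s$. Concretely, the choice of $\bfs\lambda$ and $\bfs p$ in the statement of Theorem \ref{th:_simultaneous_noether_normalization} ensures that $A_s(\bfs\lambda^{s+1})\rho_s(\bfs\lambda^s,\bfs p^s)\neq 0$, that $Y_1,\dots,Y_{n-s}$ are in Noether position with respect to $\mathcal{I}_s$, that $\bfs p^s$ is a lifting point of $\pi_s$, and that $B_s=\Q[\mathcal{V}_s]$ is a free $R_s$-module of rank $\delta_s$. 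These are exactly the assumptions used throughout Section \ref{subsec: kronecker_representations_from_specializations_of_the_Chow_form}.

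Second, I would note that the polynomials $Q^s, W^s_{n-s+2},\dots, W^s_n\in R_s[T]$ defined in \eqref{eq: def: kronecker_representation_of_I_s} coincide, after relabeling, with the polynomials $Q, W_{n-s+2},\dots, W_n$ produced in Section \ref{subsec: kronecker_representations_from_specializations_of_the_Chow_form} from a primitive Chow form $P_s$ of $\mathcal{V}_s$ with the specialization $\bfs\Lambda\mapsto\bfs\lambda^s$. Consequently, Proposition \ref{prop: kronecker_representation_of_I} applied to $V=\mathcal{V}_s$ yields the first assertion, asserting that $Q^s, W^s_{n-s+2},\dots, W^s_n$ form a Kronecker representation of $\mathcal{I}_s$ with primitive element $Y_{n-s+1}$.

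For the second assertion, I would specialize $Y_1,\dots,Y_{n-s}$ at $\bfs p^s$ and apply Proposition \ref{prop: kronecker_representation_of_J} to $V=\mathcal{V}_s$ with lifting point $\bfs p^s$, which gives the Kronecker representation of $\mathcal{J}_s$ by the polynomials $Q^s(\bfs p^s,T), W^s_{n-s+2}(\bfs p^s,T),\dots, W^s_n(\bfs p^s,T)$. For the third assertion, I would specialize only $Y_1,\dots,Y_{n-s-1}$ at $\bfs p^{s+1}$: here $\bfs p^{s+1}$ plays the role of the $\bfs p^*$ appearing in Proposition \ref{prop: kronecker_representation_of_K}, and the hypotheses of that proposition (namely, that $\bfs\lambda^s$ and $\bfs p^s=(\bfs p^{s+1},p_{n-s})$ satisfy $A_s(\bfs\lambda^{s+1})\rho_s(\bfs\lambda^s,\bfs p^s)\neq 0$) are again guaranteed by Theorem \ref{th:_simultaneous_noether_normalization}. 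Applying it yields the Kronecker representation of $\mathcal{K}_s$ with primitive element $Y_{n-s+1}$, completing the argument. There is no substantive obstacle: the work has already been done in Section \ref{subsec: kronecker_representations_from_specializations_of_the_Chow_form}, and the proof is merely a bookkeeping exercise of verifying that the genericity conditions imposed by the choice of $\bfs\lambda$ and $\bfs p$ in Theorem \ref{th:_simultaneous_noether_normalization} supply, at every index $s$, the hypotheses of the three propositions we invoke.
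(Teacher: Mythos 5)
Your proposal is correct and follows essentially the same route as the paper: the choice of $\bfs\lambda$ and $\bfs p$ in Theorem \ref{th:_simultaneous_noether_normalization} guarantees $A_s(\bfs\lambda^{s+1})\rho_s(\bfs\lambda^s,\bfs p^s)\neq 0$ (and the linear independence of $Y_1\klk Y_n$), whence the Noether position, lifting point and freeness conditions hold for each $\mathcal{V}_s$, and the three assertions are then just Propositions \ref{prop: kronecker_representation_of_I}, \ref{prop: kronecker_representation_of_J} and \ref{prop: kronecker_representation_of_K} applied to $V=\mathcal{V}_s$ with $\bfs p^s$ as lifting point and $\bfs p^{s+1}$ in the role of $\bfs p^*$. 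This is exactly how the paper treats it, giving no further argument beyond this verification.
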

Now let $p$ be a prime number as in Theorem
\ref{th:_simultaneous_noether_normalization}. Let
$\mathcal{I}_{s,p}$, $\mathcal{J}_{s,p}$ and $\mathcal{K}_{s,p}$ be
the ideals of  $\cfp[\bfs X]$ defined by
$\mathcal{I}_{s,p}:=(F_{1,p}\klk F_{s,p})$ and
$\mathcal{J}_{s,p}:=\mathcal{I}_{s,p}+(Y_{1,p}-p_{1,p}, \dots,
Y_{n-s,p}-p_{n-s,p})$ for $1\leq s\leq r$, and
$\mathcal{K}_{s,p}:=\mathcal{I}_{s,p}+(Y_{1,p}-p_{1,p}, \dots,
Y_{n-s-1,p}-p_{n-s-1,p})$ for $1\leq s\leq r-1$. By Theorem
\ref{th:_simultaneous_noether_normalization} the following
conditions are satisfied for $1\leq s\leq r$:
\begin{itemize}
   \item $\mathcal{I}_{s,p}$ is a radical, equidimensional ideal of dimension
   $n-s$;
   \item the variables $Y_{1,p}, \dots, Y_{n,p}$ are in Noether position with respect to
   $\mathcal{I}_{s,p}$;
   \item the mapping $\pi_{s,p}:\mathcal{V}_{s,p}\rightarrow \A_{\scriptscriptstyle \cfp}^{n-s}$ defined
   by $Y_{1,p}, \dots, Y_{n-s,p}$ is a finite morphism and $\bfs p_p$ is a lifting point of
   $\pi_{s,p}$;
   \item $P_{s,p}$ is a Chow form of $\mathcal{V}_{s,p}$.
 \end{itemize}
It follows that $\mathcal{I}_{s,p}$, $\mathcal{J}_{s,p}$ and
$\mathcal{K}_{s,p}$ are the defining ideals of the variety
$\mathcal{V}_{s,p}$, the lifting fiber $\mathcal{V}_{\bfs p_p^s}$
and the lifting curve $\mathcal{W}_{\bfs p_p^{s+1}}$ respectively.
Since $p \nmid A_s(\bfs\lambda^{s+1})$, the polynomials $Q^s_p,
W^s_{1,p}, \dots, W^s_{n,p}\in \fp[T]$ are well--defined, and we
have the following result.
\begin{prop}\label{prop: kronecker_representations_mod_p}
The following assertions hold:
\begin{itemize}
  \item $Q_p^s, W^s_{n-s+2,p}, \dots, W^s_{n,p}$ form the Kronecker
  representation of $\mathcal{I}_{s,p}$ with  primitive element
  $Y_{n-s+1,p}$;
  \item $Q_p^s(\bfs p_p^s,T), W^s_{n-s+2,p}(\bfs p_p^s,T), \dots, W^s_{n,p}(\bfs p_p^s,T)$
  form the Kronecker representation of $\mathcal{J}_{s,p}$  with primitive element
  $Y_{n-s+1,p}$;
  \item $Q_p^s(\bfs p_p^{s+1},\!Y_{n-s,p},T), W^s_{n-s+2,p}(\bfs p_p^{s+1},Y_{n-s,p},T),
  \dots, W^s_{n,p}(\bfs p_p^{s+1},Y_{n-s,p},T)$
  form the Kronecker representation of $\mathcal{K}_{s,p}$  with primitive element $Y_{n-s+1,p}$.
\end{itemize}
\end{prop}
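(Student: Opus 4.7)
The plan is to apply the three propositions of Section~\ref{subsec: kronecker_representations_from_specializations_of_the_Chow_form} not over $\Q$ but over $\cfp$, using as Chow form the reduction $P_{s,p}$ of the primitive Chow form $P_s\in\Z[\bfs\Lambda^s,\bfs Z]$ of $\mathcal{V}_s$. The hypothesis $p\nmid\mathfrak{N}$ will make all the required nonvanishing conditions transfer automatically from $\Q$ to $\cfp$.

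First I would gather the consequences of Theorem~\ref{th:_simultaneous_noether_normalization}: for $1\le s\le r$, the ideal $\mathcal{I}_{s,p}$ is radical and equidimensional of dimension $n-s$, the variety $\mathcal{V}_{s,p}\subset\A^n_{\cfp}$ has degree $\delta_s$, and by Corollary~\ref{coro: equality_of_the_degree} the polynomial $P_{s,p}$ is a Chow form of $\mathcal{V}_{s,p}$. Next, since $\mathfrak{N}$ is a multiple of ${\sf M}_s(\bfs\lambda^s,\bfs p^s)=\alpha_s\gamma_s A_s(\bfs\lambda^{s+1})\rho_s(\bfs\lambda^s,\bfs p^s)$ by the definition \eqref{N_lambda_definition}, the condition $p\nmid\mathfrak{N}$ forces $A_{s,p}(\bfs\lambda^{s+1}_p)\neq 0$ and $\rho_{s,p}(\bfs\lambda^s_p,\bfs p^s_p)\neq 0$. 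Thus the hypotheses of Theorem~\ref{th:_lifting_point_rank_and_finite_morphism} are satisfied over $\K=\cfp$ for $V=\mathcal{V}_{s,p}$ with data $(\bfs\lambda^s_p,\bfs p^s_p)$, and $\bfs p^s_p$ is a lifting point of $\pi_{s,p}$ at which $Y_{n-s+1,p}$ is a primitive element.

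Now since $A_s(\bfs\lambda^{s+1})\in\Z$ is invertible modulo $p$, the formulas \eqref{eq: def: kronecker_representation_of_I_s} defining $Q^s,W^s_{n-s+2},\ldots,W^s_n\in R_s[T]$ commute with reduction modulo $p$: the reductions $Q^s_p,W^s_{n-s+2,p},\ldots,W^s_{n,p}$ are obtained by applying the very same formulas to the Chow form $P_{s,p}$ of $\mathcal{V}_{s,p}$ and the data $\bfs\lambda^s_p$. Hence Proposition~\ref{prop: kronecker_representation_of_I} applied over $\cfp$ to $\mathcal{V}_{s,p}$ yields the first assertion. Specializing at $\bfs p^s_p$, the nonvanishing $\rho_{s,p}(\bfs\lambda^s_p,\bfs p^s_p)\neq 0$ is exactly the hypothesis required by Proposition~\ref{prop: kronecker_representation_of_J}, giving the second assertion. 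For the third, we specialize only at $\bfs p^{s+1}_p$ and need to know that $\rho_{s,p}(\bfs\lambda^s_p,\bfs p^{s+1}_p,Y_{n-s,p})$ is a nonzero polynomial in $Y_{n-s,p}$; this is guaranteed by condition~(4) of Theorem~\ref{th: lifting_fibers_modulo_p} together with Remark~\ref{fiber_of_the_fiber_remark}, since $\rho_{s,p}(\bfs\lambda^s_p,\bfs p^{s+1}_p,a)\neq 0$ for every root $a\in\cfp$ of the minimal polynomial of $Y_{n-s,p}$ over $\pi_{s+1,p}^{-1}(\bfs p^{s+1}_p)$. Then Proposition~\ref{prop: kronecker_representation_of_K} applied over $\cfp$ gives the third assertion.

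The whole argument is conceptually a direct transcription of the integral construction modulo $p$, and the only real point to watch is that the rational functions involved in \eqref{eq: def: kronecker_representation_of_I_s} have denominator $A_s(\bfs\lambda^{s+1})$, which has nonzero image in $\cfp$; once this is noted, the identities \eqref{ident:chow form_fundamental} and \eqref{ident:partial_derivatives_chow_form} reduce modulo $p$ to their $\cfp$--analogues for $\mathcal{V}_{s,p}$, and the three Kronecker representations follow mechanically. The main obstacle I foresee is purely bookkeeping: checking that every divisibility $p\mid{\sf M}_s$ or $p\mid{\sf L}_{\bfs\lambda^s}$ is excluded by $p\nmid\mathfrak{N}$, which is taken care of by the explicit description \eqref{N_lambda_definition} of ${\sf N}_{\bfs\lambda}$.
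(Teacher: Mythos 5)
Your argument is correct and takes essentially the same route as the paper: the paper likewise reduces the defining formulas \eqref{eq: def: kronecker_representation_of_I_s} modulo $p$ (using that $A_s(\bfs\lambda^{s+1})$ is a unit mod $p$), notes that $P_{s,p}$ is a Chow form of $\mathcal{V}_{s,p}$ and that $p\nmid A_s(\bfs\lambda^{s+1})\rho_s(\bfs\lambda^s,\bfs p^s)$, and then repeats the arguments of Propositions \ref{prop: kronecker_representation_of_I}, \ref{prop: kronecker_representation_of_J} and \ref{prop: kronecker_representation_of_K} over $\cfp$. The only inessential difference is your detour through Theorem \ref{th: lifting_fibers_modulo_p}(4) and Remark \ref{fiber_of_the_fiber_remark} for the third item: the nonvanishing of $\rho_{s,p}(\bfs\lambda^s_p,\bfs p^{s+1}_p,Y_{n-s,p})$ as a polynomial in $Y_{n-s,p}$ already follows from $\rho_{s,p}(\bfs\lambda^s_p,\bfs p^s_p)\neq 0$ by evaluating at $Y_{n-s,p}=p_{n-s,p}$, which is all the specialization argument needs.
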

\begin{proof} From  \eqref{eq: def: kronecker_representation_of_I_s} we
deduce that
\begin{align*}
Q^s_p =&\frac{P_{s,p}\bigl(\bfs\lambda^s_p, Y_{1,p}\klk
Y_{n-s,p},T\bigr)} {A_{s,p}(\bfs\lambda^{s+1}_p)}, \\ W^s_{j,p}
=&-\sum_{k=1}^{n}\frac{\lambda_{jk,p}}{A_{s,p}(\bfs\lambda^{s+1}_p)}\frac{\partial
P_{s,p}}{\partial \Lambda_{n-s+1,k}}(\bfs\lambda_p^s, Y_{1,p} \klk
Y_{n-s,p},T)\quad(n-s+2 \leq j \leq n).
\end{align*}
As $P_{s,p}$ is a Chow form of $\mathcal{V}_{s,p}$, the proposition
follows taking into account the condition $p \nmid
A_s(\bfs\lambda^{s+1})\rho_s(\bfs\lambda^s, \bfs p^s)$ and arguing
as in Propositions \ref{prop: kronecker_representation_of_I},
\ref{prop: kronecker_representation_of_J} and \ref{prop:
kronecker_representation_of_K}.
\end{proof}
%
%
\section{Computation of a Kronecker representation}
\label{section: computation_of_a_kronecker_representation}
Let $F_1 \klk F_r\in \Z[\bfs X]$ be, as in Section \ref{section:
modular_simultaneous_noether_normalization}, polynomials defining a
reduced regular sequence. In this section we establish an upper
bound on the bit complexity of computing a Kronecker representation
of a zero--dimensional fiber $\pi_r^{-1}(\bfs p^r)$ of
$\mathcal{V}(F_1 \klk F_r)$. For this purpose, following the
approach of \cite{GiLeSa01}, we perform this computation modulo a
prime number $p$ and apply $p$--adic lifting to recover the integers
of a Kronecker representation of $\pi_r^{-1}(\bfs p^r)$ over $\Q$.
Assuming that a ``lucky'' prime $p$ is given, the complexity of
computing a Kronecker representation of a zero--dimensional fiber of
$\mathcal{V}(F_{1,p} \klk F_{r,p})$ was analyzed in \cite{CaMa06a}.
On the other hand, the complexity of the $p$--adic lifting step was
analyzed in \cite{GiLeSa01}. Accordingly, in this section we analyze
the cost of computing a ``lucky'' prime (Proposition \ref{prop:
computation_of_a_lucky_prime}), and then obtain an upper bound on
the bit complexity of computing a Kronecker representation of
$\pi_r^{-1}(\bfs p^r)$ over $\Q$ (Theorem \ref{th:
bit_complexity_kronecker_solver}).


%
%
\subsection{Computation of a Kronecker representation modulo $p$}
Let ${\sf S}:=\{0, \dots, \textsf{a}\}$ and ${\sf T}:=\{0, \dots,
\textsf{b}\}$, where  $\textsf{a}:=\lfloor 8D\rfloor$ and
$\textsf{b}:=\lfloor {9\sf D}\rfloor$. Assume that we have randomly
chosen $(\bfs\lambda, \bfs p)\in {\sf S}^{n^2} \times {\sf T}^{n-1}$
such that ${\sf R}(\bfs\lambda)\neq 0$ and ${\sf
N}_{\bfs\lambda}(\bfs p)\neq 0$. The following result asserts that
this can be done with a high probability of success.
\begin{lema}\label{lema: lambda and p_probability}
Let $(\bfs\lambda, \bfs p)$ be a point chosen uniformly at random in
${\sf S}^{n^2}\times {\sf T}^{n-1}$. Then the probability that ${\sf
R}(\bfs\lambda)\neq 0$ and  ${\sf N}_{\bfs\lambda}(\bfs p)\neq 0$ is
greater than $\frac{7}{9}$.
\end{lema}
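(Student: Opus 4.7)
The plan is a standard two-stage application of Zippel--Schwartz (Lemma \ref{lemma: zippel_schwartz}): first I would bound the probability that ${\sf R}(\bfs\lambda)\neq 0$ for a uniformly random $\bfs\lambda\in {\sf S}^{n^2}$, and then, conditionally on this event, bound the probability that ${\sf N}_{\bfs\lambda}(\bfs p)\neq 0$ for a uniformly random $\bfs p\in {\sf T}^{n-1}$. Since $\sf R$ is a fixed nonzero polynomial in $n^2$ variables of total degree at most $D$ by \eqref{deg_R_estimate}, and $|{\sf S}|=\mathsf{a}+1=\lfloor 8D\rfloor+1>8D$, Lemma \ref{lemma: zippel_schwartz} immediately yields $\mathrm{Prob}({\sf R}(\bfs\lambda)=0)<D/|{\sf S}|<1/8$.

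Before the second stage I would verify the only nontrivial point, namely that ${\sf N}_{\bfs\lambda}$ is a nonzero element of $\Z[Z_1,\ldots,Z_{n-1}]$ whenever ${\sf R}(\bfs\lambda)\neq 0$. The factors ${\sf L}_{\bfs\lambda^s}$ in \eqref{N_lambda_definition} are nonzero by the definition \eqref{def:L_lambda_s}. For the factors ${\sf M}_s(\bfs\lambda^s,\cdot)$, recall that ${\sf C}_s\in\Z[\bfs\Lambda]$ was introduced as a nonzero coefficient of ${\sf M}_s{\sf M}_{s+1}$ regarded as an element of $\Z[\bfs\Lambda][Z_1,\ldots,Z_{n-s}]$. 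Hence ${\sf C}_s(\bfs\lambda)\neq 0$ forces both ${\sf M}_s(\bfs\lambda^s,\cdot)$ and ${\sf M}_{s+1}(\bfs\lambda^{s+1},\cdot)$ to be nonzero polynomials in the $Z$-variables. Since ${\sf R}=\prod_{s=1}^{r-1}{\sf C}_s{\sf R}_s$, the assumption ${\sf R}(\bfs\lambda)\neq 0$ guarantees that all the ${\sf M}_s$ factors for $1\le s\le r$ appearing in ${\sf N}_{\bfs\lambda}$ are nonzero (in particular ${\sf M}_r$ is controlled by ${\sf C}_{r-1}$), so that ${\sf N}_{\bfs\lambda}\neq 0$, with $\deg {\sf N}_{\bfs\lambda}\le D$ as noted after \eqref{N_lambda_definition}.

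With this in hand, the second stage is a routine application of Lemma \ref{lemma: zippel_schwartz}: since $|{\sf T}|=\mathsf{b}+1>9D$, for every $\bfs\lambda$ with ${\sf R}(\bfs\lambda)\neq 0$ we obtain the conditional estimate $\mathrm{Prob}\bigl({\sf N}_{\bfs\lambda}(\bfs p)=0\bigm|{\sf R}(\bfs\lambda)\neq 0\bigr)<D/|{\sf T}|<1/9$. Using the independence of the uniform choices of $\bfs\lambda$ and $\bfs p$, the probability that both ${\sf R}(\bfs\lambda)\neq 0$ and ${\sf N}_{\bfs\lambda}(\bfs p)\neq 0$ is strictly greater than $(1-1/8)(1-1/9)=7/9$. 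The only step requiring attention is the bookkeeping check that ${\sf N}_{\bfs\lambda}$ stays nonzero on the event $\{{\sf R}(\bfs\lambda)\neq 0\}$; once this is secured, both applications of Zippel--Schwartz are straightforward.
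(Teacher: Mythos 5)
Your proof is correct and follows essentially the same route as the paper: two applications of Lemma \ref{lemma: zippel_schwartz}, giving probability greater than $\tfrac{7}{8}$ for ${\sf R}(\bfs\lambda)\neq 0$ and conditional probability greater than $\tfrac{8}{9}$ for ${\sf N}_{\bfs\lambda}(\bfs p)\neq 0$, whose product exceeds $\tfrac{7}{9}$. Your extra check that ${\sf N}_{\bfs\lambda}$ remains a nonzero polynomial of degree at most $D$ on the event $\{{\sf R}(\bfs\lambda)\neq 0\}$ (via the factors ${\sf C}_s$ and ${\sf L}_{\bfs\lambda^s}$) is a point the paper leaves implicit from the preceding discussion, and you handle it correctly.
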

\begin{proof} Since $\deg {\sf R}\leq
D$, by Lemma \ref{lemma: zippel_schwartz} we see that for a random
choice of $\bfs\lambda$ in ${\sf S}^{n^2}$, the probability that
${\sf R}(\bfs\lambda)\neq 0$ is greater than $\frac{7}{8}$.
Similarly, as $\deg ({\sf N}_{\bfs\lambda})\leq D$, for a point
$\bfs p$ chosen uniformly at random in ${\sf T}^{n-1}$, the
conditional probability that ${\sf N}_{\bfs\lambda}(\bfs p)\neq 0$,
given that ${\sf R}(\bfs\lambda)\neq 0$, is greater than
$\frac{8}{9}$. This finishes the proof of the lemma.
\end{proof}

For such a choice of $\bfs\lambda$ and $\bfs p$, let $\mathfrak{N}$
be the integer of Theorem
\ref{th:_simultaneous_noether_normalization}. According to Theorem
\ref{th: app: mathfrak_N_height}, we have
\begin{equation}\label{eq: log_frak_N_height}
h(\mathfrak{N})\leq\mathfrak{H} \quad  \textrm{and} \quad \log
\mathfrak{H} \in \mathcal{O}^{\sim}\bigr(\log (d^r nh)\bigl).
\end{equation}
Now we can estimate the complexity of computing a ``lucky'' prime
$p$ of ``low'' bit length.
\begin{prop} \label{prop: computation_of_a_lucky_prime}
There is a probabilistic algorithm which takes $\mathfrak{H}$ as
input and computes a prime $p$ with $12\mathfrak{H}+1\leq p \leq
24\mathfrak{H}$ such that $p \nmid \mathfrak{N}$. The algorithm uses
$\SO \bigr(\log^2 (d^r nh)\bigl)$ bit operations and returns a right
result with probability at least $\frac{3}{4}$.
\end{prop}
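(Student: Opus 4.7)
The plan is to invoke Lemma \ref{lemma: finding_primes} directly, with $M := \mathfrak{N}$, $B := 12\mathfrak{H}$ and $m := 12$. The height bound from \eqref{eq: log_frak_N_height} provides exactly the hypothesis $\log|M|\le B/m$ required by that lemma, since
\[
\log|\mathfrak{N}| \;=\; h(\mathfrak{N}) \;\le\; \mathfrak{H} \;=\; \frac{B}{m}.
\]
Hence the lemma returns, in $\mathcal{O}^\sim(k\log^2 B)$ bit operations, a prime $p$ with $12\mathfrak{H}+1\le p\le 24\mathfrak{H}$ that does not divide $\mathfrak{N}$, with probability at least
\[
\Bigl(1-\tfrac{\log B}{2^{k-1}}\Bigr)\Bigl(1-\tfrac{2}{12}\Bigr) \;=\; \Bigl(1-\tfrac{\log B}{2^{k-1}}\Bigr)\cdot \tfrac{5}{6},
\]
where $k$ is a parameter that I am free to choose.

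To meet the prescribed success probability $3/4$, I would pick $k$ so that $1-\log B/2^{k-1}\ge 9/10$, which reduces to the condition $2^{k-1}\ge 10\log B$. Setting $k:=\lceil\log_2(10\log B)\rceil+1$ suffices and yields $k=\mathcal{O}(\log\log B)$, so $k$ gets absorbed by the soft--Oh notation.

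For the final complexity bound, the cost becomes $\mathcal{O}^\sim(\log^2 B)$. Combining this with the estimate $\log\mathfrak{H}\in\mathcal{O}^\sim\bigl(\log(d^rnh)\bigr)$ from \eqref{eq: log_frak_N_height}, we obtain $\log B=\log(12\mathfrak{H})\in\mathcal{O}^\sim\bigl(\log(d^rnh)\bigr)$ and therefore $\log^2 B\in\mathcal{O}^\sim\bigl(\log^2(d^rnh)\bigr)$, which matches the claimed bit complexity. There is no significant obstacle in this proof: the entire argument is a direct bookkeeping application of Lemma \ref{lemma: finding_primes}, once the key height estimate $h(\mathfrak{N})\le\mathfrak{H}$ already established in Theorem \ref{th: app: mathfrak_N_height} is in hand.
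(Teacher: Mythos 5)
Your proof is correct and follows essentially the same route as the paper: both apply Lemma \ref{lemma: finding_primes} with $M=\mathfrak{N}$, $B=12\mathfrak{H}$, $m=12$, and a parameter $k\in\mathcal{O}(\log\log\mathfrak{H})$, using the bound $h(\mathfrak{N})\le\mathfrak{H}$ from \eqref{eq: log_frak_N_height}. The only difference is your specific choice of $k$ (the paper takes $k=5+\log\log(12\mathfrak{H})$), which is immaterial.
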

\begin{proof}
The proposition follows applying Lemma \ref{lemma: finding_primes}
with $B=m\mathfrak{H}$, $M=\mathfrak{N}$, $m=12$, and $k=5+
\log\log(12\mathfrak{H})$, and taking into account \eqref{eq:
log_frak_N_height}.
\end{proof}
%
%
Assume that we have computed a ``lucky'' prime $p$ as in Proposition
\ref{prop: computation_of_a_lucky_prime}. Further, assume that we
are given a straight--line program of length at most $L$ which
represents the polynomials $F_{1,p} \klk F_{r,p}$. Since the integer
$\mathfrak{H}$ of \eqref{eq: log_frak_N_height} can certainly be
chosen with $\mathfrak{H}\geq 5n^2d\delta^4$, we can assume that $p>
60n^2d\delta^4$. Thus we can apply the algorithm described in
\cite{CaMa06a} to compute a Kronecker representation of the lifting
fiber $V_{\bfs p_p^r}$.

The algorithm starts computing the Kronecker representation of the
fiber $V_{\bfs p_p^1}$ of the hypersurface $\{F_{1,p}=0\}$, with
$Y_{n,p}$ as primitive element. With the notations of Proposition
\ref{prop: kronecker_representations_mod_p}, such a Kronecker
representation only consists of the minimal polynomial
$Q^1\bigl(\bfs p^1,T\bigr)$ of $Y_{n,p}$ modulo $\mathcal{J}_{1,p}$.
Since, with notations as in Section \ref{section:
lifting_points_and_lifting_fibers}, we have
$\mathcal{J}_{1,p}=\bigl(F_{1,p}(\bfs p_p^1,Y_{n,p})\bigr)$, we see
that $\cfp[V_{\bfs p_p^1}]=\cfp[Y_{n,p}]/\bigl(F_{1,p}(\bfs
p_p^1,Y_{n,p})\bigr)$. It follows that $Q^1\bigl(\bfs p_p^1,T\bigr)$
equals the polynomial $F_{1,p}(\bfs p_p^1,T)$ divided by its leading
coefficient.

Then the algorithm proceeds in $r-1$ stages. For $s\in \{1\klk
r-1\}$, the $s$th stage takes as input a Kronecker representation
$Q^s(\bfs p_p^s,T), W^s_{n-s+2}(\bfs p_p^s,T) \klk W^s_n(\bfs
p_p^s,T)$ of $\mathcal{J}_{s,p}$ and outputs a Kronecker
representation $Q^{s+1}(\bfs p_p^{s+1},T), W^{s+1}_{n-s+1}(\bfs
p_p^{s+1},T) \klk W^{s+1}_n(\bfs p_p^{s+1},T)$ of
$\mathcal{J}_{s+1,p}$. This stage, whose cost is analyzed below,
consists in two main tasks, which are called the lifting step and
the intersection step.
%
%
\subsubsection{Lifting step}
In the lifting step we compute the Kronecker representation
\linebreak $Q^s(\bfs p_p^{s+1}, Y_{n-s,p},T), W^s(\bfs
p_p^{s+1},Y_{n-s,p},T) \klk W^s(\bfs p_p^{s+1},Y_{n-s,p},T)$ of
$\mathcal{K}_{s,p}$ with primitive element $Y_{n-s+1,p}$, from the
univariate representation of $\mathcal{J}_{s,p}$  with $Y_{n-s+1,p}$
as primitive element. By Proposition \ref{prop:
kronecker_representations_mod_p}, such a Kronecker representation is
defined by the specializations of $Q_p^s, W^s_{n-s+2,p} \klk
W^s_{n,p}$ at $Y_{1,p}=p_{1,p} \klk Y_{n-s-1,p}=p_{n-s-1,p}$.  Let
$\widehat{R}_{s,p}:=\fp[\![Y_{1,p}-p_{1,p}\klk
Y_{n-s,p}-p_{n-s,p}]\!]$. By Remark \ref{total_degrees_remark} we
conclude that it suffices to compute the approximation of $Q_p^{s},
W^s_{n-s+2,p} \klk W^s_{n,p}$ to precision $(Y_{1,p}-p_{1,p} \klk
Y_{n-s,p}-p_{n-s,p})^{\delta_s+1}$ in $\widehat{R}_{s,p}[T]$.
As $F_{1,p}(\bfs p_p^{s+1},Y_{n-s,p}\klk Y_{n,p}) \klk F_{s,p}(\bfs
p_p^{s+1},Y_{n-s,p}\klk Y_{n,p})$ generate the radical ideal
$\mathcal{K}_{s,p}$ and form a regular sequence of
$\cfp[Y_{n-s,p}\klk Y_{n,p}]$  by Lemma \ref{lema:
reduced_regular_sequence}, applying the Global Newton algorithm of
\cite[II.4]{GiLeSa01} we have the following result.
\begin{prop}\label{prop: lifting_step}
There exists a deterministic  algorithm that takes as input:
\begin{itemize}
  \item a straight--line program of length $L$ which represents the polynomials $F_{1,p} \klk
  \!F_{s,p}$;
  \item the dense representation of the polynomials in $\fp[T]$
  which form the univariate representation of $\mathcal{J}_{s,p}$ with primitive element
  $Y_{n-s+1,p}$;
\end{itemize}
and outputs the dense representation of the polynomials in
$\fp[Y_{n-s,p},T]$ which form the Kronecker representation of
$\mathcal{K}_{s,p}$ with  primitive element $Y_{n-s+1,p}$. The
algorithm uses $\SO\bigr((nL+n^5)\delta_s^2\log p\bigl)$ bit
operations.
\end{prop}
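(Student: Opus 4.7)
The plan is to apply the Global Newton algorithm of \cite[Section~II.4]{GiLeSa01} with lifting parameter $Y_{n-s,p}-p_{n-s,p}$, starting from the given univariate representation of $\mathcal{J}_{s,p}$ (which is the input Kronecker representation at precision $1$) and iterating until precision $\delta_s+1$ is reached. By Proposition \ref{prop: kronecker_representations_mod_p}, the target Kronecker representation of $\mathcal{K}_{s,p}$ is the collection $Q^s_p(\bfs p^{s+1}_p,Y_{n-s,p},T)$, $W^s_{j,p}(\bfs p^{s+1}_p,Y_{n-s,p},T)$ for $n-s+2\le j\le n$; by Remark \ref{total_degrees_remark}, each of these polynomials has total degree at most $\delta_s$ in $(Y_{n-s,p},T)$. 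Hence it suffices to compute them in $\fp[\![Y_{n-s,p}-p_{n-s,p}]\!][T]$ modulo $(Y_{n-s,p}-p_{n-s,p})^{\delta_s+1}$; equivalently, in the ring $A[T]/(Q^s_p(\bfs p^s_p,T))$, where $A:=\fp[Y_{n-s,p}]/((Y_{n-s,p}-p_{n-s,p})^{\delta_s+1})$ is an Artinian base ring of $\fp$-dimension $\delta_s+1$, so that the residue algebra has $\fp$-dimension $\mathcal{O}(\delta_s^2)$.

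The hypotheses of Global Newton can be verified as follows. By Lemma \ref{lema: reduced_regular_sequence}, the polynomials $F_{1,p}(\bfs p^{s+1}_p,Y_{n-s,p},\ldots,Y_{n,p}),\ldots, F_{s,p}(\bfs p^{s+1}_p,Y_{n-s,p},\ldots,Y_{n,p})$ form a regular sequence in $\fp[Y_{n-s,p},\ldots,Y_{n,p}]$, and by Lemma \ref{lemma: lifting_curve_isomorphism} they generate the radical ideal $\overline{\mathcal{K}}_{s,p}$. Moreover, by Lemma \ref{lemma: jacobian_nonzero} applied at $Y_{n-s,p}=p_{n-s,p}$, their Jacobian determinant with respect to $Y_{n-s+1,p},\ldots,Y_{n,p}$ is invertible modulo $\overline{\mathcal{J}}_{s,p}$, and hence, by the standard Hensel-type argument, lifts to a unit in the $(Y_{n-s,p}-p_{n-s,p})$-adic completion. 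This is precisely the non-degeneracy condition required by the Newton scheme, and it also justifies doubling the working precision at each step.

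Running the Newton iteration with doubling precision requires $\mathcal{O}(\log\delta_s)$ steps to go from precision $1$ to precision $\delta_s+1$. According to the analysis in \cite[Section~II.4]{GiLeSa01}, each iteration amounts to evaluating the straight-line program representing $F_{1,p},\ldots,F_{s,p}$ together with its Jacobian, at a cost of $\mathcal{O}(nL)$ arithmetic operations over $A[T]/(Q^s_p(\bfs p^s_p,T))$, followed by a constant number of $s\times s$ matrix inversions and matrix--vector products, contributing $\mathcal{O}(n^5)$ further operations over that ring using classical linear algebra. Since each such ring operation costs $\SO(\delta_s^2\log p)$ bit operations, the per-iteration cost is $\SO((nL+n^5)\delta_s^2\log p)$; after $\mathcal{O}(\log\delta_s)$ iterations, the factor $\log\delta_s$ is absorbed into the soft-Oh, yielding the claimed total bound. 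Finally, after the iteration terminates, the coefficients of the lifted power series (truncated modulo $(Y_{n-s,p}-p_{n-s,p})^{\delta_s+1}$) are read off as the dense coefficients of the output polynomials in $\fp[Y_{n-s,p},T]$.

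The main technical obstacle I anticipate is the precise bookkeeping of arithmetic in the truncated power-series-and-quotient ring $A[T]/(Q^s_p(\bfs p^s_p,T))$, including the cost of computing the minimal polynomial and parametrizations at each doubled precision. However, since this is exactly the setting treated in \cite[Section~II.4]{GiLeSa01}, where the Global Newton algorithm is designed and analyzed in full generality for a reduced regular sequence over a one-parameter base equipped with a non-degenerate Jacobian, the proof reduces to verifying the hypotheses above and invoking the cited complexity estimate with the input dimensions $s\le n$ and $\delta_s$.
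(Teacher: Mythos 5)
Your proposal is correct and follows essentially the same route as the paper: reduce the task to computing the Kronecker representation of $\mathcal{K}_{s,p}$ as a truncation at precision $\delta_s+1$ in the lifting parameter (justified by Proposition \ref{prop: kronecker_representations_mod_p} and Remark \ref{total_degrees_remark}), verify the hypotheses of regularity, radicality and Jacobian invertibility via Lemmas \ref{lema: reduced_regular_sequence}, \ref{lemma: lifting_curve_isomorphism} and \ref{lemma: jacobian_nonzero}, and invoke the Global Newton algorithm of \cite[II.4]{GiLeSa01} with its complexity estimate. The paper's own proof is exactly this citation-based argument, so no further comparison is needed.
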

%
%
\subsubsection{Intersection step}
The input of the intersection step is  the output of the algorithm
underlying Proposition \ref{prop: lifting_step}, namely the
Kronecker representation of $\mathcal{K}_{s,p}$  with primitive
element $Y_{n-s+1,p}$. Let $Q^s(\bfs p_p^{s+1}\!,\! Y_{n-s,p}, T),
V^s_{n-s+2}(\bfs p_p^{s+1}\!,\! Y_{n-s,p}, T), \dots,$ $V^s_n(\bfs
p_p^{s+1}, Y_{n-s,p}, T)$ be the corresponding univariate
representation. The output is the univariate representation
$Q^{s+1}(\bfs p_p^{s+1}, T), V^s_{n-s+1}(\bfs p_p^{s+1}, T),
\dots,V^{s+1}_n(\bfs p_p^{s+1}, T)$  of $\mathcal{J}_{s+1,p}$ with
primitive element $Y_{n-s,p}$. 
Consider $F_{s+1,p}$ as an element of $\fp[Y_{1,p} \klk Y_{n,p}]$
and define $h\in \fp(Y_{n-s,p})[T]$ by
\begin{align*}
h(T):=F_{s+1,p}\bigl(\bfs p_p^{s+1}, Y_{n-s,p}, T,V^{s}_{n-s+2}(\bfs
p_p^{s+1}, T) \klk V^{s}_n(\bfs p_p^{s+1}, T)\bigr)\qquad \\\mod
\bigl(Q^{s}(\bfs p_p^{s+1}, Y_{n-s,p}, T)\bigr).
\end{align*}
The following result provides an expression for $Q^{s+1}(\bfs
p_p^{s+1},T)$ from which we shall be able to compute it efficiently.
\begin{prop} We have
\[
Q^{s+1}(\bfs p_p^{s+1},Y_{n-s,p})=\epsilon\,
\mathrm{Res}_{T}\bigl(h(T),Q^{s}(\bfs p_p^{s+1}, Y_{n-s,p},T)\bigr),
\]
for some $\epsilon \in \fp\setminus \{0\}$.
\end{prop}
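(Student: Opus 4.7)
My plan is to compute the $0$-th Fitting ideal of the fiber coordinate ring $A := \cfp[Y_{n-s,p},\ldots,Y_{n,p}]/\mathcal{J}_{s+1,p}$ in two different ways and to equate the outcomes. Let $\Delta := \rho_{s,p}(\bfs\lambda_p,\bfs p_p^{s+1},Y_{n-s,p}) \in \fp[Y_{n-s,p}]$; up to a nonzero constant in $\fp$, $\Delta$ coincides with the discriminant of $Q^s(\bfs p_p^{s+1},Y_{n-s,p},T)$ with respect to $T$. By Theorem~\ref{th: lifting_fibers_modulo_p}(4) together with Remark~\ref{fiber_of_the_fiber_remark}, $\Delta$ does not vanish at any root of $Q^{s+1}(\bfs p_p^{s+1},T)$, so $\Delta$ and $Q^{s+1}(\bfs p_p^{s+1},Y_{n-s,p})$ are coprime in $\fp[Y_{n-s,p}]$, and $\Delta$ acts invertibly on $A$.

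The central input is the isomorphism of $\fp[Y_{n-s,p}]_\Delta$-algebras
\[
\Phi:\; \fp[Y_{n-s,p}]_\Delta[T]/\bigl(Q^s(\bfs p_p^{s+1},Y_{n-s,p},T)\bigr) \;\xrightarrow{\sim}\; \fp[Y_{n-s,p}]_\Delta[Y_{n-s+1,p},\ldots,Y_{n,p}]/\mathcal{K}_{s,p}^\Delta
\]
coming from the univariate form of the Kronecker representation of $\mathcal{K}_{s,p}$ (see Proposition~\ref{prop: kronecker_representations_mod_p}), which sends $T \mapsto Y_{n-s+1,p}$ and $V^s_j \mapsto Y_{j,p}$ for $n-s+2 \le j \le n$. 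By the very definition of $h$, $\Phi$ carries the class of $h(T)$ onto the class of $F_{s+1,p}$. Since $\mathcal{J}_{s+1,p} = \mathcal{K}_{s,p}+(F_{s+1,p})$ and $\Delta$ already acts invertibly on $A$, passing to quotients yields an isomorphism $A \cong \fp[Y_{n-s,p}]_\Delta[T]/(Q^s,h)$ of $\fp[Y_{n-s,p}]_\Delta$-algebras.

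With this in hand, I would compute $\mathrm{Fitt}_0(A)$ over the principal ideal domain $\fp[Y_{n-s,p}]_\Delta$ in two ways. Since $Y_{n-s,p}$ is a primitive element of the radical zero-dimensional ideal $\mathcal{J}_{s+1,p}$, the module $A$ is cyclic with annihilator $(Q^{s+1}(\bfs p_p^{s+1},Y_{n-s,p}))$, and for a cyclic torsion module over a PID the $0$-th Fitting ideal coincides with the annihilator. On the other hand, via the iso above $A$ is the cokernel of multiplication by $h$ on the free rank-$\delta_s$ module $M := \fp[Y_{n-s,p}]_\Delta[T]/(Q^s)$, so $\mathrm{Fitt}_0(A) = (\det\mathrm{mult}_h)$; since $Q^s$ is monic in $T$, this determinant equals $\mathrm{Res}_T(h,Q^s)$ up to sign. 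Equating the two computations yields the ideal identity $(\mathrm{Res}_T(h,Q^s)) = (Q^{s+1}(\bfs p_p^{s+1},Y_{n-s,p}))$ in $\fp[Y_{n-s,p}]_\Delta$.

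To upgrade this ideal identity to the scalar equality asserted, with $\epsilon \in \fp\setminus\{0\}$, I would write the resultant, which a priori lies in $\fp[Y_{n-s,p}]_\Delta$, in reduced form $N/\Delta^m$ with $\gcd(N,\Delta) = 1$, and observe that the ideal equality forces $N = \epsilon\,Q^{s+1}$ for some $\epsilon \in \fp^*$; a degree-matching argument, based on the explicit expression of $h$ via $V^s_j = W^s_j/\partial_T Q^s$ and on $\deg_{Y_{n-s,p}} Q^{s+1} = \delta_{s+1}$, should then force $m = 0$. This last normalization step, which requires carefully tracking how the denominators coming from $V^s_j$ cancel inside the resultant, is the main technical obstacle I foresee; the conceptual content of the proof lies in the Fitting ideal comparison.
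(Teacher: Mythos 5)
Your Fitting--ideal framework is sound up to the step you yourself flag, but that flagged step is a genuine gap rather than a routine normalization. Because you present $A$ through the Kronecker parametrization of $\mathcal{K}_{s,p}$, you are forced to work over the localization $\fp[Y_{n-s,p}]_\Delta$, and a $0$--th Fitting ideal over that ring determines its generator only up to a unit of the localization, i.e.\ up to $c\prod_i p_i^{e_i}$ with $c\in\fp\setminus\{0\}$, the $p_i$ the irreducible factors of $\Delta$ and $e_i\in\Z$ (so you must also exclude positive powers of factors of $\Delta$ in the numerator, not only a denominator $\Delta^m$). To force all $e_i=0$, the degree--matching you sketch needs two inputs that are not available at that point: (i) that $\mathrm{Res}_T\bigl(h,Q^{s}(\bfs p_p^{s+1},Y_{n-s,p},T)\bigr)$ is actually a polynomial in $Y_{n-s,p}$ -- the coefficients of $h$ genuinely carry powers of $\Delta$ in their denominators, and nothing in the localized computation prevents the resultant from retaining one; and (ii) the sharp bound $\deg_{Y_{n-s,p}}\mathrm{Res}_T(h,Q^{s})\le\delta_{s+1}$, whereas the naive estimate coming from cutting the lifting curve with $\{F_{s+1,p}=0\}$ only gives $d\,\delta_s$, which in general cannot exclude even a single extra factor $p_i$. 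Both (i) and (ii) are precisely the integrality statement the paper obtains from Noether position: the characteristic polynomial of $F_{s+1,p}$ modulo $\overline{\mathcal{K}}_{s,p}$ has coefficients in $\fp[Y_{n-s,p}]$ (\cite{DuLe08}, Theorem 1.27), and its constant term is identified, up to a scalar in $\fp\setminus\{0\}$, with the characteristic polynomial of $Y_{n-s,p}$ modulo $\overline{\mathcal{J}}_{s+1,p}$ (\cite{DuLe08}, Proposition 2.7), which equals $Q^{s+1}(\bfs p_p^{s+1},T)$ by primitivity.

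The gap can be closed inside your own framework, but only by abandoning the localization: by Lemma \ref{lemma: lifting_curve_isomorphism} (valid mod $p$ under the hypotheses of Theorem \ref{th: lifting_fibers_modulo_p}) the curve algebra $\fp[Y_{n-s,p},\dots,Y_{n,p}]/\overline{\mathcal{K}}_{s,p}$ is a \emph{free} $\fp[Y_{n-s,p}]$--module of rank $\delta_s$, and $A$ is its quotient by multiplication by $F_{s+1,p}$; computing $\mathrm{Fitt}_0(A)$ over the un-localized ring $\fp[Y_{n-s,p}]$ from this presentation and comparing with $\mathrm{Fitt}_0(A)=\bigl(Q^{s+1}(\bfs p_p^{s+1},Y_{n-s,p})\bigr)$ coming from cyclicity yields the scalar identity with $\epsilon\in\fp\setminus\{0\}$ exactly, and the determinant is then identified with $\mathrm{Res}_T(h,Q^{s})$ after base change to $\fp(Y_{n-s,p})$, where your isomorphism $\Phi$ lives. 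Note that this repaired version is essentially the paper's proof (determinant of the homothety equals the resultant; constant term of a characteristic polynomial with coefficients in $\fp[Y_{n-s,p}]$; primitivity of $Y_{n-s,p}$) recast in Fitting--ideal language; as written, your argument stops one genuine step short of the stated equality.
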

\begin{proof}
Let $M_h$ be the matrix of the homothety of multiplication by $h$ in
\linebreak $\cfp(Y_{n-s,p})[T]/\bigl(Q^{s}(\bfs p_p^{s+1},
Y_{n-s,p}, T)\bigr)$ with respect to the basis $\{1, T\klk
T^{\delta_s-1}\}$. We have (see, e.g.,  \cite[Proposition
5.4]{ElMo07}):
\[
\det (M_{h})=\mathrm{Res}_{T}\bigl(h(T),Q^{s}(\bfs p_p^{s+1},
Y_{n-s,p}, T)\bigr).
\]
Consider the isomorphism of $\cfp(Y_{n-s,p})$--algebras
\[
\Phi: \cfp(Y_{n-s,p})[Y_{n-s+1,p}\klk
Y_{n,p}]/\overline{\mathcal{K}}_{s,p}^e \rightarrow
\cfp(Y_{n-s,p})[T]/\bigl(Q^{s}(\bfs p_p^{s+1}, Y_{n-s,p}, T)\bigr),
\]
which maps $Y_{n-s+1,p} \mod \overline{\mathcal{K}}_{s,p}^e$ to $T
\mod \bigl(Q^{s}(\bfs p_p^{s+1}, Y_{n-s,p}, T)\bigr)$. Let $S$ be a
new indeterminate and $\chi \in \cfp[Y_{n-s,p}][S]$ the
characteristic polynomial of the homothety by $F_{s+1,p}(\bfs
p_p^{s+1}, Y_{n-s,p} \klk Y_{n,p})$ modulo
$\overline{\mathcal{K}}_{s,p}^e$. Let $\chi_0\in \fp[Y_{n-s,p}]$ be
the constant term of $\chi$. Since $\Phi$ maps $F_{s+1,p}(\bfs
p_p^{s+1}, Y_{n-s,p}, \dots, Y_{n,p})\mod
\overline{\mathcal{K}}_{s,p}^e$ to $h \mod \bigl(Q^{s}(\bfs
p_p^{s+1}, Y_{n-s,p}, T)\bigr)$, $\chi$ coincides with the
characteristic polynomial of the homothety of multiplication by $h$
modulo $\bigl(Q^{s}(\bfs p_p^{s+1}, Y_{n-s,p}, T)\bigr)$. Thus
$\chi_0=(-1)^{\delta_s}\det (M_{h})$. On the other hand, as the
hypersurface $\{F_{s+1,p}(\bfs p_p^{s+1}, Y_{n-s,p}, \dots,
Y_{n,p})=0\}$ intersects the lifting curve $\mathcal{W}_{\bfs
p_p^{s+1}}$ in the finite fiber $V_{\bfs p^{s+1}_p}$, the polynomial
$F_{s+1,p}\bigl(\bfs p_p^{s+1}, Y_{n-s,p}, \dots, Y_{n,p}\bigr)$ is
not a zero divisor in $\cfp[Y_{n-s,p}\klk
Y_{n,p}]/\overline{\mathcal{K}}_{s,p}$. Since
$\overline{\mathcal{J}}_{s+1,p}=\overline{\mathcal{K}}_{s,p}+\bigl(F_{s+1,p}(\bfs
p_p^{s+1}, Y_{n-s,p}, \dots, Y_{n,p})\bigr)$, by \cite[Proposition
2.7]{DuLe08} we deduce that $\chi_0(T)$ coincides, up to multiples
in $\fp\setminus \{0\}$, with the characteristic polynomial of
$Y_{n-s,p}$ in $\cfp[Y_{n-s,p}\klk
Y_{n,p}]/\overline{\mathcal{J}}_{s+1,p}$. Since $Y_{n-s,p}$ induces
a primitive element for $\overline{\mathcal{J}}_{s+1,p}$, we
conclude that $\chi_0(T)=\epsilon Q^{s+1}(\bfs p_p^{s+1}, T)$ for
some $\epsilon \in \cfp\setminus \{0\}$. This finishes the proof of
the Proposition.
\end{proof}

Now we discuss the computation of the polynomials
$V^{s+1}_{n-s+1}(\bfs p_p^{s+1}\!,\! T), \dots,\! V^{s+1}_{n}(\bfs
p_p^{s+1}\!,\!T)$. Let $Q^{s+1}(\bfs p_p^{s+1},T)=q_1\cdots
q_{\ell}$ be the irreducible factorization of $Q^{s+1}(\bfs
p_p^{s+1},T)$ in $\fp[T]$. We describe below how to compute
$V^{s+1}_j(\bfs p_p^{s+1}, T) \mod q_k$ for $n-s+1 \leq j \leq n$
and $1\leq k \leq \ell$. Then the $V^{s+1}_j(\bfs p_p^{s+1}, T)$ can
be recovered by means of the Chinese remainder theorem. For $1 \leq
k\leq \ell$, let $a$ be the residue class of $T$ in $\fp[T]/(q_k)$.
Set $\mathbb{L}=\fp[T]/(q_k)$. Thus $\mathbb{L}:=\fp[a]$ is a finite
extension of $\fp$ which contains the root $a$ of $Q^{s+1}(\bfs
p_p^{s+1},T)$. Let $\overline{\mathbb{L}}$ be the algebraic closure
of $\mathbb{L}$. We have a field isomorphism
$\overline{\mathbb{L}}=\cfp$. By Remark
\ref{fiber_of_the_fiber_remark} we know that
$\rho_s\bigl(\bfs\lambda_p^s,(\bfs p_p^{s+1},a)\bigr)\neq 0$. Thus
$(\bfs p_p^{s+1},a)$ is a lifting point of $\pi_{s,p}$ and
$Y_{n-s+1,p}$ induces a primitive element of the lifting fiber
$\pi_{s,p}^{-1}(\bfs p_p^{s+1},a)$. Moreover, $\mathcal{K}_{s,p}+
(Y_{n-s}-a)$ is a radical ideal of $\cfp[\bfs X]$ by Lemma
\ref{lemma: positive_dimension_lifting_fiber}, and therefore it is
the vanishing ideal of $\pi_{s,p}^{-1}(\bfs p_p^{s+1},a)$. Let $q_a,
w_{a,n-s+2} \klk w_{a,n}$ be the Kronecker representation of
$\mathcal{K}_{s,p}+ (Y_{n-s}-a)$ with primitive element
$Y_{n-s+1,p}$. Let $Q_p^{s}, W^{s}_{n-s+2,p} \klk W^{s}_{n,p}$ be
the Kronecker representation of $\mathcal{I}_{s,p}$ with primitive
element $Y_{n-s+1,p}$. By Proposition \ref{prop:
kronecker_representation_of_J} the specializations of $Q_p^{s},
W^{s}_{n-s+2,p} \klk W^{s}_{n,p}$ at $Y_{1,p}=p_{1,p} \klk
Y_{n-s-1,p}=p_{n-s-1,p}, Y_{n-s,p}=a$ coincide with $q_a,
w_{a,n-s+2} \klk w_{a,n}$. Since the input polynomials  $Q^{s}(\bfs
p_p^{s+1}, Y_{n-s,p}, T)$, $W^{s}_{n-s+2}(\bfs p_p^{s+1}, Y_{n-s,p},
T) \klk W^{s}_n(\bfs p_p^{s+1}, Y_{n-s,p}, T)$ coincide with the
specializations of $Q_p^{s},W^{s}_{n-s+2,p} \klk W^{s}_{n,p}$ at
$Y_{1,p}=p_{1,p} \klk Y_{n-s-1,p}=p_{n-s-1,p}$, we see that $q_a,
w_{a,n-s+2} \klk w_{a,n}$ can be obtained by substituting $a$ for
$Y_{n-s,p}$ in $Q^{s}(\bfs p_p^{s+1}, Y_{n-s,p}, T)$,
$W^{s}_{n-s+2}(\bfs p_p^{s+1}, Y_{n-s,p}, T), \dots, W^{s}_n(\bfs
p_p^{s+1}, Y_{n-s,p}, T)$. Then we can compute the corresponding
univariate representation $q_a, v_{a,n-s+2} \klk v_{a,n}$ by means
of the identities $v_{a,j}= (q'_a)^{-1}w_{a,j} \mod q_a$ for
$n-s+2\leq j \leq n$. Let $g(Y_{n-s+1,p}):=F_{s+1,p}\bigl(\bfs
p_p^{s+1}, a, Y_{n-s+1,p}, v_{a,n-s+2}(Y_{n-s+1,p}), \dots,
v_{a,n}(Y_{n-s+1,p})\bigr)$. Now $V^{s+1}_{n-s+1}(\bfs p_p^{s+1}, a)
,$ $\ldots,V^{s+1}_n(\bfs p_p^{s+1}, a)$ can be computed using the
following identities (see, e.g., \cite{DuLe08}):
\begin{align*}
\nonumber Y_{n-s+1,p}-V^{s+1}_{n-s+1}(\bfs p_p^{s+1}, a)&
= \gcd \bigl(g(Y_{n-s+1,p}), q_a(Y_{n-s+1,p})\bigr),\\
\nonumber V^{s+1}_j(\bfs
p_p^{s+1},a)&=v_{a,j}\bigl(V^{s+1}_{n-s+2}(\bfs p_p^{s+1},a)\bigr)
\quad  (n-s+2 \leq j \leq n).
\end{align*}
More precisely, these identities allows us to compute
$V^{s+1}_j(\bfs p_p^{s+1},T) \mod Q_k$ for $n-s+1\leq j \leq n$.
Having done this for $1\leq k \leq \ell$, we can recover
$V^{s+1}_{n-s+1}(\bfs p_p^{s+1},T), \dots,$ $V^{s+1}_n(\bfs
p_p^{s+1},T)$ by the Chinese remainder theorem.

As it is shown in \cite[Section 4]{CaMa06a}, the previous
computations can be rendered into an efficient procedure from which
we obtain the following result (see \cite[Proposition
4.7]{CaMa06a}).
\begin{prop}\label{prop: intersection_step}
There exists a probabilistic algorithm that takes as input
\begin{itemize}
  \item a straight--line program of size at most $L$ which represents the polynomial
  $F_{s+1,p}$;
  \item the dense representation of the polynomials in $\fp[Y_{n-s,p},T]$
  which form the Kronecker representation of $\mathcal{K}_{s,p}$ with primitive element
  $Y_{n-s+1,p}$;
\end{itemize}
and outputs the dense representation of the polynomials in $\fp[T]$
which form the univariate representation of $\mathcal{J}_{s+1,p}$
with primitive element $Y_{n-s,p}$. It uses an expected number of
$\SO\bigr((L+n)\delta_s(d\delta_s+ \log p)\log p\bigl)$
bit operations and returns the right result with probability at
least $1-1/60n$.
\end{prop}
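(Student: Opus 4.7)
The plan is to implement in sequence the three subroutines described in the discussion immediately preceding the statement and to analyze their bit cost using fast polynomial arithmetic, essentially following the procedure of \cite{CaMa06a}. Correctness of the output is already established in the preceding paragraphs (from the resultant identity for $Q^{s+1}$ and the gcd identities recovering the remaining parametrizing polynomials), so the real content of the proof is the complexity analysis.

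For the first stage, I would compute $h(T) \in A := \fp[Y_{n-s,p},T]/\bigl(Q^s(\bfs p_p^{s+1},Y_{n-s,p},T)\bigr)$ by running the input straight--line program of length $L$ with inputs $\bfs p_p^{s+1}$, $Y_{n-s,p}$, $T$ and the fractions $W^s_j/Q'^s$ representing the $V^s_j$. After precomputing the inverse of $Q'^s$ in $A$, each arithmetic instruction becomes one multiplication in $A$ of cost $\SO(\delta_s(d\delta_s+\log p))$ bit operations, so this stage takes $\SO(L\delta_s(d\delta_s+\log p)\log p)$ bit operations. Next, $Q^{s+1}(\bfs p_p^{s+1},Y_{n-s,p})$ is obtained by a single fast bivariate resultant of $h$ and $Q^s(\bfs p_p^{s+1},Y_{n-s,p},T)$ in $\fp[Y_{n-s,p}][T]$, within the same bound. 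For the third stage, I would factor $Q^{s+1}(\bfs p_p^{s+1},T)$ into its irreducible factors $q_1\klk q_\ell$ with the Cantor--Zassenhaus algorithm (the only source of probabilistic failure) and, for each $q_k$, work in $\mathbb{L}_k=\fp[T]/(q_k)$ as described: specialize the input Kronecker representation at $Y_{n-s,p}=a$, compute $g\bmod q_a$, extract $V^{s+1}_{n-s+1}(\bfs p_p^{s+1},a)$ via $\gcd(g,q_a)$, and recover the remaining $V^{s+1}_j(\bfs p_p^{s+1},a)$ by modular composition with $v_{a,j}$. A final Chinese remaindering yields the polynomials $V^{s+1}_j(\bfs p_p^{s+1},T)\in\fp[T]$. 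Using fast modular composition and half--gcd subroutines, the total cost across all factors sums to $\SO(n\delta_s(d\delta_s+\log p)\log p)$.

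The main obstacle is a careful bookkeeping of the bidegrees of the intermediate polynomials during the straight--line program evaluation in $A$, since the $Y_{n-s,p}$--degree can in principle blow up well beyond $d\delta_s$. The standard fix is to truncate everything modulo $Y_{n-s,p}^{\eta+1}$ for $\eta=\mathcal{O}(d\delta_s)$ dictated by Remark \ref{total_degrees_remark} and reconstruct the exact bivariate polynomials at the end. The lower bound $1-1/60n$ on the success probability comes from the factorization step: the standard analysis of equal--degree factorization of a polynomial of degree at most $\delta_s$ over $\fp$ yields a failure probability of order $\delta_s/p$, which is bounded above by $1/60n$ thanks to the hypothesis $p>60n^2d\delta^4$ built into the choice of a lucky prime.
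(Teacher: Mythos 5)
Your skeleton follows the discussion that immediately precedes the proposition, which is also how the paper frames it; but note that the paper does not prove this statement itself: after establishing the resultant identity for $Q^{s+1}$ and the gcd identities for the parametrizations, it imports the efficient procedure, the complexity bound and the probability bound verbatim from \cite[Section 4, Proposition 4.7]{CaMa06a}. What you propose is to supply exactly that outsourced analysis, and there your argument has concrete gaps. The clearest one is the probability claim: the polynomial you factor is $Q^{s+1}(\bfs p_p^{s+1},T)$, of degree $\delta_{s+1}$ (which can be as large as $d\delta_s$), not $\delta_s$, and Cantor--Zassenhaus equal--degree splitting does not fail with probability $O(\delta/p)$ --- each splitting attempt fails with constant probability and the procedure is Las Vegas, so a Monte Carlo failure bound comes from capping the number of trials, not from the size of $p$. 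Hence your derivation of ``at least $1-1/60n$'' does not go through; in the paper this figure, and the use of the hypothesis $p>60n^2d\delta^4$, come from the error analysis of the random choices made inside the algorithm of \cite{CaMa06a}, which you would have to reproduce rather than attribute to the factorization step.

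Two further points need repair. First, $Q'^s$ is not invertible in $\fp[Y_{n-s,p},T]/\bigl(Q^s(\bfs p_p^{s+1},Y_{n-s,p},T)\bigr)$: it becomes invertible only after localizing in $Y_{n-s,p}$, so your truncation must be a power--series expansion centered at a point where the discriminant $\rho_{s,p}\bigl(\bfs\lambda_p,(\bfs p_p^{s+1},\cdot)\bigr)$ does not vanish --- for instance at $Y_{n-s,p}=p_{n-s,p}$, which the lifting--point condition guarantees --- not a truncation at order $Y_{n-s,p}^{\eta+1}$ around the origin. Second, the assertion that the factorization plus the per--factor gcd and composition work ``sums to $\SO\bigl(n\delta_s(d\delta_s+\log p)\log p\bigr)$'' is not justified: a complete irreducible factorization of a degree-$\delta_{s+1}$ polynomial costs on the order of $\delta_{s+1}^{2}$ field operations with the standard algorithms, and with $\delta_{s+1}$ up to $d\delta_s$ this is not obviously dominated by $(L+n)\delta_s\, d\delta_s$ when $d$ is much larger than $L+n$. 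This bookkeeping --- and the way \cite{CaMa06a} organizes the intersection step so that the stated bound and success probability hold --- is precisely the content the paper delegates and that your proposal would need to make explicit.
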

%
%
Taking into account the complexity and probability estimates of
Propositions \ref{prop: lifting_step}  and \ref{prop:
intersection_step} for $1\leq s \leq r-1$, we easily deduce the
following result.
\begin{teo}\label{th: algorithm_Kronecker representation modulo p}
There exists a probabilistic algorithm that takes as input
 \begin{itemize}
    \item a ``lucky'' prime $p$ as in Proposition \ref{prop:
    computation_of_a_lucky_prime};
   \item the points $\bfs \lambda_p \in \fp^{n^2}$ and $\bfs p_p\in \fp^{n-1}$, which are the images of $\bfs \lambda$ and $\bfs
   p$ modulo $p$;
   \item a straight--line program of length at most $L$ which represents the polynomials $F_{1,p}, \dots,
   F_{r,p}$;
 \end{itemize}
and outputs the Kronecker representation of $\mathcal{J}_{r,p}$ with
primitive element $Y_{n-r+1,p}$. It uses an expected number of
$\SO\bigl(r(nL+n^5)\delta(d\delta + \log p)\log p\bigr)$
bit operations and returns the right result with probability at
least $1-1/12$.
\end{teo}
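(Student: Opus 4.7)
The plan is to describe the algorithm as the concatenation of the initialization step together with $r-1$ main iterations, where at each iteration one applies the lifting step of Proposition \ref{prop: lifting_step} followed by the intersection step of Proposition \ref{prop: intersection_step}. The correctness of this composition follows immediately from Proposition \ref{prop: kronecker_representations_mod_p}, which guarantees that all the intermediate Kronecker representations produced (those of $\mathcal{I}_{s,p}$, $\mathcal{J}_{s,p}$ and $\mathcal{K}_{s,p}$ for $1\le s\le r$) are well defined under the hypothesis that $p$ is lucky, and from the discussion preceding Propositions \ref{prop: lifting_step} and \ref{prop: intersection_step} which describes precisely how the output of each step matches the input required by the next one.

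More concretely, the initialization computes the Kronecker representation of $\mathcal{J}_{1,p}$ with primitive element $Y_{n,p}$, which as observed just before Proposition \ref{prop: lifting_step} amounts to normalizing $F_{1,p}(\bfs p_p^1,T)$ by its leading coefficient. This costs $\SO(d\log p)$ bit operations and is deterministic. Then, for $s=1,\ldots,r-1$, we call the algorithm of Proposition \ref{prop: lifting_step} to obtain the Kronecker representation of $\mathcal{K}_{s,p}$ with primitive element $Y_{n-s+1,p}$ at cost $\SO\bigl((nL+n^5)\delta_s^2\log p\bigr)$, and immediately afterwards the algorithm of Proposition \ref{prop: intersection_step} to obtain the Kronecker representation of $\mathcal{J}_{s+1,p}$ with primitive element $Y_{n-s,p}$ at expected cost $\SO\bigl((L+n)\delta_s(d\delta_s+\log p)\log p\bigr)$. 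After the $(r-1)$th iteration the output is the desired Kronecker representation of $\mathcal{J}_{r,p}$ with primitive element $Y_{n-r+1,p}$.

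For the complexity, since $\delta_s\le\delta$ for all $s$, summing the two contributions over $1\le s\le r-1$ gives a total expected cost of
\[
\SO\bigl(r(nL+n^5)\delta^2\log p\bigr)+\SO\bigl(r(L+n)\delta(d\delta+\log p)\log p\bigr),
\]
which is absorbed into $\SO\bigl(r(nL+n^5)\delta(d\delta+\log p)\log p\bigr)$, as claimed. For the probability analysis, the initialization and all $r-1$ lifting steps are deterministic, while each of the $r-1$ intersection steps returns a correct result with probability at least $1-1/(60n)$. By a union bound, the probability that every intersection step succeeds is at least $1-(r-1)/(60n)\ge 1-1/60\ge 1-1/12$, using $r\le n$.

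The only delicate point is bookkeeping: one must check that the precise inputs demanded by Propositions \ref{prop: lifting_step} and \ref{prop: intersection_step} (namely, straight--line programs for the appropriate $F_{i,p}$'s and dense univariate or bivariate representations in $\fp[T]$ or $\fp[Y_{n-s,p},T]$) match the outputs of the previous stage; this is a direct consequence of Proposition \ref{prop: kronecker_representations_mod_p} and of the observation that a Kronecker representation with primitive element $Y_{n-s+1,p}$ is trivially transformed into the corresponding univariate representation by inverting $Q'$ modulo $Q$, at negligible cost. No step presents a conceptual obstacle; the main care is in verifying that the luckiness assumption on $p$ supplied by Theorem \ref{th:_simultaneous_noether_normalization} is precisely what is needed to ensure that both the lifting and the intersection procedures are applicable at every stage.
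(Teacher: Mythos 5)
Your proposal is correct and follows exactly the paper's (essentially implicit) argument: the paper deduces this theorem directly by chaining the initialization with the $r-1$ applications of Propositions \ref{prop: lifting_step} and \ref{prop: intersection_step}, summing the costs with $\delta_s\le\delta$ and taking a union bound over the probabilistic intersection steps. The only nitpick is that producing the dense form of $F_{1,p}(\bfs p_p^1,T)$ from the straight--line program costs $\SO(Ld\log p)$ rather than $\SO(d\log p)$ bit operations, and the final univariate--to--Kronecker conversion costs $\SO(n\delta\log p)$; both are absorbed in the stated bound, so the argument stands.
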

%
%
\subsection{Lifting the integers}\label{subsec: lifting_the_integers}
Let $s$ with $1\leq s\leq r$ and let $p$ be a ``lucky'' prime  as in
Proposition \ref{prop: computation_of_a_lucky_prime}. We have seen
that the Kronecker representation $Q^{s}(\bfs p_p^s,T),$
$W^{s}_{n-s+2}(\bfs p_p^s, T), \dots, w^{s}_n(\bfs p_p^s, T)\in
\fp[T]$ of Proposition \ref{prop: kronecker_representations_mod_p}
is obtained by reducing modulo $p$ the integers of the Kronecker
representation $Q^{s}(\bfs p^s,T), W^{s}_{n-s+2}(\bfs p^s, T),
\dots, w^{s}_n(\bfs p^{(s)},T)$ of Proposition \ref{prop:
kronecker_representation_of_I_s_J_s_and_K_s}. Further, by Lemma
\ref{lemma: jacobian_nonzero} the Jacobian determinant of the
polynomials $F_{1,p}(\bfs p_p^s, Y_{n-s+1,p}\klk Y_{n,p}), \dots,
F_{s,p}(\bfs p_p^s, Y_{n-s+1,p}\klk Y_{n,p})$ with respect to the
variables $Y_{n-s+1,p}\klk Y_{n,p}$ is invertible in
$\fp[Y_{n-s+1,p}\klk Y_{n,p}]/\overline{\mathcal{J}}_{s,p}$. With
these conditions, the following result holds (see \cite[Theorem
2]{GiLeSa01}).
\begin{prop}\label{prop: lifting_the_integers}
Assume that we are given:
\begin{itemize}
\item an upper bound $\eta_s$ for the heights of
$Q^{s}(\bfs p^s,T), W^{s}_{n-s+2}(\bfs p^s, T), \dots,W^{s}_n(\bfs
p^s,T)$;
\item a lucky prime number $p$ as in Proposition \ref{prop: computation_of_a_lucky_prime};
 \item  the polynomials $Q^{s}(\bfs p_p^s,T), W^{S}_{n-s+2}(\bfs p_p^s, T), \dots,
 W^{s}_n(\bfs p_p^{(s)}, T)\in \fp[T]$.
 \end{itemize}
Then $Q^{s}(\bfs p^s,T), W^{s}_{n-s+2}(\bfs p^s, T), \dots,
W^{s}_n(\bfs p^s,T)$ can be computed using
$\SO\bigl((nL+n^4)\delta_s\eta_s\bigr)$
bit operations.
\end{prop}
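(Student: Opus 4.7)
The plan is a standard $p$-adic Newton/Hensel lifting applied to the zero-dimensional system $F_1(\bfs p^s, Y_{n-s+1}, \dots, Y_n) = 0, \dots, F_s(\bfs p^s, Y_{n-s+1}, \dots, Y_n) = 0$, which by Lemma \ref{lemma: lifting_curve_isomorphism} is a defining system of a copy of the lifting fiber $V_{\bfs p^s}$ in $\mathbb{A}^s$. The input at precision one is the given modular univariate representation; the key enabling condition is Lemma \ref{lemma: jacobian_nonzero}, which ensures that the Jacobian determinant of the specialised system with respect to $Y_{n-s+1}, \dots, Y_n$ is invertible modulo the vanishing ideal $\overline{\mathcal{J}}_{s,p}$. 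Since $p$ is lucky, this invertibility propagates to the rings $A_k := (\Z/p^{2^k}\Z)[T]/(Q_k(T))$ used along the iteration, where $Q_k$ is a simultaneous Hensel lift of the modular minimal polynomial $Q^s_p(\bfs p_p^s,T)$.

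The iteration maintains an approximation $\bigl(Q_k(T), V^s_{n-s+2,k}(T), \dots, V^s_{n,k}(T)\bigr)$ of the univariate representation of $\mathcal{J}_s$ to $p$-adic precision $2^k$, updating it by the multivariate Newton operator $\bfs V \mapsto \bfs V - \mathbf{J}^{-1}\mathbf{F}(\bfs V)$ in $A_k^{s-1}$, where $\mathbf{F}$ is the vector of polynomials evaluated at the current approximation and $\mathbf{J}$ the corresponding Jacobian matrix; the standard Hensel theory then yields quadratic convergence. Once the precision $2^k \log p$ exceeds $2\eta_s + \mathcal{O}(\log \delta_s)$, the target integer polynomials $Q^s(\bfs p^s,T), W^s_{n-s+2}(\bfs p^s,T), \dots, W^s_n(\bfs p^s,T)$ are recovered unambiguously from the balanced representatives modulo $p^{2^k}$, converting the univariate to the Kronecker form via $W^s_j \equiv Q_k'\cdot V^s_{j,k} \pmod{Q_k}$ as in Proposition \ref{prop: kronecker_representation_of_I_s_J_s_and_K_s}.

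For the complexity analysis, step $k$ evaluates the straight-line program of length $L$ for $F_1, \dots, F_s$ on elements of $A_k$ (contributing $\mathcal{O}(nL)$ operations in $A_k$), assembles the $s \times s$ Jacobian with $\mathcal{O}(n^2)$ further multiplications, and inverts it using $\SO(n^3)$ operations in $A_k$. Each operation in $A_k$ costs $\SO(\delta_s \cdot 2^k \log p)$ bit operations, so step $k$ consumes $\SO\bigl((nL + n^4)\delta_s \cdot 2^k \log p\bigr)$ bit operations. Summing the geometric progression up to $k \approx \lceil \log_2(2\eta_s/\log p)\rceil$ the total is dominated by the last term, yielding the claimed bound $\SO\bigl((nL + n^4)\delta_s \eta_s\bigr)$.

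The main technical obstacle is verifying the precision-preserving Hensel step in the mixed variable/parameter setting, namely that the simultaneous lift of $Q_k$ together with the tuple $(V^s_{j,k})$ remains well defined and quadratically convergent at every precision, and that the Jacobian remains invertible in every $A_k$. This is precisely the situation treated by \cite[Theorem 2]{GiLeSa01}; after matching notations and checking that our hypotheses (lucky prime, Jacobian invertibility modulo $p$, known univariate representation modulo $p$) are identical to theirs, the bound follows by direct application of that result.
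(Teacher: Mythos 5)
Your proposal is correct and takes essentially the same route as the paper: the paper's proof consists exactly of checking the hypotheses you identify (the modular Kronecker representation is the reduction of the integral one, and the Jacobian is invertible modulo $\overline{\mathcal{J}}_{s,p}$ by Lemma \ref{lemma: jacobian_nonzero}) and then invoking \cite[Theorem 2]{GiLeSa01}, which is also where your argument ultimately lands. Your sketch of the Newton--Hensel iteration and the complexity accounting is a consistent unpacking of that cited result; the only imprecision is that the coefficients of $Q^{s}(\bfs p^s,T), W^{s}_j(\bfs p^s,T)$ are in general rational (with denominator dividing $A_s(\bfs\lambda^{s+1})$), so the final recovery uses rational reconstruction rather than balanced integer representatives, which the $2\eta_s$ precision you allow already covers.
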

%
%
\subsection{Computation of a Kronecker representation over the rationals}
Combining the algorithm underlying Theorem \ref{th:
algorithm_Kronecker representation modulo p} with the $p$--adic
lifting procedure of Proposition \ref{prop: lifting_the_integers} we
obtain a probabilistic algorithm for computing a Kronecker
representation of a zero--dimensional fiber of the variety defined
by $F_1, \dots, F_r$.

More precisely, assume that $F_1\klk  F_r$ are given by a
straight--line program $\beta$ of length  at most $L$ with integer
parameters. We first choose at random a point $(\bfs\lambda, \bfs
p)\in {\sf S}^{n^2}\times {\sf T}^{n-1}$ such that ${\sf
R}(\bfs\lambda)\neq 0$ and ${\sf N}_{\bfs\lambda}\neq 0$.  Then we
compute a ``lucky'' prime $p$ as in Proposition \ref{prop:
computation_of_a_lucky_prime}. By reducing the parameters of $\beta$
modulo $p$ we obtain a straight--line program $\beta_p$ of length at
most $L$ which represents the polynomials $F_{1,p}, \dots, F_{r,p}$.
Then, by means of the algorithm underlying Theorem \ref{th:
algorithm_Kronecker representation modulo p}, we compute the
Kronecker representation $Q^{r}_p, W^{r}_{1,p} \klk  W^{r}_{n,p}$ of
the lifting fiber $V_{\bfs p_p^r}$ with primitive element
$Y_{n-r+1,p}$. Finally, applying the algorithm  underlying
Proposition \ref{prop: lifting_the_integers} we lift  these
polynomials to the Kronecker representation $Q^{r}, W^{r}_1 \klk
W^{r}_n$ of the lifting fiber $V_{\bfs p^r}$ with primitive element
$Y_{n-r+1}$. We have the following result.
\begin{teo}\label{th: bit_complexity_kronecker_solver}
There exists a probabilistic algorithm that takes as input a
straight--line program $\beta$ of length at most $L$ which
represents the polynomials $F_1 \klk F_r$, and outputs a Kronecker
representation of a zero--dimensional fiber of $\mathcal{V}(F_1,
\dots, F_r)$ with probability at least $\frac{77}{144}$. If $h$ is
an upper bound for the bit length of the coefficients of $F_1\klk
F_r$ and the parameters in $\beta$, then the bit complexity of the
algorithm is in
$$\mathcal{O}^{\sim}\big(r(nL+n^5)\delta(d\delta+nd^rh)\big).$$
\end{teo}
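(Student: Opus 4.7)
The plan is to compose the results already built up in Sections \ref{section: modular_simultaneous_noether_normalization} and the earlier parts of Section \ref{section: computation_of_a_kronecker_representation}, in exactly the order sketched just before the statement. First I would draw $(\bfs\lambda, \bfs p)$ uniformly at random from ${\sf S}^{n^2} \times {\sf T}^{n-1}$; by Lemma \ref{lema: lambda and p_probability} this choice satisfies ${\sf R}(\bfs\lambda)\neq 0$ and ${\sf N}_{\bfs\lambda}(\bfs p)\neq 0$ with probability at least $7/9$, so by Theorem \ref{th:_simultaneous_noether_normalization} every prime not dividing $\mathfrak{N}:=\det(\bfs\lambda){\sf N}_{\bfs\lambda}(\bfs p)$ is lucky. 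Using the height bound $\log|\mathfrak{N}|\le \mathfrak{H}$ with $\log\mathfrak{H}\in\mathcal{O}^\sim(\log(d^r nh))$ of Theorem \ref{th: app: mathfrak_N_height}, I would invoke Proposition \ref{prop: computation_of_a_lucky_prime} to compute a lucky prime $p$ of bit length $\mathcal{O}^\sim(\log(d^r nh))$ in $\mathcal{O}^\sim(\log^2(d^r nh))$ bit operations, with success probability $\geq 3/4$.

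Next I would reduce the parameters of $\beta$ modulo $p$, obtaining a straight--line program $\beta_p$ of length at most $L$ evaluating $F_{1,p},\ldots,F_{r,p}$, at a per-step cost of $\mathcal{O}^\sim(\log p)$ bit operations. Applying the algorithm of Theorem \ref{th: algorithm_Kronecker representation modulo p} to $\beta_p$, the random data $\bfs\lambda_p,\bfs p_p$ and $p$ yields the Kronecker representation $Q^r_p,W^r_{n-r+2,p},\ldots,W^r_{n,p}\in\fp[T]$ of $\mathcal{J}_{r,p}$ with primitive element $Y_{n-r+1,p}$ in $\mathcal{O}^\sim\!\bigl(r(nL+n^5)\delta(d\delta+\log p)\log p\bigr)$ bit operations and probability at least $11/12$.

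Finally I would recover the integer Kronecker representation $Q^r,W^r_{n-r+2},\ldots,W^r_n$ of Proposition \ref{prop: kronecker_representation_of_I_s_J_s_and_K_s} by the $p$--adic lifting of Proposition \ref{prop: lifting_the_integers}. For this I need an a priori bound $\eta_r$ on the bit length of the coefficients of these polynomials, which by the arithmetic Nullstellensatz and the height estimates for Chow forms of equidimensional varieties derived in the appendix (Theorem \ref{th: app: mathfrak_N_height} and its supporting lemmas) is of order $\mathcal{O}^\sim(nd^r h)$. Together with Lemma \ref{lemma: jacobian_nonzero}, which guarantees that the Jacobian determinant appearing in the Newton iteration is invertible modulo $\overline{\mathcal{J}}_{r,p}$, this makes Proposition \ref{prop: lifting_the_integers} applicable, at a cost of $\mathcal{O}^\sim\!\bigl((nL+n^4)\delta\eta_r\bigr)=\mathcal{O}^\sim\!\bigl((nL+n^4)\delta nd^r h\bigr)$ bit operations.

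Multiplying the three independent success probabilities yields $\tfrac{7}{9}\cdot\tfrac{3}{4}\cdot\tfrac{11}{12}=\tfrac{77}{144}$. Adding the three cost contributions, and using $\log p\in\mathcal{O}^\sim(\log(d^r nh))$ to absorb the modular term into the lifting term, gives the bit complexity $\mathcal{O}^\sim\!\bigl(r(nL+n^5)\delta(d\delta+nd^r h)\bigr)$ claimed in the statement. The one genuinely non-routine ingredient is the explicit height bound $\eta_r$ on the rational Kronecker representation, since the lifting precision must be set in advance; this is precisely the content that is deferred to the appendix, and everything else reduces to bookkeeping of the probabilities and complexities already established.
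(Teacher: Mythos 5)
Your proposal is correct and follows essentially the same route as the paper: random choice of $(\bfs\lambda,\bfs p)$ via Lemma \ref{lema: lambda and p_probability}, a lucky prime via Proposition \ref{prop: computation_of_a_lucky_prime}, the modular computation of Theorem \ref{th: algorithm_Kronecker representation modulo p}, and $p$--adic lifting via Proposition \ref{prop: lifting_the_integers} with an a priori height bound, multiplying the success probabilities to get $\frac{77}{144}$. The only slip is the reference for that height bound: it is Proposition \ref{eta_s_height_estimate} (giving $\eta_r\in\SO\bigl(nd^{r-1}(h+rd)\bigr)$, not Theorem \ref{th: app: mathfrak_N_height} nor exactly $\SO(nd^rh)$), but this does not affect the final complexity estimate.
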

\begin{proof}
Let $\mathcal{C}_p$ denote the bit complexity of computing a
``lucky'' prime $p$ and $\eta$ an upper bound for heights of the
integers in the output. Combining the complexity estimates in
Theorem \ref{th: algorithm_Kronecker representation modulo p} and
Proposition \ref{prop: lifting_the_integers}, the bit complexity of
the algorithm above is in
\[
\SO\Bigl(r(nL+n^5)\delta\bigr((d\delta+\log p)\log p+ \eta\bigl)
\Bigr) + \mathcal{C}_{p}.
\]
By Proposition \ref{eta_s_height_estimate} we can take $\eta \in
\SO\bigl(nd^{r-1}(h+rd)\bigr)$. Then, taking into account the
estimate for $\mathcal{C}_p$  in Proposition \ref{prop:
computation_of_a_lucky_prime}, we obtain the complexity estimate of
the theorem.

Finally, taking into account Lemma \ref{lema: lambda and
p_probability} and the estimates for the probability of success of
Proposition \ref{prop: computation_of_a_lucky_prime} and Theorem
\ref{th: algorithm_Kronecker representation modulo p}, the theorem
follows.
\end{proof}
%
%
\appendix
%
%
\section{Height estimates}
\label{section: appendix} In this appendix we obtain estimates for
the height of the integer $\mathfrak{N}$ of Theorem
\ref{th:_simultaneous_noether_normalization} and the integers
occurring in the output of the algorithm underlying  Theorem
\ref{th: bit_complexity_kronecker_solver}, namely the polynomials in
Proposition \ref{prop: kronecker_representation_of_I_s_J_s_and_K_s}
which form the Kronecker representation of $\mathcal{J}_r$. For this
purpose, we shall rely on the arithmetic Nullstellens\"{a}tze of
\cite{KrPaSo01}. We start recalling the notions of height of
polynomials and varieties and basic facts about these, and then
proceed to obtain the estimates.
%
%
\subsection{Height of polynomials and varieties}
We define the \emph{height} of a nonzero integer $a$ as $h(a):=\log
|r|$, where $\log$ stands for the logarithm to the base $2$.
Further, we define $h(0):=0$. It follows that the height of $a$
bounds from above the bit length of $a$. The height $h(F)$ of a
polynomial $F\in \Z[\bfs X]$ is defined as the maximum of the
heights of its coefficients. More generally, if $F\in
\mathbb{Q}[\bfs X]\setminus \{0\}$ and $a\in \mathbb{N}$ is a
minimal common denominator of all the coefficients of $F$, then we
define $h(F):= \max\{h(aF), h(a)\}$.

Let $V\subset  \mathbb{A}^n(\overline{\mathbb{Q}})$ be an
equidimensional $\mathbb{Q}$--variety of dimension $n-s$, with
$1\leq s \leq n$, and let $h(V)$ be the Faltings height of its
projective closure $\overline{V}\subset
\mathbb{P}^n(\overline{\mathbb{Q}})$ (see \cite{Faltings91}). We
have the following identity:
\begin{equation}\label{eq: app: faltings_height}
h(V)=m(F_{\scriptscriptstyle V}; S_{n+1}^{n-s+1}) + \sum_{p}\log
|F_{\scriptscriptstyle V}|_p +
(n-s+1)\Biggl(\sum_{i=1}^n\frac{1}{2i}\Biggr)\deg V,
\end{equation}
where $F_{\scriptscriptstyle V}$ is any Chow form of $V$,
$m(F_{\scriptscriptstyle V}; S_{n+1}^{n-s+1})$ is the
$S_{n+1}^{n-s+1}$--\emph{Mahler measure} of $F_{\scriptscriptstyle
V}$ and $|F_{\scriptscriptstyle V}|_p$ is the  $p$--adic absolute
value over $\mathbb{Q}$ for all rational primes $p$ (see, e.g.,
\cite[Section 1.2.4]{KrPaSo01}). Since $F_{\scriptscriptstyle V}$ is
uniquely determined up to nonzero multiples in $\mathbb{Q}$, we may
assume that $F_{\scriptscriptstyle V}$ is a primitive polynomial in
$\Z[\bfs\Lambda^h_1, \dots, \bfs\Lambda^h_{n-s+1}]$, in which case
$\log |F_{\scriptscriptstyle V}|_p=0$ for every prime $p$ and the
sum $\sum_{p}\log |F_{\scriptscriptstyle V}|_p$ in \eqref{eq: app:
faltings_height} disappears. On the other hand, by \cite[Lemma
1.1]{KrPaSo01} we have
\begin{equation}\label{eq: app: mahler_measure_versus_height}
|m(F_{\scriptscriptstyle V})-h(F_{\scriptscriptstyle V})|\leq
(n-s+1)\log(n+2)\deg V,
\end{equation}
where $m(F_{\scriptscriptstyle V})$ denotes the \emph{Mahler
measure} of $F_{\scriptscriptstyle V}$. The {Mahler measure} and the
$S_{n+1}^{n-s+1}$--{Mahler measure} of $F_{\scriptscriptstyle V}$
are related by
\begin{equation}\label{eq: app: Mahler_versus_S_n_n-s+1_mahler measure}
    0\leq m(F_{\scriptscriptstyle V})-m(F_{\scriptscriptstyle V};
    S_{n+1}^{n-s+1})\leq (n-s+1)\deg(V)\sum_{i=1}^n\frac{1}{2i}
\end{equation}
(see, e.g., \cite[(1.2)]{KrPaSo01}). Combining \eqref{eq: app:
faltings_height}, \eqref{eq: app: mahler_measure_versus_height} and
\eqref{eq: app: Mahler_versus_S_n_n-s+1_mahler measure} gives
\begin{equation*}\label{eq: app: height_chow_form_versus_height_variety}
    h(F_{\scriptscriptstyle V})\leq h(V) + (n-s+1)\log(n+2)\deg V.
\end{equation*}
Further, the canonical height $\widehat{h}(V)$ of $V$ is defined by
$\widehat{h}(V):=\widehat{h}(\overline{V})$, where $\widehat{h}(V)$
is the \emph{canonical height} of
$\overline{V}\subset\mathbb{P}^n(\overline{\mathbb{Q}})$ defined as
in \cite{DaKrSo13}. The Faltings and the canonical height of $V$ are
related by the inequality
$$|\widehat{h}(V)-h(V)|\leq \frac{7}{2}\log(n+1)\deg V$$
(see, e.g., \cite[Proposition 2.39 (5)]{DaKrSo13}). As a
consequence, we have
\begin{equation}\label{eq: app: height_chow_form_versus_canonical_height}
    h(F_{\scriptscriptstyle V})\leq \widehat{h}(V) + \frac{9}{2}(n-s+1)\log(n+2)\deg V.
\end{equation}
%
%
\subsection{Estimates for Chow forms, discriminants and Kronecker representations}
From now on, we return to the setting of Sections \ref{section:
modular_simultaneous_noether_normalization} and \ref{section:
computation_of_a_kronecker_representation}, namely we consider
polynomials $F_1\klk F_r\in \Z[\bfs X]$ which form a regular
sequence, denote by $\mathcal{V}_s$ the affine equidimensional
subvariety of $\mathbb{A}^{n}$ defined by $F_1\klk F_s$ and by
$\delta_s$ its degree for $1 \leq s \leq r$. Let $d_j:=\deg(F_j)$
and $h_j:=h(F_j)$ for $1 \leq j \leq r$, and denote
$$\delta:=\max_{1\leq s \leq r} \delta_s,\quad d:=\max_{1\leq j \leq
r}d_j, \quad h:=\max_{1\leq j \leq r}h_j.$$
Let $\widehat{h}_s:=\widehat{h}(\mathcal{V}_s)$ for $1\leq s \leq r$
and $\widehat{h}:=\max_{1\leq s \leq r}\widehat{h}_s$. By
\cite[Corollary 2.62]{DaKrSo13}, taking into account \cite[Lemma
2.30 (1)]{DaKrSo13}, we have
\begin{equation}\label{eq: app: widehat_v_s_height}
\widehat{h}(\mathcal{V}_s)\leq \sum_{\ell=1}^s
h_{\ell}\Biggl(\prod_{\scriptscriptstyle j=1, \scriptscriptstyle j
\neq \ell}^{s}d_j\Biggr) +
s\Biggl(\prod_{j=1}^{s}d_j\Biggr)\log(n+2)\quad (1\leq s \leq r).
\end{equation}

%
Let $\mu$ and $\varepsilon$ be fixed real numbers with $0< \mu,
\varepsilon < 1$. Let $\textsf{a}:=\lfloor {\sf D/(1-\mu)}\rfloor$
and $\textsf{b}:=\lfloor {\sf D/(1-\varepsilon)}\rfloor$, where $D$
is defined in \eqref{deg_R_estimate}. Recall that $D$ is an upper
bound for the degree of the polynomials ${\sf R}$ and ${\sf
N}_{\bfs\lambda}$ of \eqref{R_definition} and
\eqref{N_lambda_definition}. Since $D\in \mathcal{O}(rnd^{3r})$ and
$h({\sf a}), h({\sf b}) \in \mathcal{O} (\log D)$, we have the
following remark.
\begin{remark}  \label{rem: app: max h_a_h_b_estimate}
$h({\sf a}), h({\sf b})\in \SO (r\log d +\log n)$.
\end{remark}

Set ${\sf S}:=\{0, \dots, \textsf{a}\}$ and ${\sf T}:=\{0, \dots,
\textsf{b}\}$. Further, let $\bfs\lambda:=(\lambda_{ij})_{1\leq i
\leq n, 1\leq j\leq n} \in {\sf S}^{n^2}$ and $\bfs p:=(p_1, \dots,
p_{n-1})\in {\sf T}^{n-1}$ be such that ${\sf R}(\bfs\lambda)\neq 0$
and ${\sf N}_{\bfs\lambda}(\bfs p)\neq 0$. By Lemma \ref{lemma:
zippel_schwartz}, for a random choice of $\bfs\lambda$ and $\bfs p$
such a condition holds with probability at least $\mu\varepsilon$.

Write $\bfs\lambda^s:=(\lambda_{ij})_{1\leq i \leq n-s+1, 1\leq
j\leq n}$ and $\bfs p^s:=(p_1\klk p_{n-s})$ for $1\leq s \leq r$.
Denote $h(\bfs\lambda^s):=\max_{1\leq i \leq n-s+1, 1\leq j\leq n}
h(\lambda_{ij})$ and $h(\bfs p^s):=\max_{1\leq i \leq n-s} h(p_i)$.
Finally, let $\bfs\lambda_i:=(\lambda_{i1}, \dots, \lambda_{in})$
and $Y_i=\bfs\lambda_i \cdot \bfs X$ for $1 \leq i \leq n$. In the
sequel, assuming that $n\geq 2$ and $d\geq 2$, we aim to estimate
the height of the integer
\begin{equation}\label{eq: app: frak_N_def}
\mathfrak{N}:=\det(\bfs\lambda){\sf N}_{\bfs\lambda}(\bfs
p)=\det(\bfs\lambda){\sf M}_r\bigl(\bfs\lambda^r,\bfs
p^r\bigr)\prod_{s=1}^{r-1}{\sf M}_s\bigl(\bfs\lambda^s,\bfs
p^s\bigr)L_{\bfs\lambda^s}\bigl(\bfs p^{s+1}\bigr).
\end{equation}

We start with an estimate for the degree and height of a primitive
Chow form of $\mathcal{V}_s$ and related polynomials.
\begin{lema}
For $1\leq s \leq r$, we have
\begin{gather}
    h(P_s)\in \SO \bigl(nd^{s-1}(h+d)\bigr), \label{eq: app: height_P_s}\\
    \deg P_s(\bfs \Lambda^s, \bfs \Lambda^s \bfs X )\in \SO (nd^s), \quad
    h\bigl(P_s(\bfs \Lambda^s, \bfs \Lambda^s \bfs X )\bigr)\in \SO \bigl(nd^{s-1}(h+d)\bigr). \label{eq: app: deg_and_height_P_s_Lambda}
\end{gather}
\end{lema}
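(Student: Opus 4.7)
The strategy is to chain the arithmetic tools assembled earlier in this appendix. For the height of $P_s$, I will first invoke inequality \eqref{eq: app: height_chow_form_versus_canonical_height} to pass from $h(P_s)$ to the canonical height $\widehat{h}(\mathcal{V}_s)$ plus a correction of order $(n-s+1)\log(n+2)\delta_s$. The canonical height is then controlled by \eqref{eq: app: widehat_v_s_height}; bounding each factor by the uniform maxima $d$ and $h$ and using the B\'ezout estimate $\delta_s\leq d^s$, this yields $\widehat{h}(\mathcal{V}_s)\leq sd^{s-1}(h+d\log(n+2))$, while the correction is of order $nd^s\log(n+2)$. Both summands fit inside $\SO(nd^{s-1}(h+d))$ after absorbing logarithmic factors in $n$ and $d$ into the soft-O notation.

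For the degree of $P_s(\bfs\Lambda^s,\bfs\Lambda^s\bfs X)$, I will use that $P_s$ is multihomogeneous of degree $\delta_s$ in each of the $n-s+1$ groups $\bfs\Lambda_i^h$, so that its total degree in $(\bfs\Lambda^s,\bfs Z)$ is at most $(n-s+1)\delta_s$. Since each substitution $Z_i\mapsto \bfs\Lambda_i\cdot\bfs X$ is bilinear in $(\bfs\Lambda,\bfs X)$, the total degree of the result is bounded by $2(n-s+1)\delta_s$, and invoking $\delta_s\leq d^s$ once more yields the claimed $\SO(nd^s)$ estimate.

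For the height of the substituted polynomial, I will note that the substitution $Z_i\mapsto\sum_j\Lambda_{ij}X_j$ uses only integer coefficients in $\{0,1\}$, so the only source of height growth is the multinomial expansion of $(\bfs\Lambda_i\cdot\bfs X)^{\beta_i}$ together with the possible collision of source monomials. A standard bookkeeping argument then gives $h\bigl(P_s(\bfs\Lambda^s,\bfs\Lambda^s\bfs X)\bigr)\leq h(P_s)+\mathcal{O}(nd^s\log n)$, which fits into $\SO(nd^{s-1}(h+d))$. The delicate point here is ensuring that the logarithmic contributions from multinomial coefficients (bounded by $n^{|\beta|}$ with $|\beta|\leq(n-s+1)\delta_s$) and from the count of source monomials of $P_s$ stay within the claimed bound; since both contributions are polynomial in $n$ and $d^s$ multiplied by a logarithmic factor, they are absorbed by the soft-O notation.
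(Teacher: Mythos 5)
Your proposal is correct and follows essentially the same route as the paper: the bound on $h(P_s)$ comes from combining \eqref{eq: app: height_chow_form_versus_canonical_height} with \eqref{eq: app: widehat_v_s_height} and the B\'ezout estimate $\delta_s\le d^s$, the degree bound is the immediate $2(n-s+1)\delta_s$ count, and the height of the specialized polynomial is $h(P_s)$ plus a term of order $nd^s\log n$. The only cosmetic difference is that where the paper cites \cite[Lemma 2.37(3)]{DaKrSo13} for the height of the substitution $Z_i\mapsto\bfs\Lambda_i\cdot\bfs X$, you re-derive the same estimate by direct bookkeeping of multinomial coefficients and monomial collisions, which yields the same bound.
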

\begin{proof}
\eqref{eq: app: height_chow_form_versus_canonical_height} and
\eqref{eq: app: widehat_v_s_height}, combined with the B\'ezout
inequality \eqref{bezout_inequality}, yields \eqref{eq: app:
height_P_s}. The degree estimate in \eqref{eq: app:
deg_and_height_P_s_Lambda} is clear. Next, observe that $P_s$ is an
element of $\Z[\bfs\Lambda^s\!,\!Z_1\klk\! Z_{n-s+1}]$  of total
degree $(n-s+1)\delta_s$ and $\Lambda_{ij}$ $(1\leq i \leq n-s+1,
1\leq j \leq n)$, $\bfs\Lambda_i\cdot\bfs X$ $(1\leq i \leq n-s+1)$
are elements of $\Z[\bfs\Lambda^s,\bfs X]$ having total degrees at
most $2$ and heights equal to $0$. Therefore, from \cite[Lemma
2.37(3)]{DaKrSo13} we deduce that
\[
h \bigl(P_s(\bfs\Lambda^s, \bfs\Lambda^s \bfs X)\bigr)\!\leq\!
h(P_s) + (n-s+1)\delta_s\Bigl(\log\bigl((n-s+1)(n+1)+1\bigl)+
2\log\bigl((n-s+2)n+1\bigr)\!\Bigr).
\]
This, together with \eqref{eq: app: height_P_s}, readily implies the
height estimate in \eqref{eq: app: deg_and_height_P_s_Lambda}.
\end{proof}

Next we estimate the degree and height of the discriminant $\rho_s$
and the polynomial $\rho_{\bfs\lambda^s}$ of Section \ref{subsec:
lifting_fibers_not_meeting_a_discriminant}.
For this purpose, we use the following result.
\begin{lema}
Let $U_1, \dots, U_{k+1}$ be indeterminates over $\Z$ and $F,G\in
\Z[U_1, \dots, U_{k+1}]$ nonzero polynomials with
$l:=\deg_{U_{k+1}}F$ and $m:=\deg_{U_{k+1}}G$. Then
  \[
  h\bigl(\mathrm{Res}_{U_{k+1}}(F,G)\bigr)\leq mh(F)+lh(G)
  + \log(k+1)\bigl((m-1)\deg F+ l\deg G\bigr) + \log\bigl((l+m)!\bigr).
  \]
\end{lema}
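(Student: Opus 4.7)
The plan is to expand the resultant via its Sylvester determinant representation and to bound the height of each summand by an iterated application of a standard product bound. Write
\[
F = \sum_{i=0}^{l} f_i\, U_{k+1}^i,\qquad G = \sum_{j=0}^{m} g_j\, U_{k+1}^j,
\]
with $f_i, g_j \in \Z[U_1,\dots,U_k]$, so that $h(f_i)\le h(F)$, $\deg f_i\le\deg F$ and likewise for the $g_j$. Then $\mathrm{Res}_{U_{k+1}}(F,G)$ is the determinant of the $(l+m)\times(l+m)$ Sylvester matrix, whose first $m$ rows are shifts of $(f_l,\dots,f_0)$ and whose last $l$ rows are shifts of $(g_m,\dots,g_0)$. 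By the Leibniz formula this determinant is a signed sum of at most $(l+m)!$ terms, each of which is a product of exactly $m$ coefficients of $F$ (one from each of the first $m$ rows) and exactly $l$ coefficients of $G$ (one from each of the last $l$ rows).

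The auxiliary fact I would use is the product bound
\[
h(PQ)\;\le\;h(P)+h(Q)+\min(\deg P,\deg Q)\,\log(k+1)
\qquad (P,Q\in\Z[U_1,\dots,U_k]),
\]
which follows from $|PQ|_\infty\le \min\bigl(\#\mathrm{mon}(P),\#\mathrm{mon}(Q)\bigr)\,|P|_\infty\,|Q|_\infty$ together with the elementary estimate $\binom{k+d}{d}\le (k+1)^d$ on the number of monomials of degree at most $d$ in $k$ variables.

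I would then bound the height of a single summand $f_{i_1}\cdots f_{i_m}\,g_{j_1}\cdots g_{j_l}$ by multiplying the $f$-factors first and then the $g$-factors. By a routine induction, after multiplying $r\le m$ factors $f_{i_\bullet}$ the height is at most $r\,h(F)+(r-1)\deg F\,\log(k+1)$, because at each step after the first the new factor has degree at most $\deg F$, which is the smaller of the two degrees. This yields $m\,h(F)+(m-1)\deg F\,\log(k+1)$ after the $f$-phase. In the $g$-phase the accumulated product already has degree at least $\deg G$, so each incorporation of a $g_{j_\bullet}$ contributes at most $h(G)+\deg G\,\log(k+1)$. Summing, each term has height at most
\[
m\,h(F)+l\,h(G)+\log(k+1)\bigl((m-1)\deg F + l\deg G\bigr).
\]

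Finally, since the height of a sum of $N$ integers is bounded by the maximum height of the summands plus $\log N$, adding $\log((l+m)!)$ gives the stated inequality. The only delicate point is the bookkeeping that ensures the minimum in the product bound is realized by $\deg F$ throughout the $f$-phase and by $\deg G$ throughout the $g$-phase; this is precisely what produces the asymmetric coefficients $(m-1)$ in front of $\deg F$ and $l$ in front of $\deg G$.
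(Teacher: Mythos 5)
Your argument is correct and is essentially the paper's proof: both expand the Sylvester determinant via the Leibniz formula into at most $(l+m)!$ signed products of $m$ coefficients of $F$ and $l$ coefficients of $G$, bound the height of each such product by $mh(F)+lh(G)+\log(k+1)\bigl((m-1)\deg F+l\deg G\bigr)$, and then add $\log\bigl((l+m)!\bigr)$ for the sum, the only difference being that you prove the product height bound directly where the paper cites \cite[Lemma 2.37]{DaKrSo13}. One small remark: in your $g$-phase the accumulated product need not have degree at least $\deg G$, but this is harmless, since the minimum in the product bound is in any case at most the degree of the incoming factor, which is at most $\deg G$.
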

\begin{proof}
Write $F=\sum_{i=0}^lF_iU_{k+1}^i$ and $G=\sum_{j=0}^mG_jU_{k+1}^j$,
where $F_i, G_j \in \Z[U_1, \dots, U_k]$. The determinant
$\mathrm{Res}_{U_{k+1}}(F,G)$ is a sum of $(l+m)!$ terms, each of
which is a product of the form $\pm F_{i_1}\cdots
F_{i_m}G_{j_1}\cdots G_{j_l}$. By \cite[Lemma 2.37(2)]{DaKrSo13},
each term has height at most $mh(F)+lh(G) + \log(k+1)\bigl((m-1)\deg
F+l\deg G\bigr)$. Then \cite[Lemma 2.37(1)]{DaKrSo13} completes the
proof of the lemma.
\end{proof}

Now we are able to estimate the degree and height of $\rho_s$ and
$\rho_{\bfs\lambda^s}$.
\begin{lema}\label{lema:_bound_height_resultant}
For $1\leq s \leq r$, we have
\begin{gather*}
 \deg \rho_s \in \mathcal{O}(nd^{2s}),
 \quad h (\rho_s)\in \SO \bigl(nd^{2s-1}(h+d)\bigr),  \\
 \deg \rho_{\bfs\lambda^s} \in \mathcal{O}(nd^{2s}),
 \quad h(\rho_{\bfs\lambda^s}) \in \SO \bigl(nd^{2s-1}(h+d)\bigr).
\end{gather*}
\end{lema}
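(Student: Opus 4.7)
My strategy is to apply the resultant-height lemma just proved to $F = P_s$ and $G = \partial P_s/\partial Z_{n-s+1}$, combined with the degree and height bounds for $P_s$ recorded in \eqref{eq: app: height_P_s} and \eqref{eq: app: deg_and_height_P_s_Lambda}, and then to deduce the bounds for $\rho_{\bfs\lambda^s}$ by a standard specialization argument.

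For $\rho_s$, I set $l := \deg_{Z_{n-s+1}} P_s = \delta_s$ and $m := \deg_{Z_{n-s+1}} \partial P_s/\partial Z_{n-s+1} \le \delta_s - 1$, and use the elementary bounds $h(\partial P_s/\partial Z_{n-s+1}) \le h(P_s) + \log\delta_s$ together with $\deg \partial P_s/\partial Z_{n-s+1} \le \deg P_s$. The resultant-height lemma then yields
\[
h(\rho_s) \le (2\delta_s - 1)h(P_s) + \delta_s\log\delta_s + \log\bigl((2\delta_s - 1)!\bigr) + (2\delta_s - 2)\log\bigl((n-s+1)n + n - s + 1\bigr)\deg P_s.
\]
Combined with $\delta_s \le d^s$ (B\'ezout), $\deg P_s \le (n-s+1)\delta_s$ and $h(P_s) \in \SO(nd^{s-1}(h+d))$, every summand on the right collapses into $\SO(nd^{2s-1}(h+d))$. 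The degree bound $\deg\rho_s \le (2\delta_s - 1)\deg P_s \in \mathcal{O}(nd^{2s})$ follows by the same B\'ezout reasoning applied to the size of the Sylvester matrix of $P_s$ and $\partial P_s/\partial Z_{n-s+1}$ in $Z_{n-s+1}$.

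For $\rho_{\bfs\lambda^s} = \rho_s(\bfs\lambda, \bfs\lambda^* \bfs X) \in \Z[\bfs X]$, each substituted value has degree at most $1$ in $\bfs X$, so $\deg\rho_{\bfs\lambda^s} \le \deg\rho_s \in \mathcal{O}(nd^{2s})$. For the height, a standard specialization estimate such as \cite[Lemma 2.37(3)]{DaKrSo13} gives an inequality of the form $h(\rho_{\bfs\lambda^s}) \le h(\rho_s) + \deg\rho_s \cdot h(\bfs\lambda) + \text{logarithmic terms}$. The main technical point is the soft-Oh bookkeeping here: by Remark \ref{rem: app: max h_a_h_b_estimate}, the entries of $\bfs\lambda$ have height in $\SO(r\log d + \log n)$, which combined with $\deg\rho_s \in \mathcal{O}(nd^{2s})$ places the second summand in $\SO(nd^{2s}) \subset \SO(nd^{2s-1}(h+d))$, while the first summand is in $\SO(nd^{2s-1}(h+d))$ by the previous paragraph. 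Everything else is a mechanical application of the preceding resultant-height lemma and the B\'ezout inequality.
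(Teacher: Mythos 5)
Your proposal is correct and follows essentially the same route as the paper: apply the preceding resultant-height lemma to $P_s$ and $\partial P_s/\partial Z_{n-s+1}$ together with the bound \eqref{eq: app: height_P_s}, then control the specialization at $\bfs\lambda$ and $\bfs\lambda^*\bfs X$ via \cite[Lemma 2.37(3)]{DaKrSo13} and Remark \ref{rem: app: max h_a_h_b_estimate}. The only cosmetic difference is that you re-derive the degree bound for $\rho_s$ from the Sylvester matrix, whereas the paper simply quotes the total-degree estimate $\deg\rho_s\le(n-s+2)(2\delta_s^2-\delta_s)$ already recorded in Section \ref{subsec: Chow form equidimensional variety}; both give $\mathcal{O}(nd^{2s})$.
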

\begin{proof}
Since $\rho_{\bfs\lambda^s}:=\rho_s(\bfs \lambda^s, \bfs
\lambda^{s+1} \bfs X)$, we have $\deg \rho_{\bfs\lambda^s} \leq \deg
\rho_s \leq (n-s+2)\delta_s^2$, which proves the degree estimates.
Next, as $\rho_s:=\mathrm{Res}_{Z_{n-s+1}}\left(P_s, \frac{\partial
P_s}{\partial Z_{n-s+1}}\right)$, Lemma
\ref{lema:_bound_height_resultant} implies
$$
h(\rho_s)\leq \delta_s\big(2h(P_s)+\log\delta_s\big)+
2\delta_s^2\log\bigl((n-s+1)(n+1)\bigr) +
\log\bigl((2\delta_s)!\bigr).$$
This and \eqref{eq: app: height_P_s} prove the estimate for
$h(\rho_s)$. Further, since $h(\bfs\lambda^s)\leq h({\sf a})$  for
all $s$, from \cite[Lemma 2.37 (3)]{DaKrSo13} we deduce that
\[
h(\rho_{\bfs\lambda^s})  \leq   h(\rho_s) + \deg\rho_s\Bigl(h({\sf
a}) + \log\bigl((n-s+1)(n+1)\bigr) + \log(n+1)\Bigr).
\]
Combining this,  Remark \ref{rem: app: max h_a_h_b_estimate} and the
estimate for $h(\rho_s)$ yields the one for $h
(\rho_{\bfs\lambda^s})$.
\end{proof}

We end this section with an estimate of the height of the Kronecker
representations of the fibers of each recursive step of our main
algorithm.
\begin{prop}\label{eta_s_height_estimate}
Let $\eta_s$ be the maximum of the heights of the polynomials
$Q^s(\bfs p^s,T)$, $W^s_{n-s+2}(\bfs p^s,T) \klk W^s_n(\bfs p^s,T)$
of Proposition  \ref{prop:
kronecker_representation_of_I_s_J_s_and_K_s}. Then $\eta_s\in \SO
\bigl(nd^{s-1}(h+rd)\bigr)$.
\end{prop}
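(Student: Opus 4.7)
The plan is to apply the explicit formulas \eqref{eq: def: kronecker_representation_of_I_s} for $Q^s$ and the $W^s_j$, together with the height estimate \eqref{eq: app: height_P_s} for the Chow form $P_s$, the bound $h({\sf a}), h({\sf b}) \in \SO(r\log d + \log n)$ from Remark \ref{rem: app: max h_a_h_b_estimate}, and the standard substitution lemma \cite[Lemma 2.37(3)]{DaKrSo13}.

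First I would observe that $A_s \in \Z[\bfs\Lambda^{s+1}]$ is the coefficient of $Z_{n-s+1}^{\delta_s}$ in $P_s$, so $h(A_s) \le h(P_s)$ and $\deg A_s \le (n-s+1)\delta_s$; likewise, for each $k$, the polynomial $\partial P_s/\partial \Lambda_{n-s+1,k} \in \Z[\bfs\Lambda^s, Z_1, \dots, Z_{n-s+1}]$ has height at most $h(P_s)+\log\deg P_s$ and degree at most $(n-s+1)\delta_s$. Substituting the entries of $\bfs\lambda^s$, $\bfs\lambda^{s+1}$ and $\bfs p^s$ (all of height $\SO(r\log d+\log n)$) via \cite[Lemma 2.37(3)]{DaKrSo13}, and using $h(P_s)\in \SO(nd^{s-1}(h+d))$ and $(n-s+1)\delta_s\in \mathcal{O}(nd^s)$, I would obtain
\[
\max\bigl\{h\bigl(P_s(\bfs\lambda^s,\bfs p^s,T)\bigr),\, h\bigl(\tfrac{\partial P_s}{\partial \Lambda_{n-s+1,k}}(\bfs\lambda^s,\bfs p^s,T)\bigr),\, h\bigl(A_s(\bfs\lambda^{s+1})\bigr)\bigr\} \in \SO\bigl(nd^{s-1}(h+rd)\bigr),
\]
because the substitution cost $\deg P_s \cdot (r\log d+\log n) \in \SO(nrd^s)$ fits inside the $nrd^s$ summand of $nd^{s-1}(h+rd)$.

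Since $Q^s(\bfs p^s,T) = P_s(\bfs\lambda^s,\bfs p^s,T)/A_s(\bfs\lambda^{s+1})$, the definition of the height of a rational polynomial gives $h(Q^s(\bfs p^s,T)) \le \max\bigl(h(P_s(\bfs\lambda^s,\bfs p^s,T)),\,h(A_s(\bfs\lambda^{s+1}))\bigr) \in \SO(nd^{s-1}(h+rd))$. For each $W^s_j(\bfs p^s,T)$, which is an $n$-term sum of products $-(\lambda_{jk}/A_s(\bfs\lambda^{s+1}))(\partial P_s/\partial \Lambda_{n-s+1,k})(\bfs\lambda^s,\bfs p^s,T)$, I would bring everything to the common denominator $A_s(\bfs\lambda^{s+1})$ and apply the additive and multiplicative height bounds of \cite[Lemma 2.37(1)--(2)]{DaKrSo13}: the extra $h({\sf a}) + \log n$ is absorbed into $\SO$, yielding the same estimate. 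Taking the maximum over $Q^s(\bfs p^s,T)$ and the $W^s_j(\bfs p^s,T)$ then gives $\eta_s\in \SO(nd^{s-1}(h+rd))$.

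The main technical point is bookkeeping rather than a single deep step: the height of $P_s$ itself contributes the $nd^{s-1}h$ summand, while the substituted entries of height $\SO(r\log d + \log n)$, once amplified by $\deg P_s\in \mathcal{O}(nd^s)$, contribute the $nrd^s$ summand; the residual $\log n$ and $\log d$ factors, as well as the $\log n$ arising from the $n$-term sum defining $W^s_j$, are all absorbed into the soft-Oh notation. Beyond the triangle inequality for heights and the substitution lemma, no new ideas are required.
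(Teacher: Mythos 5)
Your proposal is correct and follows essentially the same route as the paper's proof: both start from the explicit formulas \eqref{eq: def: kronecker_representation_of_I_s}, bound $h\bigl(P_s(\bfs\lambda^s,\bfs p^s,T)\bigr)$, $h\bigl(\partial P_s/\partial\Lambda_{n-s+1,k}(\bfs\lambda^s,\bfs p^s,T)\bigr)$ and $h\bigl(A_s(\bfs\lambda^{s+1})\bigr)$ via \cite[Lemma 2.37]{DaKrSo13} together with \eqref{eq: app: height_P_s} and Remark \ref{rem: app: max h_a_h_b_estimate}, and then absorb the denominator $A_s(\bfs\lambda^{s+1})$ and the $n$-term sum defining $W^s_j$ into the same estimate. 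The only differences are inessential bookkeeping constants (e.g.\ $(n-s+1)\delta_s$ versus $(n-s)\delta_s$ for $\deg A_s$), which do not affect the soft-Oh conclusion.
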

\begin{proof}
 Note that
\begin{align}
Q^s(\bfs p^s,T)&=\frac{P_s(\bfs\lambda^s,\bfs p^s,
T)}{A_s(\bfs\lambda_1, \dots,\bfs\lambda_{n-s})},
\label{kronecker_representation_height_estimate_section_equation_1}\\
W^s_j(\bfs p^s,T)&=
-\sum_{k=1}^{n}\frac{\lambda_{jk}}{A_s(\bfs\lambda_1, \dots,
\bfs\lambda_{n-s})}\frac{\partial P_s(\bfs\lambda^s,\bfs
p^s,T)}{\partial \Lambda_{n-s+1,k}} \quad (n-s+2\leq j \leq n).
\label{kronecker_representation_height_estimate_section_equation_2}
\end{align}
Since $h(\bfs\lambda^s)\leq h({\sf a})$ and $h(\bfs p^s)\leq h({\sf
b})$, by \cite[Lemma 2.37 (3)]{DaKrSo13} we deduce that
\begin{align*}
h\bigl(P_s(\bfs\lambda^s\!,\bfs p^s\!, T)\bigr)\!&\leq h(P_s)\!+\!
(n-s+1)\delta_s\Bigl(\max\{h({\sf a}),h({\sf b})\}\!+
\!\log\bigl((n-s+\!1)(n+\!1)\!+\!1\bigr)\!+\!1\!\Bigr)\\
&\leq h(P_s) + (n-s+1)\delta_s\bigl(\max\{h({\sf a}),h({\sf b})\} +
\log(4n^2)\bigr).
\end{align*}
Further, as $h\bigl(\frac{\partial P_s}{\partial
\Lambda_{n-s+1,k}}\bigr)\leq h(P_s) + \log\delta_s$, a similar
argument shows that
$$
h\left(\frac{\partial P_s\left(\bfs\lambda^s,\bfs
p^s,T\right)}{\partial \Lambda_{n-s+1,k}} \right)\leq  h(P_s)
+\log\delta_s+ (n-s+1)\delta_s\bigl(\max\{h({\sf a}),h({\sf b})\} +
\log(4n^2)\bigr).
$$
Therefore, by \cite[Lemma 2.37(1)]{DaKrSo13} we obtain
\begin{align}\label{kronecker_representation_height_estimate}
h\left( \sum_{k=1}^{n} \lambda_{jk} \frac{\partial
P_s\left(\bfs\lambda^s,\bfs p^s,T\right)}{\partial
\Lambda_{n-s+1,k}} \right)
& \leq  h(P_s) +\log\delta_s + h({\sf a}) + \log n\\
\nonumber & + (n-s+1)\delta_s\bigl(\max\{h({\sf a}),h({\sf b})\} +
\log(4n^2)\bigr)
\end{align}
for $n-s+2\leq j \leq n$. Similarly we  deduce that
$$
h\bigl(A_s(\bfs\lambda_1, \dots,\bfs\lambda_{n-s})\bigr)\leq
h(P_s)+(n-s)\delta_s\Bigl(h({\sf a}) +
\log\bigl((n-s+1)n+1\bigr)\Bigr).
$$

By
\eqref{kronecker_representation_height_estimate_section_equation_1},
\eqref{kronecker_representation_height_estimate_section_equation_2}
and the previous estimates we see that $\eta_s$ is bounded above by
the right--hand side of
(\ref{kronecker_representation_height_estimate}). The proposition
then follows by \eqref{eq: app: height_P_s} and Remark \ref{rem:
app: max h_a_h_b_estimate}.
\end{proof}
%
%
\subsection{Estimates for unmixedness
and generic smoothness}
In this section we estimate the height of integers $\alpha_s$ and
$\gamma_s$ as in \eqref{alpha_s_definition} and
\eqref{gamma_s_definition}, whose nonvanishing modulo $p$ implies
that the corresponding modular reduction is unmixed and generically
smooth, and yields new variables in Noether position (Theorem
\ref{th: conditions_mod_p_summary}).

We start with $\alpha_s$. Taking into account that
$\widehat{h}\bigl(\mathbb{A}^{(n-s+2)n}\bigr)=0$ and
$\deg\bigl(\mathbb{A}^{(n-s+2)n}\bigr)=1$, from \cite[Theorem
2]{DaKrSo13} it follows that there exists $\alpha_s\in
\Z\setminus\{0\}$ as in \eqref{alpha_s_definition} with
\begin{equation*}
h(\alpha_s)\leq 3h\bigl(P_s(\bfs\Lambda, \bfs\Lambda \bfs
X)\bigr)\prod_{j=1}^sd_j+ 2\deg\bigl(P_s(\bfs\Lambda^s,
\bfs\Lambda^s \bfs X)\bigr)\prod_{j=1}^sd_j\Biggl( h\sum_{\ell=1}^s
\frac{1}{d_{\ell}}+c(n)\Biggr),
\end{equation*}
where $c(n)\in \mathcal{O}^{\sim}(n)$. Combining this with
\eqref{eq: app: deg_and_height_P_s_Lambda} we deduce the following
result.
\begin{lema}\label{lema: app: alpha_s_height_estimate}
We have $h(\alpha_s)   \in  \SO \bigl(nd^{2s-1}(h+nd)\bigr)$.
\end{lema}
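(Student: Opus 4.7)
The plan is to derive the bound by substituting the height and degree estimates already available for the polynomial $P_s(\bfs\Lambda,\bfs\Lambda\bfs X)$ into the explicit inequality for $h(\alpha_s)$ obtained just above the lemma via the arithmetic Nullstellensatz from \cite{DaKrSo13}. Since the text preceding the statement already displays that explicit inequality, the lemma is really a calculation in Soft-Oh notation; no additional algebraic geometry is required.

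First, I would recall the ingredients: by \eqref{eq: app: deg_and_height_P_s_Lambda} we have the bounds
\[
\deg\bigl(P_s(\bfs\Lambda^s,\bfs\Lambda^s\bfs X)\bigr)\in\SO(nd^s),\qquad h\bigl(P_s(\bfs\Lambda,\bfs\Lambda\bfs X)\bigr)\in\SO\bigl(nd^{s-1}(h+d)\bigr),
\]
together with $\prod_{j=1}^s d_j\le d^s$, $c(n)\in\SO(n)$, and the identity $\prod_{j=1}^s d_j\cdot\sum_{\ell=1}^s 1/d_\ell=\sum_{\ell=1}^s\prod_{j\ne\ell}d_j\le sd^{s-1}\le nd^{s-1}$.

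Next, I would plug these into the pre-lemma inequality. The first summand contributes
\[
3h\bigl(P_s(\bfs\Lambda,\bfs\Lambda\bfs X)\bigr)\prod_{j=1}^sd_j\in\SO\bigl(nd^{s-1}(h+d)\bigr)\cdot d^s=\SO\bigl(nd^{2s-1}(h+d)\bigr).
\]
For the second summand, splitting the factor $h\sum 1/d_\ell+c(n)$ yields
\[
2\deg\bigl(P_s(\bfs\Lambda^s,\bfs\Lambda^s\bfs X)\bigr)\Bigl(h\sum_{\ell=1}^s\prod_{j\ne\ell}d_j+c(n)\prod_{j=1}^sd_j\Bigr)\in\SO(nd^s)\cdot\bigl(nd^{s-1}h+nd^s\bigr),
\]
which simplifies to $\SO(n^2d^{2s-1}h)+\SO(n^2d^{2s})=\SO\bigl(n^2d^{2s-1}(h+d)\bigr)$. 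Combining both summands and factoring gives $h(\alpha_s)\in\SO\bigl(nd^{2s-1}(h+nd)\bigr)$, which is the claimed bound.

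There is no real obstacle here — the arithmetic Nullstellensatz has already been invoked, and the only potential pitfall is keeping track of the factor of $n$ coming from the sum $\sum_{\ell=1}^s 1/d_\ell$, which is what ultimately produces the $h+nd$ term instead of $h+d$. Everything else is a routine asymptotic manipulation hidden inside the $\SO$ notation.
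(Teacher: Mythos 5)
Your route is the same as the paper's: Lemma \ref{lema: app: alpha_s_height_estimate} is indeed meant to follow by substituting \eqref{eq: app: height_P_s} and \eqref{eq: app: deg_and_height_P_s_Lambda} into the arithmetic--Nullstellensatz inequality displayed just before the statement, and your treatment of the first summand and of $\prod_{j=1}^sd_j\sum_{\ell=1}^s1/d_\ell=\sum_{\ell}\prod_{j\neq\ell}d_j\le sd^{s-1}$ is fine. The gap is the final sentence. From your own intermediate estimate the second summand is $\SO\bigl(n^2d^{2s-1}h+n^2d^{2s}\bigr)$, and $n^2d^{2s-1}h+n^2d^{2s}=nd^{2s-1}(nh+nd)$, \emph{not} $nd^{2s-1}(h+nd)$: in the regime $h\gg nd$ the term $n^2d^{2s-1}h$ exceeds $nd^{2s-1}(h+nd)$ by a factor $n$, which the Soft--Oh notation cannot absorb. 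So ``combining both summands and factoring'' is a non sequitur; what your computation actually establishes is $h(\alpha_s)\in\SO\bigl(n^2d^{2s-1}(h+d)\bigr)$, a weaker bound than the one claimed in the lemma.

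Your closing remark also mislocates the delicate point. The second $n$ in $h+nd$ comes from $c(n)\prod_jd_j\cdot\deg P_s(\bfs\Lambda^s,\bfs\Lambda^s\bfs X)\in\SO(n)\cdot d^s\cdot nd^s=\SO(n^2d^{2s})$, whereas the factor produced by the sum, $\sum_\ell\prod_{j\neq\ell}d_j\le sd^{s-1}$, multiplies $h$, and it is exactly this factor that threatens the statement: keeping it as $s$ rather than bounding it by $n$, the plug--in yields $h(\alpha_s)\in\SO\bigl(nd^{2s-1}(sh+nd)\bigr)$, since $\deg P_s(\bfs\Lambda^s,\bfs\Lambda^s\bfs X)\in\SO(nd^s)$. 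To land exactly on $\SO\bigl(nd^{2s-1}(h+nd)\bigr)$ one must discard that factor $s$ on the $h$--term, and neither your argument nor the paper's one--line derivation supplies a justification for doing so. You should either give such an argument (e.g.\ a sharper form of the Nullstellensatz estimate for this particular $g=P_s(\bfs\Lambda,\bfs\Lambda\bfs X)$) or state the bound you can actually prove, $\SO\bigl(nd^{2s-1}(sh+nd)\bigr)$; since $s\le r$, the latter is still adequate for the subsequent estimates such as Lemma \ref{lema: app: height_M_s_lambda_s_p_s} and Theorem \ref{th: app: mathfrak_N_height}, where an extra factor of $r$ is absorbed, but as written your proof does not deliver the lemma as stated.
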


Next we consider $\gamma_s$. Let $J_s$ be the Jacobian determinant
of $Y_1, \dots, Y_{n-s},F_1, \dots, F_s$ with respect to the
variables $X_1, \dots, X_n$.
\begin{lema}\label{lema: app: jacobian_degree_and_height_estimate}
The following assertions hold:
\begin{itemize}
  \item $\deg J_s \leq s(d-1)$;
  \item $h(J_s)  \leq s(\log d + h)
  + (n-s)h(\textsf{a}) + s\,d\log(n+1)+ \log (n!)$.
\end{itemize}
\end{lema}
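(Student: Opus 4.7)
The plan is to expand $J_s$ via the Leibniz formula and control each piece separately. Since the first $n-s$ rows of the Jacobian matrix of $Y_1,\dots,Y_{n-s},F_1,\dots,F_s$ consist of constants $\lambda_{ij}$ (entries of $\bfs\lambda^*$) and the last $s$ rows consist of partial derivatives $\partial F_j/\partial X_k$, each summand in the Leibniz expansion is, up to sign, a product of the form
\[
\lambda_{1,\sigma(1)}\cdots\lambda_{n-s,\sigma(n-s)}\cdot \frac{\partial F_1}{\partial X_{\sigma(n-s+1)}}\cdots \frac{\partial F_s}{\partial X_{\sigma(n)}}
\]
for some permutation $\sigma\in \mathfrak{S}_n$. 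Each partial derivative has degree at most $d-1$ and each $\lambda_{ij}$ has degree $0$, so every such summand has degree at most $s(d-1)$, which gives the degree estimate.

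For the height bound, I would use the standard toolkit of Lemma 2.37 of \cite{DaKrSo13}. First, $h(\partial F_j/\partial X_k)\le h(F_j)+\log d\le h+\log d$, since each coefficient of $\partial F_j/\partial X_k$ is at most $d$ times a coefficient of $F_j$. Second, each entry $\lambda_{ij}$ of $\bfs\lambda^*$ satisfies $|\lambda_{ij}|\le \textsf{a}$, hence has height at most $h(\textsf{a})$. The product of the $(n-s)$ entries $\lambda_{i,\sigma(i)}$ is an integer of height at most $(n-s)h(\textsf{a})$.

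The main (mild) technical step is to bound the height of the product of the $s$ partial derivatives $\partial F_j/\partial X_{\sigma(n-s+j)}$ in $n$ variables. Applying the product-of-polynomials bound of \cite[Lemma 2.37(2)]{DaKrSo13} iteratively, we obtain that this product has height at most
\[
\sum_{j=1}^s h(\partial F_j/\partial X_{\sigma(n-s+j)}) + \log(n+1)\sum_{j=1}^s \deg(\partial F_j/\partial X_{\sigma(n-s+j)}) \le s(h+\log d) + s(d-1)\log(n+1).
\]
Multiplying further by the integer factor $\prod_i \lambda_{i,\sigma(i)}$ only adds $(n-s)h(\textsf{a})$ to the height, since this factor is a constant. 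Hence each summand in the Leibniz expansion has height at most $s(h+\log d)+(n-s)h(\textsf{a})+sd\log(n+1)$.

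Finally, $J_s$ is a sum of at most $n!$ such terms, so by \cite[Lemma 2.37(1)]{DaKrSo13} we get
\[
h(J_s)\le s(h+\log d)+(n-s)h(\textsf{a})+sd\log(n+1)+\log(n!),
\]
which is precisely the claimed bound. I do not anticipate a real obstacle here; the argument is a direct application of the height calculus for sums and products, and the only point requiring a little care is keeping the logarithmic factors from the product bound within the stated expression $sd\log(n+1)$.
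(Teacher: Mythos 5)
Your argument is correct and essentially identical to the paper's proof: both expand $J_s$ by the Leibniz formula, bound each term using $h(\partial F_i/\partial X_j)\le h(F_i)+\log d_i$, $h(\lambda_{ij})\le h(\textsf{a})$ and the product estimate of \cite[Lemma 2.37(2)]{DaKrSo13}, and then add $\log(n!)$ for the sum via \cite[Lemma 2.37(1)]{DaKrSo13}. Your per-term factor $s(d-1)\log(n+1)$ is marginally weaker than the paper's $(s-1)(d-1)\log(n+1)$, but both are within the stated bound $s\,d\log(n+1)$, so nothing is lost.
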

\begin{proof}
The assertion on the degree of $J_s$ is clear. To prove the second
assertion, we observe that $J_s$ is a sum of $n!$ terms of the form
$\pm\partial F_1/\partial X_{j_1}\cdots \partial F_s/\partial
X_{j_s}\lambda_{1,l_1}\cdots\lambda_{n-s,l_{n-s}}$. Since
$h(\lambda_{ij})\leq h(\textsf{a})$ and $h(\partial F_i/\partial
X_j)\leq h(F_i) +\log(d_i)$, by \cite[Lemma 2.37(2)]{DaKrSo13} we
deduce that each term has height at most $ s(h+\log d) +
(n-s)h(\textsf{a}) + \log(n+1)\bigl((s-1)(d-1)\bigr). $ The estimate
for the height of $J_s$ follows by \cite[Lemma 2.37(1)]{DaKrSo13}.
\end{proof}

Let $d_j\!:=\!1$ and $h_j\!:=\!h(Y_{j-s}-p_{j-s})$ for $s+1\leq j
\leq n$, $d_{n+1}:=\deg J_s$ and $h_{n+1}\!:=h(J_s)$. By
\cite[Theorem 1]{DaKrSo13}, there exist $\gamma_s\in \Z\setminus
\{0\}$ and $G_1, \dots, G_{n+1}\in \Z[\bfs X]$ as in
\eqref{gamma_s_definition} with
\begin{align*}
h(\gamma_s)  &\leq \sum_{\ell=1}^{n+1}\Biggl(\prod_{j\neq
\ell}d_j\Biggr)h_{\ell} + (4n+8)\log(n+3)\prod_{j=1}^{n+1}d_j\\
&\leq \deg
J_s\Biggl(\prod_{j=1}^sd_j\Biggr)\Biggl(\sum_{\ell=1}^s\frac{h_{\ell}}{d_{\ell}}
+ \sum_{\ell=1}^{n-s}h(Y_{\ell}-p_\ell) +(4n+8)\log(n+3) \Biggr) +
h(J_s)\prod_{j=1}^sd_j.
\end{align*}
Since $h(Y_{\ell})\leq h({\sf a})$ and $h(p_{\ell})\leq h({\sf b})$
for all $\ell$, we obtain
$$
h(\gamma_s)\leq \deg J_s\,d^{s-1}sh + \deg
J_s\,d^s\bigl((n-s)\max\{h({\sf a}),h({\sf b})\} +
(4n+8)\log(n+3)\bigr) + h(J_s)d^s.
$$
Combining this with Remark \ref{rem: app: max h_a_h_b_estimate} and
Lemma \ref{lema: app: jacobian_degree_and_height_estimate}, we
deduce the following result.
\begin{lema}\label{eq: app: gamma_s_height} We have
$h(\gamma_s) \in \SO \bigl(d^s(h+rnd)\bigr)$.
\end{lema}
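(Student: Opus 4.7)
The plan is to invoke the arithmetic Nullstellensatz of \cite[Theorem 1]{DaKrSo13} for the system $F_1,\dots,F_s,Y_1-p_1,\dots,Y_{n-s}-p_{n-s},J_s$, which by hypothesis has no common zero in $\mathbb{A}^n$. With the degree and height labels $d_j,h_j$ as set up just before the statement, the Nullstellensatz yields the inequality
\[
h(\gamma_s) \le \sum_{\ell=1}^{n+1}\Biggl(\prod_{j\neq \ell}d_j\Biggr)h_\ell + (4n+8)\log(n+3)\prod_{j=1}^{n+1}d_j.
\]
Since $d_j=1$ for $s+1\le j\le n$, the product $\prod_{j=1}^{n+1}d_j$ reduces to $d^s\deg J_s$, which keeps the ``extraneous'' factor under control.

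Next I would split the sum on the right into three contributions: (i) the terms corresponding to $F_1,\dots,F_s$, which contribute at most $\deg J_s\cdot d^{s-1}\cdot sh$; (ii) the terms from the linear forms $Y_\ell-p_\ell$, each of height at most $\max\{h(\textsf{a}),h(\textsf{b})\}$, giving at most $\deg J_s\cdot d^s\cdot (n-s)\max\{h(\textsf{a}),h(\textsf{b})\}$; and (iii) the term for $J_s$ itself, namely $h(J_s)d^s$. Finally, the logarithmic correction contributes $(4n+8)\log(n+3)\deg J_s\cdot d^s$.

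To finish, I would plug in the bounds from Lemma \ref{lema: app: jacobian_degree_and_height_estimate}, namely $\deg J_s\le s(d-1)$ and $h(J_s)\le s(\log d+h)+(n-s)h(\textsf{a})+sd\log(n+1)+\log(n!)$, together with Remark \ref{rem: app: max h_a_h_b_estimate}, which gives $\max\{h(\textsf{a}),h(\textsf{b})\}\in\SO(r\log d+\log n)$. After this substitution, the $F_i$--contribution is of order $d^s\cdot sh$, the linear--form contribution is of order $d^s\cdot n\cdot(r\log d+\log n)$, the Jacobian contribution is of order $d^s(sh+nh(\textsf{a})+sd\log n)$, and the logarithmic correction is of order $d^{s+1}\cdot s n\log n$. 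Absorbing the logarithmic factors into the soft--Oh and collecting dominant terms yields $h(\gamma_s)\in \SO\bigl(d^s(h+rnd)\bigr)$.

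The argument is purely computational and the only mild subtlety is that $h(J_s)$ itself already contains a term proportional to $h(\textsf{a})$ (inherited from the $\lambda_{ij}$), which, after multiplication by $d^s$, must be compared with the target bound $d^s(h+rnd)$; since $h(\textsf{a})=\SO(r\log d)$, this contribution is absorbed into $\SO(d^s\cdot rnd)$ without difficulty. There is no conceptual obstacle beyond bookkeeping.
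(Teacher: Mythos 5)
Your proposal is correct and follows essentially the same route as the paper: apply the arithmetic Nullstellensatz \cite[Theorem 1]{DaKrSo13} to $F_1,\dots,F_s$, $Y_1-p_1,\dots,Y_{n-s}-p_{n-s}$, $J_s$ with the stated degree/height bookkeeping, then insert the bounds of Lemma \ref{lema: app: jacobian_degree_and_height_estimate} and Remark \ref{rem: app: max h_a_h_b_estimate}. The only (harmless) imprecision is dropping some factors of $\deg J_s\le sd$ and of $s$ in the final tally, which are absorbed since $s\le\log_2 d^s$ and the extra $d$ fits inside $\SO\bigl(d^s\cdot rnd\bigr)$, exactly as in the paper.
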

%
%
\subsection{Estimates for smooth fibers}
In this section we estimate the height of the integers considered in
Section \ref{subsec: lifting_fibers_not_meeting_a_discriminant},
namely ${\sf M}_s(\bfs\lambda^s, \bfs p^s)$ and ${\sf
L}_{\bfs\lambda^s}(\bfs p^{s+1})$, where ${\sf M}_s$ is the
polynomial of \eqref{def:polynomial_M_s} and ${\sf
L}_{\bfs\lambda^s}$ is the polynomial of \eqref{def:L_lambda_s}.
Combining these estimates we shall be able to estimate the height of
the integer $\mathfrak{N}$ of \eqref{eq: app: frak_N_def}, which
comprises all the unlucky primes $p$.

We start estimating the height of ${\sf M}_s(\bfs\lambda^s, \bfs
p^s)$.
\begin{lema}\label{lema: app: height_M_s_lambda_s_p_s} For $1\leq s \leq r$,
we have $ h\bigl({\sf M}_s(\bfs\lambda^s, \bfs p^s)\bigr)\in \SO
\bigl(nd^{2s-1}(h+nd)\bigr). $
\end{lema}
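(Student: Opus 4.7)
The plan is to expand
\[
{\sf M}_s(\bfs\lambda^s,\bfs p^s) = \alpha_s\,\gamma_s\,A_s(\bfs\lambda^*)\,\rho_s(\bfs\lambda^s,\bfs p^s)
\]
and bound each of the four factors separately, assembling them via the height inequality for products \cite[Lemma 2.37(1)--(2)]{DaKrSo13}, which adds only a logarithmic overhead to the sum of the individual heights. The contributions of $\alpha_s$ and $\gamma_s$ are handed to me directly: Lemma \ref{lema: app: alpha_s_height_estimate} gives $h(\alpha_s)\in\SO(nd^{2s-1}(h+nd))$, and Lemma \ref{eq: app: gamma_s_height} gives $h(\gamma_s)\in\SO(d^s(h+rnd))$. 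Both fit inside the target class $\SO(nd^{2s-1}(h+nd))$ once we use $r\le n$.

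For the factor $A_s(\bfs\lambda^*)$, I would use that $A_s$ is the coefficient of $Z_{n-s+1}^{\delta_s}$ in $P_s$ regarded as a polynomial in $Z_{n-s+1}$, so $h(A_s)\le h(P_s)$ and $\deg A_s\le (n-s)\delta_s$. Specializing $\bfs\Lambda^*$ at integers of height at most $h({\sf a})$ via \cite[Lemma 2.37(3)]{DaKrSo13} yields a bound of the form
\[
h\bigl(A_s(\bfs\lambda^*)\bigr)\le h(P_s) + \deg(A_s)\bigl(h({\sf a}) + \log((n-s)n+1)\bigr),
\]
which, combined with \eqref{eq: app: height_P_s} and Remark \ref{rem: app: max h_a_h_b_estimate}, is clearly dominated by $\SO(nd^{s-1}(h+d))$.

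For the resultant factor $\rho_s(\bfs\lambda^s,\bfs p^s)$, I would apply the same specialization estimate to the degree and height bounds for $\rho_s$ obtained in Lemma \ref{lema:_bound_height_resultant}, namely $\deg\rho_s\in\mathcal{O}(nd^{2s})$ and $h(\rho_s)\in\SO(nd^{2s-1}(h+d))$. Since the entries of $\bfs\lambda^s$ and $\bfs p^s$ have heights bounded by $\max\{h({\sf a}),h({\sf b})\}\in\SO(r\log d+\log n)$ by Remark \ref{rem: app: max h_a_h_b_estimate}, \cite[Lemma 2.37(3)]{DaKrSo13} gives
\[
h\bigl(\rho_s(\bfs\lambda^s,\bfs p^s)\bigr)\le h(\rho_s) + \deg(\rho_s)\bigl(\max\{h({\sf a}),h({\sf b})\} + \log((n-s+2)n+1)\bigr).
\]
The only point to check carefully is that the resulting $\deg(\rho_s)\cdot\max\{h({\sf a}),h({\sf b})\}$ term, which is $\mathcal{O}(nd^{2s})\cdot\SO(r\log d+\log n)$, still lies in $\SO(nd^{2s-1}(h+nd))$; this holds because $r\le n$ and the Soft-Oh notation absorbs the logarithmic factors, so the contribution is subsumed by the $n^2d^{2s}$ term on the right.

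Summing the four bounds and absorbing the $\mathcal{O}(\log)$ overhead from the product inequality yields $h({\sf M}_s(\bfs\lambda^s,\bfs p^s))\in\SO(nd^{2s-1}(h+nd))$, as required. The computation is entirely routine once each factor is in place; the only mildly delicate bookkeeping is the specialization step for $\rho_s$, where one must ensure the $r\le n$ inequality is used so that $\max\{h({\sf a}),h({\sf b})\}\cdot\deg\rho_s$ does not exceed the advertised bound.
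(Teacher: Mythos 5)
Your argument is correct and is essentially the paper's own proof in slightly reordered form: the paper first bounds $h({\sf M}_s)$ as a polynomial via \cite[Lemma 2.37(2)]{DaKrSo13} and then specializes once at $(\bfs\lambda^s,\bfs p^s)$ with \cite[Lemma 2.37(3)]{DaKrSo13}, while you specialize $A_s$ and $\rho_s$ first and then multiply the resulting integers, using exactly the same inputs (Lemmas \ref{lema: app: alpha_s_height_estimate}, \ref{eq: app: gamma_s_height}, \ref{lema:_bound_height_resultant}, the bound \eqref{eq: app: height_P_s} and Remark \ref{rem: app: max h_a_h_b_estimate}). The only quibble is that your intermediate claim that $h\bigl(A_s(\bfs\lambda^*)\bigr)$ is dominated by $\SO\bigl(nd^{s-1}(h+d)\bigr)$ is slightly optimistic, since the term $\deg(A_s)\,h({\sf a})$ can be of order $n d^{s}r$ up to logarithmic factors, but, exactly as in your check for $\rho_s$ using $r\le n$, this is harmlessly absorbed by the final bound $\SO\bigl(nd^{2s-1}(h+nd)\bigr)$.
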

\begin{proof}
By \cite[Lemma 2.37 (3)]{DaKrSo13}, we have
\begin{equation}\label{eq: app: M_s_lambda_s_p_s_height_estimate}
h\bigl({\sf M}_s(\bfs\lambda^s,\bfs p^s)\bigr)\leq h({\sf M}_s) +
\deg({\sf M}_s)\Bigl(\max \{h(\bfs\lambda^s), h(\bfs p^s)\} +
\log\bigl((n-s+1)(n+1)+1\bigr)\Bigr).
\end{equation}
Recall that ${\sf M}_s:=\alpha_s\gamma_sA_s\rho_s$ and, by
definition, $\deg A_s \leq (n-s)\delta_s$ and $h(A_s)\leq h(P_s)$.
As a consequence, from \cite[Lemma 2.37 (2)]{DaKrSo13} we deduce
that
$$
h({\sf M}_s)  \leq h(\alpha_s)+ h(\gamma_s) + h(P_s) + h(\rho_s) +
(n-s)\delta_s\log\bigl((n-s+1)(n+1)+1\bigr).
$$
Combining this with \eqref{eq: app: height_P_s} and Lemmas
\ref{lema:_bound_height_resultant}, \ref{lema: app:
alpha_s_height_estimate} and \ref{eq: app: gamma_s_height} we obtain
  $$
  h({\sf M}_s)\in \SO \bigl(nd^{2s-1}(h+nd)\bigr).
  $$
On the other hand, since $h(\bfs\lambda^s)\leq h({\sf a})$ and $h(
\bfs p^s)\leq h({\sf b})$ for all $s$,  by Remark \ref{rem: app: max
h_a_h_b_estimate} we have $\max \{h(\bfs\lambda^s), h(\bfs p^s)\}
\in \SO (r\log d +\log n)$. Further, $\deg {\sf M}_s\in
\mathcal{O}(nd^{2s})$ by \eqref{eq: deg_M_s}.  Combining all these
estimates with \eqref{eq: app: M_s_lambda_s_p_s_height_estimate},
the lemma follows.
\end{proof}

Next we estimate ${\sf L}_{\bfs\lambda^s}(\bfs p^{s+1})$. As this
integer is expressed in terms of the integers $\mu_{\bfs\lambda^s}$
of \eqref{empty_case_equation} and $\beta_{\bfs\lambda^s}$ of
\eqref{nullstellensatz_implicit_equation} and the polynomial
$B_{\bfs\lambda^s}\in \mathbb{Z}[Z_1,\dots,Z_{n-s-1}]\setminus
\{0\}$ of \eqref{nonempty_case_implicit_equation}, we start with an
estimate for $\mu_{\bfs\lambda^s}$ and $B_{\bfs\lambda^s}$.
\begin{prop}
Let $1\leq s \leq r-1$ and assume that
$\mathcal{W}_{\bfs\lambda^s}=\emptyset$. Then there exists
$\mu_{\bfs\lambda^s}\in \Z\setminus \{0\}$ as in
\eqref{empty_case_equation} with
\begin{equation}\label{eq: app: height_mu_s_lambda_s}
h(\mu_{\bfs\lambda^s})\in \SO \bigl(n^2d^{3s}(h+d)\bigr).
\end{equation}
On the other hand, if $\mathcal{W}_{\bfs\lambda^s}\neq \emptyset$,
then there exists $B_{\bfs\lambda^s}\in
\mathbb{Z}[Z_1,\dots,Z_{n-s-1}]\setminus \{0\}$ as in
(\ref{nonempty_case_implicit_equation}) with
 \begin{equation}\label{eq: app: deg_and_height_B_s_lambda_s}
    \deg B_{\bfs\lambda^s} \in \mathcal{O}(nd^{3s+1}), \quad
    h(B_{\bfs\lambda^s}) \in \SO \bigl(nd^{3s}(h+rnd)\bigr)
 \end{equation}
\end{prop}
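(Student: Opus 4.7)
The plan is to treat the two cases separately, applying in each an arithmetic effectivity theorem from \cite{DaKrSo13} to the system $F_1,\dots,F_{s+1},\rho_{\bfs\lambda^s}$, and then substituting the degree and height estimates for $\rho_{\bfs\lambda^s}$ established in Lemma \ref{lema:_bound_height_resultant}. The recurring ingredients are: $\deg F_j\leq d$ and $h(F_j)\leq h$ for $1\leq j\leq s+1$, $\deg\rho_{\bfs\lambda^s}\in\mathcal{O}(nd^{2s})$, and $h(\rho_{\bfs\lambda^s})\in\SO(nd^{2s-1}(h+d))$.

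Suppose first that $\mathcal{W}_{\bfs\lambda^s}=\emptyset$. Then $F_1,\dots,F_{s+1},\rho_{\bfs\lambda^s}$ have no common zero in $\overline{\Q}^n$, so the effective arithmetic Nullstellensatz \cite[Theorem 1]{DaKrSo13} (of the type already used in Section A.3) provides a nonzero $\mu_{\bfs\lambda^s}$ satisfying \eqref{empty_case_equation} with
\[
h(\mu_{\bfs\lambda^s})\leq\sum_{\ell=1}^{s+2}\Bigl(\prod_{j\neq\ell}d'_j\Bigr)h'_\ell + (4n+8)\log(n+3)\prod_{j=1}^{s+2}d'_j,
\]
where $(d'_j,h'_j)=(d_j,h_j)$ for $1\leq j\leq s+1$ and $(d'_{s+2},h'_{s+2})=(\deg\rho_{\bfs\lambda^s},h(\rho_{\bfs\lambda^s}))$. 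The $s+1$ terms of the form $d^{s}\deg\rho_{\bfs\lambda^s}\cdot h$ contribute $\mathcal{O}(snd^{3s}h)\subseteq\mathcal{O}(n^2d^{3s}h)$ (using $s+1\leq r\leq n$), the single term $d^{s+1}h(\rho_{\bfs\lambda^s})$ contributes $\SO(nd^{3s}(h+d))$, and the final correction is $\SO(nd^{3s+1})$; summing these yields \eqref{eq: app: height_mu_s_lambda_s}.

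Suppose now that $\mathcal{W}_{\bfs\lambda^s}\neq\emptyset$. By Lemma \ref{lemma: pure_dimesion_r_subvariety} it is equidimensional of dimension $n-s-2$, so I would apply the arithmetic Perron-type statement \cite[Theorem 3.1]{DaKrSo13} to the projection of $\mathcal{W}_{\bfs\lambda^s}$ onto $(Y_1,\dots,Y_{n-s-1})$. This yields $B_{\bfs\lambda^s}\in\Z[Z_1,\dots,Z_{n-s-1}]\setminus\{0\}$ satisfying \eqref{nonempty_case_implicit_equation}, with $\deg B_{\bfs\lambda^s}\leq\deg\mathcal{W}_{\bfs\lambda^s}$ and with $h(B_{\bfs\lambda^s})$ bounded essentially by $\widehat{h}(\mathcal{W}_{\bfs\lambda^s})$ up to a correction of order $\deg\mathcal{W}_{\bfs\lambda^s}\cdot\log n$. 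The degree bound in \eqref{eq: app: deg_and_height_B_s_lambda_s} follows from the B\'ezout inequality, since $\deg\mathcal{W}_{\bfs\lambda^s}\leq\delta_{s+1}\deg\rho_{\bfs\lambda^s}\in\mathcal{O}(nd^{3s+1})$. For the height, I combine the arithmetic B\'ezout inequality of \cite{DaKrSo13} with the bound $\widehat{h}(\mathcal{V}_{s+1})\in\mathcal{O}\bigl(rd^s(h+d\log n)\bigr)$ (obtained from \eqref{eq: app: widehat_v_s_height}) to get
\[
\widehat{h}(\mathcal{W}_{\bfs\lambda^s})\leq\widehat{h}(\mathcal{V}_{s+1})\deg\rho_{\bfs\lambda^s}+\delta_{s+1}\,h(\rho_{\bfs\lambda^s})+\text{(polylog correction)}\in\SO\bigl(nd^{3s}(h+rnd)\bigr),
\]
the factor $r$ arising from the $s+1$ summands appearing in the bound for $\widehat{h}(\mathcal{V}_{s+1})$.

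The routine part is Case 1, which is a direct substitution into the arithmetic Nullstellensatz. The main obstacle is Case 2: one must invoke an arithmetic Perron-type theorem in a form that gives an integer representative of the implicit equation of $\mathcal{W}_{\bfs\lambda^s}$ with height controlled by $\widehat{h}(\mathcal{W}_{\bfs\lambda^s})$, and then bound the latter via arithmetic B\'ezout applied to a hypersurface intersection that is proper by Lemma \ref{lemma: pure_dimesion_r_subvariety}, while ensuring that all $\log n$ corrections are absorbed into the $\SO\bigl(nd^{3s}(h+rnd)\bigr)$ estimate rather than producing a spurious factor of $n$.
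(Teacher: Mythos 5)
Your proposal is correct and follows essentially the same route as the paper: the empty case via the arithmetic Nullstellensatz \cite[Theorem 1]{DaKrSo13} together with the degree and height bounds for $\rho_{\bfs\lambda^s}$, and the nonempty case via equidimensionality (Lemma \ref{lemma: pure_dimesion_r_subvariety}), the Perron-type estimate of \cite{DaKrSo13} (the paper uses Corollary 3.23) and the arithmetic B\'ezout inequality applied to $\overline{\mathcal{V}}_{s+1}\cap\{\rho_{\bfs\lambda^s}^h=0\}$. One small bookkeeping remark: in the paper's accounting the $rnd$ term of \eqref{eq: app: deg_and_height_B_s_lambda_s} comes from the correction $\deg\mathcal{W}_{\bfs\lambda^s}\sum_{\ell}h(Y_\ell)$ with $h(Y_\ell)\in\SO(r\log d+\log n)$ (not merely a $\log n$ correction), while the $(s+1)$ summands in the bound for $\widehat{h}(\mathcal{V}_{s+1})$ are absorbed as factors logarithmic in $d^{s}$ rather than producing the $r$; your final estimate is unaffected.
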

\begin{proof} 
Assume that $\mathcal{W}_{\bfs\lambda^s}\!:=\!\mathcal{V}_{s+1}\cap
\{\rho_s(\bfs\lambda^s\!,\!\bfs\lambda^{s+1} \bfs X)=0\}=\emptyset$
and let $\rho_{\bfs\lambda^s}:=\rho_s( \bfs\lambda^s,\!
\bfs\lambda^{s+1} \bfs X)$. By \cite[Theorem 1]{DaKrSo13} there
exists $\mu_{\bfs\lambda^s}\in \Z\setminus \{0\}$ as in
\eqref{empty_case_equation} with
\begin{align*}
h(\mu_{\bfs\lambda^s})\leq&
h(\rho_{\bfs\lambda^s})\prod_{j=1}^{s+1}d_j +
\deg(\rho_{\bfs\lambda^s})\prod_{j=1}^{s+1}d_j\Biggl(
\sum_{\ell=1}^{s+1}\frac{h_{\ell}}{d_{\ell}}+(4n+8)\log(n+3)\Biggr)\\
\le &d^{s+1}\bigl(h(\rho_{\bfs\lambda^s}) +
\deg(\rho_{\bfs\lambda^s})(4n+8)\log(n+3)\bigr) +
(s+1)\deg(\rho_{\bfs\lambda^s})d^sh
\end{align*}
Combining this with Lemma \ref{lema:_bound_height_resultant} proves
the first assertion of the lemma.

On the other hand, assume that $\mathcal{W}_{\bfs\lambda^s}\neq
\emptyset$. By hypothesis ${\sf R}_s( \bfs\lambda^s)\neq 0$, and
hence Lemma \ref{lemma: pure_dimesion_r_subvariety} proves that
$\mathcal{W}_{\bfs\lambda^s}$ is equidimensional of dimension
$n-s-2$. By \cite[Corollary 3.23]{DaKrSo13} there exists a
polynomial $B_{\bfs\lambda^s}\in
\mathbb{Z}[Z_1,\dots,Z_{n-s-1}]\setminus \{0\}$ as in
(\ref{nonempty_case_implicit_equation}) with
\begin{align}
\deg (B_{\bfs\lambda^s})& \leq \deg\mathcal{W}_{\bfs\lambda^s}, \label{B_s_lambda_s_deg_estimate}\\
 h(B_{\bfs\lambda^s})&  \leq \widehat{h}(\mathcal{W}_{\bfs\lambda^s})+\deg\mathcal{W}_{\bfs\lambda^s}
 \left(\sum_{\ell=1}^{n-s-1}h(Y_{\ell}) + (n-s)\log (2n + 8)\right). \label{B_s_lambda_s_height_estimate}
\end{align}

Next we obtain estimates for $\deg \mathcal{W}_{\bfs\lambda^s}$ and
$h(\mathcal{W}_{\bfs\lambda^s})$ in terms of the degrees and heights
of $\mathcal{V}_s$ and $\mathcal{V}_{s+1}$. For this purpose, let
$\overline{\mathcal{V}}_{s+1}$ and
$\overline{\mathcal{W}}_{\bfs\lambda^s}$ denote the projective
closures of $\mathcal{V}_{s+1}$ and $\mathcal{W}_{\bfs\lambda^s}$
respectively, via the canonical inclusion $\mathbb{A}^n
\hookrightarrow \mathbb{P}^n$. Let $\rho_{\bfs\lambda^s}^h$ be the
homogenization of $\rho_{\bfs\lambda^s}$. Lemma \ref{lemma:
pure_dimesion_r_subvariety} implies that
$\{\rho_{\bfs\lambda^s}^h=0\}$ of $\mathbb{P}^n$ cuts
$\overline{\mathcal{V}}_{s+1}$ properly. By \cite[Corollary
2.62]{DaKrSo13} we conclude that
\[
\widehat{h}\bigl(\overline{\mathcal{V}}_{\!s+1}\cap
\{\rho_{\bfs\lambda^s}^h\!=\!0\}\bigr)\leq \deg\rho_{\bfs
\lambda^s}\,\widehat{h}\bigl(\overline{\mathcal{V}}_{\!s+1}\bigr)
+\deg\overline{\mathcal{V}}_{\!s+1}\,h(\rho_{\bfs\lambda^s}^h) +
\deg\overline{\mathcal{V}}_{\!s+1}\deg\rho_{\bfs\lambda^s}^h\,\log(n+2).
\]
Since $\overline{\mathcal{V}}_{s+1}\cap
\{\rho_{\bfs\lambda^s}^h=0\}$ is equidimensional of dimension
$n-s-2$ and contains every component of
$\overline{\mathcal{W}}_{\bfs\lambda^s}$, we see that
$\widehat{h}\bigl(\overline{\mathcal{W}}_{\bfs\lambda^s}\bigr)\leq
\widehat{h}\bigl(\overline{\mathcal{V}}_{s+1}\cap
\{\rho_{\bfs\lambda^s}^h=0\}\bigr)$. Recalling that
$\widehat{h}(\mathcal{V}_{s+1})=\widehat{h}(\overline{\mathcal{V}}_{s+1})$
and $\deg \mathcal{V}_{s+1}=\deg\overline{\mathcal{V}}_{s+1}$, and
taking into account that
$\deg\rho_{\bfs\lambda^s}^h=\deg\rho_{\bfs\lambda^s}$ and
$h(\rho_{\bfs\lambda^s}^h)=h(\rho_{\bfs\lambda^s})$, we obtain
 \begin{gather*}
 \deg \mathcal{W}_{\bfs\lambda^s}  \leq  \deg \mathcal{V}_{s+1}\deg\rho_{\bfs\lambda^s},\\
 \widehat{h}(\mathcal{W}_{\bfs\lambda^s}) \leq
 \deg\rho_{\bfs\lambda^s}\,\widehat{h}(\mathcal{V}_{s+1})+\deg\mathcal{V}_{s+1}\,
 h(\rho_{\bfs\lambda^s}) + \deg\mathcal{V}_{s+1}\,\deg\rho_{\bfs\lambda^s}\,\log(n+2).
 \end{gather*}
By \eqref{eq: app: widehat_v_s_height} we have
$\widehat{h}(\mathcal{V}_{s+1})\in \SO \bigl(d^s(h+d)\bigr)$.
Therefore, by Lemma \ref{lema:_bound_height_resultant} we conclude
that
 $$
\deg \mathcal{W}_{\bfs\lambda^s} \in \mathcal{O}(nd^{3s+1}), \quad
\widehat{h}(\mathcal{W}_{\bfs\lambda^s}) \in \SO
\bigl(nd^{3s}(h+d)\bigr).
 $$
Combining these estimates with \eqref{B_s_lambda_s_deg_estimate} and
\eqref{B_s_lambda_s_height_estimate}, and taking into account that
$h(Y_{\ell})\in \SO (r\log d+\log n)$ for all $\ell$, the second
assertion of the lemma easily follows.
\end{proof}

Now we estimate the height of $\beta_{\bfs\lambda^s}$.
\begin{lema}\label{lema: app: b_s_lambda_s_height}
Let $1\leq s \leq r-1$ and assume that $\mathcal{W}_{\bfs
\lambda^s}\neq \emptyset$. Then there exists
$\beta_{\bfs\lambda^s}\in \Z\setminus\{0\}$ as in
(\ref{nullstellensatz_implicit_equation}) with
$h(\beta_{\bfs\lambda^s})\in \SO \bigl(n^3d^{8s+1}(h+rd)\bigr)$.
\end{lema}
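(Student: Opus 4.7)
The plan is to apply the arithmetic Nullstellensatz \cite[Theorem 1]{DaKrSo13} (in its ideal-membership form) to the polynomials $F_1,\dots,F_{s+1},\rho_{\bfs\lambda^s}\in\Z[\bfs X]$ together with the target polynomial $\overline{B}:=B_{\bfs\lambda^s}(Y_1,\dots,Y_{n-s-1})\in\Z[\bfs X]$. By identity \eqref{nonempty_case_implicit_equation}, $\overline{B}$ vanishes on every point of $\mathcal{W}_{\bfs\lambda^s}=V(F_1,\dots,F_{s+1},\rho_{\bfs\lambda^s})$, so the hypothesis of the theorem is met and produces integers $\beta_{\bfs\lambda^s}\in\Z\setminus\{0\}$ and $\ell_{\bfs\lambda^s}\in\N$ together with polynomials witnessing \eqref{nullstellensatz_implicit_equation}, equipped with an explicit bound on $h(\beta_{\bfs\lambda^s})$.

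Before invoking the theorem, I would convert the existing estimates on $B_{\bfs\lambda^s}$ into estimates on $\overline{B}$. Since each $Y_i=\bfs\lambda_i\cdot\bfs X$ is a linear form in $\bfs X$ whose coefficients have heights at most $h({\sf a})\in\SO(r\log d+\log n)$ by Remark \ref{rem: app: max h_a_h_b_estimate}, a direct application of \cite[Lemma 2.37(3)]{DaKrSo13} combined with \eqref{eq: app: deg_and_height_B_s_lambda_s} yields $\deg\overline{B}\in\mathcal{O}(nd^{3s+1})$ and $h(\overline{B})\in\SO(nd^{3s}(h+rnd))$.

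Next I would plug these estimates, together with the bounds $\deg\rho_{\bfs\lambda^s}\in\mathcal{O}(nd^{2s})$ and $h(\rho_{\bfs\lambda^s})\in\SO(nd^{2s-1}(h+d))$ from Lemma \ref{lema:_bound_height_resultant}, and the trivial inequalities $h_j\leq h$, $d_j\leq d$ for $1\leq j\leq s+1$, into the height estimate produced by the Nullstellensatz. The bookkeeping is entirely analogous to the one already performed in the proof of \eqref{eq: app: height_mu_s_lambda_s} for $\mu_{\bfs\lambda^s}$ and in Lemma \ref{lema: app: alpha_s_height_estimate} for $\alpha_s$: the dominant contributions come from the products $h(\overline{B})\cdot D$ and $\deg(\overline{B})\cdot D\cdot(\sum h_\ell/d_\ell+c(n))$, where $D:=\deg\rho_{\bfs\lambda^s}\prod_{j=1}^{s+1}d_j\in\mathcal{O}(nd^{3s+1})$ plays the role of the Bezout number of the input system.

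The main obstacle is not substantive mathematics but a careful accounting of exponents of $n$ and $d$: one must verify that all resulting summands can be absorbed into an expression of the form $\SO(n^3 d^{8s+1}(h+rd))$. The increase in the exponent of $d$ (compared with the $\mu_{\bfs\lambda^s}$ estimate $\SO(n^2 d^{3s}(h+d))$ of \eqref{eq: app: height_mu_s_lambda_s}) is entirely explained by the nontrivial degree and height of the new target $\overline{B}$, which bring the extra factor $\deg(\overline{B})\cdot h(\overline{B})\in\SO(n^2 d^{6s+1}(h+rnd))$ into the Nullstellensatz bound; all other summands are comparable to or strictly dominated by this one.
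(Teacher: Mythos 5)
Your proposal follows essentially the same route as the paper: apply the arithmetic Nullstellensatz of \cite{DaKrSo13} to $F_1,\dots,F_{s+1},\rho_{\bfs\lambda^s}$ with target $B_{\bfs\lambda^s}(Y_1,\dots,Y_{n-s-1})$, estimate the target's degree and height via \cite[Lemma 2.37(3)]{DaKrSo13} combined with \eqref{eq: app: deg_and_height_B_s_lambda_s}, control $\deg\rho_{\bfs\lambda^s}$ and $h(\rho_{\bfs\lambda^s})$ by Lemma \ref{lema:_bound_height_resultant}, and the dominant summands you identify are exactly those in the paper's bookkeeping, yielding $\SO\bigl(n^3d^{8s+1}(h+rd)\bigr)$. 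The only slip is the citation: since $\mathcal{W}_{\bfs\lambda^s}\neq\emptyset$ you need the strong form over a variety, \cite[Theorem 2]{DaKrSo13} (as used for $\alpha_s$), not \cite[Theorem 1]{DaKrSo13}, which is the weak B\'ezout-identity version for systems without common zeros; your stated hypotheses and conclusion are, however, precisely those of the correct statement.
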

\begin{proof}
Let $d_j=\deg f_j$ and $h_j:=h(f_j)$ for $1\leq j \leq s+1$, and
$d_{s+2}:=\deg \rho_{\bfs\lambda^s}$ and
$h_{s+2}:=h(\rho_{\bfs\lambda^s})$. Further, define $d_0:=\deg
B_{\bfs\lambda^s}(Y_1 \klk Y_{n-s-1})$ and $h_0:=
h(B_{\bfs\lambda^s}(Y_1 \klk Y_{n-s-1}))$. Finally, denote
$D:=\prod_{j=1}^{s+2}d_j$ and $H:=\max_{1\leq j \leq s+2} h_j$. By
\cite[Theorem 2]{DaKrSo13}, taking into account that
$\deg\mathbb{A}^n=1$ and $\widehat{h}(\mathbb{A}^n)=0$, it follows
that there exists $\beta_{\bfs\lambda^s}\in \Z\setminus\{0\}$ as in
\eqref{nullstellensatz_implicit_equation} with
$$
h(\beta_{\bfs\lambda^s})\leq
2d_0D\left(\frac{3h_0}{2d_0}+\sum_{\ell=1}^{s+2}\frac{H}{d_{\ell}}+e(n)\right),
$$
where $e(n)\in \SO (n)$. Now, by Lemma
\ref{lema:_bound_height_resultant} we have $h_{s+2}\in \SO
\bigl(nd^{2s-1}(h+d)\bigr)$. Since $H=\max\{h,h_{s+2}\}$, we deduce
that $H\in \SO \bigl(nd^{2s-1}(h+d)\bigr)$. On the other hand, $d_0
\leq \deg B_{\bfs\lambda^s}\in \SO (nd^{3s+1})$ by \eqref{eq: app:
deg_and_height_B_s_lambda_s} and $D\leq d^{s+1}d_{s+2}\in \SO
(nd^{3s+1})$. This implies
\begin{equation}\label{eq: app: b_s_lambda_s_height}
d_0D\left(\sum_{\ell=1}^{s+2}\frac{H}{d_{\ell}}+e(n) \right) \in \SO
\bigl(n^3d^{8s+1}(h+d)\bigr).
\end{equation}
Next, since $h(\bfs\lambda^s)\leq h({\sf a})$ for all $s$, by
\cite[Lemma 2.37 (3)]{DaKrSo13} we have
$$
h_0\leq h(B_{\bfs\lambda^s})+ \deg B_{\bfs\lambda^s}\bigl(h({\sf a})
+ \log (n-s) + \log(n+1)\bigr).
$$
Combining this with \eqref{eq: app: deg_and_height_B_s_lambda_s} and
Remark \ref{rem: app: max h_a_h_b_estimate} we deduce that $h_0\in
\SO \bigl(nd^{3s}(h+rnd)\bigr)$. Hence $Dh_0\in \SO
\bigl(n^2d^{6s+1}(h+rnd)\bigr)$ which, together with \eqref{eq: app:
b_s_lambda_s_height}, proves the lemma.
\end{proof}

Now we are finally able to estimate the height of ${\sf
L}_{\bfs\lambda^s}(\bfs p^{s+1})$.
\begin{coro}\label{coro: app: L_s_lambda_s_p_s+1_height}
For $1\leq s \leq r-1$, it holds that $h\bigl({\sf
L}_{\bfs\lambda^s}(\bfs p^{s+1})\bigr) \in \SO
\bigl(n^3d^{8s+1}(h+rd)\bigr)$.
\end{coro}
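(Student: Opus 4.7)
The plan is to split into the two cases dictated by the definition \eqref{def:L_lambda_s} of ${\sf L}_{\bfs\lambda^s}$ and reduce everything to estimates already established in the preceding lemmas.

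First, if $\mathcal{W}_{\bfs\lambda^s}=\emptyset$, then ${\sf L}_{\bfs\lambda^s}=\mu_{\bfs\lambda^s}$ is an integer, so ${\sf L}_{\bfs\lambda^s}(\bfs p^{s+1})=\mu_{\bfs\lambda^s}$ and the bound follows immediately from \eqref{eq: app: height_mu_s_lambda_s}, since $\SO(n^2d^{3s}(h+d))$ is subsumed by $\SO(n^3d^{8s+1}(h+rd))$.

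Second, if $\mathcal{W}_{\bfs\lambda^s}\neq\emptyset$, then ${\sf L}_{\bfs\lambda^s}(\bfs p^{s+1})=\beta_{\bfs\lambda^s}\,B_{\bfs\lambda^s}(\bfs p^{s+1})$, and by the subadditivity of the height on products of integers,
\[
h\bigl({\sf L}_{\bfs\lambda^s}(\bfs p^{s+1})\bigr)\le h(\beta_{\bfs\lambda^s})+h\bigl(B_{\bfs\lambda^s}(\bfs p^{s+1})\bigr).
\]
The first term is controlled by Lemma \ref{lema: app: b_s_lambda_s_height}, which already yields $h(\beta_{\bfs\lambda^s})\in\SO(n^3d^{8s+1}(h+rd))$, matching the target bound. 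For the second term, I would apply \cite[Lemma 2.37 (3)]{DaKrSo13} together with the bound $h(\bfs p^{s+1})\le h({\sf b})$ to obtain
\[
h\bigl(B_{\bfs\lambda^s}(\bfs p^{s+1})\bigr)\le h(B_{\bfs\lambda^s})+\deg(B_{\bfs\lambda^s})\bigl(h({\sf b})+\log(n-s)+\log(n+1)\bigr),
\]
and then plug in the estimates from \eqref{eq: app: deg_and_height_B_s_lambda_s}, namely $\deg B_{\bfs\lambda^s}\in\mathcal{O}(nd^{3s+1})$ and $h(B_{\bfs\lambda^s})\in\SO(nd^{3s}(h+rnd))$, together with Remark \ref{rem: app: max h_a_h_b_estimate}, which gives $h({\sf b})\in\SO(r\log d+\log n)$. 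The soft-oh absorbs the logarithmic contribution, so this whole term lies in $\SO(n^2d^{3s+1}(h+rnd))$, which is again dominated by $\SO(n^3d^{8s+1}(h+rd))$.

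Adding the two contributions then yields the desired estimate. There is no real obstacle here: the work has already been done in Lemma \ref{lema: app: b_s_lambda_s_height} and in the estimates \eqref{eq: app: height_mu_s_lambda_s} and \eqref{eq: app: deg_and_height_B_s_lambda_s}; the only task is to verify that the specialization of $B_{\bfs\lambda^s}$ at $\bfs p^{s+1}$ does not blow up the bound beyond the contribution of $\beta_{\bfs\lambda^s}$, which is indeed the case because the dominant factor $n^3d^{8s+1}$ in Lemma \ref{lema: app: b_s_lambda_s_height} already swallows the $\deg(B_{\bfs\lambda^s})\cdot h({\sf b})$ term.
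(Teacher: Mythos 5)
Your argument is correct and follows the paper's own proof essentially verbatim: split according to whether $\mathcal{W}_{\bfs\lambda^s}$ is empty, invoke \eqref{eq: app: height_mu_s_lambda_s} in the empty case, and otherwise bound $h(\beta_{\bfs\lambda^s})$ by Lemma \ref{lema: app: b_s_lambda_s_height} and $h\bigl(B_{\bfs\lambda^s}(\bfs p^{s+1})\bigr)$ via \cite[Lemma 2.37 (3)]{DaKrSo13} together with \eqref{eq: app: deg_and_height_B_s_lambda_s} and Remark \ref{rem: app: max h_a_h_b_estimate}. The intermediate bound you state for the specialized $B_{\bfs\lambda^s}$ is slightly looser than the paper's $\SO\bigl(nd^{3s}(h+rnd)\bigr)$ but is still dominated by the target estimate, so the conclusion stands.
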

\begin{proof}
Observe that
$ h\bigl({\sf L}_{\bfs\lambda^s}(\bfs p^{s+1})\bigr)=
h(\mu_{\bfs\lambda^s})$ for $\mathcal{W}_{\bfs\lambda^s}=\emptyset,
$
and $h\bigl({\sf L}_{\bfs\lambda^s}(\bfs p^{s+1})\bigr)=
h(\beta_{\bfs\lambda^s}) + h\bigl(B_{\bfs\lambda^s}(\bfs
p^{s+1})\bigr)$ for $\mathcal{W}_{\bfs\lambda^s}\neq \emptyset. $
Since $h(\bfs p^{s+1})\leq h({\sf b})$, by \cite[Lemma 2.37
(3)]{DaKrSo13} we have
$$
h\bigl(B_{\bfs\lambda^s}(\bfs p^{s+1})\bigr)  \leq
h(B_{\bfs\lambda^s}) + \deg B_{\bfs\lambda^s}\,\bigl(h({\sf b}) +
\log(n-s)\bigr).
$$
This inequality, Remark \ref{rem: app: max h_a_h_b_estimate} and
\eqref{eq: app: deg_and_height_B_s_lambda_s} imply
$h\bigl(B_{\bfs\lambda^s}(\bfs p^{s+1})\bigr)\in \SO
\bigl(nd^{3s}(h+rnd)\bigr)$. Comparing this with \eqref{eq: app:
height_mu_s_lambda_s} and Lemma \ref{lema: app: b_s_lambda_s_height}
yields the estimate of the lemma.
\end{proof}

As a consequence of Lemma \ref{lema: app: height_M_s_lambda_s_p_s}
and Corollary \ref{coro: app: L_s_lambda_s_p_s+1_height} we are able
to estimate the height of the multiple $\mathfrak{N}$ of all the
unlucky primes.
\begin{teo}\label{th: app: mathfrak_N_height}
The integer $\mathfrak{N}$ of \eqref{eq: app: frak_N_def} satisfies
$h(\mathfrak{N})\in \SO \bigl(n^3d^{8r-7}(h+rd)\bigr)$.
\end{teo}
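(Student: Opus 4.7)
The plan is straightforward given the estimates already assembled in this appendix. Using the factorization in \eqref{eq: app: frak_N_def} and the additivity of height for nonzero integers, namely $h(a_1\cdots a_k)=\sum_i h(a_i)$, I would write
\[
h(\mathfrak{N}) \;=\; h(\det(\bfs\lambda)) \;+\; h\bigl({\sf M}_r(\bfs\lambda^r,\bfs p^r)\bigr) \;+\; \sum_{s=1}^{r-1}\Bigl(h\bigl({\sf M}_s(\bfs\lambda^s,\bfs p^s)\bigr) \;+\; h\bigl({\sf L}_{\bfs\lambda^s}(\bfs p^{s+1})\bigr)\Bigr),
\]
and then bound each summand separately using the estimates already available.

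First I would dispose of the determinant factor. Since $\bfs\lambda\in{\sf S}^{n^2}$ with ${\sf S}=\{0,\dots,{\sf a}\}$, expanding $\det(\bfs\lambda)$ as the usual signed sum of $n!$ degree--$n$ monomials in the $\lambda_{ij}$ yields $|\det(\bfs\lambda)|\le n!\,{\sf a}^n$, hence by Remark \ref{rem: app: max h_a_h_b_estimate} we get $h(\det(\bfs\lambda))\in\SO\bigl(n(r\log d+\log n)\bigr)$, which is absorbed into the stated bound. Next, Lemma \ref{lema: app: height_M_s_lambda_s_p_s} gives
\[
\sum_{s=1}^r h\bigl({\sf M}_s(\bfs\lambda^s,\bfs p^s)\bigr) \;\in\; \SO\!\Biggl(n(h+nd)\sum_{s=1}^r d^{2s-1}\Biggr) \;\subseteq\; \SO\bigl(nd^{2r-1}(h+nd)\bigr),
\]
since the geometric sum is dominated (up to a constant factor) by its largest term $d^{2r-1}$.

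The dominant contribution comes from the ${\sf L}_{\bfs\lambda^s}$ factors. By Corollary \ref{coro: app: L_s_lambda_s_p_s+1_height},
\[
\sum_{s=1}^{r-1} h\bigl({\sf L}_{\bfs\lambda^s}(\bfs p^{s+1})\bigr) \;\in\; \SO\!\Biggl(n^3(h+rd)\sum_{s=1}^{r-1} d^{8s+1}\Biggr) \;\subseteq\; \SO\bigl(n^3 d^{8(r-1)+1}(h+rd)\bigr) \;=\; \SO\bigl(n^3 d^{8r-7}(h+rd)\bigr),
\]
again because the geometric sum is driven by its last term, corresponding to $s=r-1$. Comparing the three contributions (for $n,d\ge 2$), the last one dominates and yields the claimed estimate $h(\mathfrak{N})\in\SO\bigl(n^3 d^{8r-7}(h+rd)\bigr)$.

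No serious obstacle is anticipated: the proof is a direct assembly of the ingredients proved earlier in the appendix, using only the multiplicativity of integer height and the observation that each sum over $s$ is driven by its last term. The exponent $8r-7$ in the theorem is precisely what one obtains by specialising $s=r-1$ in the $d$--exponent $8s+1$ of Corollary \ref{coro: app: L_s_lambda_s_p_s+1_height}; the main conceptual point is simply that, among the factors making up $\mathfrak{N}$, the ${\sf L}_{\bfs\lambda^s}$ factors coming from the Nullstellensatz-type estimate in Lemma \ref{lema: app: b_s_lambda_s_height} are the largest and dictate the final bound.
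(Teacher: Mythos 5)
Your proposal is correct and follows essentially the same route as the paper: bound $h(\det\bfs\lambda)$ by $\log(n!)+nh({\sf a})$, then combine Lemma \ref{lema: app: height_M_s_lambda_s_p_s} and Corollary \ref{coro: app: L_s_lambda_s_p_s+1_height}, the ${\sf L}_{\bfs\lambda^s}$ factor at $s=r-1$ giving the dominant exponent $8r-7$. Your write-up merely makes explicit the summation over $s$ and the absorption of the smaller terms, which the paper leaves as "readily implies."
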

\begin{proof}
Note that $h(\det \bfs \lambda)\leq \log(n!) +nh({\sf a})\in \SO
(rn)$. This, together Lemma with \ref{lema: app:
height_M_s_lambda_s_p_s} and Corollary \ref{coro: app:
L_s_lambda_s_p_s+1_height}, readily implies the theorem.
\end{proof}

\providecommand{\bysame}{\leavevmode\hbox to3em{\hrulefill}\thinspace}
\providecommand{\MR}{\relax\ifhmode\unskip\space\fi MR }
\providecommand{\MRhref}[2]{%
  \href{http://www.ams.org/mathscinet-getitem?mr=#1}{#2}
}
\providecommand{\href}[2]{#2}

\end{document}